\title{Existence of Invariant Probability Measures for   Stochastic\\ Differential  Equations with  Finite Time Delay} %Gaining Insight\\ into   Mackey--Glass Type Equations}
\author{M. van den Bosch$^{\rm a, }$\footnote{Corresponding author.\\    Email addresses: \url{mark-bosch@hotmail.com}, \url{vangaans@math.leidenuniv.nl} and \url{S.M.VerduynLunel@uu.nl}.\\ To appear in SIAM
Journal on Applied Dynamical Systems. }\,\,, O.\,W. van Gaans$^{\mathrm a}$, S.\,M. Verduyn Lunel$^{\mathrm b}$}
\date{\today}
\numberwithin{equation}{section}
\newcommand\yesnumber{\addtocounter{equation}{1}\tag{\theequation}}
\newtheorem{theorem}{Theorem}[section]
\newtheorem{lemma}[theorem]{Lemma}
\newtheorem{proposition}[theorem]{Proposition}
\newtheorem{corollary}[theorem]{Corollary}
\theoremstyle{definition}
\newtheorem{definition}[theorem]{Definition}
\newtheorem{remark}[theorem]{Remark}
\newtheorem{assumption}[theorem]{Assumption}
\newtheorem{examplex}[theorem]{Example}
\newcommand{\cadlag}{càdlàg\xspace}
\newcommand{\caglad}{càglàd\xspace}
\newcommand{\R}{\mathbb{R}}
\newcommand{\Rplus}{[0,\infty)}
\newcommand{\tauRplus}{\mathbb D[-\tau,\infty)}
\newcommand{\taunul}{\mathbb D[-\tau,0]}
\newcommand{\N}{\mathbb{N}}
\newcommand{\dint}[3]{\int_{#1}^{#2}{#3} \, \mathrm{d}}
\def\gb #1{\bigl( #1 \bigr)}
\def\ph {\varphi}
\def\ph
\def\th
\begin{document}

\iffalse \thispagestyle{empty}
\begin{center}
   \huge Artikel 2
\end{center}

\setcounter{tocdepth}{2}
\clearpage
{\footnotesize \tableofcontents}

\thispagestyle{empty}
\newpage
\setcounter{page}{1}
\fi 

\maketitle
\begin{center}\small
    \textsc{
    $^{\mathrm a}$Mathematical Institute,  Leiden University,\\ P.O. Box 9512, 2300 RA Leiden, The Netherlands}

    \
    
    \textsc{$^{\mathrm b}$Department of Mathematics, University of Utrecht,\\ P.O. Box 80010, 3508 TA Utrecht, The Netherlands}
\end{center}

\

\begin{abstract}
   \noindent We provide sufficient conditions for the existence of invariant probability measures for generic  stochastic differential equations with finite time delay. This is achieved by means of the Krylov--Bogoliubov method. Furthermore, we focus on stochastic delay equations whose deterministic coefficient satisfies a one-sided bound, which enables us to show that boundedness in probability of a solution $X(t)$ entails boundedness in probability of its solution segment $X_t$. This implies that for a large set of systems, we can infer that an invariant measure  exists  if only there is at least one solution that is bounded in probability.
   % which allows us to verify these sufficient conditions explicitly.
   % Additionally, we show how global existence in non-autonomous systems relate to the property that solutions are bounded above (or below) in probability.
   Applications include, but are not limited to, the stochastic Mackey--Glass equations \cite{artikel2-MG} and the stochastic Wright's equation \cite{BosGaaVer1-24}. The noise driving the dynamical system is allowed to be an integrable Lévy process.

   % Solutions to these stochastic negative feedback systems persist globally  and all solutions are bounded above in probability. 
   % It turns out that the occurrence of finite time blowups and boundedness in probability of solutions and solution segments are closely related. A non-trivial invariant measure is shown to exist if and only if  there is at least one  initial condition  for which the solution remains bounded away from zero in probability. 
   %The existence of an invariant measure is achieved via the Krylov--Bogoliubov method. Further, we provide an overview of existence and uniquens
\end{abstract}

\textsc{Keywords:} {\footnotesize{semimartingales;  existence and uniqueness of solutions;  stochastic functional differential equation (SFDE); bounded in probability; stationary solutions; invariant measures;  tightness;  Krylov--Bogoliubov method.}}

\section{Introduction}

The main objective of this paper  is to establish sufficient conditions   guaranteeing the existence of an invariant measure, defined on the space of \cadlag functions $D[-\tau,0],$ and thus that  of a stationary solution, for generic autonomous initial value problems of the form
% \todo{Check history, namelijk volgens mij ergens een punt vergeten, etc. Voeg Blumenthal toe.}%CHECK NOG!!! HIER GEEN BLUMENTHAL TOEVOEGEN...
  \begin{equation}\label{eq:SDDE-main}\left\{
	\begin{array}{rlll}
		\mathrm d X(t)&=&a(X_t)\,\mathrm d t+b(X_{t-}) \,\mathrm d M(t), &\quad \text { for } t\geq 0, \\[.05cm]
		X(u)&=&\Phi(u),& \quad \text { for } u \in[-\tau, 0].
	\end{array}\right.
\end{equation}
In here, the process $M=(M(t))_{t\geq 0}$ is a semimartingale (with restrictions specified later), and  $a,b:D[-\tau,0]\to\mathbb R$ are locally Lipschitz functionals \pagebreak with respect to the supremum norm $\|\cdot\|_\infty$ and are, in addition, assumed \textit{proper} for well-posedness; see {\S}\ref{sec:existunique}. Throughout we write  $D\Rplus$, $D[-\tau,\infty),$ and $D[-\tau,0]$ for the spaces of \cadlag functions defined on $[0,\infty)$, $[-\tau,\infty)$, and $[-\tau,0]$,  respectively, along with $C\Rplus$, $C[-\tau,\infty),$ and $C[-\tau,0]$  being the  spaces of  continuous functions.

We obtain  the existence of an invariant measure via a procedure thanks to  Krylov and Bogoliubov \cite{kryloff1937theorie}; one can find the existence theorem(s) for our delayed setting in Appendix \ref{app:D}. 
% \begin{theorem} [{Krylov--Bogoliubov}]\label{thm:Krylov--Bogoliubov} Assume $M=(M(t))_{t\geq 0}$ is of class \todo{Is (HSpec) voldoende? Of hebben we meer aannames nodig? Volgens mij niet... Ook introduceren voor continue geval? Of alleen in appendix vermelden?} \textnormal{(HSpec)}. Suppose    initial value problem \eqref{eq:SDDE2.2} has a solution with maximal existence time $T_{\infty}=\infty$ $\mathbb P$-a.s., for any  initial process $\Phi\in\mathbb D[-\tau,0]$. If the  partial  segment process $ (X_{t} )_{t \geq \tau}$ of some global solution $X=(X(t))_{-\tau\leq t<\infty}$ is tight in the Skorokhod space $D[-\tau,0]$, then the stochastic differential equation in \eqref{eq:SDDE2.2} admits an invariant measure $\nu.$ 	
% 		Moreover, if $\varepsilon>0$ and $K_{\varepsilon} \subset D[-\tau,0]$   compact are such that $\mathbb{P}\left(X_{t} \in K_{\varepsilon}\right) \geq 1-\varepsilon $ {holds for all} $ t \geq \tau$, then there is an invariant measure $\nu$ with $\nu\left(K_{\varepsilon}\right) \geq 1-\varepsilon$.
% \end{theorem}
% \begin{proof}
%     We refer to Appendix \ref{app:D} for a proof and additional information. 
% \end{proof}
Thanks to the latter, it all comes down to finding sufficient conditions for which we have partial tightness of one set of  solution segments; see Appendix \ref{app:B} for general tightness results. %See the diagram below. 
The main steps are illustrated in Figure \ref{fig:placeholder}. 

\begin{figure}[!t]
    \centering
    \includegraphics[width=0.75\linewidth]{schematic-new.pdf}
     \caption{A schematic on how we prove the existence of an invariant measure for system \eqref{eq:SDDE-main}.}%Hier is de punt toegevoegd.}
    \label{fig:placeholder}
\end{figure}

Since we focus on systems with finite time delays only,  it is possible to derive  general results with regard to boundedness in probability of solutions and their segments, see {\S}\ref{boundednesssolutions}, as well as  tightness of solution segments, see {\S}\ref{sec:towards}. Tightness of the segment process $(X_t)_{t\geq \tau}$ of a  solution $(X(t))_{-\tau\leq t<\infty}$ to \eqref{eq:SDDE-main} is ensured when, in addition to global existence of all solutions and boundedness in probability of $(X(t))_{t\geq 0}$, the following two sufficient conditions are satisfied:
 \begin{enumerate}
     \item[(i)]  the noise coefficient $b$ is  bounded, i.e., there is a constant $\beta\geq 0$ such that 
     \begin{equation}
         |b(\ph)|\leq \beta,\quad \ph\in D[-\tau,0];
     \end{equation}
     \item[(ii)] the process $\big(\sup_{u\in[t-\tau,t]}|a(X_u)|\big)_{t\geq \tau}$ is bounded in probability, i.e.,
	\begin{equation} 
		\lim _{R \rightarrow \infty} \sup _{t \geq \tau}\mathbb  P\left(\sup _{u\in [t-\tau,t]} |  a(X_u) |>R\right)=0. \label{eq:difficult-main}
	\end{equation}
 \end{enumerate}
Moreover,  we prove that  (i) and (ii) can be replaced by the following  condition:
 \begin{itemize}
     \item[(iii)] the process $\big(\|X_t\|_\infty\big)_{t\geq 0}$ is bounded in probability, i.e.,
     \begin{equation} 
		\lim _{R \rightarrow \infty} \sup _{t \geq 0}\mathbb  P\left(  \|  X_t \|_\infty>R\right)=0. \label{eq:difficult-main2}
	\end{equation}
 \end{itemize}
% In particular, observe that (iii) implies (ii) whenever $a$ is of linear growth. 
% In the case of Brownian noise, it also suffices to show that condition (iii) holds.
 % ; see also \cite[Thm. 2.3]{BosGaaVer1-24}.\todo{Check artikel 1 hele document op juiste referenties (document veranderd namelijk)}
We  provide three separate proofs demonstrating that these conditions imply partial tightness: one where $M$ is a Brownian motion ({\S}\ref{sec:tight_cont});    one where $M$ is a square-integrable Lévy process ({\S}\ref{sec:special}); and   one inspired by \cite{article:reiss} using semimartingale characteristics (see Appendix \ref{app:C}), which allows $M$ to be an integrable L\'evy process ({\S}\ref{sec:special}).  In particular,  the full segment process $(X_t)_{t\geq 0}$ is tight in the continuous case and  (iii) is in fact sufficient and necessary; see Proposition \ref{prop:Dtight}.  Finally, we claim that most  if not all results mentioned above can be generalised to systems of delay equations.

The main results of this article are Theorem \ref{eq:SDDEW} and Theorems \ref{cor:main}--\ref{cor:cor:main}. Furthermore, in {\S}\ref{Sec3.4.2} we show  that|for delay equations with stochastic negative feedback|conditions  (ii) and  (iii) are automatically satisfied when the solution $(X(t))_{t\geq0}$ is bounded in probability. In particular, this  conclusion   can be generalised to   stochastic delay equations whose   deterministic coefficient $a$ in \eqref{eq:SDDE-main} has a one-sided bound. This result is due to Proposition \ref{thm:interstep} and   summarised  in Corollary \ref{cor:main_cont} for a Brownian motion.  Finally, let us point out that these general results are exploited in   \cite{artikel2-MG, BosGaaVer1-24}.

\paragraph{Organisation} The paper is organised as follows.
As a service to the dynamical systems community, we outline in {\S}\ref{sec:existunique} a SDDE framework with finite time delays in the spirit of \cite[Ch. 5]{book:mao} and provide general existence and uniqueness results together with simple proofs for certain types of noise. 
In {\S}\ref{boundednesssolutions} we study the boundedness in probability of solutions and solution segments, and its connection to finite time blowups. In particular, we prove in {\S}\ref{sec:bound-prelimedness} that boundedness in probability of a solution $X(t)$ entails boundedness in probability of its solution segment $X_t$ when the deterministic coefficient $a$ in \eqref{eq:SDDE-main} has a one-sided bound. 
In {\S}\ref{sec:towards} we provide sufficient conditions regarding the existence of an invariant measure for generic stochastic differential equations with a finite time horizon, and verify those conditions for delay equations with stochastic negative feedback. 
% In {\S}\ref{Sec5} we generalise the one-sided supremum bounds  as considered in \cite{BosGaaVer1-24} to It\^o- and L\'evy-driven processes with negative drift, leading to a proof of   Theorem \ref{main-thrm} in {\S}\ref{Sec6} where the problem is essentially divided into a deterministic and stochastic part. 
See Appendix \ref{app:A} for the classes of stochastic integrators we consider.

%\paragraph{Acknowledgements} The authors are grateful to the anonymous reviewers for their insightful comments.

% Moreover, Appendix \ref{app:A} lists the classes of (semi)martingales considered throughout this work. Appendix \ref{app:B} covers preliminary notions on the spaces of continuous and càdlàg segment functions. Appendix \ref{app:C} introduces semimartingale characteristics, a generalisation of Lévy triplets, necessary for proving a tightness result. Appendix \ref{app:D} presents a Krylov-–Bogoliubov existence theorem for invariant measures in the context of stochastic delay differential equations with a finite time horizon.

%\newpage

\section{Existence and uniqueness of SDEs with finite time delay}\label{sec:existunique}
% This paper is concerned with setting up a precise framework making stochastic delay differential equations such as, but not limited to,
%   \begin{equation}\label{eq:ourmodel}
% 	\mathrm d y(t) =\left[-\gamma(t)+r(t)e^{-y(t)}f\big(e^{y(t-1)}\big)\right]\mathrm dt+a(y_t,t)\,\mathrm dt+b(y_t-,t-)\,\mathrm dL(t),
% \end{equation}
% well-defined. Our work is strongly related to that of \cite{article:reiss} and \cite{article:stojkovic}.

Fix a   probability space $(\Omega, \mathcal{F}, \mathbb  P)$ together with a filtration $\mathbb F=(\mathcal F_t)_{0\leq t<\infty}$ 
that satisfies the usual conditions.  Let us denote by  $\mathbb D\Rplus$ the space of $\mathbb F$-adapted \cadlag  processes $(X(t))_{0\leq t<\infty}$. One can trivially extend the filtration  by setting $\mathcal F_s=\mathcal F_0$ for $-\tau\leq s<0$, with $\tau>0$ fixed, and in a similar fashion we   write $\tauRplus$    for the space of $(\mathcal F_t)_{-\tau \leq t<\infty} $-adapted \cadlag processes $(X(t))_{-\tau\leq t<\infty}$. 
The space $\taunul$ consists  of all \cadlag  processes $(X(t))_{-\tau \leq t\leq 0}$ where $X(s)$ is $\mathcal F_0$-measurable for all  $s\in[-\tau,0]$.

A process $X=(X(t))_{-\tau\leq t<\infty}\in \tauRplus$ gives rise to the {segment process} $(X_t)_{t\geq 0}$,  where each {segment} $X_t$ is an $\mathcal F_t$-measurable \cadlag process   on $[-\tau,0]$  defined by
% \todo{Misschien allebei hoofdletter! Kleine letter $x$ gebruik ik niet heel veel.} 
\begin{equation}\label{eq:segment} 	    
    X_t(\theta):=X(t+\theta),\quad - \tau\leq \theta\leq0.
\end{equation} 
 Note that the segment process can be regarded as an $\mathbb F$-adapted $D[-\tau,0]$-valued stochastic process, thus where each $X_t$ is a $D[-\tau,0]$-valued random variable; see Corollary \ref{remark:segment}.
 % \todo{Initial condition must be independent of the Brownian motion.}
 % \textcolor{red}{Initial condition must be independent of the Brownian motion. Assumption of $\mathcal F_0$ is sufficient for being independent of a BM, but not for a general semimartingale and must therefore be assumed.}

\iffalse
The framework  we  develop is in the spirit of \cite[Ch. 5]{book:mao}, which provides existence and uniqueness results for (strong)  solutions to stochastic
differential equations driven by Brownian motion together with a finite time horizon. For our intents and purposes, we focus on finite time delays too.
% , but there actually no real issue w considering more general delay terms.
% , as pointed out in \cite[p. 155]{book:mao}.\todo{Check of de pagina nummers overeenkomen met de juiste editie; Protter en Mao.}
The purpose of this section is to provide a complete overview. In particular, we state results as general as possible in the sense that we avoid any integrability conditions and allow a broad class of non-continuous integrators $M$. 
Existence and uniqueness results with more general delay terms can be found in \cite{book:Rao, book:protter}.
\fi 

Throughout this section, we let $M=(M(t))_{t\geq 0}$  be a semimartingale (adapted to $\mathbb F)$, e.g., a Brownian motion or a L\'evy process, unless stated otherwise.  The purpose of  {\S}\ref{Sec2.1.1} is to provide a rigorous mathematical interpretation for autonomous equations of the form
\begin{equation}\label{eq:SDDE11}
    \left\{
    \begin{array}{rlll}
 	\mathrm dX(t)&=&a(X_t)\,\mathrm d t+b(X_{t-}) \,\mathrm d M(t), &\quad \text { for } t\geq 0, \\[.05cm]
 		X(u)&=&\Phi(u)& \quad \text { for } u \in[-\tau, 0],
    \end{array}
    \right.
\end{equation}
subject to an {initial process} $\Phi\in\mathbb D[-\tau,0]$. In  {\S}\ref{Sec2.1.2} we consider the non-autonomous case.
% \begin{equation}\label{eq:SDDE22}
% \left\{
% \begin{array}{rlll}
%     \mathrm d X(t)&=&a(X_t,t)\,\mathrm d t+b(X_t-,t-) \,\mathrm d L(t), &\quad \text { for } t\geq 0, \\[.05cm]
%     X(u)&=&\Phi(u)& \quad \text { for } u \in[-\tau, 0],
% \end{array}
% \right.
% \end{equation}
 All  results throughout this section naturally extend to higher dimensional systems.
%These stochastic delay differential equations will arise in Chapter \ref{Ch3}.
%
The framework  we   develop is in the spirit of \cite[Ch. 5]{book:mao}, which provides existence and uniqueness results for (strong)  solutions to stochastic
differential equations driven by Brownian motion together with a finite time horizon. 
Existence and uniqueness results with more general delay terms can be found in, e.g., \cite{book:Rao, book:protter}.

\subsection{Autonomous equations}\label{Sec2.1.1}
% \noindent  The content below can easily be generalised for higher dimensional processes, i.e., taking values in $\R^d$ for $d> 1$, yet we deliberately restrict ourselves to the $d=1$ case.\todo{Ook zo houden? Later ook nog opmerken. Alles vectorwaardig en voor systemen ook waar!!! In een stelling!}  Now, the main object of study    is the following stochastic differential equation with a finite horizon time delay:
% \begin{equation}\label{eq:SDDE}\left\{
%   \begin{array}{rlll}
%   	\mathrm d X(t)&=&a(x_t)\,\mathrm d t+b(x_t-) \,\mathrm d L(t), &\quad \text { for } t\geq 0, \\[.05cm]
%   	X(u)&=&\Phi(u),& \quad \text { for } u \in[-\tau, 0],
%   \end{array}\right.
%   \end{equation}
% where the interpretation of \eqref{eq:SDDE} and $a,b:D[-\tau,0]\to\R$ is summarised in 1., 2., and 3. below. \todo{More concrete conditions on $L$}
% Recall $\Phi\in\mathbb D[-\tau,0]$. 
% Moreover, it actually suffices for $L$ to be a semimartingale, yet we assume $L$ to be a Lévy process. This is merely because we have  the applications in Chapter \ref{Ch3} in mind. 
\noindent   We write $\ph(s-)=\lim_{t\nearrow s}\ph(s)$ and $\|\varphi\|_\infty=\sup_{-\tau\leq s\leq 0}|\varphi(s)|$ for any \cadlag function  $\ph$. Observe that \eqref{eq:SDDE11}  is shorthand  for the following integral equation:
\begin{equation}\label{eq:interpret}
    X(t)=\Phi(0)+\dint0t{\mathbf A_\Phi(X)(s)}s+\dint0t{\mathbf B_\Phi(X)(s-)}M(s),\quad t\geq 0,
\end{equation}
where
% \todo{Recall, begin with section with $f(s-)=\lim_{t\to s}f(s)$} 
\begin{enumerate} 
    \item the maps $\mathbf A_\Psi,\mathbf B_\Psi:\mathbb D\Rplus\to\mathbb D\Rplus$, for some fixed $\Psi\in\mathbb D[-\tau,0]$, are defined pathwise for any process $Y=(Y(t))_{t\geq 0}\in\mathbb D\Rplus$ by
    \begin{equation}\label{eq:main}
        \mathbf A_\Psi(Y)(s,\omega)=a\left(Y^\Psi_s(\omega)\right)\quad\text{and} \quad 	\mathbf B_\Psi(Y)(s,\omega)=b\left(Y^\Psi_s(\omega)\right),\quad s\geq0;
    \end{equation}
\item the  process $(Y^{\Psi}_t)_{t\geq 0}$ in  \eqref{eq:main} is the segment process of   $Y^\Psi\in \tauRplus$, where
    % \footnote{Note  $Y^{\Psi}(0)=Y(0)=\Psi(0)$ is automatically satisfied when $Y$   solves equation \eqref{eq:interpret}, with $\Phi$ replaced by $\Psi$.}
\begin{equation}
Y^\Psi(s)=\left\{\begin{array}{ll}\label{eq:extended}
	\Psi(s), & -\tau \leq s<0 \\
	Y(s), & s \geq 0;
\end{array}\right.
% \vspace{.3cm}
\end{equation}
\item the functionals  $a:D[-\tau,0]\to\R$ and $b:D[-\tau,0]\to\R$    in  \eqref{eq:main} are  locally Lipschitz continuous with respect to the supremum norm $\|\cdot \|_\infty$  and \textit{proper} (see  Definition \ref{def:proper}). %Definition \ref{def:loc} and 
\end{enumerate} 
% Recall for any $Y=(Y(s))_{s\geq 0}\in\mathbb D\Rplus$  we have $Y_-=(Y(s-))_{s\geq 0}\in \mathbb L\Rplus,$ a significant observation making the second integral in \eqref{eq:interpret} well-defined. 
\noindent The first integral in \eqref{eq:interpret} is to be understood as a Lebesgue--Stieltjes integral. The second integral is a stochastic integral as in \cite{book:protter}. For further information regarding the functionals or integrals, see the discussion succeeding the proof of Proposition \ref{prop:p_est}.

Let us  clarify what is meant by a (strong) solution of initial value problem \eqref{eq:SDDE11}. For this we need the following notion. We denote by $X^T$  the stopped process of $X$ at stopping time $T$, which is defined by
$
	X^T(t,\omega)=X({t\wedge T(\omega)},\omega)=X(t,\omega)\mathbf{1}_{\{t<T(\omega)\}}+X(T(\omega),\omega)\mathbf{1}_{\{t\geq T(\omega)\}}
$. 
% \todo{tussen haakjes bij de definition? local solutions?}

\begin{definition}\label{def:solution}
	Let $X=(X(t))_{-\tau\leq t<\infty} $ be a stochastic process and   $T_\infty$   a stopping  time. Then $(X,T_\infty)$, often abbreviated by $X$ again, is said to be a \texttt{local\,(strong)} \texttt{solution}   of  \eqref{eq:SDDE11} on the interval  $[-\tau,T_\infty)$, whenever $X_0=\Phi$   holds  $\mathbb P$-a.s.,   $X^{T_k}\in\mathbb D\Rplus$, and  $\mathbb P$-a.s.\  we have  
	\begin{equation}\label{eq:interpret2}
		X(t\wedge T_k)=\Phi(0)+\dint0{t\wedge T_k}{\mathbf A_\Phi(X^{T_k})(s)}s+\dint0{t\wedge T_k}{\mathbf B_\Phi(X^{T_k})(s-)}M(s),\quad t\geq 0,
	\end{equation}
  for all integers $k\geq 1$, where $(T_{k})_{k \geq 1}$ is any non-decreasing sequence of finite stopping times such that $T_{k} \nearrow T_{\infty}$ holds $\mathbb P$-a.s.,\,as $k\to\infty.$ 
Further, if $\limsup _{k \rightarrow \infty}|X(t\wedge T_k)(\omega)|=\infty$ when $T_{\infty}(\omega)<\infty$,  for almost all $\omega\in\Omega,$ then $(X,T_\infty)$ is  a \texttt{maximal\,local} \texttt{solution} with $T_{\infty}$ being the   \texttt{explosion\,time}. A maximal local solution  is    \texttt{unique}  if for any other maximal local solution $(Y,S_\infty)$ we have    $T_{\infty}=S_{\infty}$    $\mathbb P$-a.s.,\,such that
% \todo{Give definition stopped process.}  
\begin{equation}
	X^{T_k\wedge S_k}\quad\text{and}\quad Y^{T_k\wedge S_k}\quad\text{are indistinguishable},
\end{equation}for all integers $k\geq 1$. Finally, a local solution $(X,T_\infty)$ is a \texttt{global\,(strong)\,solution} of \eqref{eq:SDDE11} if $T_\infty=\infty$ holds $\mathbb P$-a.s., or in other words, when $X$ satisfies \eqref{eq:interpret}.
\end{definition}

Clearly, if $X$ is a global solution, then $X\in\mathbb D\Rplus.$ In addition, when we talk about a solution, we may as well always consider a version with \cadlag paths everywhere, because of the fact that these two versions are indistinguishable \cite[Thm. I.2]{book:protter}. We usually set $X=0$ on the stochastic interval $[T_\infty,\infty).$

% \paragraph{Main results} Let us now state the main results first, before we delve deeper into the meaning of being a proper locally Lipschitz functional.

% Importantly, note   that \cite{book:protter}, and also  \cite{book:mao} for instance, defines a continuous process   with $\mathbb P$-a.s.\ cont- inuous paths. Despite the difference with our definition, conform \cite{book:karatzas}, \cite{book:kallenberg} and \cite{book:chung} for example, such a process would be indistinguishable
%  	from one with all paths continuous. Indeed, it is trivial to define a version on the $\mathbb P$-null set to make it
%  	continuous on all of $\Omega$. Such a version is unique up to indistinguishability, due to Proposition \ref{prop:regulatity}.
%  	The same is true if we replace continuous by, e.g., left- and right-continuous processes. Another valid reason of why we take regularity everywhere, per definition, is to be found in Example \ref{ex:regularity}.

% Perhaps it would have been more appropriate to have written 
% 	\begin{equation} 
% 	X(t\wedge T_k)=\Phi(0)+\dint0{t\wedge T_k}{\mathbf A_\Phi(X^{T_k})(s)}s+\dint0{t\wedge T_k}{\mathbf B_\Phi(X^{T_k})(s-)}L(s),\quad t\geq 0,
% \end{equation}
% instead of \eqref{eq:interpret2}. In particular, we have $X^{T_k}\in\mathbb D\Rplus$. 
 
% % See also Definition \ref{def:functional}.

As a service to the community, we will prove the existence and uniqueness result below by
     % \todo{Herschrijven!
     % Misschien ook waar voor local square integrable martingales (note that any local martingale is  square integrable in the continuous case }
     restricting ourselves to  $M$ that are martingales of class (HDol); see Appendix \ref{app:A} for the definition of this class and other classes. Within this setting,  the proof is completely analogous to the Brownian case, see \cite[Ch. 5]{book:mao}, and it can be divided into essentially three main steps. It immediately follows that the assertion is true for semimartingales of class (HSqL) as well, since their martingale part is in (HSqLM) $\subseteq$ (HDol) and because the predictable part is directly proportional to $t$. For general semimartingales $M$, we refer to the results scattered throughout \cite{book:Rao} and \cite{book:protter}. These monographs follow two distinct approaches on how to prove this, yet require rather sophisticated tools. 

\begin{theorem}\label{thm:main-exist} 
	Suppose $a, b: D[-\tau,0] \rightarrow \mathbb{R}$ are proper  locally Lipschitz functionals. Then for every  initial process $\Phi\in \mathbb D[-\tau,0]$, there exists a stopping time $T_\infty$ and  a  stochastic  process $X$ such that $(X,T_\infty)$ is a local solution to   initial value problem \eqref{eq:SDDE11}. 
The stopping time $T_\infty$ can be chosen such that $(X,T_\infty)$ is a maximal solution of   \eqref{eq:SDDE11}. This maximal solution   is   unique. 
	
If, in addition, the functionals $a,b$ satisfy  the linear growth condition, then  \eqref{eq:SDDE11} admits a unique global solution. This global solution is in fact a semimartingale. 
\end{theorem}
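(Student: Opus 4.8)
The plan is to follow the classical Picard iteration scheme adapted to the delay setting, exactly in the spirit of \cite[Ch.~5]{book:mao}, and to exploit the reduction announced just before the statement: it suffices to treat martingale integrators $M$ of class (HDol), since for semimartingales of class (HSqL) the martingale part is already in (HDol) and the predictable part is proportional to $t$, hence absorbable into the drift functional $a$. I would organise the argument into the three steps the authors indicate: (1) global existence and uniqueness when $a,b$ are \emph{globally} Lipschitz of linear growth and $\Phi$ is square-integrable; (2) removal of the integrability and global-Lipschitz hypotheses by truncation, producing the maximal local solution together with its uniqueness; and (3) a non-explosion estimate under the linear growth condition, followed by the semimartingale conclusion.

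For step (1), I would fix a horizon $T>0$ and work in the Banach space $\mathcal S^2_T$ of $\mathbb F$-adapted \cadlag processes $X$ on $[-\tau,T]$ with $X_0=\Phi$ and $\mathbb E\big[\sup_{-\tau\le t\le T}|X(t)|^2\big]<\infty$. Setting $X^{(0)}\equiv\Phi(0)$ on $[0,T]$, define the iterates
\begin{equation*}
    X^{(n+1)}(t)=\Phi(0)+\dint0t{\mathbf A_\Phi(X^{(n)})(s)}s+\dint0t{\mathbf B_\Phi(X^{(n)})(s-)}M(s).
\end{equation*}
Properness of $a,b$ (Definition \ref{def:proper}) ensures that $\mathbf A_\Phi(X^{(n)})$ and $\mathbf B_\Phi(X^{(n)})$ are again adapted \cadlag, so the iterates are well defined and, by linear growth, lie in $\mathcal S^2_T$. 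The crucial contraction estimate for $\|X^{(n+1)}-X^{(n)}\|$ on $[0,t]$ uses Doob's maximal inequality for the drift term and, for the martingale term, the defining property of class (HDol) to provide an $L^2$/BDG-type bound of the stochastic integral by $\mathbb E\int_0^t|\mathbf B_\Phi(X^{(n)})(u)-\mathbf B_\Phi(X^{(n-1)})(u)|^2\,\mathrm d\langle M\rangle_u$. Here the finite delay is benign: since all iterates agree with $\Phi$ on $[-\tau,0]$, the local Lipschitz bound gives $\|(X^{(n)})^\Phi_s-(X^{(n-1)})^\Phi_s\|_\infty\le\sup_{0\le u\le s}|X^{(n)}(u)-X^{(n-1)}(u)|$, so the segment dependence collapses to a running supremum. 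A Gronwall/iterated-kernel argument then yields $\sum_n\|X^{(n+1)}-X^{(n)}\|<\infty$, a limit $X\in\mathcal S^2_T$ solving \eqref{eq:interpret}; the same one-step estimate applied to two solutions gives pathwise uniqueness.

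For step (2), given merely locally Lipschitz $a,b$, I would first reduce to square-integrable $\Phi$ by partitioning $\Omega$ according to the $\mathcal F_0$-measurable value of $\|\Phi\|_\infty$ and gluing the resulting solutions, and then truncate the coefficients: for each $R$ choose globally Lipschitz, linearly growing $a_R,b_R$ agreeing with $a,b$ on $\{\|\cdot\|_\infty\le R\}$. Step (1) gives a unique global solution $X^{(R)}$ to the truncated problem; with $\tau_R=\inf\{t\ge0:|X^{(R)}(t)|\ge R\}$, uniqueness forces $X^{(R)}$ and $X^{(R')}$ to agree on $[0,\tau_R\wedge\tau_{R'})$, so the pieces paste into a process $X$ on $[-\tau,T_\infty)$ with $T_\infty=\lim_R\tau_R$; by construction $\limsup|X(t\wedge\tau_R)|=\infty$ on $\{T_\infty<\infty\}$, which is precisely the maximal local solution of Definition \ref{def:solution}, with uniqueness inherited from the truncated problems. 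Finally, for step (3), the linear growth condition yields an a priori bound: applying Itô's formula to $|X(t\wedge\tau_R)|^2$, using linear growth and the (HDol) control of the martingale term, and Gronwall gives $\mathbb E\big[\sup_{s\le t\wedge\tau_R}|X(s)|^2\big]\le C(1+\mathbb E\|\Phi\|_\infty^2)e^{Ct}$ uniformly in $R$, so letting $R\to\infty$ forces $\mathbb P(T_\infty<\infty)=0$. The resulting global $X$ is, via \eqref{eq:interpret}, the sum of $\Phi(0)$, a finite-variation Lebesgue--Stieltjes integral, and a stochastic integral against the semimartingale $M$, hence a semimartingale.

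I expect the main obstacle to be the stochastic-integral estimate driving both the contraction and the a priori moment bound: making the $L^2$ control of $\int_0^{\cdot}\mathbf B_\Phi(X)\,\mathrm dM$ precise is exactly where the class (HDol) hypothesis on $M$ is indispensable, and one must verify that properness of $b$ keeps the integrand adapted and \caglad so that the integral is defined and the isometry/BDG inequality is applicable. By contrast, the delay itself is harmless once one notes that finite delay lets the segment sup-norm be dominated by the running supremum of the difference of the processes, and the passage from square-integrable $\Phi$ to an arbitrary \cadlag initial process is a routine localization on $\{\|\Phi\|_\infty\le k\}$.
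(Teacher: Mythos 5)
Your proposal is correct, and its skeleton---Picard iteration under global Lipschitz and linear growth with square-integrable initial data, truncation of the coefficients to produce the unique maximal local solution, and a moment bound ruling out explosion under linear growth---is exactly the paper's three-step scheme, including the preliminary reduction to integrators of class (HDol). You deviate in two sub-steps, and in both places the paper deliberately takes a non-standard route. First, to pass from square-integrable to arbitrary \cadlag initial processes you partition $\Omega$ on the $\mathcal F_0$-measurable sets $\{\|\Phi\|_\infty\le k\}$ and glue the solutions using the local character of the stochastic integral; this is the classical localization of \cite[p. 134]{book:kloeden}, which the paper explicitly avoids. Instead it performs a change of measure, $\mathrm d\mathbb Q\propto e^{-\|\Phi\|_\infty^2}\,\mathrm d\mathbb P$, under which $\Phi$ becomes square integrable, solves the equation under $\mathbb Q$, and transfers the solution back to $\mathbb P$ via \cite[Thm. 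II.14]{book:protter}, which guarantees that stochastic integrals under equivalent measures are indistinguishable. Both routes are valid: yours requires the pasting argument (and the local property of the integral) at every truncation level, while the paper's is a one-shot argument whose only nontrivial input is the invariance of stochastic integration under equivalent changes of measure. Second, for the a priori non-explosion bound you propose applying It\^o's formula to $|X(t\wedge\tau_R)|^2$; since $M$ is allowed to jump, this drags in quadratic-variation and jump terms, and the paper explicitly flags this as its reason for avoiding It\^o's formula altogether: estimate \eqref{eq:growth} and Proposition \ref{prop:p_est} are obtained instead from the elementary inequality $|a+b+c|^p\le 3^{p-1}(|a|^p+|b|^p+|c|^p)$, H\"older, Doob and Burkholder--Davis--Gundy applied directly to the running supremum, and Gr\"onwall. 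Your It\^o route is repairable---under (HDol) one still has $\mathbb E\int b^2\,\mathrm d[M]\le\lambda\,\mathbb E\int b^2\,\mathrm ds$---but the direct supremum estimate is cleaner in the presence of discontinuities.
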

\begin{proof}  For general semimartingales $M$, we refer to  \cite{book:Rao,book:protter}. Now let $M$ be of class (HDol); see Appendix \ref{app:A}. We shall proceed by dividing the proof into three steps. In Step 1,  we show existence and uniqueness of global solutions assuming that the initial condition is square integrable, i.e., $\mathbb E\|\Phi\|_{\infty}^2<\infty$, and that the functionals $a,b$ are globally Lipschitz continuous with respect to the supremum norm $\|\cdot\|_\infty$. In Step 2, we extend the results to the case where $a$ and $b$ are locally Lipschitz.
In Step 3, we show that   square integrability  of the initial condition is a superfluous condition to pose. 

\textit{Step 1.} 
% Note that globally Lipschitz implies the linear growth condition. 
First, suppose $X$ is a global solution. Then the linear growth condition combined with an application of H\"older's inequality, Doob's maximal inequality \cite[Thm. 1.3.8]{book:karatzas}, and Gr\"onwall's inequality \cite[Thm. 1.8.1]{book:mao}, yields
\begin{equation}
    \mathbb E\left[\sup_{-\tau\leq t\leq T}|X(t)|^2\right]\leq (1+4\mathbb E\|\Phi\|_\infty^2)e^{3K_{\rm lin}T(T+4\lambda)}-1,\label{eq:growth}
\end{equation}
for any $T\geq 0.$ For more details, we refer to the proof of \cite[Lem. 2.3.2]{book:mao} for the continuous case or the proof of Proposition \ref{prop:p_est}.
Growth estimate \eqref{eq:growth} together with global Lipschitz continuity yields uniqueness. Indeed, suppose $X$ and $Y$ are two global solutions, then for any $0\leq t\leq T$ we have
\begin{equation}\begin{aligned}
    \mathbb E\left[\sup_{-\tau\leq s\leq t}|X(s)-Y(s)|^2\right]&\leq 2K_{\rm lip}(T+4\lambda)\int_0^t\mathbb E\|X_s-Y_s\|_\infty^2\,\mathrm ds\\&\leq 2K_{\rm lip}(T+4\lambda)\int_0^t \mathbb E\left[\sup_{-\tau\leq r\leq s}|X(r)-Y(r)|^2\right]\mathrm ds.
\end{aligned}\end{equation}
Since $T\mapsto \mathbb E\left[\sup_{-\tau\leq s\leq T}|X(s)-Y(s)|^2\right]$ is integrable thanks to  \eqref{eq:growth}, we obtain that, in fact, 
 the latter equals zero after invoking Gr\"onwall's inequality.
 
 Existence of a global solution $X$ is obtained via a Picard's iteration. Define for any  $n\geq 0$ the  process
\begin{equation}
    X^{n+1}(t)=\Phi(0)+\int_0^t\mathbf A_\Phi(X^{n})(s)\,\mathrm ds+\int_0^t \mathbf B_\Phi(X^{n})(s-)\,\mathrm dM(s),\quad t\geq 0.
\end{equation}
With similar techniques as  before, one can show by  induction that,  for any $n\geq 0,$ we have
\begin{equation}
    \mathbb E\left[\sup_{0\leq t\leq T}|X^{n+1}(t)-X^n(t)|^2\right]\leq \frac{C[LT]^n}{n!},\label{eq:L2}
\end{equation}
where $L=2K_{\rm lip}(T+4\lambda)$ and $C=2K_{\rm lin}(T+4\lambda)(1+\mathbb E\|\Phi\|_\infty^2)T.$ Consequently, we can show with the help of Borel-Cantelli that there exists a process $X=(X(t))_{t\geq 0}$ such that $X\in L^2(\Omega\times [0,T])$, for all $T\geq0,$ and for which we have $X^n\to X$ uniformly in $t$ with probability 1. Moreover, we clearly have convergence $X^n(t)\to X(t)$, for any $t\geq 0,$ 
  in $L^2(\Omega)$ as well, thanks to \eqref{eq:L2}, which allows us to show that $X$ is a solution to
\eqref{eq:interpret}. 
  See \cite[Thm. 2.3.1]{book:mao} for further details.

\textit{Step 2.} 
% Assume now that the functionals $a,b$ are locally Lipschitz continuous.
We shall follow  a standard truncation procedure and it is completely in line with \cite[Thm. 2.3.4]{book:mao}. For each integer $n \geq 1$,  introduce the truncated functionals
\begin{equation}
a_n(\varphi)= \begin{cases}a(\varphi) & \text { if }\|\varphi\|_\infty \leq n, \\ a(n \varphi /\|\varphi\|_\infty) & \text { if }\|\varphi\|_\infty>n,\end{cases}
\end{equation}
and $b_n(\varphi)$ similarly. Then $a_n$ and $b_n$ are globally Lipschitz continuous, for any  $n\geq 1.$ Step 1 gives us a unique solution $X^n$ to the equation
\begin{equation}
X^n(t)=\Phi(0)+\int_{0}^t a_n\left(X^n_s\right) \mathrm d t+\int_{0}^t b_n\left(X^n_{s-}\right)\mathrm d M(s), \quad t\geq 0 .
\end{equation}
Define the increasing sequence of stopping times $(T_n)_{n\geq 1}$ by
\begin{equation}
T_n=n\wedge \inf \left\{t \geq 0:\left|X^n(t)\right| \geq n\right\}.\end{equation}
If we assume, in addition, that $a$ and $b$ satisfy the linear growth condition, then one can show that $T_n\to\infty$ holds $\mathbb P$-a.s., as $n\to\infty,$ and that $X(t):=\lim_{n\to\infty}X^n(t)$ is well-defined and solves   \eqref{eq:interpret}. Indeed, we have   $X^n(t)=X^{n+1}(t)$, for $0\leq t\leq T_n$, and $X(t\wedge T_n)=X^n(t),$ for any $n\geq 1.$
However, whenever a functional no longer satisfies linear growth,  an explosion may occur in finite time, hence only guaranteeing a maximal local solution.
Uniqueness can be proved by means of a stopping
time argument as well.

\textit{Step 3.} Instead of using a truncation argument as   in \cite[p. 134]{book:kloeden}, we proceed as follows. Suppose now $\Phi\in\mathbb D[-\tau,0]$ is merely $\mathcal F_0$-measurable and introduce  the measurable map
\begin{equation}
m:\Omega\to(0,1],\quad \omega\mapsto e^{-\|\Phi(\omega)\|_\infty^2}.
\end{equation}
Note that $\mathbb E\big[e^{-\|\Phi\|_\infty^2}\big]>0$  because $\|\Phi\|_\infty<\infty$ holds ($\mathbb P$-a.s.), since $\Phi$ has \cadlag sample paths.
This allows us to define a new probability measure $\mathbb Q$ on $(\Omega,\mathcal F)$ as follows:
\begin{equation}
    \mathbb Q(A)=\frac{\int_A m(\omega)\,\mathrm d\mathbb P(\omega)}{\int_\Omega m(\omega)\,\mathrm d\mathbb P(\omega)}=\left(\mathbb E\big[e^{-\|\Phi\|_\infty^2}\big]\right)^{-1}\mathbb E\big[\mathbf1_Ae^{-\|\Phi\|_\infty^2}\big].
\end{equation}
Clearly $\mathbb P$ and $\mathbb Q$ are equivalent measures, i.e., $\mathbb P\ll \mathbb Q\ll \mathbb P.$ 
% In fact, we have slightly more. There exists constants $\pi,\rho>0$ such that $\pi \mathbb P(A)\leq \mathbb Q(A)\leq \rho\mathbb P(A)$ for all $A\in\mathcal F.$ WRONG
Note that $\Phi$ is square integrable with respect to the new measure $\mathbb Q$. Indeed, we have
\begin{equation}
   \int_\Omega \|\Phi\|_\infty^2 \,\mathrm d\mathbb Q = \left(\mathbb E\big[e^{-\|\Phi\|_\infty^2}\big]\right)^{-1}\int_\Omega \|\Phi\|_\infty^2 e^{-\|\Phi\|_\infty^2}\,\mathrm d\mathbb P\leq \left(\mathbb E\big[e^{-\|\Phi\|_\infty^2}\big]\right)^{-1}e^{-1}<\infty.
\end{equation}
% \todo{Dit moet voorzichtig! Kan misschien korter?}
From Steps 1 and  2, we  conclude that there exists a (local) solution to  initial value problem \eqref{eq:SDDE11}, where equality holds $\mathbb Q$-a.s., and with the stochastic integral being computed under the law of this new probability measure. In particular, we have   $X^{T_k}\in L^2(\Omega\times [0,T];\mathbb Q\times\mathrm ds)$, for all $T\geq0,$  for any appropriate sequence of stopping times $(T_k)_{k\geq 1}$ as in Definition \ref{def:solution}.
 Since $\mathbb Q\ll\mathbb P,$ it follows from \cite[Thm. II.14]{book:protter}  that all the stochastic integrals are $\mathbb Q$-indistinguishable from their corresponding versions computed under the law of $\mathbb P$. Under this law, all the stochastic integrals clearly exist in the ucp-sense, which is sufficient, but not necessarily with $L^2(\Omega,\mathcal F,\mathbb P)$-convergence. The solution we found satisfies initial value problem \eqref{eq:SDDE11} also $\mathbb P$-a.s.. This completes the proof.
\end{proof}

%  The proof will be carried out in essentially three main steps.

%  The roadmap for proving the theorem is as follows. First, we assume that the initial condition is square integrable, i.e., $\mathbb E\|\Phi\|_{\infty}^2<\infty$, where $\|\Phi\|_\infty=\sup_{-\tau\leq t\leq 0}|\Phi(t)|,$ and that the functionals $a,b$ are (globally) Lipschitz continuous. Subsequently, we relax to $a,b$ locally Lipschitz continuous. Then we show that one can lift the integrablity assumption.
% All this is well-known, but not always written out that thoroughly and clear. FOR INSTANCE, STEP 3, SUCH AS IN KLOEDEN EN PLATEN. TRUNCTATION OR CHANGE OF MEASURE? We try to focus on the important parts.

% {\todo{Beter zelf sketch geven! Voor kwadratisch integreerbaar martingale, moet wel lukken, maar algemener verwijzen: Protter en ander goed boek!}
% \color{red}	When $a,b$ satisfy the Lipschitz condition globally, the final result follows directly from  Theorem 7 of \cite{book:protter}, whereas we merely justify the local results by means of the brief comment about local maximal solutions in \cite{book:protter}. These results can  be proven  via a truncation argument, as in the continuous case for which we refer Theorem ?? of \cite{book:mao04}.}

Theorem \ref{thm:main-exist} can, in fact, be slightly improved. Indeed, replacing the linear growth condition by a monotone condition, see, e.g., \cite[Thm. 2.3.6]{book:mao}, also results into global solutions. 
Additionally,  an estimate like \eqref{eq:growth} holds for every $p\geq 2.$ We believe this can be proved more easily  by not invoking It\^o's formula \cite{book:applebaum,book:sulem,book:protter},  due to the discontinuities of  the integrator $M$.% (for L\'evy processes, see also     \cite[{p. 243}]{book:applebaum} or  \cite[{p. 6}]{book:sulem}), due to the  discontinuities of $M=(M(t))_{t\geq 0}$.

 \begin{proposition}\label{prop:p_est}
     Suppose $M=(M(t))_{t\geq 0}$ is of class \textnormal{(HDol)} or \textnormal{(HSqL)}, let $X=(X(t))_{t \geq 0}$  be a global solution of problem \eqref{eq:SDDE11}, and assume $a,b$ satisfy the linear growth condition. For $p\geq 2,$ if $\mathbb E[\sup_{-\tau\leq t\leq 0}|\Phi(t)|^p]<\infty$ holds, then $\mathbb E[\sup_{-\tau\leq t\leq T}|X(s)|^p]<\infty$ for all $T\geq 0.$ 
     % \todo{Kan nog algemener geformuleerd worden; maar dan met Dolean measure.} 
 \end{proposition}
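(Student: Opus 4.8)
The plan is to reproduce the moment estimate behind \eqref{eq:growth}, but with the power $p$ in place of $2$ throughout, and to run it on a localised process so that every quantity is finite before Gr\"onwall is invoked. Since $X$ is a global solution it does not explode, so the stopping times $\theta_n:=\inf\{t\geq 0:|X(t)|\geq n\}$ satisfy $\theta_n\nearrow\infty$ $\mathbb P$-a.s.\ as $n\to\infty$. On $[0,\theta_n)$ we have $\|X_{s-}\|_\infty\leq n$, so by the linear growth condition the integrands $\mathbf A_\Phi(X)(s)$ and $\mathbf B_\Phi(X)(s-)$ are bounded there; together with $\mathbb E\|\Phi\|_\infty^p<\infty$ and the integrability built into the integrator classes, this guarantees $\mathbb E[\sup_{-\tau\leq s\leq T\wedge\theta_n}|X(s)|^p]<\infty$ for each fixed $n$. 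The entire argument is carried out for the stopped process, and one lets $n\to\infty$ only at the very end.

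Starting from the integral form \eqref{eq:interpret}, I take the supremum over $[0,t\wedge\theta_n]$ and apply the elementary inequality $|u+v+w|^p\leq 3^{p-1}(|u|^p+|v|^p+|w|^p)$, splitting into the initial value, the drift (Lebesgue--Stieltjes) integral, and the stochastic integral. The initial term obeys $|\Phi(0)|^p\leq\|\Phi\|_\infty^p$, which is $p$-integrable by hypothesis. For the drift I use H\"older's inequality, $\big|\int_0^s\mathbf A_\Phi(X)(u)\,\mathrm du\big|^p\leq s^{p-1}\int_0^s|\mathbf A_\Phi(X)(u)|^p\,\mathrm du$, followed by linear growth $|a(\ph)|\leq K_{\rm lin}(1+\|\ph\|_\infty)$ to dominate the integrand by a constant times $1+\|X_u\|_\infty^p$. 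For the stochastic integral I invoke the $L^p$ Burkholder--Davis--Gundy-type moment inequality available for integrators of class (HDol), respectively (HSqL) --- the same inequality that produces the factor $\lambda$ in \eqref{eq:growth} --- to bound $\mathbb E\big[\sup_{s\leq t\wedge\theta_n}\big|\int_0^s\mathbf B_\Phi(X)(u-)\,\mathrm dM(u)\big|^p\big]$ by a constant times $\mathbb E\int_0^{t\wedge\theta_n}|b(X_u)|^p\,\mathrm du$, and again apply linear growth. The localisation is what makes each of these expectations finite. Using $\|X_u\|_\infty=\sup_{u-\tau\leq r\leq u}|X(r)|\leq\sup_{-\tau\leq r\leq u}|X(r)|$, collecting the three bounds yields, for $0\leq t\leq T$,
\[
\mathbb E\Big[\sup_{-\tau\leq s\leq t\wedge\theta_n}|X(s)|^p\Big]
\leq C_1(T)+C_2(T)\int_0^t \mathbb E\Big[\sup_{-\tau\leq r\leq s\wedge\theta_n}|X(r)|^p\Big]\,\mathrm ds,
\]
where $C_1(T)$ absorbs $\mathbb E\|\Phi\|_\infty^p$ and the constant parts of the growth bounds, while $C_2(T)$ depends on $p$, $T$, $K_{\rm lin}$, $\lambda$ and the BDG constant, but crucially \emph{not} on $n$.

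Because the left-hand side is finite for each $n$, Gr\"onwall's inequality \cite[Thm.\ 1.8.1]{book:mao} gives $\mathbb E[\sup_{-\tau\leq s\leq T\wedge\theta_n}|X(s)|^p]\leq C_1(T)e^{C_2(T)T}$, a bound independent of $n$. Letting $n\to\infty$, using $\theta_n\nearrow\infty$ and monotone convergence (or Fatou's lemma), transfers this uniform bound to $\mathbb E[\sup_{-\tau\leq s\leq T}|X(s)|^p]\leq C_1(T)e^{C_2(T)T}<\infty$, as claimed.

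The main obstacle is the \emph{a priori} finiteness problem: applied naively, the Gr\"onwall step would bound a quantity that might be $+\infty$ and hence be vacuous. The whole subtlety is therefore the localisation --- ensuring simultaneously that the left-hand side of the recursive estimate is finite for each fixed $n$ \emph{and} that the Gr\"onwall constant is $n$-independent, so that Fatou carries the bound to the limit. A secondary delicate point, and exactly the reason one avoids It\^o's formula for this $p$-estimate, is securing the correct $L^p$ moment inequality for the stochastic integral when $M$ has jumps; here one applies the class (HDol)/(HSqL) estimate for $\mathbb E\sup\big|\!\int\!\mathbf B_\Phi(X)\,\mathrm dM\big|^p$ directly, rather than routing through an It\^o correction whose discontinuous terms would be awkward to control.
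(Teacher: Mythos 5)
Your overall skeleton --- localisation by stopping times, the $3^{p-1}$ power split, H\"older plus linear growth for the drift term, a BDG-type bound for the stochastic integral, Gr\"onwall on the stopped process, and a limit in $n$ to remove the localisation --- is exactly the structure of the paper's proof, and your treatment of the drift term, the $n$-independence of the constants, and the final passage to the limit are all fine. The gap is in the one step you treat as a quotable black box: the claim that for integrators of class (HDol)/(HSqL) one has
\begin{equation*}
\mathbb E\Bigl[\sup_{s\leq t\wedge\theta_n}\Bigl|\int_0^s\mathbf B_\Phi(X)(u-)\,\mathrm dM(u)\Bigr|^p\Bigr]\leq C\,\mathbb E\int_0^{t\wedge\theta_n}|b(X_u)|^p\,\mathrm du .
\end{equation*}
No such inequality is ``available'' to invoke; deriving it is the substantive content of the paper's proof. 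What the Burkholder--Davis--Gundy inequality actually gives is $C_p\,\mathbb E\bigl[\bigl(\int_0^{t\wedge\theta_n}b(X_{s-})^2\,\mathrm d[M](s)\bigr)^{p/2}\bigr]$, and for $p>2$ with a discontinuous $M$ the quadratic variation $[M]$ is a \emph{random} measure, so you cannot replace $\mathrm d[M]$ by $\lambda\,\mathrm ds$ inside the nonlinear power $p/2$: the Dol\'eans domination $\mu_M\ll \mathrm ds\times\mathbb P$ with density bounded by $\lambda$ only controls expectations of \emph{linear} integrals against $\mathrm d[M]$. This is why your ``same inequality that produces the factor $\lambda$ in \eqref{eq:growth}'' reasoning works for $p=2$ but does not extend.

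The paper closes this gap in three moves: H\"older with respect to the measure $\mathrm d[M]$, giving $\bigl(\int b^2\,\mathrm d[M]\bigr)^{p/2}\leq([M](T))^{(p-2)/2}\int|b|^p\,\mathrm d[M]$; the splitting $([M](T))^{(p-2)/2}\leq(1+[M](T))([M](T))^{k_p}$ with $k_p=\lfloor(p-2)/2\rfloor$, which reduces the fractional power to integer powers; and the iterated Dol\'eans-measure estimate $\mathbb E\bigl[[M](T)^{k}A\bigr]\leq(\lambda T)^{k}\,\mathbb E[A]$ for non-negative $A$, which peels off the random prefactor one factor of $[M](T)$ at a time (as a by-product this shows $[M](T)$ has all the moments your a priori finiteness claim implicitly uses). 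Only after these steps does one arrive at the bound $K_p\lambda\,\mathbb E\int_0^{t\wedge T_n}|b(X_s)|^p\,\mathrm ds$ and can run Gr\"onwall as you do. To repair your proof you would either have to reproduce this argument or cite a Kunita/Bichteler--Jacod-type $L^p$ inequality for jump integrators; the latter is formulated for L\'evy-type integrals and would not cover the general class (HDol). As written, the central estimate of your proof is asserted rather than proved.
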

 \begin{proof}
      Fix $T>0$. As before, it suffices to consider the class (HDol) only. Introduce the  increasing sequence of stopping times
     \begin{equation}
T_n=T\wedge \inf \left\{t \in\left[0, T\right]:\left|X(t)\right| \geq n\right\},\quad n\geq 1.\end{equation}
Then the elementary estimate $|a+b+c|^p\leq 3^{p-1}(|a|^p+|b|^p+|c|^p)$ yields
\begin{equation}
\begin{aligned}
    \mathbb E\left[\sup_{-\tau\leq s\leq t}|X(s\wedge T_n)|^p\right]\leq 3^{p-1}\mathbb E\|\Phi\|_{\infty}^p&+3^{p-1}\mathbb E\left(\int_0^{t\wedge T_n}|a(X(s))|\,\mathrm ds\right)^p\\&+3^{p-1}\mathbb E\left|\sup_{0\leq s\leq t}\int_0^{s\wedge T_n} b(X_{r-})\,\mathrm dM(r)\right|^{p},
\end{aligned}
\end{equation}
for any $0\leq t\leq T.$
Applying the Burkholder--Davis--Gundy inequality
% \todo{for some $C_p>0$ only depending on $p$} 
\cite[Ch. VII]{DelMey80} 
% (see also \cite[p. 222]{book:protter})
results into 
\begin{equation}
    \mathbb E\left|\sup_{0\leq s\leq t}\int_0^{s\wedge T_n} b(X_{r-})\,\mathrm dM(r)\right|^{p}\leq C_p \mathbb E\left|\int_0^{t\wedge T_n}b(X_{s-})^2\,\mathrm d[M](s)\right|^{p/2},\label{eq:Burk}
\end{equation}
for some $C_p>0$  depending solely on $p\geq 2.$
We  like to  point out that working with the predictable quadratic variation would be slightly unfavourable now, since this would result into an additional term  \cite[{Lem. VII.3.34}]{book:jacod}, as opposed to \eqref{eq:Burk}. We  refer to \cite{kuhn2023maximal} for a similar discussion; see \cite[Thm. 4.19]{kuhn2023maximal} and \cite[Thm. 4.21]{kuhn2023maximal} in particular.

Since  $([M](t))_{t\geq 0}$ is a non-decreasing process, we can invoke H\"older's inequality to obtain
\begin{equation}
\begin{aligned}
\left|\int_0^{t\wedge T_n}b(X_{s-})^2\,\mathrm d[M](s)\right|^{p/2}&\leq C_p \big([M](T)\big)^{(p-2)/2} \int_0^{t\wedge T_n}|b(X_{s-})|^p\,\mathrm d[M](s)\\
% &= C_p^p \big([M](T)\big)^{(p-2)/2} \int_0^T|b(X_{s-})|^p\mathbf 1_{\{s\leq T_n\}}\mathrm d[M](s)\\
&\leq C_p \big(1+[M](T)\big)\big([M](T)\big)^{k_p} \int_0^{t\wedge T_n}|b(X_{s-})|^p\,\mathrm d[M](s),\label{eq:unnecessary}
\end{aligned}
\end{equation}
where $k_p=\lfloor (p-2)/2\rfloor\geq 0 $ is the  greatest integer smaller or equal to the value $(p-2)/2$. If $M$ is continuous, then it is a Brownian motion with drift and  $[M](t)=\sigma^2t$ is deterministic, making the final step in  \eqref{eq:unnecessary} and  the part  what will follow unnecessarily cumbersome. 
% By using  that  the Doléan measure $\mu_M$ satisfies  $\mathrm d\mu_M=\mathrm d[M](s)\mathrm d \mathbb P=\mathrm d\langle M\rangle(s)\mathrm d \mathbb P=\lambda (\mathrm ds\times \mathbb P)$  in differential form, where $\lambda =\mathbb E[M(1)^2]$, 
 % \todo{Verre weg van triviale stap. Jacod p. 423 laat ook zien waarom zo omslachtig! Het gaat echt om de quadratic, omdat de predictable variant vervelend is. }
For any integer $k\geq 1$ and non-negative $(\Omega,\mathcal F)$-measurable function $A$, we get 
% \todo{Niet triviale methode; kan misschien anders?} the identity
\begin{align}
    \mathbb E \big[[M](T)^{k}A\big]&=\mathbb E \big[[M](T)^{k-1}A[M](T)\big]=\mathbb E\left[[M](T)^{k-1}A\int_0^T\,\mathrm d[M](s)\right]\nonumber\\&=\int_{\Omega\times[0,T]}[M](T)^{k-1}A\,\mathrm d\mu_M\leq \lambda T\mathbb E\big[[M](T)^{k-1}A\big]\leq ...\leq (\lambda T)^{k}\mathbb E [A].
\end{align}
% Due to the local character of stochastic integrals \todo{REF}[REF], we can write
% \begin{equation}
%     \int_0^{t\wedge T_n}|b(X_{s-})|^p\,\mathrm d[M](s)=\int_0^{t}|b(X_{s-})|^p\mathbf 1_{\{s\leq  T_n\}}\,\mathrm d[M](s),
% \end{equation}
Consequently,  set $K_p=C_p(1+\lambda T)(\lambda T)^{k_p}$ and we have
\begin{equation}
\begin{aligned}
    \mathbb E\left|\sup_{0\leq s\leq t}\int_0^{s\wedge T_n} b(X_{s-})\,\mathrm dM(s)\right|^{p}&\leq K_p\mathbb E  \int_0^{t\wedge T_n}|b(X_{s-})|^p\,\mathrm d[M](s) \\
    &\leq  K_p\lambda \mathbb E  \int_0^{t\wedge T_n}|b(X_{s})|^p\,\mathrm ds\\&
    \leq 2^{p-1}K_p\lambda \mathbb E\int_0^{t\wedge T_n}\left[1+\sup_{s-\tau\leq r\leq s}|X(r)|^p\right]\mathrm ds\\
    &\leq 2^{p-1}K_p\lambda \int_0^{t}1+\mathbb E\left[\sup_{-\tau\leq r\leq s}|X(r\wedge T_n)|^p\right]\mathrm ds,
\end{aligned}
\end{equation}
where we have used  Fubini's theorem and that $a,b$ satisfy the linear growth condition.

A similar estimate can be obtained for the deterministic part. Gr\"onwall's lemma now implies
\begin{equation}
    \mathbb E\left[\sup_{-\tau\leq s\leq t}|X(s\wedge T_n)|^p\right]\leq (1+3^{p-1}\mathbb E\|\Phi\|_\infty)e^{K T}-1,\quad 0\leq t\leq T,
\end{equation}
for some $K>0.$ Taking $n\to\infty$ proves the assertion.
 \end{proof}

\paragraph{About the functionals}  Let us write $E$, e.g., $E[0,\infty)$, where $E$ can  be either $C$ or $D$. Of course, one could include the space of \caglad functions, but we will not need it here. 

Given two normed spaces $(X,\|\cdot\|_X)$ and $(Y,\|\cdot\|_Y)$,  a   map $f:X\to Y$ is said to be  locally Lipschitz continuous if for all integers $n\geq 1$ there exists a   constant $K_n> 0$ such that, for $x_1,x_2\in X$ satisfying   $\|x_1\|_X\wedge \|x_2\|_X\leq n$, we have
    \begin{equation}
        \|f(x_{1})- f(x_{2})\|_Y \leq K_n \|x_{1}- x_{2}\|_X.
    \end{equation}
    The map $f$ is (globally) Lipschitz continuous if $K_n=K_{\rm lip}$ for all $n\geq 1,$ for some $K_{\rm lip}>0.$ The following definition is  a special case, where   $Y=\mathbb R$ and $X=E[0,\infty)$ is endowed with the supremum norm $\|\cdot\|_\infty.$ For completeness, we also add the definition of the linear growth condition.
\begin{definition}\label{def:loc}
	A functional $f:E[-\tau,0]\to\R$ is called \texttt{(globally)\,\,Lipschitz} \texttt{continuous}  when it satisfies:
	\begin{equation}\begin{aligned}\label{eq:lip}
	 \exists K_{\rm lip}>0 \text{ such that } \forall \varphi,\psi\in E[-\tau,0] &\text{ we have }|f(\varphi)-f(\psi)|\leq K_{\rm lip} \sup_{s\in[-\tau,0]}|\varphi(s)-\psi(s)|.
  \end{aligned}
	\end{equation}
A functional  $f:E[-\tau,0]\to\R$ is called \texttt{locally\,\,Lipschitz\,\,continuous} when it satisfies:
\begin{equation}\label{eq:loclip}
	\begin{aligned}&\text{
	 $\forall n\in\N,\,\exists K_n>0$ such that $\forall \varphi,\psi\in E[-\tau,0]$ we have}\\[.3cm]& \quad\quad\,\,\,\sup_{s\in[-\tau,0]}|\varphi(s)|\vee|\psi(s)|\leq n\implies |f(\varphi)-f(\psi)|\leq K_n \sup_{s\in[-\tau,0]}|\varphi(s)-\psi(s)|. 
 \end{aligned}
\end{equation}
% \end{definition}
% \begin{definition}
A functional $f:E[-\tau,0]\to\R$ is said to satisfy  the \texttt{linear\,\,growth\,\,condition} when it satisfies:\vspace{-.1cm}
\begin{equation} \text{
  $\exists K_{\rm lin}>0$ such that $\forall \varphi\in E[-\tau,0]$ we have  }  |f(\varphi)|\leq K_{\rm lin} \left(1+ \sup_{s\in[-\tau,0]}|\varphi(s)|\right). \end{equation}
  \end{definition}
Recall that   globally Lipschitz  
% \todo{Linear growth may also be replaced by a monotone condition, see, e.g., \cite[Thm. 2.3.6]{book:mao} and is very common in the field of SPDEs. Result even more general, since growth implies monotone}
implies local Lipschitz continuity together with the linear growth condition. On the contrary,    a locally Lipschitz  functional satisfying the linear growth condition is not necessarily  globally Lipschitz.  An additional note, linear growth     not only implies the existence and uniqueness of global solutions, but also allows for a variation of constants formula to hold    \cite{article:stojkovic}. 
% Many examples in practice do not satisfy the linear growth condition. Therefore one needs to prove  global existence, for instance, by means of a different approach.

\begin{examplex}
  Suppose $d\geq 1$ and consider $h:\R^d\to\R$ to be (locally) Lipschitz continuous. Then any  $f:E[-\tau,0]\to \R$, defined by 
	\begin{equation}
		f(\varphi)=h\left(\int_{[-\tau,0]}{\varphi(s)}\lambda_1(\mathrm ds),\ldots,\int_{[-\tau,0]}{\varphi(s)}\lambda_d(\mathrm ds)\right),\quad \varphi\in E[-\tau,0],
	\end{equation}
	where $\lambda_1,...,\lambda_d$ are  finite signed Borel measures on $[-\tau,0]$, is  (locally) Lipschitz  continuous.  For  other  examples, we refer to   \cite[p. 257]{book:protter} and \cite[p. 1413]{article:reiss}.  See also Example \ref{example:MG}; take $\gamma(t)=\gamma\geq0$ and $r(t)=r\geq 0$ to obtain autonomous equations.
\end{examplex}
\begin{definition}\label{def:proper}
	A (locally) Lipschitz continuous functional $f:D[-\tau,0]\to\R$ is said to be a \texttt{proper} functional if for all $\psi\in D[-\tau,\infty)$  the mapping \begin{equation}\label{eq:extra}
\mathfrak F_ {f,\psi}:\Rplus\to\R,\,	t\mapsto f(\psi_t),
\end{equation}  where $\psi_t(\theta)=\psi(t+\theta)$ for $-\tau\leq \theta\leq 0$, is  \cadlag on $[0,\infty)$. 
\end{definition}
 % \todo{Dirac measures weghalen.}
%  Specifically, let us take    Dirac measures $\lambda_i=\delta _{\tau_i},$ with $\tau_i\in[-\tau,0]$, $ 1\leq i\leq d.$ Then we obtain that the functional
% 	\begin{equation}
% 		f(\varphi)=h(\varphi(-\tau_1),\varphi(-\tau_2),...,\varphi(-\tau_d))
% 	\end{equation}
% is (locally) Lipschitz continuous.
% \end{examplex}

Fix $\psi\in D[-\tau,\infty)$.  If $f:D[-\tau,0]\to\R$ is (locally) Lipschitz continuous, then the  mapping
$
\mathfrak F_ {f,\psi}$
is not necessarily \cadlag on $ \Rplus $. 
% In fact,  it can be the case that \eqref{eq:extra} is  \cadlag only for $\varphi\in C[-\tau,\infty).$ 
Indeed,
 let $f:D[-\tau,0]\to\R$  be the jump size functional  \begin{equation}f(\ph)=\Delta\ph(0)=\ph(0)-\ph(0-),\quad \ph\in D[-\tau,0].\end{equation}
 This $f$ is globally Lipschitz  and we have  $f(\psi_t)=\Delta\psi(t)=\psi(t)-\psi(t-)$, for any $\psi\in D[-\tau,\infty)$, yet the mapping   $	t\mapsto f(\psi_t)$ is only \cadlag for   continuous  $\psi$. 
 % For our framework, we need to avoid those functionals where a phenomena like this can occur.
	Furthermore, observe that if we    restrict  our scope to the continuous setting,  then being proper---that is,   for all $\psi \in C[-\tau,\infty)$ we want $\mathfrak F_ {f,\psi}$ to be continuous---is automatically satisfied and thus a superfluous condition to pose. Indeed, for every  $\psi\in C[-\tau,\infty)$, we have that $\psi$ is uniformly continuous on a compact interval. For any $t\geq 0$ fixed we can subsequently find   for  arbitrary $\varepsilon>0$  a sufficiently small $\delta>0$ such that
	\begin{equation}
		|\mathfrak F_ {f,\psi}(t)-\mathfrak F_ {f,\psi}(s)|\leq K_n \sup_{u\in[-\tau,0]}|\psi(t+u)-\psi(s+u)|\leq \varepsilon,
	\end{equation}
which holds for all time $s\geq 0$ with $|t-s|=|(t+u)-(s+u)|<\delta.$
 
  Definition \ref{def:proper} seems a bit out of the ordinary at first glance, but  is  necessary to make sure that the maps $\mathbf A_\Psi,\mathbf B_\Psi$ are well-defined. In fact, it is  a very natural condition to pose. A \textit{proper} (locally) Lipschitz continuous functional $f:D[-\tau,0]\to\R$ gives rise to the map \begin{equation}\label{eq:rise}    F:D[-\tau,\infty)\to D[0,\infty),\,\varphi\mapsto [t\mapsto f(\varphi_t)],\end{equation}
  and satisfies 
 the relationship
\begin{equation}
	  F(\varphi)(t)=f(\varphi_t),\quad t\geq 0.
\end{equation}
Observe  that $F$ is (lo)lidet; see the definition below (from \cite{article:stojkovic}). Furthermore,  note that lidet maps  are  the  deterministic counterpart of the (random) Lipschitz functionals considered in \cite{book:Rao, book:protter}. 
\begin{definition} A map $F: E[-\tau, \infty) \rightarrow E[0, \infty)$ is  said to be a \texttt{Lipschitz} \texttt{functional\,\,of} \texttt{deter\-ministic} \texttt{type}, abbreviated by  \texttt{lidet}, when it satisfies:
  		\begin{equation}\label{eq:lidet}
  		\begin{aligned}&\text{
  			$\exists K>0$ such that $\forall \varphi,\psi\in E[-\tau,\infty)$ and $\forall t\geq 0$ we have}\\[.3cm]&  \quad\quad\quad\quad\quad\quad\quad\quad\quad\quad\quad\quad\quad\quad\quad|F(\varphi)(t)-F(\psi)(t)|\leq K \sup_{s\in[t-\tau,t]}|\varphi(s)-\psi(s)|.
  		\end{aligned}
  	\end{equation}  	
  	A map $F: E[-\tau, \infty) \rightarrow E[0, \infty)$ is called a \texttt{locally\,\,Lipschitz functional\,\,of} \texttt{deterministic} \texttt{type}, abbreviated by  \texttt{lolidet}, when it satisfies:
  	\begin{equation}\label{eq:lolidet}
  		\begin{aligned}&\text{
  				$\forall n\in\N,\,\exists K_n>0$ such that $\forall \varphi,\psi\in E[-\tau,\infty)$ and $\forall t\geq0$ we have}\\[.3cm]& \quad  \sup_{s\in[t-\tau,t]}|\varphi(s)|\vee|\psi(s)|\leq n\implies |F(\varphi)(t)-F(\psi)(t)|\leq K_n \sup_{s\in[t-\tau,t]}|\varphi(s)-\psi(s)|. 
  		\end{aligned}
  	\end{equation}
\end{definition}

% This particular terminology, as well as the abbreviations, is inspired by \cite{article:stojkovic}.
% ; in the original, the   linear growth condition is taken within the definition of (lo)lidet. 
 
Ultimately, while a proper (locally) Lipschitz continuous functional gives rise to a (lo)lidet map, see identification  \eqref{eq:rise}, we observe that a converse also holds true.%, see Proposition \ref{prop:converse}.
\begin{definition}
	A map $F: E[-\tau, \infty) \rightarrow E[0, \infty)$ is called \texttt{autonomous} if
	\begin{equation}
		F(\varphi(s+\cdot\,))(t)=  F(\varphi)(t+s), \quad 
	\end{equation}
for all  $t, s \geq 0.$
\end{definition}
 
\begin{proposition}\label{prop:converse}
For any autonomous \textnormal(lo\textnormal)lidet map $  F: D[-\tau, \infty) \rightarrow D[0, \infty)$, there is a unique proper \textnormal(locally\textnormal) Lipschitz continuous   $f:D[-\tau,0]\to \R$ satisfying the relationship 
\begin{equation}\label{eq:relationship}
	F(\varphi)(t)=f(\varphi_t),\quad t\geq 0,
\end{equation}
for all $\varphi\in D[-\tau,\infty).$  
\end{proposition}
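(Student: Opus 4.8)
The plan is to read $f$ off from $F$ by evaluating at the single time $t=0$. Indeed, the required relationship \eqref{eq:relationship} forces, at $t=0$, the identity $f(\varphi_0)=F(\varphi)(0)$, and since $\varphi_0=\varphi|_{[-\tau,0]}$ this already pins down $f$. In particular uniqueness comes for free: if $g:D[-\tau,0]\to\mathbb R$ is any other functional satisfying \eqref{eq:relationship}, then for every $\psi\in D[-\tau,0]$ and every extension $\varphi\in D[-\tau,\infty)$ with $\varphi_0=\psi$ we get $g(\psi)=F(\varphi)(0)=f(\psi)$.

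For existence, I would first establish a locality (causality) property of $F$: if $\varphi,\psi\in D[-\tau,\infty)$ agree on $[t-\tau,t]$, then $F(\varphi)(t)=F(\psi)(t)$. In the lidet case this is immediate from \eqref{eq:lidet}, whose right-hand side vanishes. In the lolidet case one first chooses $n$ larger than $\sup_{s\in[t-\tau,t]}|\varphi(s)|$, which is finite since càdlàg functions are bounded on compacts, so that the hypothesis of \eqref{eq:lolidet} is met and its right-hand side again vanishes. Thus $F(\varphi)(t)$ depends only on the segment $\varphi|_{[t-\tau,t]}$.

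Next I would fix the constant extension operator sending $\psi\in D[-\tau,0]$ to $\hat\psi\in D[-\tau,\infty)$ given by $\hat\psi(s)=\psi(s)$ for $s\in[-\tau,0]$ and $\hat\psi(s)=\psi(0)$ for $s>0$; this $\hat\psi$ is indeed càdlàg. Define $f(\psi):=F(\hat\psi)(0)$. To verify \eqref{eq:relationship}, fix $\varphi\in D[-\tau,\infty)$ and $t\geq 0$. By the autonomy property, taking the shift amount equal to $t$ and the evaluation time equal to $0$, one has $F(\varphi)(t)=F(\varphi(t+\cdot))(0)$. The shifted function $\varphi(t+\cdot)$ and $\widehat{\varphi_t}$ agree on $[-\tau,0]$, since both equal $\theta\mapsto\varphi(t+\theta)$ there, so locality at time $0$ gives $F(\varphi(t+\cdot))(0)=F(\widehat{\varphi_t})(0)=f(\varphi_t)$. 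Hence $F(\varphi)(t)=f(\varphi_t)$, as desired.

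It then remains to check that $f$ inherits the right regularity. Local (resp. global) Lipschitz continuity follows by applying \eqref{eq:lolidet} (resp. \eqref{eq:lidet}) at $t=0$ to $\hat\psi_1,\hat\psi_2$: because the constant extension preserves the supremum over $[-\tau,0]$, the constraint $\|\psi_1\|_\infty\vee\|\psi_2\|_\infty\le n$ transfers verbatim, yielding $|f(\psi_1)-f(\psi_2)|\le K_n\|\psi_1-\psi_2\|_\infty$ (with $K_n$ replaced by a single constant $K$ in the globally Lipschitz case). Properness is then automatic, since for any $\psi\in D[-\tau,\infty)$ the relationship just proved gives $\mathfrak F_{f,\psi}(t)=f(\psi_t)=F(\psi)(t)$, which is càdlàg because $F$ takes values in $D[0,\infty)$. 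I expect the only genuine subtlety to be the bookkeeping in the lolidet case, namely confirming that the localization level $n$ can always be chosen and that the constant extension does not inflate the relevant suprema; the autonomy-plus-locality identity is the conceptual heart of the argument and should go through cleanly.
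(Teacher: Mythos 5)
Your proof is correct and follows essentially the same route as the paper: read $f$ off from $F(\cdot)(0)$, use the (lo)lidet property to get locality/well-definedness, invoke autonomy to obtain the relationship at all $t\geq 0$, and deduce properness from that relationship. The only cosmetic difference is that you fix the constant extension to define $f$ and then invoke locality in the verification step, whereas the paper defines $f$ directly on restrictions (with locality ensuring well-definedness); your explicit treatment of the lolidet localization and the càdlàg check for the constant extension fills in details the paper leaves implicit.
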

\begin{proof}
	Due to the fact $F$ is (lo)lidet, we obtain   $F(\varphi)(0)=F(\psi)(0)$ for all $\varphi,\psi\in D[-\tau,\infty)$ satisfying $\varphi=\psi$ on $[-\tau,0]$. This results into a   well-defined functional 
		$f:D[-\tau,0]\to\R$, given by
\begin{equation}
\left.	f(\varphi\right|_{[-\tau,0]})=  F(\varphi)(0), \quad \varphi\in D[-\tau,\infty).
\end{equation}   
Clearly, $f$ is (locally) Lipschitz. Under the additional assumption that $  F$ is {autonomous}, we obtain
\begin{equation}
	\left.  F(\varphi)(t)=  F(\varphi(t+\,\cdot\,))(0)=f(\varphi(t+\,\cdot\, )\right|_{[-\tau,0]})=f(\varphi_t),\quad t\geq 0.\label{eq:vgl}
\end{equation}
We deduce that the functional $f$ is also  proper and  its uniqueness is trivial.
 % Uniqueness  simply follows from the result at the beginning of the proof and \eqref{eq:vgl}.
\end{proof}

% \begin{examplex} \todo{weg}(Inspired by \cite{article:reiss}.) \label{ex:lip2} As a continuation of Example \ref{ex:lip1}, note that $F:E[-\tau,\infty)\to E[0,\infty)$ defined by
	% \begin{equation}
		% F(\varphi)(t)=h\left(\int_{[-\tau,0]}{\varphi(t+s)}\lambda_1(\mathrm ds),...,\int_{[-\tau,0]}{\varphi(t+s)}\lambda_d(\mathrm ds)\right),\,  \varphi\in E[-\tau,\infty),t\geq 0,
	% \end{equation}
% is an autonomous (locally) Lipschitz functional of deterministic type. 
% In particular, we obtain  that the specific functional
% 	\begin{equation}
% 		F(\varphi)(t)=h(\varphi(t-\tau_1),\varphi(t-\tau_2),...,\varphi(t-\tau_d))
% 	\end{equation}
% is a (locally) Lipschitz functional of deterministic type, too, and in fact autonomous.
% \end{examplex}

\paragraph{About the integrals}  Let us recall that the first integral in \eqref{eq:interpret} is  a Lebesgue--Stieltjes integral. In here, the $s$  may be replaced by $s-$, because the stochastic processes
$\dint0\cdot {Y(s)}s$ and $\dint0\cdot {Y(s-)}s$
for any $Y=(Y(t))_{t\geq 0}\in\mathbb D\Rplus$ are indistinguishable. Indeed, we have $\mathbb P$-almost surely  
\begin{equation}
	\left|	\dint0t {Y(s)}s-\dint0t {Y(s-)}s\right|\leq \dint0t {|\Delta Y(s)|}s=\dint{[0,t]\cap\{\Delta Y\neq 0\}}{}{|\Delta Y(s)|}s=0, 
\end{equation}
where $\Delta Y(s)=Y(s)-Y(s-),$
since  $t\mapsto \Delta Y(t)(\omega)$ is equal to zero  except for at most countably many times $t$, for almost every $\omega\in\Omega$. See \cite{book:apostol,book:carter,  book:kallenberg, article:horst} for more on Lebesgue--Stieltjes integration.

The second integral in  \eqref{eq:interpret} is a stochastic integral, which is again a semimartingale and takes values in $\mathbb D\Rplus$. A stochastic integral is defined by taking limits of integrated simple predictable processes, where convergence is in the ucp-sense.  We thus follow the construction as in \cite{book:protter}. Note that the integrand of the second integral  is an adapted \caglad process, provided that $X\in\mathbb D[0,\infty)$, making \eqref{eq:interpret} well-defined.
% The second integral in  \eqref{eq:interpret} is well-defined, because the integrand is an adapted \caglad processes. 
% In particular,  note that both $\mathbf A_\Psi$ and $\mathbf B_\Psi$  in \eqref{eq:main} do not depend on the future.
The first integral (after changing $s$ into $s-$)  can,  in fact, be interpreted as a stochastic integral with respect to a (deterministic) semimartingale as well; the two  interpretations coincide.  
In the case of $M=(M(t))_{\geq 0}$ being  a Brownian motion, or  more generally a continuous square integrable  martingale,   one is able to  construct stochastic integrals with convergence in   $L^2(\Omega)$.  The advantage  is that integrands do not need  to be predictably measurable. It suffices, for instance, to assume progressive measurablity together with the integrabilty condition $L^2(\Omega\times [0,T])$, $T\geq 0$; this implies that the stochastic integral is again a square integrable martingale. One can extend this definition to a larger class of integrands by weakening the integrability condition slightly, resulting into an integral which is then merely a local martingale \cite{book:kallenberg,book:karatzas,book:mao,book:revuz}. This allows for more general maps $\mathbf A_\Psi$ and $\mathbf B_\Psi$, in the continuous setting. In practice  it  suffices to have $\mathbf A_\Psi$ and $\mathbf B_\Psi$ going from $\mathbb D\Rplus$ to itself. 
It is worth pointing out that the stochastic integral can as well be extended to the class of  progressively measurable integrands for suitable non-continuous (local) martingales \cite{book:chung,book:jacod,unpublished:timo}, e.g., square integrable  martingales in (HDol). 
However, one now needs to be cautious. For instance, the Lebesgue–Stieltjes integral|if it exists|may no longer coincide with the stochastic integral if the integrand is not predictable; see \cite[Sec. 4.3]{artikel2-MG} for more details. 
% Other useful references regarding the $L^2$-construction of stochastic integrals and possible extensions are \cite{}.%book:revuz,

% \todo{Second integral is stochastic integral, where we briefly recall that integrands need to be \caglad if the integrators are \cadlag, references!!!}
% % For more information on the latter we refer to Chapter \ref{Ch1}, subsection \ref{Sec1.1.3} to be precise. 

 % Before we introduce  the somewhat technical   conditions on the functionals $a$ and $b$, we define what is meant by being a (strong) solution to the initial value problem \eqref{eq:SDDE}.

 \subsection{Non-autonomous equations}\label{Sec2.1.2}
     \noindent In this section, we  briefly   discuss    stochastic differential equations with   finite time delays where the coefficients $a$ and $b$ may now also explicitly depend on time: 
  \begin{equation}\label{eq:SDDE2}\left\{
 	\begin{array}{rlll}
 		\mathrm d X(t)&=&a(X_t,t)\,\mathrm d t+b(X_{t-},t-) \,\mathrm d M(t), &\quad \text { for } t\geq 0, \\[.05cm]
 		X(u)&=&\Phi(u),& \quad \text { for } u \in[-\tau, 0].
 	\end{array}\right.
 \end{equation}
% Let $\Phi\in \mathbb D[-\tau,0]$ and   suppose $L=(L(t))_{t\geq 0}$ is a semimartingale. 
System \eqref{eq:SDDE2} is shorthand notation for the integral equation
\begin{equation}\label{eq:interpret3}
	X(t)=\Phi(0)+\dint0t{\mathbf A_\Phi(X)(s)}s+\dint0t{\mathbf B_\Phi(X)(s-)}M(s),\quad t\geq 0,
\end{equation}
where    
\begin{enumerate}  
		\item the maps $\mathbf A_\Psi,\mathbf B_\Psi:\mathbb D\Rplus\to\mathbb D\Rplus$, for some fixed $\Psi\in\mathbb D[-\tau,0]$, are defined pathwise for any process $Y\in\mathbb D\Rplus$ by
		\begin{equation}\label{eq:main2}
			\mathbf A_\Psi(Y)(s,\omega):=a\left(Y^\Psi_s(\omega),s\right)\quad\text{and}\quad 	\mathbf B_\Psi(Y)(s,\omega):=b\left(Y^\Psi_s(\omega),s\right),\quad s\geq0;
			\end{equation}
	\item the process $Y^\Psi=(Y^{\Psi}_t)_{t\geq 0}$ in  \eqref{eq:main2} is the segment process of    $Y^\Psi\in\mathbb D[-\tau,\infty)$, where   $Y^\Psi(s)=\Psi(s)$ for $s\in[-\tau,0)$ and $Y^\Psi(s)=Y(s)$ for $s\geq 0$;
	\item the functionals  $a:D[-\tau,0]\times [0,\infty)\to\R$ and $b:D[-\tau,0]\times [0,\infty)\to\R$ in \eqref{eq:main2} are assumed to be \textit{time-proper} (\textit{locally}) \textit{Lipschitz}   (see Definition \ref{def:timeprop}).
\end{enumerate}  
  % Conform to the previous section, we will now introduce the non-autonomous analogues.
\begin{definition}\label{def:timeprop}
	A map $f:D[-\tau,0]\times \R\to\R$ is called \texttt{time-proper\,\,(locally)} \texttt{Lipschitz} when it satisfies the following two conditions:
	\begin{enumerate}[~~~(i)]
		\item for every   $t\geq 0 $, the first component  $D[-\tau,0]\to \R, \varphi\mapsto f(\varphi,t)$ of $f$ is   (locally) Lipschitz continuous with respect to the supremum norm $\|\cdot\|_\infty$;
		\item   for all $\psi\in D[-\tau,\infty)$,  the mapping\begin{equation}\label{eq:extra2}
			\mathfrak G_ {f,\psi}:\Rplus\to\R,\,	t\mapsto f(\psi_t,t)
		\end{equation}
	is also \cadlag.
	\end{enumerate}
	% A function $f:D[-\tau,0]\times \R\to\R$ is   \texttt{time-proper locally Lipschitz} if  $D[-\tau,0]\to \R, \varphi\mapsto f(\varphi,t)$ is  locally Lipschitz continuous, for all $t\geq 0$ fixed, and such that (ii) is satisfied.
\end{definition}

A (local, maximal, global) solution to problem \eqref{eq:SDDE2} can be defined in a similar fashion as Definition \ref{def:solution}. All results in {\S}\ref{Sec2.1.1} clearly extend to the non-autonomous case. We only highlight the existence and uniqueness result.
 
 % Completely analogous to the translation done for the autonomous system, we see that the mappings $\textbf{A}_\Phi,\textbf{B}_\Phi$ are (locally) functional  Lipschitz (recall Definition \ref{def:functional}), hence we can employ again the results from Chapter V of \cite{book:protter};   particularly, we   invoke   Theorem 7 of \cite{book:protter} and the comments thereafter.
 
 \begin{theorem}\label{thm:main-exist3} 
 	Suppose $a, b: D[-\tau,0]\times \R \rightarrow \mathbb{R}$ are time-proper  locally Lipschitz functions. Then for every    $\Phi\in \mathbb D[-\tau,0]$, there exists a stopping time $T_\infty$ and a  stochastic process $X$ such that $(X,T_\infty)$ is a local solution to the initial value problem \eqref{eq:SDDE2}. 
 	The stopping time $T_\infty$ can be chosen such that $(X,T_\infty)$ is a maximal solution of   \eqref{eq:SDDE2}. This maximal solution  is   unique. 
 	
If, in addition, $a(\cdot,t)$ and $b(\cdot,t)$ satisfy  the linear growth condition for every $t\geq 0$ with a uniform constant $K_{\rm lin}>0$, then there exists a unique global solution to \eqref{eq:SDDE2}. This global solution is again a semimartingale. 
 \end{theorem}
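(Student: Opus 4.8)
The plan is to reuse the three-step architecture of the proof of Theorem~\ref{thm:main-exist} essentially verbatim, the only genuinely new ingredient being the bookkeeping of the explicit time dependence. Before running the argument one checks that the driving objects are well defined: for a fixed initial segment $\Phi\in\mathbb D[-\tau,0]$ and a process $Y\in\mathbb D\Rplus$, condition (ii) of Definition~\ref{def:timeprop} guarantees that $s\mapsto a(Y^\Phi_s,s)$ and $s\mapsto b(Y^\Phi_{s-},s-)$ are \cadlag, so that the maps $\mathbf A_\Phi,\mathbf B_\Phi$ of \eqref{eq:main2} genuinely send $\mathbb D\Rplus$ into itself and the integral equation \eqref{eq:interpret3} makes sense as a \cadlag identity. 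A (local, maximal, global) solution is then defined exactly as in Definition~\ref{def:solution}. One is tempted to autonomise by enlarging the state to $Z(t)=(X(t),t)$ and applying Theorem~\ref{thm:main-exist} in $\R^2$; however, this would force the augmented coefficient $(\ph_1,\ph_2)\mapsto a(\ph_1,\ph_2(0))$ to be locally Lipschitz \emph{jointly} in the state and in time, which is strictly stronger than Definition~\ref{def:timeprop}(i). I therefore favour the direct mirroring.

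\emph{Steps 1 and 2.} Assuming first that $a(\cdot,t),b(\cdot,t)$ are globally Lipschitz and of linear growth with constants uniform in $t$, and that $\mathbb E\|\Phi\|_\infty^2<\infty$, I run the Picard iteration
\begin{equation}
X^{n+1}(t)=\Phi(0)+\int_0^t a(X^n_s,s)\,\mathrm ds+\int_0^t b(X^n_{s-},s-)\,\mathrm dM(s),\quad t\geq 0,
\end{equation}
and reproduce the a priori bound \eqref{eq:growth} and the contraction estimate \eqref{eq:L2}. The decisive observation is that every pointwise estimate is evaluated at a \emph{single} time $s$: for instance $|a(X^n_s,s)-a(X^{n-1}_s,s)|\le K_{\mathrm{lip}}\|X^n_s-X^{n-1}_s\|_\infty$, so the time slot cancels and the Gr\"onwall and Borel--Cantelli machinery of \cite[Thm.~2.3.1]{book:mao} delivers a unique global solution unchanged. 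For merely locally Lipschitz coefficients I truncate, setting $a_n(\ph,t)=a(\ph,t)$ for $\|\ph\|_\infty\le n$ and $a_n(\ph,t)=a(n\ph/\|\ph\|_\infty,t)$ otherwise, and similarly for $b_n$. Since the radial truncation $\ph\mapsto n\ph/(\|\ph\|_\infty\vee n)$ is Lipschitz with a universal constant and does not involve $t$, composing with $t\mapsto\psi_t$ preserves the \cadlag property (ii), so the truncated functionals stay time-proper; Step~1 then yields solutions $X^n$ up to $T_n=n\wedge\inf\{t\ge 0:|X^n(t)|\ge n\}$, which patch into a maximal local solution, with uniqueness following from the same stopping-time comparison. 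The uniform linear-growth hypothesis gives $T_n\nearrow\infty$ $\mathbb P$-a.s.\ and hence globality, and an estimate in the spirit of Proposition~\ref{prop:p_est} persists.

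\emph{Step 3.} Square-integrability of $\Phi$ is removed by the identical change-of-measure device: introduce $m(\om)=e^{-\|\Phi(\om)\|_\infty^2}$ and the equivalent law $\mathbb Q$ with $\mathrm d\mathbb Q\propto m\,\mathrm d\mathbb P$, under which $\Phi\in L^2$; one solves under $\mathbb Q$ and transfers back via $\mathbb Q\ll\mathbb P\ll\mathbb Q$ together with \cite[Thm.~II.14]{book:protter}, which leaves the stochastic integrals $\mathbb Q$-indistinguishable from their $\mathbb P$-versions. This step is insensitive to the time variable and requires no modification. The main obstacle, such as it is, is not analytic but organisational: one must ensure that after the state truncation the (now global) Lipschitz and growth constants remain bounded on each compact horizon $[0,T]$ \emph{uniformly in} $t$, and that the \cadlag-in-time property (ii) survives both truncation and composition, so that on $[0,T]$ every Gr\"onwall estimate closes with a single constant and all integrands stay admissible. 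Once this local-in-time uniformity is secured — which is exactly the content one reads into ``time-proper locally Lipschitz'' on bounded time sets — the nonautonomous estimates are word-for-word those of Theorem~\ref{thm:main-exist}, completing the proof.
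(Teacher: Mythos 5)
Your proposal is correct and takes exactly the paper's route: the paper's entire proof of Theorem~\ref{thm:main-exist3} is the single sentence that the argument is ``completely analogous to the autonomous case,'' and your three-step mirroring of Theorem~\ref{thm:main-exist} — with the checks that the time-proper condition makes $\mathbf A_\Phi,\mathbf B_\Phi$ well defined, that radial truncation preserves time-properness, and that the change-of-measure step is insensitive to $t$ — is precisely the elaboration that sentence presupposes. Your closing caveat about needing the Lipschitz/growth constants to be uniform in $t$ on compact horizons is a fair observation (the paper glosses over it too, and the theorem's hypothesis only makes uniformity explicit for the linear growth constant), but it does not change the fact that your argument and the paper's are the same.
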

\begin{proof}
	The proof is completely analogous to the autonomous case. 
 %We refer to Theorem \ref{thm:main-exist}.
\end{proof}
 
We end this section with an example of a \textit{time-proper locally Lipschitz} function $f$, relevant to the study of delay equations with stochastic negative feedback. 
%\begin{definition}\label{def:cadlag}
%		A function $f:D[-\tau,0]\times \R\to\R$ is called \name{\cadlag  Lipschitz} if
%		\begin{enumerate}[~~~(i)]
%		\item the function $ t\mapsto f(\varphi,t)$ is right-continuous with existing left limits for all $\varphi\in D[-\tau,0);$
%		\item the functional $D[-\tau,0]\to \R, \varphi\mapsto f(\varphi,t)$ is \textit{proper} Lipschitz continuous for every  $t\geq 0 $ fixed.
%		\end{enumerate}
%	A function $f:D[-\tau,0]\times \R\to\R$ is   \name{locally \cadlag  Lipschitz} if (i) is satisfied and 
%	\begin{enumerate}[(ii$'$)]
%		\item the functional $D[-\tau,0]\to \R, \varphi\mapsto f(\varphi,t)$ is \textit{proper} locally Lipschitz continuous for all  $t\geq 0 $ fixed.
%	\end{enumerate}
%\end{definition}

% \subsection{Short notes on multivariate systems}\label{Sec2.1.3}
% \noindent TBA
% 
% \begin{equation}\label{eq:SDDE3}\left\{
% 	\begin{array}{rlll}
% 		\mathrm d  {X}(t)&=&A(x_t,t)\,\mathrm d t+B(x_t-,t-) \,\mathrm d L(t), &\quad \text { for } t\geq 0, \\
% 		 {X}(u)&=&(\Phi_1,...,\Phi_n)(u)& \quad \text { for } u \in[-\tau, 0],
% 	\end{array}\right.
% \end{equation}
% 
% \section{General theory on simulations of an SDDE}

 \begin{examplex}\label{example:MG}
Let    $h:\R \to\R$ be locally Lipschitz. Define  $f:D[-\tau,0]\times [0,\infty)\to\R$ as
 \begin{equation}f(\varphi,t)=-\gamma(t)+r(t)e^{-\varphi(0)}h\big(e^{\varphi(-\tau)}\big),\quad \varphi\in D[-\tau,0],\end{equation}
where $\gamma, r:[0,\infty)\to[0,\infty)$ are assumed to be bounded functions.
Exploiting the fact that both $h$ and the exponential function are locally Lipschitz continuous, one easily verifies that $\varphi \mapsto f(\varphi,t)$ is locally Lipschitz for any $t\geq 0$ fixed. 
%\mark{Lelijke berekening xP}
If, in addition, we assume     $\gamma$  as well as   $r$ to be \cadlag functions, we can conclude that
\begin{equation}
	t\mapsto -\gamma(t)+r(t)e^{-\psi(t)}h\big(e^{\psi(t-\tau)}\big) 
\end{equation}
is \cadlag too, for all $\psi\in D[-\tau,\infty)$. From this we conclude that  $f$ is time-proper locally Lipschitz.
Indeed,    taking a pointwise sum and product of \cadlag functions is again \cadlag, and  the composition of a continuous function with a \cadlag function is also \cadlag. Be aware that  the composition of two \cadlag functions is not necessarily right-continuous.
 \end{examplex}

%%%%%%%%%%%%%%%%%%%%%

%\documentclass[../klad-theory.tex]{subfiles}
%\setcounter{section}{2}
%-------------------------------------------

%\begin{document}

% \section{Properties of solutions to  stochastic differential equations with finite time delay}\label{boundednesssolutions}
\section{Boundedness in probability of solution  segments}\label{boundednesssolutions}
\noindent In this section, we study  the non-autonomous system
  \begin{equation}\label{eq:nonauto}\left\{
	\begin{array}{rlll}
		\mathrm d X(t)&=&a(X_t,t)\,\mathrm d t+b(X_{t-},t-) \,\mathrm d M(t), &\quad \text { for } t\geq 0, \\[.05cm]
		X(u)&=&\Phi(u)& \quad \text { for } u \in[-\tau, 0].
	\end{array}\right.
\end{equation}
Without further explicit mention, we assume $\Phi\in\mathbb D[-\tau,0)$ and that $a$ and $b$ are time-proper locally Lipschitz. Recall that for any $\varphi\in D[-\tau,0]$ we have   $\|\varphi\|_\infty=\sup_{s\in[-\tau,0]}|\varphi(s)|<\infty,$ because $[-\tau,0]$ is compact. In {\S}\ref{sec:bound-prelim} we provide the relevant definitions and a few additional preliminary notions.
In {\S}\ref{sec:bound-prelimedness} we study the impact of  one-sided  constraints imposed on the nonlinearity $a$ and how this results into  solution  segments being bounded in probability. Finally, in {\S}\ref{sec:blowups} we  discuss how boundedness in probability of solutions and their segments relate to  finite time blowups and global existence.

\subsection{Preliminaries}\label{sec:bound-prelim}
\noindent  Let $I$ be some index set, e.g., take $I=\mathbb N$ or $I=\Rplus$. A family
% \footnote{For a brief moment, we disregard our   notational convention and write $(Z_{\eta})_{\eta\in I}$ again, instead of $(Z({\eta}))_{\eta\in I}$. This has mainly to do with the more general setting (and because for general families there can be no confusion regarding segments). We will also deviate occasionally from our convention in subsections \ref{Sec2.2.3} and \ref{Sec2.2.5} where necessary, for exactly the same reasons. } 
$(Z_{\eta})_{\eta\in I}$ of real-valued random variables is    \texttt{bounded\,\,above {\normalsize\textnormal{(resp.,}} below{\normalsize\textnormal)} in\,\,probability}  if for every $\varepsilon>0$ there exists a real number $M_{\varepsilon} \in \mathbb{R}$ such that for all $\eta \in I$ we have 
 \begin{equation}
 \mathbb{P}\left(Z_{\eta} >  M_{\varepsilon}\right) < \varepsilon \quad\quad\quad\quad\quad\quad\quad\big(\text {resp., }\mathbb{P}\left(Z_{\eta} <  M_{\varepsilon}\right) <  \varepsilon \big).\quad
\end{equation}
 Differently put, we have $(Z_{\eta})_{\eta\in I}$ is bounded above (resp., below) in probability if and only if
 \begin{equation}
 	\lim_{R\to\infty}\sup_{\eta\in I}  \mathbb{P}\left(Z_{\eta} \geq  R\right)=0\quad\quad\quad\left(\text {resp., }	\lim_{R\to\infty}\sup_{\eta\in I}  \mathbb{P}\left(Z_{\eta} \leq -R\right)=0 \right).
 \end{equation}
A real-valued family $(Z_{\eta})_{\eta\in I}$ is said to be \texttt{bounded\,\,in\,\,probability} if it is both bounded above and below in probability. That is, for every $\varepsilon>0$ there exists a real number $M_{\varepsilon} \in \mathbb{R}$ such that for all $\eta \in I$ we have 
$
\mathbb{P}\left(|Z_{\eta}| >  M_{\varepsilon}\right) < \varepsilon,
$
or, in short,
$
\lim_{R\to\infty}\sup_{\eta\in I}  \mathbb{P}\left(|Z_{\eta}| \geq  R\right)=0.
$ We can generalise this  concept.
\begin{definition}
	Let $(X,\|\cdot\|)$ be a normed space. An $X$-valued family $(Z_{\eta})_{\eta\in I}$ is  \texttt{bounded\,\,in prob\-ability} if for every $\varepsilon>0$ there exists a constant $M_{\varepsilon} \in \mathbb{R}$ such that for all $\eta \in I$ we have 
	\begin{equation}
	\mathbb{P}\left(\|Z_{\eta}\| >  M_{\varepsilon}\right) < \varepsilon.
	\end{equation}
	In other words, $(Z_{\eta})_{\eta\in I}$ is  {bounded in probability}  when
	$
	\lim_{R\to\infty}\sup_{\eta\in I}  \mathbb{P}\left(\|Z_{\eta}\| \geq  R\right)=0
	$ holds.
\end{definition}

Any $X$-valued random variable, and even any finite sequence of $X$-valued random variables, is bounded in probability. This is an immediate consequence of the continuity of the measure $\mathbb P$.
% and because $\|x\|<\infty$ for all $x\in X$. 
The following lemma provides another sufficient condition for  a family to be bounded in probability.

 \begin{lemma}\label{lem:handy!}
 	Suppose $(Z_{\eta})_{\eta \in I}$ is a family of $X$-valued random variables with
 	\begin{equation}\label{eq:handy!}
 	\sup _{\eta \in I} \mathbb{E} \|Z_{\eta}\|^{2}<\infty.
 	\end{equation}
 	Then $(Z_{\eta})_{\eta \in I}$ is bounded in probability.
 \begin{proof}An application of Markov's inequality, for each $\eta\in I$,  yields
 	\begin{equation}
 	\sup_{\eta\in I}	\mathbb{P}\left(\|Z_{\eta}\|\geq R\right)=\sup_{\eta\in I}\mathbb{P}\left(\|Z_{\eta}\|^{2}>R^{2}\right)\leq \frac{1}{R^2}\sup_{\eta\in I}\mathbb{E} \|Z_{\eta}\|^{2}.
 	\end{equation}
 	 Taking $R\to \infty$  shows that $(Z_{\eta})_{\eta \in I}$ is bounded in probability. 
 \end{proof}
 \end{lemma}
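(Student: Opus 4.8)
The plan is to reduce the statement directly to Markov's inequality applied to the second moments, exploiting that the bound $\sup_{\eta\in I}\mathbb{E}\|Z_\eta\|^2$ is finite and, crucially, uniform in $\eta$. By definition, boundedness in probability of a normed-space-valued family requires exactly that $\lim_{R\to\infty}\sup_{\eta\in I}\mathbb P(\|Z_\eta\|\geq R)=0$, so it suffices to produce an upper bound on $\mathbb P(\|Z_\eta\|\geq R)$ that tends to zero as $R\to\infty$ and is independent of the index $\eta$.

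First I would fix $R>0$ and, for each fixed $\eta\in I$, observe that the events $\{\|Z_\eta\|\geq R\}$ and $\{\|Z_\eta\|^2\geq R^2\}$ coincide, since $\|Z_\eta\|$ is non-negative. Applying Markov's inequality to the non-negative random variable $\|Z_\eta\|^2$ with threshold $R^2$ then yields $\mathbb P(\|Z_\eta\|\geq R)\leq R^{-2}\,\mathbb E\|Z_\eta\|^2$.

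Next I would take the supremum over $\eta\in I$ on both sides. Writing $C:=\sup_{\eta\in I}\mathbb E\|Z_\eta\|^2$, which is finite by hypothesis \eqref{eq:handy!}, this gives $\sup_{\eta\in I}\mathbb P(\|Z_\eta\|\geq R)\leq C/R^2$. Letting $R\to\infty$, the right-hand side vanishes, which is precisely the assertion that $(Z_\eta)_{\eta\in I}$ is bounded in probability.

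As for the difficulty: the argument is essentially routine, so the only point that deserves attention---and the single place where the hypothesis is genuinely used---is the uniformity. It would not be enough for each individual second moment $\mathbb E\|Z_\eta\|^2$ to be finite; one needs the uniform bound $C=\sup_{\eta\in I}\mathbb E\|Z_\eta\|^2<\infty$, so that the constant appearing in $C/R^2$ does not depend on $\eta$ and the supremum over $\eta$ may be taken before sending $R\to\infty$.
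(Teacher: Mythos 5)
Your proof is correct and follows exactly the same route as the paper's: apply Markov's inequality to $\|Z_\eta\|^2$ with threshold $R^2$, take the supremum over $\eta\in I$ using the uniform moment bound, and let $R\to\infty$. Your added emphasis that the uniformity of the bound (rather than mere finiteness for each $\eta$) is the essential hypothesis is a fair observation, but the argument itself is identical to the one in the paper.
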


% Before we continue, let us state some additional useful observations.

If we have two families of real-valued random variables $(Y_\eta)_{\eta\in I}$ and $(Z_\eta)_{\eta\in I}$ such that
\begin{equation}
	Y_\eta \leq Z_\eta,\quad \forall \eta\in I\quad\quad\quad\big(\textnormal{resp., } Y_\eta \geq Z_\eta,\quad \forall \eta\in I\big),
\end{equation}
and  $(Z_\eta)_{\eta\in I}$  is bounded above (resp., below)  in probability, then   $(Y_\eta)_{\eta\in I}$ is so, too. 
Another useful observation is that  if two families $(Y_\eta)_{\eta\in I}$ and  $(Z_\eta)_{\eta\in I}$ are  bounded, bounded above or bounded below in probability, then so is its sum  $(Y_\eta+Z_\eta)_{\eta\in I}$. The latter  follows  from  $
	\mathbb P(Y_\eta+Z_\eta >R)\leq \mathbb  P(Y_\eta>R/2)+ \mathbb P(Z_\eta>R/2)$
and $
	\mathbb P(Y_\eta+Z_\eta <R)\leq \mathbb P(Y_\eta<R/2)+\mathbb P(Z_\eta<R/2).
$

% We end this section on the notion bounded in probability with a remark regarding segments.

\begin{remark}
Throughout this work, we will often encounter  that $(\|Y_t\|_\infty)_{t\geq 0}$ needs to be bounded in probability (in $\mathbb R$). One could  reformulate this as  
   $(Y_t)_{t\geq 0}$   being  bounded in probability in the Banach space $ (D[-\tau,0],\|\cdot\|_\infty).$ A reason to be cautious  is because we know   by  Corollary \ref{remark:segment}     that  segments $Y_t$ are no $(D[-\tau,0],\|\cdot\|_\infty)$-random variables; measurability cannot be with respect to the Borel $\sigma$-algebra induced by the uniform topology. 
We could speak of bounded in probability in $ (D[-\tau,0],\|\cdot\|_\infty)$ while simultaneously  we consider the Borel $\sigma$-algebra induced by the Skorokhod topology, but this  deviates from the conventions in Appendix \ref{app:B} and may lead to confusion. There is of course nothing to worry about when we restrict to  the continuous setting $(C[-\tau,0],\|\cdot\|_\infty).$%nothing but confusion. % preceding Remark \ref{remark:segment} 
   
   % Confusion may arise and the latter seems to be suggested when we say bounded in probability in $ (D[-\tau,0],\|\cdot\|_\infty)$.
	
	 % Nonetheless, it is a superfluous condition to pose that we  need   the uniform topology and its Borel $\sigma$-algebra to speak of boundedness in probability with the supremum norm on $D[-\tau,0]$, because for boundedness in probability we only need  the  \begin{equation}\textstyle \Omega_t=\big\{\omega\in\Omega:\sup_{s\in[t-\tau,t]}|f(s,\omega)|\leq M_\varepsilon\big\}\end{equation}
	 %  to be   measurable sets. Clearly, this is valid when we merely consider $ (f(s))_{-\tau\leq s<\infty}$ in $\mathbb D\Rplus$,  since we only have to look then at rational times.  
	  
	  % In conclusion, we can speak of bounded in probability in $ (D[-\tau,0],\|\cdot\|_\infty)$ when we take the Borel $\sigma$-algebra induced by the Skorokhod topology, instead of the (at first glance obvious)  uniform topology (and so, one then deviates from the convention  preceding Remark \ref{remark:segment}).
\end{remark}

\subsection{The effect of one-sided bounds on the deterministic dynamics}\label{sec:bound-prelimedness}
 A sufficient condition for global existence (and uniqueness) of solutions to \eqref{eq:nonauto}  would be to assume that $a$ and $b$ are of linear growth.  In particular, this holds when $a$ and $b$ are  bounded. In the next proposition, we show that a finite time blowup to $+\infty$ (resp., $-\infty)$ cannot occur when $a$ is bounded from above (resp., below).
% what we can conclude if the deterministic part $a$ is merely uniformly bounded from  either above or below.\todo{Minder vaag. Ook permenance property opmerken! Ook uniformly bounded incorrect; gewoon bounded!}

\begin{proposition}\label{prop:global} Assume $M=(M(t))_{t\geq 0 }$ is of class \textnormal{(HDol)}  or \textnormal{(HSqL)} and suppose there is a  constant $\beta\geq 0$ with
 \begin{equation}
 	b(\varphi,\,\cdot\,)^2\leq \beta^2,\quad\text{for all }\varphi\in D[-\tau,0].
 \end{equation}
 If there is a non-negative constant $\alpha_{\textnormal{max}}\geq 0$ $($resp.,  $\alpha_{\textnormal{min}}\geq 0)$ such that  
	 \begin{equation}%\label{subtle1}
	 	a(\varphi,\,\cdot\,)\leq \alpha_{\textnormal{max}}\quad\big(\text{resp., 
   } a(\varphi,\,\cdot\,)\geq -\alpha_{\textnormal{min}}\big),\quad\text{for all }\varphi\in D[-\tau,0], 
	 \end{equation}
then for any local solution $X$ to initial value problem \eqref{eq:nonauto} we have \begin{equation}\textstyle \liminf _{t\to T_\infty(\omega)}X(t,\omega)=-\infty\quad\big(\text{resp., } \limsup _{t\to T_\infty(\omega)}X(t,\omega)=
+\infty\big)\end{equation} 
  whenever $T_\infty(\omega)<\infty$ holds, for almost every $\omega\in\Omega$. 

% Then the \textnormal{(}local\textnormal{)} solution to \eqref{eq:nonauto}, subject to any initial process $\Phi$, is in fact global.\todo{Triviaal. Corollary 3.5 wel erg nuttig, dus maar anders verwoorden!}
\end{proposition}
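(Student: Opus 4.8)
\emph{Plan.} It suffices to treat the case $a(\varphi,\cdot)\le\alpha_{\textnormal{max}}$; the resp.\ case is entirely symmetric, using the lower bound on the drift together with boundedness of the stochastic integral from below to obtain $\liminf_{t\uparrow T_\infty}X(t)>-\infty$, which forces $\limsup_{t\uparrow T_\infty}X(t)=+\infty$. Fix $\omega$ in the full-measure set on which $X$ has càdlàg paths and the explosion dichotomy of Definition \ref{def:solution} holds, and suppose $T_\infty(\omega)<\infty$. Starting from \eqref{eq:interpret2} and using $a\le\alpha_{\textnormal{max}}$ to bound the Lebesgue--Stieltjes term by $\alpha_{\textnormal{max}}\,t$, I obtain the pathwise estimate
\begin{equation}
    X(t)\le \Phi(0)+\alpha_{\textnormal{max}}\,t+N(t),\qquad 0\le t<T_\infty,
\end{equation}
where $N(t)=\int_0^t\mathbf B_\Phi(X)(s-)\,\mathrm dM(s)$ denotes the stochastic integral.

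The heart of the argument is to show that $N$ is bounded (indeed convergent) on $[0,T_\infty)$ on the event $\{T_\infty<\infty\}$, and here the hypotheses $|b|\le\beta$ and the class of $M$ are essential. When $M$ is of class \textnormal{(HSqL)} I split it into its \textnormal{(HDol)}-martingale part and its predictable finite-variation part; by assumption the latter is proportional to $t$, so it contributes to $N$ a term bounded in modulus by a linear function of $t$, hence bounded on the finite interval $[0,T_\infty)$. For the martingale part $\widetilde N(t)=\int_0^t\mathbf B_\Phi(X)(s-)\,\mathrm dM^{\mathrm{mart}}(s)$, boundedness of the integrand by $\beta$ (which lets me extend it by zero past $T_\infty$ to a bounded predictable process on all of $[0,\infty)$) yields $[\widetilde N](t)\le\beta^2[M](t)$. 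To pass from finite quadratic variation to boundedness I localize with $R_m=\inf\{t\ge 0:[M](t)\ge m\}$: each $\widetilde N^{R_m}$ is a local martingale with $[\widetilde N^{R_m}]_\infty\le\beta^2[M](R_m)<\infty$ a.s., hence converges a.s.\ and is bounded on $[0,\infty)$. Since $[M]$ is finite on finite intervals, $R_m\uparrow\infty$, so on $\{T_\infty<\infty\}$ there is pathwise an $m$ with $R_m>T_\infty$; as $\widetilde N=\widetilde N^{R_m}$ on $[0,R_m)$, the martingale part is bounded on $[0,T_\infty)$. Adding the two contributions shows $\sup_{0\le t<T_\infty}|N(t)|<\infty$.

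Feeding this back into the estimate gives $\limsup_{t\uparrow T_\infty}X(t)\le\Phi(0)+\alpha_{\textnormal{max}}T_\infty+\sup_{[0,T_\infty)}N<\infty$, so $X$ cannot explode to $+\infty$. Since $(X,T_\infty)$ is a maximal solution, Definition \ref{def:solution} gives $\limsup_{t\uparrow T_\infty}|X(t)|=\infty$ on $\{T_\infty<\infty\}$; combined with the upper bound just obtained, the blow-up must be downward, i.e.\ there is a sequence $t_n\uparrow T_\infty$ with $X(t_n)\to-\infty$, which is precisely $\liminf_{t\uparrow T_\infty}X(t)=-\infty$. I expect the main obstacle to be exactly the middle step: converting the a.s.\ finiteness of $[\widetilde N](T_\infty-)$ into genuine boundedness of the càdlàg local martingale $\widetilde N$ on the random interval $[0,T_\infty)$. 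This is why I localize by the level sets $R_m$ of $[M]$ rather than working with an expectation bound, and why the uniform bound $|b|\le\beta$ on the noise coefficient cannot be dispensed with.
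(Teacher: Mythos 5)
Your overall strategy is sound and genuinely different from the paper's: you aim for a \emph{pathwise} bound on the stochastic integral over $[0,T_\infty)$ and then use the explosion dichotomy, whereas the paper works in expectation (Burkholder--Davis--Gundy together with the Dol\'eans-measure bound of (HDol) give $\mathbb{E}\sup_{\eta<t\wedge T_\infty}|\int_0^\eta b\,\mathrm dM|^2\le 4\lambda\beta^2 t$ for each fixed $t$) and finishes with Markov's inequality and an $\varepsilon$-set argument. The problem is precisely the step you single out as the heart of your argument. The principle you invoke --- a \cadlag local martingale whose total \emph{optional} quadratic variation is a.s.\ finite converges a.s.\ and is a.s.\ bounded --- is false without further hypotheses. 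Counterexample: let $\xi_k$ be independent with $\mathbb{P}\big(\xi_k=(2^k-1)/k\big)=2^{-k}$ and $\mathbb{P}\big(\xi_k=-1/k\big)=1-2^{-k}$, so that $\mathbb{E}\xi_k=0$, and set $N(t)=\sum_{k\le t}\xi_k$. This is a globally defined martingale; by Borel--Cantelli only finitely many positive jumps occur a.s., so $[N]_\infty=\sum_k\xi_k^2<\infty$ a.s., yet $N(t)\to-\infty$ a.s. The correct versions of this convergence theorem require either uniformly bounded jumps, or the \emph{predictable} bracket (a locally square integrable local martingale converges a.s.\ on $\{\langle N\rangle_\infty<\infty\}$), or an integrability hypothesis such as $\mathbb{E}[[N]_\infty^{1/2}]<\infty$ via Davis' inequality. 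In your setting the jumps of $\widetilde N$ are $b\,\Delta M$, and $\Delta M$ is unbounded for a general integrator of class (HDol) or (HSqL) (e.g.\ a compensated compound Poisson process with Gaussian jumps), so none of these applies as you have set things up. A telling symptom is that your proof never uses the defining feature of (HDol) --- the bound $\lambda$ on the Radon--Nikodym derivative of $\mu_M$ --- only the fact $[M](t)<\infty$, which holds for \emph{every} semimartingale.

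The gap is repairable, and your intermediate claim is in fact true. (i) Replace $[\,\cdot\,]$ by the predictable bracket: since $M$ is square integrable, $\langle M\rangle$ exists and $\langle\widetilde N\rangle(t)\le\beta^2\langle M\rangle(t)<\infty$ a.s., so on $\{T_\infty<\infty\}$ the stopped martingale has a.s.\ finite predictable bracket at infinity, and the standard theorem for locally square integrable local martingales yields a.s.\ convergence, hence pathwise boundedness. (ii) Simpler still, and already implicit in your own parenthetical remark: once you extend the integrand by zero past $T_\infty$ to a bounded predictable process $H$ on $[0,\infty)$, the integral $\int_0^\cdot H\,\mathrm dM$ is a globally defined semimartingale, i.e.\ a finite-valued \cadlag process on all of $[0,\infty)$, agreeing with $N$ on $[0,T_\infty)$ by locality of the stochastic integral; a \cadlag function is bounded on the compact interval $[0,T_\infty(\omega)]$, so $\sup_{[0,T_\infty)}|N|<\infty$ a.s.\ on $\{T_\infty<\infty\}$ with no quadratic-variation argument whatsoever. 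With either repair, the remainder of your proof (the drift estimate and the use of maximality to convert ``no upward blow-up'' into $\liminf_{t\uparrow T_\infty}X(t)=-\infty$) goes through.
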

\begin{proof} It suffices to take $M$ to be of class (HDol). By symmetry, we only need to consider the case where we have an upper bound by $ \alpha_{\max}.$ 
    We may assume  $\sup _{\theta \in[-\alpha,0]}|\Phi(\theta)| \leq R$  for some  $R\geq 0$ without loss; see  the  proof of Proposition \ref{thm:interstep} for more details. 
		Let $(X,T_\infty)$  denote any solution to equation \eqref{eq:nonauto} with such an initial condition and suppose $(T_k)_{k\geq 1}$ is some sequence of finite stopping  times as in  Definition \ref{def:solution}. Fix  an arbitrary instant $t\geq 0$, then
	\begin{equation}\begin{aligned}		
		\sup_{0\leq s\leq t}X(s\wedge T_k)&\leq R+\alpha_{\max} t+\sup_{0\leq \eta \leq t\wedge T_k}\left|\int_0^{\eta}b(X_{s-},s-)\,\mathrm dM(s)\right|\\
		&\leq R+\alpha_{\max} t+\sup_{0\leq \eta <t\wedge T_\infty}\left|\int_0^{\eta}b(X_{s-},s-)\,\mathrm dM(s)\right|.	
	\end{aligned}\end{equation}
Appealing to the Burkholder--Davis--Gundy inequality \cite[Ch. VII]{DelMey80}---or alternatively,   exploiting\linebreak Doob's maximal inequality \cite[Thm. 1.3.8]{book:karatzas} in combination with results in \cite[Ch. I.4]{book:jacod} regarding the predictable quadratic variation---gives us
\begin{equation}
	\begin{aligned}
	   	\mathbb{E} \sup _{0\leq \eta \leq  t\wedge T_k}\left|\int_{0}^\eta b\left(X_{s-},s-\right) \mathrm d M(s)\right|^{2}  
	&  \leq 4 \mathbb{E} \int_{[0,t\wedge T_k]} b\left(X_{s-},s-\right)^{2} \mathrm d [M](s) \\
 	&  = 4 \mathbb{E} \int_{[0,t\wedge T_k]} b\left(X_{s-},s-\right)^{2} \mathrm d \langle M\rangle(s) \\
	& \leq 4 \lambda \beta^{2}  t,\label{eq:upperbound}
\end{aligned}
\end{equation} for some $\lambda>0$, as a result of $M$ being of class (HDol).
Since the upper bound in \eqref{eq:upperbound} does not depend on $T_k$, for any integer $ k\geq 1$, we obtain
\begin{equation}
	\mathbb{E} \sup _{0\leq \eta <t\wedge T_\infty}\left|\int_{0}^\eta b\left(X_{s-},s-\right) \mathrm d M(s)\right|^{2}  \leq  4 \lambda \beta^{2}  t.
\end{equation}
An application of Markov's inequality yields the following statement: for any $\varepsilon>0$ there exists a sufficiently large value $R_\varepsilon(t)>0$ such that
\begin{equation}
	\mathbb P\left(\sup_{0\leq \eta <t\wedge T_\infty}\left|\int_0^{\eta}b(X_{s-},s-)\,\mathrm dM(s)\right|\geq R_\varepsilon(t)\right)<\varepsilon.
\end{equation}
In conclusion, for any $\varepsilon>0$ there exists a measurable set $\Omega_\varepsilon\subset \Omega$ with $\mathbb P(\Omega_\varepsilon)\geq 1-\varepsilon$ and
\begin{equation}
	\sup_{0\leq s\leq t}X(s\wedge T_k(\omega),\omega)\leq R+\alpha_{\max}t+R_\varepsilon(t),\quad \omega\in\Omega_\varepsilon.%\label{eq:apriori}
\end{equation}
This holds for any integer $k\geq 1$, hence 
\begin{equation}
	 \sup_{0\leq s\leq t\wedge T_\infty(\omega)}X(s,\omega)\leq R+\alpha_{\max}t+R_\varepsilon(t),\quad \omega\in\Omega_\varepsilon.%\label{eq:apriori}
\end{equation}
As a result, because we have $\mathbb P(\Omega_\varepsilon)\to 1$ as $\varepsilon\to 0$, we see  that $\limsup _{t\to T_\infty(\omega)}X(t,\omega)=
    +\infty$ simply cannot happen unless $T_\infty(\omega)=\infty$, which holds true for almost every $\omega\in\Omega$. 
\end{proof}

% We will now provide several results with regard  to solutions being bounded in probability.
% \todo{Niet waar voor local solutions... Tegenvb.} 

One-sided constraints on the deterministic part 
$a$ provide significant control over the stochastic solutions. In particular, the  below demonstrates that if a global solution is bounded in probability, then the segment process is also bounded in probability.

\begin{proposition}\label{thm:interstep}
	Assume $M=(M(t))_{t\geq 0 }$ is of class  \textnormal{(HDol)} or \textnormal{(HSqL)} and suppose  $X=(X(t))_{-\tau\leq t<\infty}$ is a 
  % \todo{Geformuleerd nu voor local, want dit gaat helpen om global existence te bewijzen. Ook het einde is nieuw.}
  global solution to  problem \eqref{eq:nonauto}. Let there be a non-negative constant $\alpha_{\textnormal{max}}\geq 0$ or $ \alpha_{\textnormal{min}}\geq 0 $ such that  
	 \begin{equation}%\label{subtle1}
	 	a(\varphi,\,\cdot\,)\leq \alpha_{\textnormal{max}}\quad\text{or}\quad a(\varphi,\,\cdot\,)\geq -\alpha_{\min},\quad\text{for all }\varphi\in D[-\tau,0], 
	 \end{equation}
and suppose there is a non-negative constant $\beta\geq 0$ with
 \begin{equation}
 	b(\varphi,\,\cdot\,)^2\leq \beta^2,\quad\text{for all }\varphi\in D[-\tau,0].
 \end{equation}
If $(X(t))_{t\geq 0}$ is bounded in probability,  then so is $(\|X_{t}\|_{\infty})_{t\geq 0}$. In particular, if $(X(t))_{t\geq 0}$ is bounded above in probability,  then $(\sup_{\theta\in[-\tau,0]}X(t+\theta))_{t\geq 0}$ is bounded above in probability, and if $(X(t))_{t\geq 0}$ is bounded below in probability,  then $(\inf_{\theta\in[-\tau,0]}X(t+\theta))_{t\geq 0}$ is bounded below in probability 
\end{proposition}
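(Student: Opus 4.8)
The plan is to control the running supremum and infimum of the solution over each sliding window $[t-\tau,t]$ by peeling off a stochastic integral whose size I can bound uniformly in $t$, in the spirit of Proposition~\ref{prop:global}. By replacing $X$ with $-X$ (which solves an equation of the same type, with drift $\varphi\mapsto-a(-\varphi,\cdot)$ and noise coefficient $\varphi\mapsto-b(-\varphi,\cdot)$), I may assume without loss of generality that $a(\varphi,\cdot)\leq\alpha_{\textnormal{max}}$ for all $\varphi\in D[-\tau,0]$; the case $a(\varphi,\cdot)\geq-\alpha_{\textnormal{min}}$ is then symmetric.

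First I would establish the central estimate on the windowed stochastic integral. For $t\geq\tau$ and $s\in[t-\tau,t]$ put $N_t(s)=\int_{t-\tau}^s b(X_{u-},u-)\,\mathrm dM(u)$. As in Proposition~\ref{prop:global} it suffices to treat the class (HDol), so the Burkholder--Davis--Gundy inequality \cite[Ch. VII]{DelMey80} combined with $b^2\leq\beta^2$ and $\mathrm d\langle M\rangle\leq\lambda\,\mathrm ds$ gives
\begin{equation}
\mathbb E\Big[\sup_{s\in[t-\tau,t]}|N_t(s)|^2\Big]\leq 4\,\mathbb E\int_{t-\tau}^t b(X_{u-},u-)^2\,\mathrm d\langle M\rangle(u)\leq 4\beta^2\lambda\tau,
\end{equation}
and the bound is \emph{independent of} $t$. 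Hence, by Lemma~\ref{lem:handy!}, the family $G_t:=\sup_{s\in[t-\tau,t]}|N_t(s)|$ is bounded in probability. This uniformity, available precisely because the window has fixed length $\tau$ and $b$ is bounded, is the technical heart of the argument.

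Next I would use the one-sided drift bound in \emph{both} time directions. The increment form of the equation over $[t-\tau,s]$ and over $[s,t]$ gives, for $t-\tau\leq s\leq t$,
\begin{align}
X(s)&=X(t-\tau)+\int_{t-\tau}^s a(X_u,u)\,\mathrm du+N_t(s)\ \leq\ X(t-\tau)+\alpha_{\textnormal{max}}\tau+G_t,\\
X(s)&=X(t)-\int_s^t a(X_u,u)\,\mathrm du-\big(N_t(t)-N_t(s)\big)\ \geq\ X(t)-\alpha_{\textnormal{max}}\tau-2G_t,
\end{align}
since $\int_{t-\tau}^s a\leq\alpha_{\textnormal{max}}\tau$ and $-\int_s^t a\geq-\alpha_{\textnormal{max}}\tau$. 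Taking the supremum (resp.\ infimum) over $s$ yields $\sup_{s\in[t-\tau,t]}X(s)\leq X(t-\tau)+\alpha_{\textnormal{max}}\tau+G_t$ and $\inf_{s\in[t-\tau,t]}X(s)\geq X(t)-\alpha_{\textnormal{max}}\tau-2G_t$. Because $G_t$ is bounded in probability and sums of families bounded above (resp.\ below) in probability remain so, I conclude that $(\sup_\theta X(t+\theta))_{t\geq\tau}$ is bounded above in probability whenever $(X(t))_{t\geq0}$ is, and $(\inf_\theta X(t+\theta))_{t\geq\tau}$ is bounded below in probability whenever $(X(t))_{t\geq0}$ is. The point I expect to be least obvious is exactly this backward decomposition: it lets a single \emph{upper} bound on $a$ also control the infimum, by anchoring at the right endpoint value $X(t)$.

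Finally I would assemble the statements. For the two-sided claim, if $(X(t))_{t\geq0}$ is bounded in probability then both one-sided conclusions apply, and the inclusion $\{\|X_t\|_\infty>R\}\subseteq\{\sup_\theta X(t+\theta)>R\}\cup\{\inf_\theta X(t+\theta)<-R\}$ transfers boundedness in probability to $(\|X_t\|_\infty)_{t\geq\tau}$. The initial window $t\in[0,\tau]$ must be treated separately, but there $\|X_t\|_\infty\leq\sup_{s\in[-\tau,\tau]}|X(s)|$, a single almost surely finite random variable (the paths are càdlàg, hence bounded on the compact $[-\tau,\tau]$), which dominates the family uniformly and is thus bounded in probability; the same domination handles the one-sided quantities on $[0,\tau]$. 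Combining the ranges $[0,\tau]$ and $[\tau,\infty)$ finishes the proof.
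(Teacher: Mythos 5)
Your proof is correct and follows essentially the same route as the paper's: the same forward decomposition (anchored at $X(t-\tau)$) and backward decomposition (anchored at $X(t)$) over the window $[t-\tau,t]$, the same uniform-in-$t$ Burkholder--Davis--Gundy estimate on the windowed stochastic integral under (HDol), and the same appeal to Lemma \ref{lem:handy!} to convert that moment bound into boundedness in probability. The only difference is peripheral bookkeeping: you restrict the main estimate to $t\geq\tau$ and dominate the initial window $t\in[0,\tau]$ by the single $\mathbb P$-a.s.\ finite random variable $\sup_{s\in[-\tau,\tau]}|X(s)|$, whereas the paper keeps the term $\|\Phi\|_\infty$ in its estimate \eqref{eq:bnd} and then removes possibly unbounded initial data via the truncation sets $\Omega_{R'}=\{\|\Phi\|_\infty\leq R'\}$ with $\mathbb P(\Omega_{R'}^c)\to 0$; both devices are valid.
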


\begin{proof}
As before, we only consider $M$ to be of class (HDol).
% Without loss of generality, we presume $X$ is a global solution from now on. In a similar fashion as proving Proposition \ref{prop:global}, one can show with a stopping procedure that it also holds for local solutions.
It suffices to assume  $a(\varphi,\,\cdot\,) \leq \alpha_{\max}$ for all $\varphi \in D[-\tau,0];$ the proof for the other case is similar. 

First, observe that for $t \geq 0$ and $\theta \in[-\tau,0]$ arbitrary, we have
\begin{equation}
	 X(t+\theta)=X((t-\tau)\vee0)+\int_{(t-\tau)\vee0}^{t+\theta} a (X_{s},s ) \,\mathrm d s+\int_{((t-\tau)\vee0,t+\theta]} b (X_{s-},s- ) \,\mathrm d M(s),\label{eq:expres1}
\end{equation}
whenever $t\geq -\theta$, and $
	X(t+\theta)=\Phi(t+\theta)$ if $t<-\theta.$ 
 % We deliberately make use of a slight abuse  of notation regarding  integrands. We    continue to do so for improved readability. 
This yields
\begin{equation}
	X(t+\theta)\leq X((t-\tau)\vee0)+\alpha_{\max}\tau + \sup _{\eta \in[-\tau,0]}\left|\int_{((t-\tau)\vee 0,(t+\eta)\vee 0]} b(X_{s-},s-) \,\mathrm d M(s)\right|,\label{eq:above}
\end{equation}
for $t\geq -\theta.$ Moreover, for $t \geq 0$ and $\theta \in[-\tau,0]$, with $t\geq -\theta$, we have
\begin{equation}
\label{eq:expres2}X(t)=X(t+\theta)+\int_{t+\theta}^{t} a\left(X_{s},s\right) \mathrm d s+\int_{(t+\theta,t]} b\left(X_{s-},s-\right)\mathrm d M(s).
\end{equation}
Consequently, for $t\geq -\theta$, this gives us
\begin{align} 
	X(t+\theta) &=X(t)-\int_{t+\theta}^{t} a\left(X_{s},s\right) \mathrm d s-\int_{(t+\theta,t]} b(X_{s-},s-)\, \mathrm dM(s) \nonumber\\\label{eq:below}
	& \geq X(t)-\alpha_{\max}\tau-\int_{((t-\tau)\vee 0,t]}  b(X_{s-},s-)\,\mathrm d M(s)+\int_{((t-\tau)\vee 0,t+\theta]}b(X_{s-},s-)\,\mathrm dM(s) \nonumber \\
	& \geq X(t)-\alpha_{\max}\tau-2 \sup _{\eta \in[-\tau,0]}\left|\int_{((t-\tau)\vee 0,(t+\eta)\vee 0]} b(X_{s-},s-)\, \mathrm d M(s)\right|.
\end{align} 
It follows that
\begin{equation}\begin{aligned}\label{eq:bnd}
&\sup _{\theta \in[-\tau,0]}|X(t+\theta)| \leq\sup_{\theta\in[-\tau,0]} |\Phi(\theta)|+|X((t-\tau)\vee 0)|+|X(t)|\\
&\hspace{4cm}+\alpha_{\max}\tau+2 \sup _{\theta \in[-\tau,0]}\left|\int_{((t-\tau)\vee 0,(t+\theta)\vee 0]} b\left(X_{s-},s-\right)\mathrm d M(s)\right|
\end{aligned}
\end{equation}
holds, for $t\geq 0$ and $\theta\in[-\tau,0]$. Importantly, the  term $\sup_{\theta\in[-\tau,0]}|\Phi(\theta)|$ in   \eqref{eq:bnd} enabled us to remove the restriction $t\geq -\theta$ as in \eqref{eq:above} and \eqref{eq:below}.

 % Recall from section \ref{Sec1.2} that the compensator of the quadratic variation of a regulated Lévy martingale $L=(L(t))_{t\geq 0}$, i.e., the predictable quadratic variation $\langle L\rangle =(\langle L\rangle (t))_{t\geq 0}$, is directly proportional to time $t$. If \todo{Sufficient is Levy process with finite first and second moment. For invariant measure, only first moment is needed; I think? } $\langle L\rangle (t)=\mu t$, then  $\mu\leq  \sigma^2+\lambda\zeta^2$.

 Thanks to the Burkholder--Davis--Gundy inequality \cite[Ch. VII]{DelMey80}, we obtain
\begin{equation}
	 \begin{aligned}
	& \mathbb{E} \sup _{\theta \in[-\tau,0]}\left|\int_{((t-\tau)\vee 0,(t+\theta)\vee 0]} b\left(X_{s-},s-\right) \mathrm d M(s)\right|^{2}  \\ & =	\mathbb{E} \sup _{\eta \in[0,\tau]}\left|\int_{((t-\tau)\vee 0,(t-\tau+\eta)\vee 0]} b\left(X_{s-},s-\right) \mathrm d M(s)\right|^{2}\\& 
 	\leq 4 \mathbb{E} \int_{(t-\tau)\vee 0}^{t} b\left(X_{s-},s-\right)^{2} \mathrm d [ M](s) \\
 % \leq 4 \mathbb{E} \left(\int_{((t-\tau)\vee 0,t]} b\left(X_{s-},s-\right)\mathrm d M(s) \right)^2\\
		% &  = 4 \mathbb{E} \int_{(t-\tau)\vee 0}^{t} b\left(X_{s-},s-\right)^{2} \mathrm d \langle M\rangle(s) \\
		& \leq 4 \lambda\beta^{2}\tau,
	\end{aligned}
\end{equation}
from which we can conclude together with Lemma \ref{lem:handy!} that 
\begin{equation}\left(\sup _{\theta \in[-\tau,0]}\left|\int_{((t-\tau)\vee 0,(t+\theta)\vee 0]} b\left(X_{s-},s-\right) \mathrm d M(s)\right|\right)_{t \geq 0}\end{equation} is bounded in probability. 

Now assume  $\sup _{\theta \in[-\tau,0]}|\Phi(\theta)| \leq R'$   for some $R'\geq 0$ fixed. Then the process $(\|X_{t}\|_{\infty})_{t \geq 0}$ is bounded in probability, thanks to estimate \eqref{eq:bnd} and by assumption  as $(X(t))_{t\geq 0}$ is bounded in probability. If $\sup _{\theta \in[-\tau,0]}|\Phi(\theta)| $   is not bounded uniformly on the entire sample space $\Omega$, then we need to proceed as follows. Define  for every  integer $R'\in\mathbb Z_{\geq 0}$ the measurable set
% \todo{Waarom kun je niet hele begin conditie doen in Hfdst 6?} 
\begin{equation}
	\Omega_{R'}:=\left\{\omega\in\Omega:\sup _{\theta \in[-\tau,0]}|\Phi(\theta,\omega)| \leq R'\right\}.
\end{equation}
Then $(\|X_{t}\|_{\infty}\mathbf 1_{\Omega_{R'}})_{t \geq 0}$ is bounded in probability. We obtain that the   family $(\|X_{t}\|_{\infty})_{t \geq 0}$ is bounded in probability, irrespective  of the initial data, since $\mathbb P( \Omega_{R'}^c)=\mathbb P(\|\Phi\|_\infty>R')\to 0$   as $R'\to\infty$. 

The assertions regarding bounded above and below follow similarly, for which one only appeals to the estimate in \eqref{eq:above} and \eqref{eq:below}, respectively. 
\end{proof}

Observe that taking a supremum over the interval $[-\tau,0]$ is simply a particular choice; the proof goes averbitim for any interval $[-\tau^*,0]$, $\tau^*>0$. Moreover, we like to point out the equality \begin{equation}
    \sup_{u\in[t-\tau,t]}\|X_{u}\|_{\infty}=\sup_{u\in[t-2\tau,t]}|X({u})|.
\end{equation}
This gives rise to the following corollary.%\todo{Alleen voor global solutions}

\begin{corollary}\label{cor:intervals}  % \todo{NIEUW}
	 Assume $M=(M(t))_{t\geq 0 }$ is of class \textnormal{(HDol)} or \textnormal{(HSqL)} and suppose   $X=(X(t))_{-\tau\leq t<\infty}$ is a 
  global solution to  problem \eqref{eq:nonauto}. Let there be a non-negative constant $\alpha_{\textnormal{max}}\geq 0$ or $ \alpha_{\textnormal{min}}\geq 0 $ such that  
	 \begin{equation}%\label{subtle1}
	 	a(\varphi,\,\cdot\,)\leq \alpha_{\textnormal{max}}\quad\text{or}\quad a(\varphi,\,\cdot\,)\geq -\alpha_{\min},\quad\text{for all }\varphi\in D[-\tau,0], 
	 \end{equation}
and suppose there is a non-negative constant $\beta\geq 0$ with
 \begin{equation}
 	b(\varphi,\,\cdot\,)^2\leq \beta^2,\quad\text{for all }\varphi\in D[-\tau,0].
 \end{equation}
If $(X(t))_{t\geq 0}$ is bounded in probability,  then for any $\tau^*>0$ the process \begin{equation}\textstyle \left(\sup_{u\in[t-\tau^*,t]}|X({u})|\right)_{t\geq (\tau^*-\tau)\vee0}\end{equation} is bounded   in probability,  and in particular $(\sup_{u\in[t-\tau,t]}\|X_{u}\|_{\infty})_{t\geq \tau}$ is bounded in probability too.
\end{corollary}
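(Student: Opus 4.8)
The plan is to reduce everything to Proposition \ref{thm:interstep}, which under exactly these hypotheses already yields that the segment family $(\|X_t\|_\infty)_{t\geq 0}$ is bounded in probability, and to recall the identity $\|X_s\|_\infty=\sup_{u\in[s-\tau,s]}|X(u)|$. A single segment window $[s-\tau,s]$ has length $\tau$, so the idea is to cover the longer window $[t-\tau^*,t]$ by a \emph{fixed} finite number of such length-$\tau$ windows, each anchored at a time $\geq 0$, and then to use that a finite maximum of members of one bounded-in-probability family is again bounded in probability, with a bound uniform in $t$ precisely because the number of windows does not depend on $t$.

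Concretely, I would set $N:=\lceil \tau^*/\tau\rceil$ and, for each $t$, introduce the anchor times $r_j(t):=(t-j\tau)\vee 0$ for $j=0,1,\ldots,N$, all nonnegative. The first step is the deterministic covering claim: for every $t\geq(\tau^*-\tau)\vee 0$ one has
\[
[t-\tau^*,t]\subseteq\bigcup_{j=0}^{N}\bigl[r_j(t)-\tau,\,r_j(t)\bigr],
\]
whence, pathwise,
\[
\sup_{u\in[t-\tau^*,t]}|X(u)|\leq \max_{0\leq j\leq N}\|X_{r_j(t)}\|_\infty.
\]
The constraint $t\geq(\tau^*-\tau)\vee 0$ is exactly what keeps $[t-\tau^*,t]$ inside $[-\tau,\infty)$, where $X$ is defined; the clamping $(\,\cdot\,)\vee0$ handles the left endpoint when $t$ is small, the window $[-\tau,0]$ being supplied by $j=N$ whenever $t\leq N\tau$, and not being needed otherwise since then $t-\tau^*>0$.

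Given the covering, the second step is a routine union bound: for any $R>0$,
\[
\sup_{t\geq(\tau^*-\tau)\vee0}\mathbb P\!\left(\sup_{u\in[t-\tau^*,t]}|X(u)|\geq R\right)\leq (N+1)\,\sup_{s\geq0}\mathbb P\bigl(\|X_s\|_\infty\geq R\bigr),
\]
and the right-hand side tends to $0$ as $R\to\infty$ by Proposition \ref{thm:interstep}. As $N+1$ is a fixed constant, this is exactly boundedness in probability of the family in question. The final assertion is then the special case $\tau^*=2\tau$: using the identity $\sup_{u\in[t-\tau,t]}\|X_u\|_\infty=\sup_{u\in[t-2\tau,t]}|X(u)|$ recorded before the statement together with $(2\tau-\tau)\vee0=\tau$, the family $(\sup_{u\in[t-\tau,t]}\|X_u\|_\infty)_{t\geq\tau}$ is bounded in probability.

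I expect no serious obstacle here: the only point demanding care is the bookkeeping in the covering claim, in particular verifying that the $N+1$ windows with the $(\,\cdot\,)\vee0$ clamping genuinely cover $[t-\tau^*,t]$ for every admissible $t$, including the transitional range $t\in\bigl[(\tau^*-\tau)\vee0,\,N\tau\bigr]$ where the leftmost window degenerates to $[-\tau,0]$. Once the covering is pinned down, the probabilistic part is immediate from Proposition \ref{thm:interstep} and the uniformity of $N$ in $t$.
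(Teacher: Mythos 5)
Your proof is correct, but it takes a genuinely different route from the paper's. The paper obtains the corollary by observing that the proof of Proposition \ref{thm:interstep} goes through \emph{verbatim} with the segment window $[-\tau,0]$ replaced by $[-\tau^*,0]$ for any $\tau^*>0$ (the drift bound $\alpha_{\max}\tau$ and the Burkholder--Davis--Gundy bound $4\lambda\beta^2\tau$ simply become $\alpha_{\max}\tau^*$ and $4\lambda\beta^2\tau^*$), and then specialises via the identity $\sup_{u\in[t-\tau,t]}\|X_u\|_\infty=\sup_{u\in[t-2\tau,t]}|X(u)|$ with $\tau^*=2\tau$. You instead use Proposition \ref{thm:interstep} purely as a black box and reduce the long window to finitely many length-$\tau$ windows: the covering $[t-\tau^*,t]\subseteq\bigcup_{j=0}^{N}\bigl[r_j(t)-\tau,\,r_j(t)\bigr]$ with $r_j(t)=(t-j\tau)\vee 0$ and $N=\lceil\tau^*/\tau\rceil$ is indeed valid (the constraint $t\geq(\tau^*-\tau)\vee 0$ guarantees $t-\tau^*\geq-\tau$, and when $t<N\tau$ the unclamped windows together with the clamped window $[-\tau,0]$ cover all of $[-\tau,t]$), and since $N$ does not depend on $t$ the union bound preserves uniformity, so $\sup_{t}\mathbb P(\sup_{u\in[t-\tau^*,t]}|X(u)|\geq R)\leq (N+1)\sup_{s\geq 0}\mathbb P(\|X_s\|_\infty\geq R)\to 0$. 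What your approach buys is modularity: no stochastic estimate has to be re-derived or re-checked, only the statement of the proposition is invoked, which makes the argument robust to how the proposition was proved. What the paper's approach buys is brevity and the absence of the ceiling/anchor bookkeeping, at the cost of requiring the reader to verify that the proposition's proof really is window-length agnostic. Both yield the final assertion about $(\sup_{u\in[t-\tau,t]}\|X_u\|_\infty)_{t\geq\tau}$ in the same way.
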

Take note that if $(X(t))_{t\geq 0}$ is bounded in probability, then so is $X=(X(t))_{-\tau\leq t<\infty},$ because we have $\lim_{R\to \infty}\sup_{-\tau\leq t\leq 0}\mathbb P(|X(t)|>R)\leq \lim_{R\to 0}\mathbb P(\|\Phi\|_\infty>R)=0.$ Further,
if we only know  that $X$ is a local solution, then we   obtain along similar lines  that $(\|X_t\|_\infty\mathbf 1_{t<T_\infty})_{t\geq 0}$ is bounded in probability, provided   $X$ is bounded in probability. 
This, however, does not  necessarily imply that $(\|X_t\|_\infty)_{t\geq 0}$ is bounded in probability, as we will  see in the next section.

% This is because \eqref{eq:bnd} contains the end-term $|X(t)|$. has a term that looks into the future, opposed to \eqref{eq:above}. Observe that assuming $X$ is bounded above in probability, we can still deduce that $(\sup_{\theta\in[-\tau,0]}X(t+\theta))_{t\geq 0}$ is bounded above in probability, hence a finite time blowup cannot occur to $+\infty$ (see next section, but this is what we already knew thanks to Proposition \ref{prop:global}).

% Merk ook op dat je als je local solution hebt, dat dan ook hieruit volgt dat je geen finite time blowup bij $+\infty.$ Je mag niet in de toekomst kijken bij local solutions!
% \todo{Hier gebleven!}

\subsection{Additional comments on finite time blowups}\label{sec:blowups}
In this section, we  show how boundedness in probability of solutions relate to finite time blowups and global existence. 
% \begin{remark}\label{remark:bnd_glob}
Let $X=(X(t))_{t\geq 0}$  be  a (local) solution to \eqref{eq:nonauto}  with  explosion time $T_\infty$. By convention, we  set $X=0$ on $[T_\infty,\infty)$. 
If we now suppose that $X$ is uniformly bounded $\mathbb P$-a.s.\ on $[-\tau,T_\infty)$, then this implies   $T_\infty = \infty$ $\mathbb P$-a.s.. 
On the other hand, if $X$ is only known to be bounded in probability, then  $\mathbb P(T_\infty=t)=0$ holds, for all $t\geq 0,$ but this does not exclude the possibility of $\mathbb P(T_\infty<\infty)=\mathbb P(\cup_{t\geq 0}\{T_\infty=t\})>0$ being strictly positive because of the uncountable union.
 Boundedness in probability gives us in particular that   $\lim_{R\to\infty}\sup_{0\leq s\leq t}\mathbb P(|X(s)|>R)=0$ holds, for all $t\geq 0$, while the value of $\mathbb P(T_\infty\leq t)=\lim_{R\to\infty}\mathbb P(\sup_{0\leq s\leq t} |X(s)|>R)$  remains inconclusive due to the fact that the supremum is inside the probability now. 
% In conclusion,  boundedness in probability of a solution does not guarantee its global existence.

More control is required to guarantee global existence. For instance, 
% note that from the observations above it is clear that, if the process $
 %    (\sup_{0\leq s\leq t}|X(s)|)_{t\geq 0} $ is bounded in probability, then  $\mathbb P(T_\infty\leq t)=0$ for all $t\geq 0.$ 
 % As a matter of fact,  
 % it suffices to show that the supremum norm of the segments is bounded in probability. Indeed,
 if  the segments are bounded in probability, i.e., if $(\|X_t\|_\infty)_{t\geq 0}$ is bounded in probability, then
 % \todo{Such consideratiosn are also true for systems of equations} 
 \begin{equation}
 \begin{aligned}
        \mathbb P(T_\infty<\infty)&=\mathbb P\left(\bigcup_{n=1}^\infty \{T_\infty\in[(n-1)\tau,n\tau)\}\right)\\&=\sum_{n=1}^\infty \lim_{R\to\infty}\mathbb P\left(\sup_{(n-1)\tau\leq t\leq n\tau}|X(t)|>R\right)=0,
    \end{aligned}\end{equation}
by continuity of the  measure $\mathbb P.$  Likewise, if the  process $
    (\sup_{\theta\in[-\tau,0]}X(t+\theta))_{t\geq 0} $ is  bounded above in probability, then a finite time blowup towards $+\infty$ does not occur $\mathbb P$-a.s.. Indeed, we have
    % \begin{equation}
    %     \mathbb P\left(\limsup_{s\to T_\infty} X(s)=+\infty\right)=0$ holds. Indeed, 
    %     for any instant $t\geq 0,$ we have
    % \begin{equation}
    %     \mathbb P\left(\limsup_{s\to T_\infty} X(s)=+\infty,T_\infty\leq t\right)=\lim_{R\to\infty}\mathbb P\left(\sup_{0\leq s\leq t}X(s)\geq R\right)=0. 
    % \end{equation}
    \begin{equation}
    \begin{aligned}
        \mathbb P\left(\limsup_{s\to T_\infty}X (s)=+\infty,\,T_\infty<\infty\right)
        % =\mathbb P\left(\bigcup_{n=1}^\infty \left\{T_\infty\in[(n-1)\tau,n\tau),\,\limsup_{s\to T_\infty} X(s)=+\infty\right\}\right)\\&
        =\sum_{n=1}^\infty \lim_{R\to\infty}\mathbb P\left(\sup_{(n-1)\tau\leq t\leq n\tau}X(t)>R\right)=0. \label{eq:above-explosion}
    \end{aligned}
    \end{equation}
        Similarly, when $(\inf_{\theta\in[-\tau,0]}X(t+\theta))_{t\geq 0}$ is bounded below in probability, then a finite time blowup towards $-\infty$ will not happen. 
        
        Relaxing the global existence assumption in Proposition \ref{thm:interstep} still allows us to deduce that the stochastic process    $(\sup_{\theta\in[-\tau,0]}X(t+\theta))_{t\geq 0} $ is bounded above in probability whenever $X$ is bounded above in probability and  $a\leq \alpha_{\rm max}$. This is due to fact that the bound in  \eqref{eq:above} expresses $X(t+\theta)$ in terms of its history only, while \eqref{eq:bnd} contains the term $|X(t)|$ on the right hand side. In particular, we can conclude from \eqref{eq:above-explosion}  that a finite time blowup towards $+\infty$ will not occur $\mathbb P$-a.s., which is in line with Proposition \ref{prop:global}. Nevertheless, as mentioned previously, only assuming either $a\leq \alpha_{\rm max}$ or $a\geq -\alpha_{\rm min}$ is insufficient to deduce that $(\|X_t\|_\infty)_{t\geq 0}$ is bounded in probability; we merely have that the  process $(\|X_t\|_\infty\mathbf 1_{t<T_\infty})_{t\geq 0}$ is  bounded in probability. It is important to note that the latter does not imply the former, as the following example illustrates.

 \begin{examplex} Let $Z:\Omega\to\mathbb R$ be a non-negative random variable with probability distribution $\mu_Z$ absolutely continuous with respect to the Lebesgue measure.
 Consider the  initial value problem
 \begin{equation} \left\{
	\begin{array}{rlll}
		\mathrm d X(t)&=&X(t)^2\,\mathrm d t+\sigma\mathrm d M(t), &\quad \text { for } t\geq 0, \\[.05cm]
		X(u)&=&\Phi(u)& \quad \text { for } u \in[-\tau, 0],
	\end{array}\right.
\end{equation}
where $\Phi\in\mathbb D[-\tau,0]$ is given by $\Phi(u,\omega)=Z(\omega)^{-1}\mathbf 1_{Z(\omega)>0}$ for all $u\in [-\tau,0]$ and  $\omega\in\Omega$ (taking into account we set $\infty\cdot 0=0$). 
Proceeding with $\sigma=0$ yields
\begin{equation}
    X(t,\omega)=\begin{cases}
        \dfrac{1}{Z(\omega)-t},& 0\leq t<Z(\omega),\\
        0,&\rm else.
    \end{cases}
\end{equation}
 Note that 
 % \todo{Alternatief: $=\mathbb P(\Phi\in (t,t+R^{-1}))$}
% $X(t,\omega)>R$ if and only if $t<\lambda<R^{-1}+t$, hence 
\begin{equation}
    \mathbb P(\omega\in\Omega:X(t,\omega)>R)= \mathbb P(\omega\in\Omega: t<Z(\omega)<t+R^{-1})=\mu_Z((t,t+R^{-1})).
\end{equation} Since $\mu_Z$ is finite and does not contain any atoms, one can show 
$\lim_{R\to \infty}\sup_{t\geq 0}\mu_Z((t,t+R^{-1}))=0$, hence 
$(X(t))_{t\geq 0}=(X(t)\mathbf 1_{t<T_\infty})_{t\geq 0}$ is bounded in probability. In addition, we have $X(t)=\|X_t\|_\infty$ for $t<T_\infty,$ hence  $(\|X_t\|_{\infty}\mathbf 1_{t<T_\infty})_{t\geq 0}$ is bounded in probability. However, note that the stochastic process   $(\|X_t\|_{\infty})_{t\geq 0}$ is not bounded in probability, because this would imply  $\mathbb P(T_\infty<\infty)=0$.
\end{examplex}

\section{Towards  invariant measures and stationary solutions}\label{sec:towards}

The main objective of this section is to find sufficient conditions  that gaurantee the existence of an invariant measure, hence a stationary solution, of the  autonomous initial value problem 
  \begin{equation}\label{eq:SDDE2.2}\left\{
	\begin{array}{rlll}
		\mathrm d X(t)&=&a(X_t)\,\mathrm d t+b(X_{t-}) \,\mathrm d M(t), &\quad \text { for } t\geq 0, \\[.05cm]
		X(u)&=&\Phi(u),& \quad \text { for } u \in[-\tau, 0].
	\end{array}\right.
\end{equation}
Again, let us assume\footnote{Furthermore, we need that $\Phi$
 is independent of $M$, to  ensure that the segment process $(X_t)_{\geq 0}$ is Markov. This is however automatically satisfied when $M$ is a Brownian motion $W$ or a Lévy process $L$, because $\Phi$ is $\mathcal F_0$-measurable and  $W$ and $L$ are independent of $\mathcal F_0$ by construction. See Appendix \ref{app:D} for more information.} $\Phi\in\mathbb D[-\tau,0)$ and that $a$ and $b$ are proper locally Lipschitz. 
In {\S}\ref{sec:tight_cont} we restrict ourselves to  Brownian noise and in {\S}\ref{sec:special} we allow $M=(M(t))_{t\geq 0}$ to be an integrable L\'evy process. In {\S}\ref{Sec3.4.2} we demonstrate how the general theorems from the previous two sections can be applied to delay equations with stochastic negative feedback.

We shall now define what we mean by stationary distributions and invariant measures.

% where $\Phi\in\mathbb D[-\tau,0]$ and $M=(M(t))_{t\geq 0}$ a semimartingale (with restrictions specified later on). The functionals $a,b$ are presumed to be proper locally Lipschitz  (see subsection \ref{Sec2.1.1}) throughout this entire  section without further explicit mention. \todo{Verander alle $Y$ naar X???}

\begin{definition}\label{def:invariant}
	A solution $X=(X(t))_{-\tau\leq t<\infty}$ to  problem \eqref{eq:SDDE2.2} with maximal existence time, i.e., $T_\infty=\infty$ $\mathbb P$-a.s., is called \texttt{stationary} if the probability distribution of $X(t)$ coincides, for all $t \geq 0$, with the probability distribution of $\Phi(0)$. In that  case, the probability distribution of $\Phi(0)$ is  said to be a \texttt{stationary distribution} of the  delay equation in \eqref{eq:SDDE2.2}.
\end{definition}

In order to find stationary solutions, we will need to study  segment processes, as $C[-\tau,0]$ and $D[-\tau,0]$ are the natural state spaces.

\begin{definition}\label{def:invariantm}
A Borel probability measure $\nu$ on $E[-\tau,0]$\footnote{We allow $E\in\{C,D\}.$ The space $C[-\tau,0]$ is to be endowed with the uniform topology, as usual, but the space $D[-\tau,0]$ must be endowed with the {Skorokhod} topology. We refer to  Appendix \ref{Sec2.2.2} for more information. } is an \texttt{invariant} \texttt{measure} of the delay  equation in \eqref{eq:SDDE2.2} if the segment process $(X_t)_{t \geq 0}$, with initial condition $X_0 = \Phi$ distributed according to $\nu$, has the same distribution $\nu$ at every time $t \geq 0$. The push-forward measure of $\nu$ under the evaluation map
	\begin{equation}
 E[-\tau,0] \rightarrow \mathbb{R}:	\varphi \mapsto \varphi(0)
	\end{equation}
	is then a stationary distribution.
\end{definition}
Note that a stationary distribution is an invariant measure on $\mathbb{R}$. Also, 
an invariant measure (on $E[-\tau,0]$) contains richer information on the dynamical system than a stationary distribution does. While the terms ``stationary distribution'' and ``invariant measure'' are often used interchangeably throughout the literature, we  distinguish these notions in this paper for clarity.

% Note that an invariant measure contains much more rich information then a stationary distribution.  Also, note that a stationary distribution is an invariant measure on $\mathbb R.$ Note that it is far from customary to have a distinction between the terminologies ``stationary distribution'' and ``invariant measure''.  Frequently these words are being used interchangeably,  but  we believe that it is more clear to give these two different objects separate labels. By doing so, we see that  invariant measures imply stationary distributions, hence stationary solutions. 

% GOAL: global existence and boundedness in probability yields invariant measures.

% In section \ref{boundednesssolutions} we  investigate important properties of solutions to non-autonomous systems such as boundedness in probability and persistence. These notions are relevant in order to find tight solutions segments, which is subject to section \ref{Sec2.2.5}. Via Krylov--Bogoliubov's existence of invariant measures theorem, this gives immediately rise to a stationary solution. Section \ref{Sec3.4.2} is an example on how to deal with the main result of this section, namely Theorem \ref{cor:main}. 

We claim that the main results in this section can be extended to systems of delay equations. 

\subsection{Brownian motion as integrator}\label{sec:tight_cont}
    Let   $W=(W(t))_{t\geq 0}$ be a 
    % \todo{Ook in artikel 1; merk wel op dit is alternatief bewijs en dus wel opmerkingswaardig}
    Brownian motion and consider the initial value problem
    \begin{equation}\label{eq:SDDEW}\left\{
	\begin{array}{rlll}
		\mathrm d X(t)&=&a(X_t)\,\mathrm d t+b(X_{t}) \,\mathrm d W(t), &\quad \text { for } t\geq 0, \\[.05cm]
		X(u)&=&\Phi(u),& \quad \text { for } u \in[-\tau, 0].
	\end{array}\right.
 \end{equation} 
% as in Theorem \ref{thm:main-cont}.  
% One is able to show that the entire segment process $(X_t)_{t\geq 0}$ is tight in the Banach space $(C[-\tau,0],\|\cdot\|_\infty)$, assuming that the solution $X=X(t))_$ exists globally and   is  bounded in probability.
%%%% with $\Phi$ independent of $W$.
In order to prove the existence of  an invariant measure,   we need to show that there is a segment process that is tight in $C[-\tau,0]$; see  Appendix \ref{Sec2.2.3}
for general tightness results.
\begin{theorem}\label{thm:main_cont}
	Suppose  $X=(X(t))_{-\tau\leq t<\infty}$ is 
 % bounded in probability and  
 a global solution to       \eqref{eq:SDDEW}.
 % If the functional $b$ is bounded by some $\beta\geq 0,$ i.e.,\todo{Deze conditie is niet nodig! Zie bewijs van 4.5.}
%  \begin{equation}
% 	b(\varphi)^2\leq \beta^2,\quad\text{for all }\varphi\in C[-\tau,0],
% \end{equation}
 % and  
 If the stochastic  process $\big(\sup_{u\in[t-\tau,t]}\|X_u\|_\infty\big)_{t\geq \tau}$ is bounded in probability, i.e.,
	\begin{equation} 
		\lim _{R \rightarrow \infty} \sup _{t \geq \tau}\mathbb  P\left(\sup _{u\in [t-\tau,t]} \|  X_u \|_\infty>R\right)=0, \label{eq:difficult}
	\end{equation}
	 then  the  segment process $(X_t)_{t\geq 0}$ is tight in $  C[-\tau,0]$. 
  
   In addition, if all other solutions to \eqref{eq:SDDEW} 
   % with $\Phi$ independent of $W$
   exist globally, then problem \eqref{eq:SDDEW} admits an invariant measure, hence there is at least one stationary solution. 
 % \todo{In introductie van Hfdst 4, dan corollary na proof van deze thrm, en cadlag case in introductie?}
\end{theorem}
\begin{remark}As a matter of fact, it suffices to show that $(\|X_t\|_\infty)_{t\geq 0}$ is bounded in probability,
% in Theorem \ref{thm:main_cont}, 
instead of the process $(\sup_{u\in[t-\tau,t]}{\|X_u\|_\infty})_{t \geq \tau}$, because we can write
\begin{equation}
\textstyle\sup_{u\in[t-\tau,t]}{\|X_u\|_\infty}=\max\big\{\|X_t\|_{\infty},\|X_{t-\tau}\|_\infty\big\},\quad t\geq \tau.
\end{equation}
By not doing so, we see more clearly how \eqref{eq:difficult} and \eqref{eq:difficult2} in Theorem \ref{cor:main} (see {\S}\ref{sec:special}) relate. Observe that condition \eqref{eq:difficult} is not only sufficient but also necessary for tightness; see Proposition \ref{prop:Dtight}.  %\todo{Interessante observatie; geeft duidelijk aan waarom je dit \textbf{niet} in 1 artikel wilt hebben.} 
\end{remark}
A direct consequence of either Proposition \ref{thm:interstep} or Corollary \ref{cor:intervals}   is the following.
 \begin{corollary}\label{cor:main_cont}
	% Assume $M=(M(t))_{t\geq 0}$ is a Brownian motion
 % % \todo{, or more generally, a   continuous (local) martingale with $\lim_{t\to\infty}\langle M\rangle(t)=\infty$}
 % and suppose $X=(X(t))_{-\tau\leq t<\infty}$ is bounded in probability and   a global solution to    initial value problem   \eqref{eq:SDDE2.2}. In addition, assume that all other solutions exist globally as well. 

 Suppose  $X=(X(t))_{-\tau\leq t<\infty}$ is   a global solution to    \eqref{eq:SDDEW} and bounded in probability.
  If there is a non-negative constant $\alpha_{\textnormal{max}}\geq 0$ $($resp., $ \alpha_{\textnormal{min}}\geq 0 )$ such that  
	 \begin{equation}%\label{subtle1}
	 	a(\varphi)\leq \alpha_{\textnormal{max}}\quad\big(\text{resp., } a(\varphi)\geq -\alpha_{\min}\big),\quad\text{for all }\varphi\in C[-\tau,0], 
	 \end{equation}
 and if there exists a non-negative constant $\beta\geq 0$ such that 
 \begin{equation}
	b(\varphi)^2\leq \beta^2,\quad\text{for all }\varphi\in C[-\tau,0],
\end{equation}
	 	 then  the  segment process $(X_t)_{t\geq 0}$ is tight in $C[-\tau,0]$. 

    In addition, if all other solutions to \eqref{eq:SDDEW} 
    % with $\Phi$ independent of $W$
    exist globally, then problem \eqref{eq:SDDEW} admits an invariant measure, hence there is at least one stationary solution. 
       % In addition, if we assume that all other solutions exist globally as well, then problem \eqref{eq:SDDEW} admits an invariant measure, hence there is at least one stationary solution.
       % Moreover,  problem \eqref{eq:SDDE2.2} attains at least one stationary solution. 
\end{corollary}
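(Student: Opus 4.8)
The plan is to obtain this statement as a short chain of the two immediately preceding results, since its hypotheses are tailored to feed directly into Proposition~\ref{thm:interstep} (equivalently Corollary~\ref{cor:intervals}) and from there into Theorem~\ref{thm:main_cont}. First I would record that a Brownian motion $W$ is a continuous square-integrable martingale with $\langle W\rangle(t)=t$, so it is of class (HDol); in particular the standing assumption ``(HDol) or (HSqL)'' of Proposition~\ref{thm:interstep} and Corollary~\ref{cor:intervals} is satisfied for equation~\eqref{eq:SDDEW}.

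Next I would address a compatibility point between the ambient spaces. Proposition~\ref{thm:interstep} is phrased with $a,b$ and their bounds defined on $D[-\tau,0]$, whereas here the one-sided bound on $a$ and the bound on $b$ are only assumed on $C[-\tau,0]$. This is harmless: the global solution $X$ to~\eqref{eq:SDDEW} has continuous sample paths, so every segment $X_s$ is an element of $C[-\tau,0]$, and left limits satisfy $X_{s-}=X_s$. Consequently, in the integral representation~\eqref{eq:expres1}--\eqref{eq:below} driving the proof of Proposition~\ref{thm:interstep}, the functionals $a$ and $b$ are only ever evaluated at continuous segments, and the $C[-\tau,0]$-bounds $a(\varphi)\leq\alpha_{\max}$ (resp.\ $a(\varphi)\geq-\alpha_{\min}$) and $b(\varphi)^2\leq\beta^2$ are exactly what the estimates require.

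With these two observations in place, Proposition~\ref{thm:interstep} applies: boundedness in probability of $(X(t))_{t\geq 0}$ together with a one-sided bound on $a$ and the bound on $b$ yields that $(\|X_t\|_\infty)_{t\geq 0}$ is bounded in probability. By the identity $\sup_{u\in[t-\tau,t]}\|X_u\|_\infty=\max\{\|X_t\|_\infty,\|X_{t-\tau}\|_\infty\}$ noted in the remark following Theorem~\ref{thm:main_cont} (or directly by Corollary~\ref{cor:intervals}), this is equivalent to the assertion that $\big(\sup_{u\in[t-\tau,t]}\|X_u\|_\infty\big)_{t\geq\tau}$ is bounded in probability, which is precisely hypothesis~\eqref{eq:difficult} of Theorem~\ref{thm:main_cont}.

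Finally I would invoke Theorem~\ref{thm:main_cont}: since $X$ exists globally and~\eqref{eq:difficult} holds, the segment process $(X_t)_{t\geq 0}$ is tight in $C[-\tau,0]$; and under the additional assumption that all solutions exist globally, the Krylov--Bogoliubov procedure (Appendix~\ref{app:D}) then furnishes an invariant measure for~\eqref{eq:SDDEW}, hence a stationary solution. There is no genuine obstacle here, as the entire content is the verification that the present hypotheses trigger Proposition~\ref{thm:interstep}/Corollary~\ref{cor:intervals} and then Theorem~\ref{thm:main_cont}. The only step demanding a moment's care is the space-compatibility remark above, ensuring that bounds posed over $C[-\tau,0]$ suffice to run an argument whose engine is stated over $D[-\tau,0]$.
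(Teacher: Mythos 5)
Your proposal is correct and takes essentially the same route as the paper, which presents this corollary precisely as ``a direct consequence of either Proposition~\ref{thm:interstep} or Corollary~\ref{cor:intervals}'' chained into Theorem~\ref{thm:main_cont}. Your two preliminary checks---that Brownian motion is of class (HDol), and that bounds posed on $C[-\tau,0]$ suffice because the continuous solution's segments never leave $C[-\tau,0]$---are sound diligence that the paper leaves implicit.
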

In order to prove Theorem \ref{thm:main_cont}, we need the following tightness result. The proposition below is obtained by exploiting Kolmogorov's tightness criterion (Theorem \ref{thm:kolm}). It is worth noting that the following proposition requires no additional assumptions on the noise coefficient $b.$

 \begin{proposition}\label{thm:tightness}
Suppose  $X=(X(t))_{-\tau\leq t<\infty}$  is a global solution to \eqref{eq:SDDEW}.  If  the stochastic process $(\sup_{u\in[t-\tau,t]}{\|X_u\|_\infty})_{t \geq \tau}$ is bounded in probability, then the segment process $({X_t})_{t \geq 0}$ is tight in  $C[-\tau,0]$.
\end{proposition}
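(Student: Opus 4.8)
The plan is to establish tightness of $(X_t)_{t\ge 0}$ in $C[-\tau,0]$ by producing, for every $\varepsilon>0$, a single compact set $K\subseteq C[-\tau,0]$ with $\inf_{t\ge 0}\mathbb P(X_t\in K)\ge 1-\varepsilon$. By Arzelà--Ascoli such a $K$ is pinned down by a uniform height bound together with uniform control of the modulus of continuity, and the latter is exactly what Kolmogorov's tightness criterion (Theorem \ref{thm:kolm}) delivers from a moment estimate on the increments $X_t(\theta_2)-X_t(\theta_1)=X(t+\theta_2)-X(t+\theta_1)$ that is uniform in $t$. The obstruction is that $a$ and $b$ are merely locally Lipschitz, so no such moment bound can hold for the true solution; we only have boundedness \emph{in probability} of $\big(\sup_{u\in[t-\tau,t]}\|X_u\|_\infty\big)_{t\ge\tau}=\big(\sup_{u\in[t-2\tau,t]}|X(u)|\big)_{t\ge\tau}$. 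I would therefore localise before invoking Theorem \ref{thm:kolm}.

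Fix $\varepsilon>0$. Using the hypothesis, first choose $R>0$ so large that $\sup_{t\ge\tau}\mathbb P\big(\sup_{u\in[t-2\tau,t]}|X(u)|>R\big)<\varepsilon/2$. Since $a,b$ are locally Lipschitz they are bounded on the ball $\{\|\varphi\|_\infty\le R\}$, say by $C_R<\infty$. For each $t$ introduce the stopping time $\rho_t=\inf\{r\ge t-2\tau:\ |X(r)|>R\}$, the stopped process $Y^{t}(r)=X(r\wedge\rho_t)$, and the event $A_t=\{\sup_{u\in[t-2\tau,t]}|X(u)|\le R\}$. On $A_t$ one has $\rho_t\ge t$, hence $X(r)=Y^{t}(r)$ for all $r\in[t-2\tau,t]$ and in particular $X_t=Y^{t}_t$ as elements of $C[-\tau,0]$. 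Moreover $|Y^{t}(r)|\le R$ for every $r$, so the value $Y^{t}_t(-\tau)=X((t-\tau)\wedge\rho_t)$ is deterministically bounded by $R$; and on $\{r\le\rho_t\}$ one has $\|X_r\|_\infty\le R$, so the stopped integrands $a(X_r)\mathbf 1_{\{r\le\rho_t\}}$ and $b(X_r)\mathbf 1_{\{r\le\rho_t\}}$ are bounded by $C_R$.

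With this bound the increment estimate is routine and, crucially, uniform in $t$. For $-\tau\le\theta_1<\theta_2\le 0$ I would split $Y^{t}(t+\theta_2)-Y^{t}(t+\theta_1)$ into its drift and stochastic parts, bound the drift pathwise by $C_R|\theta_2-\theta_1|$, and apply the Burkholder--Davis--Gundy inequality \cite{DelMey80} followed by Hölder's inequality to the martingale part, obtaining $\mathbb E\,|Y^{t}_t(\theta_2)-Y^{t}_t(\theta_1)|^{4}\le C(R)\,|\theta_2-\theta_1|^{2}$ with $C(R)$ independent of $t$. Since the values at $-\tau$ are uniformly bounded by $R$, Kolmogorov's tightness criterion applied to the family $(Y^{t}_t)_{t\ge\tau}$ yields one compact $K\subseteq C[-\tau,0]$ with $\inf_{t\ge\tau}\mathbb P(Y^{t}_t\in K)\ge 1-\varepsilon/2$. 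Combining the two estimates, using $X_t=Y^{t}_t$ on $A_t$,
\[
\mathbb P(X_t\notin K)\le \mathbb P(A_t^c)+\mathbb P\big(\{Y^{t}_t\notin K\}\cap A_t\big)\le \tfrac{\varepsilon}{2}+\tfrac{\varepsilon}{2}=\varepsilon,\qquad t\ge\tau.
\]
The remaining range $t\in[0,\tau]$ is handled by the identical localisation, now with the single a.s.\ finite random variable $\sup_{u\in[-\tau,\tau]}|X(u)|$ playing the role of the uniform bound, so this family is tight as well; enlarging $K$ to absorb it completes the argument.

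The step I expect to be the genuine obstacle is reconciling the localisation with boundedness \emph{in probability} while keeping every constant independent of the shift $t$: one must pass from the uncontrolled true process to the stopped process $Y^{t}$ precisely on the high-probability event $A_t$, verify that $X_t$ and $Y^{t}_t$ coincide there, and confirm that the BDG/Hölder constants depend only on $R$ (through $C_R$) and on the interval length, never on $t$. Everything else---the Arzelà--Ascoli description of the compacts and the invocation of Theorem \ref{thm:kolm}---is standard once the uniform fourth-moment increment bound for $Y^{t}$ is in place.
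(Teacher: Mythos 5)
Your argument for $t\ge\tau$ is correct and is essentially the paper's own proof: localise so that the coefficients become bounded (local Lipschitz $\Rightarrow$ bounded on $\{\|\varphi\|_\infty\le R\}$), derive a fourth-moment increment bound via Burkholder--Davis--Gundy that is uniform in $t$, invoke Theorem \ref{thm:kolm}, and transfer the compact set back to $X_t$ on the high-probability event $A_t$. The only difference is cosmetic: you implement the localisation with the stopping time $\rho_t$ and the stopped process $Y^t$, whereas the paper multiplies by the indicator $\mathbf 1_{A_{R;t}}$ and invokes the local character of stochastic integrals to replace $b(X_s)$ by $b(X_s)\mathbf 1_{\{\|X_s\|_\infty\le R\}}$; both yield the same estimate. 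One small slip: your claim that $|Y^t(r)|\le R$ for \emph{every} $r$ fails on the event $\{|X(t-2\tau)|>R\}$, where $\rho_t=t-2\tau$ and the stopped process freezes at the possibly large value $X(t-2\tau)$. This is harmless, because condition (i) of Theorem \ref{thm:kolm} only requires the starting values $Y_t^t(-\tau)=X((t-\tau)\wedge\rho_t)$ to be bounded in probability, which they are (they are dominated by $\max\{R,|X(t-2\tau)|\}$ and $(X(t-2\tau))_{t\ge\tau}$ is bounded in probability by hypothesis); but the justification should be stated this way rather than as a deterministic bound.

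The genuine gap is your treatment of $t\in[0,\tau]$. For such $t$ the segment $X_t$ contains a piece of the initial condition: $X_t(\theta)=\Phi(t+\theta)$ for $\theta\le -t$. These increments are \emph{not} given by the drift-plus-stochastic-integral representation, so no localisation can produce a Kolmogorov-type moment bound for them: $\Phi$ is an arbitrary ($\mathcal F_0$-measurable, $C[-\tau,0]$-valued) random variable, e.g.\ a deterministic continuous function that is nowhere H\"older, for which $\mathbb E|\Phi(t_2)-\Phi(t_1)|^\gamma\le K|t_2-t_1|^\delta$ with $\delta>1$ is false for every choice of constants. Hence the ``identical localisation'' does not apply on $[0,\tau]$, and Theorem \ref{thm:kolm} cannot be used there. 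The paper closes this range by an entirely different (and soft) argument: the family $(X_t)_{t\ge 0}$ is stochastically continuous in $C[-\tau,0]$, so Proposition \ref{prop:wow} (continuity of $t\mapsto\mathcal L(X_t)$ into $\mathscr P(C[-\tau,0])$ plus Prokhorov on the compact time interval) gives tightness of the finite-horizon family $(X_t)_{0\le t\le\tau}$. Alternatively, you could argue directly: $X$ has a.s.\ continuous, hence a.s.\ uniformly continuous, paths on $[-\tau,\tau]$, so the sup-norms and moduli of continuity $\omega(X_t,\delta)$, $t\in[0,\tau]$, are dominated by the corresponding a.s.\ finite quantities of $X|_{[-\tau,\tau]}$, and Arzel\`a--Ascoli produces the required compact set. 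Either repair completes your proof; as written, the final step fails.
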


\begin{proof}
Let $R>0$ be arbitrary and define the truncated segment process
\begin{equation}X_t^R(\theta):=X(t+\theta)\mathbf{1}_{\{\|X_r\|_\infty \le R \text{ for all } r\in [t-\tau,t]\}},\quad\mbox{for all } \theta\in [-\tau,0] \mbox{ and } t\ge 0.\end{equation}
Fix now $t\ge \tau$ and $\theta_1,\theta_2\in [-\tau,0]$ with $\theta_2>\theta_1$. Introduce the set 
% \todo{Note that $t\mapsto \|X_t\|_\infty$ is \cadlag/\caglad if $X$ is \cadlag/\caglad. In that case, replace $y_s$ by $y_{s-}$ in both the indicator function as $b$ and you are fine; I think.}
\begin{equation}{{A_{R;t}}}:=\bigl\{\omega\in\Omega\colon\, \|X_r(\omega)\|_\infty\le R \mbox{ for all }r\in [t-\tau,t]\,\bigr\}.\end{equation}
Then $X_t^R=X_t$ on $A_{R;t}$, and
\begin{equation}
\begin{aligned}
\bigl|X_t^R(\theta_2)-X_t^R(\theta_1)\bigr| &=|X(t+\theta_2)-X(t+\theta_1)|\mathbf{1}_{{A_{R;t}}}\\
&\le \int_{t+\theta_1}^{t+\theta_2} |a(X_s)|\mathbf{1}_{{A_{R;t}}}\,\mathrm ds   + \left|\int_{t+\theta_1}^{t+\theta_2} b(X_s)\, \mathrm dW(s)\right| \mathbf{1}_{{A_{R;t}}}.
\end{aligned}\end{equation}
On this set ${A_{R;t}}$,  observe that $b(X_s)$ and $b(X_s)\mathbf{1}_{\{\|X_s\|_\infty\le R\}}$ are equal. Thus, by the local character of stochastic integrals \cite[Thm. VIII.23]{DelMey80}, see also \cite[Thm. II.18]{book:protter}, we obtain
\begin{equation}\int_{t+\theta_1}^{t+\theta_2} b(X_s)\, \mathrm dW(s) = \int_{t+\theta_1}^{t+\theta_2} b(X_s)1_{\{\|X_s\|_\infty\le R\}}\, \mathrm dW(s)\quad\mathbb P\mbox{-a.s.\ on }{A_{R;t}}.\end{equation}
This yields
\begin{equation}
|X_t^R(\theta_2)-X_t^R(\theta_1)| \le \int_{t+\theta_1}^{t+\theta_2} |a(X_s)|\mathbf{1}_{{A_{R;t}}}\,\mathrm ds  +
\left|\int_{t+\theta_1}^{t+\theta_2} b(X_s) \mathbf{1}_{\{\|X_s\|_\infty\le R\}}\mathrm dW(s)\right|.
\end{equation}
Let $B_R := \{\varphi\in C[-\tau ,0] \colon \|\varphi\|_{\infty} \le R \,\}$. Since the functionals $a$ and $b$ are locally Lipschitz, they are bounded on $B_R$,
% \todo{Kunnen we dit trucje niet ook uitvoeren bij het andere bewijs? Maakt de stelling wel echt sterker.
% 
% Local behaviour of stochastic integral Theorem 18, p61-p62 Protter!} 
i.e., there are some $\alpha_R,\beta_R>0$ such that
\begin{equation}|a(\varphi)| \le \alpha_R\quad \hbox{ and }\quad |b(\varphi)| \le \beta_R,\quad\hbox{for all } \varphi \in B_R.\label{eq:beta_R}\end{equation}
Therefore, we have\begin{equation}
\begin{aligned}
\mathbb E \bigl| X_t^R(\theta_2)-X_t^R(\theta_1) \bigr|^4 &\le 
8 \mathbb E \left(\int_{t+\theta_1}^{t+\theta_2} |a(X_s)|\,\mathbf{1}_{A_{R;t}}\,\mathrm ds\right)^4\\
&\qquad\qquad + 8\mathbb E\left(\int_{t+\theta_1}^{t+\theta_2} b(X_s)\mathbf{1}_{\{\|X_s\|\le R\}}\, \mathrm dW(s)\right)^4\\
 &\le 8|\theta_2-\theta_1|^4\alpha^4_R + 8C_4\mathbb E\left( \int_{t+\theta_1}^{t+\theta_2} b(X_s)^2\, \mathrm ds\right)^2\\
 % &\le 8|\theta_2-\theta_1|^4\alpha^4_R + 8C_4|\theta_2-\theta_1|^2\beta_R^4\\
&\le \tilde{C}|\theta_2-\theta_1|^2,
\end{aligned}\end{equation}
with  $\tilde{C}=8\alpha^4_R\tau^2 +8C_4\beta^4_R,$ and  where $C_4>0$ is the constant from the Burkholder--Davis--Gundy inequality \cite[Ch. IV.4]{book:revuz}.
Since $(X_t(-\tau))_{t\ge 0}$ is tight, we conclude that $(X_t^R)_{t\ge 0}$, for any $R>0,$ is tight in $C[-\tau,0]$ thanks to Theorem \ref{thm:kolm}. 

From the latter, we infer that $(X_t)_{t\ge 0}$ is tight. Indeed, let $\varepsilon>0$ be arbitrarily given. Since the process $(\sup_{u\in[t-\tau,t]}\|X_u\|_\infty)_{t\ge \tau}$ is bounded in probability by assumption, there is an $R_\varepsilon>0$ where
\begin{equation}\textstyle\mathbb P(\sup_{u\in[t-\tau,t]}{\|X_u\|_\infty\le R_\varepsilon}) > 1-\varepsilon/2,\quad\mbox{ for all } t\ge \tau.\end{equation}
% Define, for any $R>0$, the set
% \begin{equation}
%     C_R=\{\varphi\in C[-\tau,0]:\|\varphi\|_\infty\leq 
% \end{equation}
Since $(X_t^{R_\varepsilon})_{t\geq 0}$ is tight in $C[-\tau,0]$, there is a compact set $K_\varepsilon\subset C[-\tau,0]$ such that
\begin{equation}
	\mathbb P(X^{R_\varepsilon}_t\in K_\varepsilon)>  1-\varepsilon/2,\quad\mbox{ for all } t\ge 0.
\end{equation}
From this, as $X_t^R=X_t$ on $A_{R;t}$ for any $R>0$, we conclude
% \todo{K bevat 0? Nadenken over ongelijkheden}
% \todo{Onno! Je hebt boundedness van lengte 2 nodig; net als Thrm 3.8, niet? Geen probleem, denk ik, want volgens mij kun je prop 3.3 introduceren voor algemene supremum normen over een vaste lengte, niet noodzakelijk $\tau$.}
\begin{equation}\begin{aligned}
    \mathbb P (X_t\not\in K_\varepsilon)&=\mathbb P(X_t\not\in K_\varepsilon,A_{R_\varepsilon;t})+\mathbb P(X_t\not\in K_\varepsilon,A_{R_\varepsilon;t}^c)\\&\leq \mathbb P(X_t^{R_\varepsilon}\not\in K_\varepsilon)+\mathbb P(A_{R_\varepsilon;t}^c)\\
    &= \mathbb P(X_t^{R_\varepsilon}\not\in K_\varepsilon)\color{black}+\mathbb P(\exists u\in[t-\tau,t]:\|X_u\|_{\infty}> R_\varepsilon)\\
    &\textstyle
    \leq \mathbb P(X_t^{R_\varepsilon}\not\in K_\varepsilon)\color{black}+\mathbb P(\sup_{u\in[t-\tau,t]}\|X_u\|_{\infty}> R_\varepsilon)\\
    &<\varepsilon/2+\varepsilon/2=\varepsilon,
\end{aligned}\end{equation}
for all $t\geq \tau.$ This shows that the partial segment process $(X_t)_{t\geq \tau}$ is tight. Tightness of the finite time horizon process $(X_t)_{0\leq t\leq\tau}$ follows from Proposition \ref{prop:wow}; 
% \todo{Ja toch? Of kan het minder omslachtig?}
the family $(X_t)_{t\geq 0}$ is  stochastically continuous in $C[-\tau,0]$, since $(X(t))_{t\geq 0}$ is a (stochastically) continuous process \cite[Lem. 2.3]{article:reiss}.
\end{proof}

% \begin{corollary}
%     Proposition
% \end{corollary}

% We have found in Proposition \ref{thm:interstep} sufficient conditions for which we find $(\|Y_t\|_\infty)_{t\geq 0}$ is bounded in probability, but is rather restrictive with respect to $a$ and $b$. 

\begin{proof}[Proof of Theorem \ref{thm:main_cont}]
  The result readily follows from Proposition \ref{thm:tightness} and the Krylov--Bogoliubov existence theorem in Appendix \ref{app:D}.% yields the result.
% \todo{That it holds for any continuous local martingale is obtained by means of   a stopping argument together with the Dambis--Dubins--Schwarz time change result for local martingales \cite[Thm. 4.6] {book:karatzas}.}\todo{Is dat waar? Volgens mij wel.} 
\end{proof}

\subsection{Integrable L\'evy processes as integrator}\label{sec:special}
In contrast to the continuous setting,   we will now  show  when the segment process $ (X_t)_{t\geq 0}$ is \textit{partially} tight, i.e., the partial segment  process $(X_t)_{t\geq \tau}$, from time $\tau$ and onwards, is tight in $D[-\tau,0]$.  
% This is sufficient to conclude the existence of invariant measures. 
This turns out to be sufficient to deduce the existence of an invariant measure; we refer to Appendix \ref{app:D} to see what properties no longer hold  for $t<\tau$ when we are in the right-continuous setting.
% \todo{$t\geq \tau$ is sufficient}
% We refer to  for more information. 

% The main result of this section is summarised in the theorem below. 
% We  provide two proofs: one where $M$ is of class (HSqL), which is easier, and one with $M$ in (HIntL).

\begin{theorem}\label{cor:main}
	Assume $M=(M(t))_{t\geq 0 }$ is of class \textnormal{(HIntL)}
 % \footnote{Note that the Krylov--Bogoliubov procedure holds for integrators in (HSpec), but our approach proving tight solution segments in section \ref{sec:special} and Appendix \ref{app:C} is limited to integrators in (HIntL). In fact, in section \ref{sec:special} and Appendix \ref{app:C} we discuss that the class of integrators do not need to be L\'evy processes per se. }
 and suppose $X=(X(t))_{-\tau\leq t<\infty}$ is bounded in probability and   a global solution to   \eqref{eq:SDDE2.2}. 
 % In addition, assume that all other solutions exist globally as well.
 If the functional $b$ is  bounded by some $\beta\geq 0,$ i.e.,
 \begin{equation}
	b(\varphi)^2\leq \beta^2,\quad\text{for all }\varphi\in D[-\tau,0],\label{eq:bounded-b}
\end{equation}
 and  if the stochastic process $\big(\sup_{u\in[t-\tau,t]}|a(X_u)|\big)_{t\geq \tau}$ is bounded in probability, i.e.,
	\begin{equation} 
		\lim _{R \rightarrow \infty} \sup _{t \geq \tau}\mathbb  P\left(\sup _{u\in [t-\tau,t]} |  a(X_u) |>R\right)=0, \label{eq:difficult2}
	\end{equation}
	 then  the  partial segment process $(X_t)_{t\geq \tau}$ is tight in $  D[-\tau,0]$. 

  In addition, if all other solutions to \eqref{eq:SDDE2.2} 
  % with $\Phi$ independent of $M$ 
  exist globally, then problem \eqref{eq:SDDE2.2} admits an invariant measure, hence there is at least one stationary solution. 
  
         % In addition, if we assume that all other solutions exist globally as well, then problem \eqref{eq:SDDE2.2} admits an invariant measure, hence there is at least one stationary solution.
         
         % Moreover, problem \eqref{eq:SDDE2.2} attains at least one stationary solution. 
  % \todo{Misschien kan $a$ weggehaald worden; door probability op te splitsen! Tot nu toe nog geen succes}
  % Condition  \eqref{eq:difficult} can be replaced by the following one-sided condition:  there is a non-negative constant $\alpha_{\textnormal{max}}\geq 0$ $($resp., $ \alpha_{\textnormal{min}}\geq 0 )$ such that  
	 % \begin{equation}\label{subtle1}
	 % 	a(\varphi,\,\cdot\,)\leq \alpha_{\textnormal{max}}\quad\big(\text{resp., } a(\varphi,\,\cdot\,)\geq -\alpha_{\min}\big),\quad\text{for all }\varphi\in C[-\tau,0], 
	 % \end{equation}
  % if $M=(M(t))_{t\geq 0}$ is a Brownian motion, or more generally, a   continuous (local) martingale.
\end{theorem}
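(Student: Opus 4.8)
The plan is to establish tightness of the family $\{\mathrm{Law}(X_t) : t\geq\tau\}$ of $D[-\tau,0]$-valued random variables in the Skorokhod topology, and then to feed this partial tightness into the Krylov--Bogoliubov existence theorem of Appendix \ref{app:D}. The structural outline mirrors the Brownian case (Theorem \ref{thm:main_cont} and Proposition \ref{thm:tightness}), but the decisive difference is that Kolmogorov's moment criterion is no longer available: the paths of $X_t$ genuinely jump, so a bound on increments of the form $\mathbb E|X_t(\theta_2)-X_t(\theta_1)|^4\leq \tilde C|\theta_2-\theta_1|^2$ would force continuity and cannot hold. Instead I would argue through the semimartingale characteristics of the segment, following the route sketched in Appendix \ref{app:C} and inspired by \cite{article:reiss}.

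First I would regard, for each fixed $t\geq\tau$, the map $\theta\mapsto X_t(\theta)=X(t+\theta)$ on $[-\tau,0]$ as a semimartingale with respect to the shifted filtration $(\mathcal F_{t+\theta})_{\theta\in[-\tau,0]}$. Reading off the canonical decomposition from the integral equation \eqref{eq:interpret} gives
\begin{equation}
X_t(\theta)=X(t-\tau)+\int_{t-\tau}^{t+\theta}a(X_u)\,\mathrm du+\int_{(t-\tau,\,t+\theta]}b(X_{u-})\,\mathrm dM(u),\quad \theta\in[-\tau,0].
\end{equation}
Hence the (truncated) characteristics of $X_t$ are controlled pathwise by those of $M$ together with $a$ and $b$ evaluated along the trajectory: the total variation of the drift is at most $\tau\sup_{u\in[t-\tau,t]}|a(X_u)|$, the Gaussian part is dominated by $\tau\beta^2$ times the diffusion coefficient of $M$, and the compensated jump part is governed by $\beta$ together with the Lévy measure of $M$, which is integrable under (HIntL).

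Next I would invoke the characteristic-based tightness criterion of Appendix \ref{app:C}, whose hypotheses are a marginal condition together with boundedness-in-probability conditions on the characteristics. The marginal condition, tightness of $(X_t(-\tau))_{t\geq\tau}=(X(t-\tau))_{t\geq\tau}$ in $\mathbb R$, is immediate since $X$ is bounded in probability. The drift-variation bound $\tau\sup_{u\in[t-\tau,t]}|a(X_u)|$ is bounded in probability uniformly in $t$ by the standing hypothesis \eqref{eq:difficult2}; the Gaussian part is uniformly bounded because $b$ is bounded by \eqref{eq:bounded-b}; and the jump intensity is uniformly controlled through $\beta$ together with the characteristics of the integrable Lévy process $M$. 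Should the criterion require genuinely uniform rather than in-probability control of the drift, I would localize exactly as in Proposition \ref{thm:tightness}: introduce the $\mathcal F_t$-measurable sets $A_{R;t}=\{\sup_{u\in[t-\tau,t]}|a(X_u)|\leq R\}$, whose complements satisfy $\sup_{t\geq\tau}\mathbb P(A_{R;t}^c)\to 0$ as $R\to\infty$ by \eqref{eq:difficult2}, establish tightness of the truncated segments $(X_t^R)_{t\geq\tau}$ for each fixed $R$, and close with the $\varepsilon/2+\varepsilon/2$ transfer to recover tightness of $(X_t)_{t\geq\tau}$ itself.

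The main obstacle is precisely the jump part: one must verify the exact form of the Skorokhod tightness criterion so that jumps neither accumulate nor split in the limit, which is where integrability of the Lévy measure and boundedness of $b$ enter crucially, and this is also why the statement only claims tightness from time $\tau$ onwards---for $t<\tau$ the segment still carries the initial data $\Phi$ across the gluing point at $0$ and the relevant continuity properties fail in the right-continuous setting (cf.\ Appendix \ref{app:D}). Once partial tightness of $(X_t)_{t\geq\tau}$ is in hand, and using the hypothesis that every solution exists globally, the Krylov--Bogoliubov procedure of Appendix \ref{app:D} produces an invariant measure on $D[-\tau,0]$; pushing it forward under the evaluation map $\varphi\mapsto\varphi(0)$ then yields the asserted stationary solution.
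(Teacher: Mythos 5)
Your proposal is correct, and its engine is the same as the paper's: read off the semimartingale structure of the segments from \eqref{eq:interpret}, feed the resulting characteristics into the tightness criterion of Appendix \ref{app:C} (Theorem \ref{thm:semi}), and close with Krylov--Bogoliubov. The one genuine difference is the decomposition. The paper never applies the criterion to $X_t$ itself: it first subtracts the anchor value, setting $Y_t(s)=X(t+s)-X(t-\tau)$, proves tightness of $(Y_t)_{t\geq\tau}$ (Proposition \ref{prop:main}), and then recovers tightness of $(X_t)_{t\geq\tau}$ by writing $X_t=Y_t+Z_t$ with $Z_t\equiv X(t-\tau)$ a constant process which is $C$-tight because $X$ is bounded in probability (Lemma \ref{lem:need2show} combined with Lemma \ref{lem:sum}). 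You instead apply the criterion directly to $X_t$, absorbing the anchor into hypothesis (i) of Theorem \ref{thm:semi}---tightness in $\mathbb R$ of the starting values $(X(t-\tau))_{t\geq\tau}$---which holds by the same boundedness-in-probability assumption. This is legitimate: $X_t$ and $Y_t$ differ by an $\mathcal F_{t-\tau}$-measurable constant, so their characteristics coincide, and conditions (ii)--(iii) are verified exactly as in the paper's proof of Proposition \ref{prop:main}: boundedness of $b$ handles the big-jump condition and the Gaussian part, integrability of the L\'evy measure under (HIntL) makes the truncation-correction constant finite (your description of the drift variation as $\tau\sup_u|a(X_u)|$ omits these correction terms, but you account for them in the jump discussion), and the in-probability bound \eqref{eq:difficult2} gives $C$-tightness of the integrated-drift family via Lemma \ref{lem:reduction}. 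Your version buys a slightly shorter proof, since Lemma \ref{lem:need2show} becomes superfluous, while the paper's subtraction keeps Proposition \ref{prop:main} in the zero-anchored form of \cite{article:reiss}. One remark: your fallback localization on the sets $A_{R;t}$ is not needed for this theorem---Lemma \ref{lem:reduction} already converts the in-probability control \eqref{eq:difficult2} into the required $C$-tightness; the truncation device is precisely what the paper reserves for Theorem \ref{cor:cor:main}, where \eqref{eq:difficult3} replaces \eqref{eq:bounded-b} and \eqref{eq:difficult2}.
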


% Subsection   \ref{Sec2.2.3} up to and including  subsection \ref{Sec2.2.5} deal with tightness.
In case of Brownian noise, we can compare the slightly different bounded in probability criteria in Theorem \ref{thm:main_cont} and  Theorem \ref{cor:main}.  If the functional $a$ satisfies the linear growth condition, we observe that \eqref{eq:difficult} implies \eqref{eq:difficult2}, but generally speaking such an implication  does not hold. Nevertheless, it turns out that condition \eqref{eq:difficult3} is also sufficient|again|for tight segments in the case of  (HIntL). Clearly, the solution $X$ is bounded in probability when \eqref{eq:difficult3} holds.
% The proofs of the theorem above and the following corollary are  to be found at the end of this section.
% \begin{theorem}\label{cor:cor:main}
%     The conclusions in Theorem \ref{cor:main} still hold if condition \eqref{eq:bounded-b} is replaced by  \eqref{eq:difficult}. In particular, condition \eqref{eq:difficult} implies \eqref{eq:difficult2} in the case where $a$ is of linear growth.
% \end{theorem}
\begin{theorem}\label{cor:cor:main}
	Assume $M=(M(t))_{t\geq 0 }$ is of class \textnormal{(HIntL)}
 % \footnote{Note that the Krylov--Bogoliubov procedure holds for integrators in (HSpec), but our approach proving tight solution segments in section \ref{sec:special} and Appendix \ref{app:C} is limited to integrators in (HIntL). In fact, in section \ref{sec:special} and Appendix \ref{app:C} we discuss that the class of integrators do not need to be L\'evy processes per se. }
 and suppose $X=(X(t))_{-\tau\leq t<\infty}$ is 
 % bounded in probability and 
 a global solution to 
 % \todo{bnd in probability superfluous condition! nessecary condition as well!}   
 \eqref{eq:SDDE2.2}. 
 % In addition, assume that all other solutions exist globally as well.
 If the stochastic process $\big(\sup_{u\in[t-\tau,t]}\|X_u\|_\infty\big)_{t\geq \tau}$ is bounded in probability, i.e.,
	\begin{equation} 
		\lim _{R \rightarrow \infty} \sup _{t \geq \tau}\mathbb  P\left(\sup _{u\in [t-\tau,t]} \|  X_u \|_\infty>R\right)=0, \label{eq:difficult3}
	\end{equation}
	 then  the  partial segment process $(X_t)_{t\geq \tau}$ is tight in $  D[-\tau,0]$.

   In addition, if all other solutions to \eqref{eq:SDDE2.2} 
   % with $\Phi$ independent of $M$ 
   exist globally, then problem \eqref{eq:SDDE2.2} admits an invariant measure, hence there is at least one stationary solution. 
   
         % In addition, if we assume that all other solutions exist globally as well, then problem \eqref{eq:SDDE2.2} admits an invariant measure, hence there is at least one stationary solution.
 \end{theorem}

The goal is to investigate when $ (X_t)_{t\geq \tau}$ is tight in the Skorokhod space $D[-\tau,0]$. The following lemma shifts the problem and introduces a different family of processes of which we are expected to  show its tightness. This approach is entirely inspired by \cite{article:reiss}. 
% \todo{gewoon bounded!}

% Completely in line with \cite{article:reiss},

    % Due to the fact that we are only  interested  in the existence (and thus not the uniqueness) of an invariant measure, we may simply choose an initial condition. Many choices for $\Phi$ are compatible with the approach below, such as $\Phi=0,$ but any $\Phi$ bounded in probability is good.

 \begin{lemma}\label{lem:need2show}
	Suppose $X=(X(t))_{-\tau\leq t<\infty}\in \mathbb D[-\tau,\infty)$  is a stochastic process which is bounded in probability. Then the partial segment process $(X_t)_{t\geq \tau}$, from time $\tau$ and  onwards, is tight in $D[-\tau,0]$ whenever the family 
	\begin{equation}
		\big(X(t+s)-X(t-\tau),s\in[-\tau,0]\big)_{t\geq \tau}\label{eq:need2show222}
	\end{equation}
	is  tight in $ D[-\tau,0].$
\end{lemma}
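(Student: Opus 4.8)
The plan is to exploit the elementary decomposition of each segment into the given shifted process plus a single constant correction. Writing $\mathbf{1}$ for the function identically equal to $1$ on $[-\tau,0]$, and recalling that $X_t(s)=X(t+s)$ and in particular $X_t(-\tau)=X(t-\tau)$, one has for every $t\geq\tau$ the identity
\[
X_t=\tilde X_t+X(t-\tau)\,\mathbf{1},\qquad \tilde X_t(s):=X(t+s)-X(t-\tau),\ \ s\in[-\tau,0].
\]
Thus the segment $X_t$ differs from the member $\tilde X_t$ of the family \eqref{eq:need2show222} only by the random constant function $X(t-\tau)\,\mathbf{1}$. The whole argument then reduces to showing that tightness is preserved under adding this constant, which I would organise in three steps: passing to joint tightness of the pair, and then transporting it through the addition map.

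First I would observe that the real-valued family $(X(t-\tau))_{t\geq\tau}=(X(u))_{u\geq 0}$ is bounded in probability, being a subfamily of the process $X$. Since closed bounded intervals are compact in $\mathbb R$, boundedness in probability is exactly tightness there, so $(X(t-\tau))_{t\geq\tau}$ is a tight family of $\mathbb R$-valued random variables. By hypothesis $(\tilde X_t)_{t\geq\tau}$ is tight in $D[-\tau,0]$. I would then upgrade these two marginal tightness statements to joint tightness of $\big(\tilde X_t,\,X(t-\tau)\big)_{t\geq\tau}$ in the product space $D[-\tau,0]\times\mathbb R$: given $\varepsilon>0$, choose a compact $K\subseteq D[-\tau,0]$ and a compact interval $J\subseteq\mathbb R$ each missed with probability at most $\varepsilon/2$ uniformly in $t$; then $K\times J$ is compact and $\mathbb P\big((\tilde X_t,X(t-\tau))\notin K\times J\big)\leq \mathbb P(\tilde X_t\notin K)+\mathbb P(X(t-\tau)\notin J)<\varepsilon$ for all $t\geq\tau$.

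Finally I would push the tight joint family through the addition map $\Sigma:D[-\tau,0]\times\mathbb R\to D[-\tau,0]$, $(\varphi,c)\mapsto\varphi+c\,\mathbf{1}$, using that a continuous image of a tight family is tight (if $\Sigma$ is continuous and $K\times J$ is compact, then $\Sigma(K\times J)$ is compact and $\{\Sigma(\tilde X_t,X(t-\tau))\notin\Sigma(K\times J)\}\subseteq\{(\tilde X_t,X(t-\tau))\notin K\times J\}$), together with the identity $X_t=\Sigma(\tilde X_t,X(t-\tau))$. This yields tightness of $(X_t)_{t\geq\tau}$ in $D[-\tau,0]$.

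The main obstacle is precisely the continuity of $\Sigma$ for the Skorokhod topology: addition of two càdlàg functions is \emph{not} Skorokhod-continuous in general, since nearby but distinct jump times may be destroyed in the limit. What rescues us here is that $c\mapsto c\,\mathbf{1}$ takes values in the continuous functions, and adding a continuous function is a Skorokhod-continuous operation. Concretely, if $\varphi_n\to\varphi$ in $D[-\tau,0]$ via strictly increasing continuous time-changes $\lambda_n$ of $[-\tau,0]$ with $\sup_s|\lambda_n(s)-s|\to 0$ and $\sup_s|\varphi_n(\lambda_n(s))-\varphi(s)|\to 0$, and $c_n\to c$ in $\mathbb R$, then the \emph{same} $\lambda_n$ witness $\varphi_n+c_n\mathbf{1}\to\varphi+c\,\mathbf{1}$, because $\sup_s\big|(\varphi_n+c_n\mathbf{1})(\lambda_n(s))-(\varphi+c\,\mathbf{1})(s)\big|\leq\sup_s|\varphi_n(\lambda_n(s))-\varphi(s)|+|c_n-c|\to 0$. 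Hence $\Sigma$ is jointly continuous, completing the argument.
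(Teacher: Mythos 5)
Your proof is correct, and it reaches the conclusion by a route that differs from the paper's in the key step. The decomposition itself is the same: the paper also writes $X_t=(X_t-Z_t)+Z_t$ with $Z_t(s):=X(t-\tau)$ the constant segment, exactly your $\tilde X_t + X(t-\tau)\,\mathbf{1}$. Where you diverge is in how the sum of a tight family and a family of random constants is shown to be tight. The paper first notes that $(Z_t)_{t\geq 0}$, being a family of constant (hence continuous) segments that is tight in $C[-\tau,0]$, is $C$-tight by Corollary \ref{cor:Ctight}, and then invokes the cited result Lemma \ref{lem:sum} (Jacod--Shiryaev, Cor.\ VI.3.33): the sum of a $C$-tight family and a tight family in $D[-\tau,0]$ is tight. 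You instead prove the needed preservation statement from scratch: joint tightness of the pair $(\tilde X_t, X(t-\tau))$ in the product space via a union bound over product compacts, followed by the continuous-mapping argument for $\Sigma(\varphi,c)=\varphi+c\,\mathbf{1}$, whose Skorokhod continuity you verify directly by reusing the same time changes $\lambda_n$ (legitimate precisely because constants commute with composition, $(c\,\mathbf{1})\circ\lambda_n=c\,\mathbf{1}$). In effect you have re-proved, in the special case of constant summands, the content of Lemma \ref{lem:sum}; your observation that generic addition is \emph{not} Skorokhod-continuous and that continuity of the added functions is what rescues the argument is exactly the reason the paper must pass through $C$-tightness. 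What the paper's route buys is brevity and generality, since Lemma \ref{lem:sum} handles arbitrary $C$-tight perturbations and is already set up in the appendix; what your route buys is a self-contained, elementary argument that makes the mechanism transparent without appealing to the $C$-tightness machinery at all.
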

 	\begin{proof}
 	 Define $Z_{t}(s):=X(t-\tau)$ for all $s \in[-\tau, 0], t\geq 0.$ We subsequently obtain that the family  $(Z_{t})_{t \geq 0}$ of constant processes on $[-\tau,0]$ is tight in $  C[-\tau, 0]$, since $X$ is bounded in probability for all time
   % \footnote{Hence, the presumption $x_0=0$ can be replaced by $\Phi$ bounded in probability, as mentioned previously. }
   (i.e., on $[-\tau,\infty)$). As a result of Corollary \ref{cor:Ctight}, the family $(Z_{t})_{t \geq 0}$ is $C$-tight.
 	 
 	We proceed by assuming   the family in \eqref{eq:need2show222} is tight in $  D[-\tau,0]$. In other words,  we have  that $(X_t-Z_t)_{t\geq \tau}$  is tight in $  D[-\tau, 0]$. The sum $(\left(X_{t}-Z_{t}\right)+Z_{t})_{t\geq \tau}$ is also tight in $  D[-\tau, 0]$, which is due to Lemma \ref{lem:sum}. Therefore $(X_t)_{t\geq \tau}$ is tight in $  D[-\tau,0]$. 
 	\end{proof}
Our new objective is thus to show that the family  \eqref{eq:need2show222} is tight in $D[-\tau,0]$. In the remainder of this section,  we return  to the notation in  {\S}\ref{sec:existunique} to improve readability.  Let $X$ be a global solution to equation \eqref{eq:SDDE2.2}, then $X$ is in $\mathbb D[-\tau,\infty)$ and satisfies
\begin{equation} \label{eq:formal}
	X(t)=\Phi(0)+ \dint0t{\mathbf A(X)(s)}s+\dint0t{\mathbf B(X)(s-)}M(s),\quad t\geq 0,
\end{equation}
where we write $\mathbf A=\mathbf A_{\Phi}$ and $\mathbf B=\mathbf B_{\Phi}$;  we drop the dependence of  the initial condition.

% Under the presumption that the volatility is uniformly bounded, we obtain tightness if the sufficient condition 
% \begin{equation}
% 		   	\lim _{R \rightarrow \infty} \sup _{t \geq \tau}\mathbb  P\left(\sup _{u\in [t-\tau,t]} | \mathbf A(X)(u) |>R\right)=0\label{eq:conditionA}
% 		   \end{equation}
%      is satisfied. We   provide two distinct proofs, where 
     % \todo{Compactify, anders, defineer gewoon nieuwe klasse. Beide bewijzen in dit hoofdstuk!}\begin{enumerate}[~~~~(1)]
         % \item  $M=(M(t))_{t\geq 0}$  is a square integrable martingale\footnote{Not necessarily  a L\'evy process, so actually the result in Proposition \ref{prop:main}, hence also Theorem \ref{cor:main},   holds   for somewhat more general square integrable processes as well. \color{red} OR DOES THIS IMPLY LEVY PROCESS?} such that the predictable quadratic variation process $\langle M\rangle=(\langle M\rangle(t))_{t\geq 0}$ satisfies $\langle M\rangle(t)= \lambda t,$ for some $\lambda > 0$;
         % \item  $M=(M(t))_{t\geq 0}$ is assumed to be of class (HIntL).%\todo{Mogelijk wel restricten tot Levy process met eindige eerste moment.}
     % \end{enumerate} Proposition \ref{prop:main}  is stated in  its full generality, but it will be proven in this section for processes of type (1) only. Recall that square integrable L\'evy processes satisfy  property (1). This particularly implies the statement holds true for processes in (HJudi) as well, since their predictable finite variation part is directly proportional to $t$.
     
We  provide two different proofs of the proposition below. The semimartingale $M=(M(t))_{t\geq 0}$ is assumed to be of class (HSqL) in one of those proofs.     Showing this result under the less restrictive hypothesis (HIntL) is rather advanced; it makes use of semimartingale characteristics (see Appendix \ref{app:C}) and the proof is to a large extend in line with the proof of \cite[Prop. 4.3]{article:reiss}.
% We refer to Appendix \ref{app:C} for the proof and the essentials regarding semimartingale characteristics.
     % In line with subsection \ref{app:C}, a Lévy process $L$ needs to be a special semimartingale, hence Proposition \ref{prop:main} also holds for those Lévy processes with only a finite first moment.

% For notational convience, we stick with the notation in section \ref{sec:existunique}. More explicitly, the solution $X$ is in $\mathbb D[-\alpha,\infty)$ and satisfies
% \begin{equation} \label{eq:formal}
% 	X(t)= \dint0t{\mathbf A_0(X)(s)}s+\dint0t{\mathbf B_0(X)(s-)}M(s),\quad t\geq 0,
% \end{equation}
% under the presumption $X$ is a global solution, i.e., when the explosion time $T_\infty$ satisfies $T_\infty=\infty$ $\mathbb P$-almost surely.

\begin{proposition}\label{prop:main} Assume $M=(M(t))_{t\geq 0 }$ is of class \textnormal{(HIntL)} and suppose   $X=(X(t))_{-\tau\leq t<\infty}$  is a global solution to  \eqref{eq:SDDE2.2} and  bounded in probability. Then the family
		\begin{equation}
		\big(X(t+s)-X(t-\tau),s\in[-\tau,0]\big)_{t\geq \tau}\label{eq:need2show2}
	\end{equation}
		   is tight in $  D[-\tau,0]$ if  $\mathbf B$ is  bounded and  
$
		   	\lim _{R \rightarrow \infty} \sup _{t \geq \tau}\mathbb  P(\sup _{u\in [t-\tau,t]} | \mathbf A(X)(u) |>R)=0$
     holds.
	
\end{proposition}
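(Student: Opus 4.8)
The plan is to exploit the additive structure of the increment. Fixing $t\geq\tau$ and $s\in[-\tau,0]$, equation \eqref{eq:formal} gives
\begin{equation}
X(t+s)-X(t-\tau)=\int_{t-\tau}^{t+s}\mathbf A(X)(u)\,\mathrm du+\int_{(t-\tau,t+s]}\mathbf B(X)(u-)\,\mathrm dM(u)=:A^t(s)+B^t(s),
\end{equation}
so the family in \eqref{eq:need2show2} splits as $(A^t)_{t\geq\tau}+(B^t)_{t\geq\tau}$. Since addition is not continuous on the Skorokhod space, I cannot simply add two tight families; instead I would show that the drift family $(A^t)_{t\geq\tau}$ is $C$-tight and the stochastic-integral family $(B^t)_{t\geq\tau}$ is tight in $D[-\tau,0]$, and then invoke Lemma \ref{lem:sum} to conclude that the sum, hence \eqref{eq:need2show2}, is tight. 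Crucially, the integration windows $(t-\tau,t+s]$ all have length at most $\tau$, which is what lets every estimate below be made uniform in $t\geq\tau$.

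For the drift part, observe that $A^t$ is continuous with $A^t(-\tau)=0$, and for $-\tau\leq\theta_1\leq\theta_2\leq0$ we have
\begin{equation}
\bigl|A^t(\theta_2)-A^t(\theta_1)\bigr|\leq|\theta_2-\theta_1|\sup_{u\in[t-\tau,t]}|\mathbf A(X)(u)|.
\end{equation}
By hypothesis the family $\bigl(\sup_{u\in[t-\tau,t]}|\mathbf A(X)(u)|\bigr)_{t\geq\tau}$ is bounded in probability, so these random Lipschitz constants are tight and the modulus of continuity of $A^t$ is uniformly controllable in probability; together with the deterministic anchor $A^t(-\tau)=0$, the Arzelà--Ascoli characterisation of compactness in $C[-\tau,0]$ (in the spirit of the argument in Proposition \ref{thm:tightness} and Corollary \ref{cor:Ctight}) shows $(A^t)_{t\geq\tau}$ is $C$-tight.

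The heart of the matter is the tightness of the martingale-type family $(B^t)_{t\geq\tau}$, whose integrand is bounded by $\beta$ uniformly. Under the stronger hypothesis \textnormal{(HSqL)} I would argue by $L^2$ estimates: the Burkholder--Davis--Gundy inequality \cite[Ch. VII]{DelMey80}, the bound $|\mathbf B|\leq\beta$, and linear growth of the bracket of a square-integrable Lévy process yield, for $\theta_1<\theta_2$,
\begin{equation}
\mathbb E\bigl|B^t(\theta_2)-B^t(\theta_1)\bigr|^2\leq 4\beta^2\,\mathbb E\bigl([M](t+\theta_2)-[M](t+\theta_1)\bigr)\leq C\,|\theta_2-\theta_1|,
\end{equation}
uniformly in $t$. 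Because $M$ may jump, Kolmogorov's criterion (used in the Brownian case) is unavailable here; instead I would verify a jump-robust tightness criterion in $D[-\tau,0]$, either Aldous's stopping-time criterion---showing $B^t(\sigma+\delta)-B^t(\sigma)\to0$ in probability uniformly in $t$ as $\delta\downarrow0$, which follows from the increment bound above---or a moment criterion controlling the product of increments on either side of a point; the anchor $B^t(-\tau)=0$ provides pointwise tightness.

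Under the weaker hypothesis \textnormal{(HIntL)} the $L^2$ route is closed, and this is the step I expect to be the main obstacle. Following \cite[Prop. 4.3]{article:reiss}, I would pass to the semimartingale characteristics of $B^t$ (Appendix \ref{app:C}): decomposing the integrable Lévy integrator into its drift, Gaussian part, and compensated jump measure, the integral $B^t$ inherits characteristics in which $\mathbf B(X)(u-)$ appears multiplicatively and is bounded by $\beta$. One then checks the Jacod--Shiryaev tightness conditions for semimartingales---boundedness in probability of the first characteristic over windows of length $\tau$, analogous control of the second characteristic through $[M]$, and the truncated-jump bound $\sup_t\mathbb E\int_{(t-\tau,t]}\int(|\mathbf B(X)(u-)z|^2\wedge1)\,\nu(\mathrm dz)\,\mathrm du<\infty$---each uniform in $t$ precisely because the windows have fixed length and $|\mathbf B|\leq\beta$. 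Integrability (rather than square integrability) of $\nu$ is exactly what forces this characteristics argument in place of the elementary estimate. Combining the $C$-tightness of $(A^t)_{t\geq\tau}$ with the $D$-tightness of $(B^t)_{t\geq\tau}$ via Lemma \ref{lem:sum} then gives tightness of \eqref{eq:need2show2}.
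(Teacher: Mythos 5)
Your proposal is correct and follows essentially the paper's own strategy: split the increment via \eqref{eq:formal} into drift and stochastic-integral parts, reduce tightness of the drift family to the boundedness-in-probability hypothesis on $\sup_{u\in[t-\tau,t]}|\mathbf A(X)(u)|$ (the paper's Lemma \ref{lem:reduction}), control the noise part through boundedness of $\mathbf B$ using the Jacod--Shiryaev machinery (Theorem \ref{thm:squaretight} under (HSqL), semimartingale characteristics and Theorem \ref{thm:semi} under (HIntL)), and recombine with Lemma \ref{lem:sum}. The only organisational difference is that the paper's (HIntL) proof applies Theorem \ref{thm:semi} to the full increment $Y_t$ — folding the drift into the first characteristic and strongly majorising $a_{Y_t}$ by the process $A_{Y_t}$ in \eqref{eq:AYt} via Lemma \ref{lem:increasing} — whereas you apply the characteristics criterion to the stochastic integral alone (where all bounds are deterministic) and re-attach the $C$-tight drift family at the end; both variants rest on the same lemmas and are equally valid, and your Aldous-criterion suggestion for the (HSqL) case is a workable substitute for the paper's predictable-quadratic-variation argument.
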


\begin{proof}[Proof of Proposition \ref{prop:main} assuming \textnormal{(HSqL)}] It suffices to assume  $M=(M(t))_{t\geq 0}$ is a  martingale, as the predictable finite variation part of a process in (HSqL) is directly propertional to $t.$ In fact, the only thing we will need now is that  the predictable quadratic variation process $\langle M\rangle=(\langle M\rangle(t))_{t\geq 0}$ satisfies $\langle M\rangle(t)= \lambda t,$ for some $\lambda > 0$.
% We restrict ourselves to processes as in (1). For a proof where $M=(M(t))_{t\geq 0}$ is a special semimartingale, we refer to Appendix \ref{app:C}.
We are led to consider the family $(Y_t)_{t\geq \tau}$, where
 \begin{equation} 
 \begin{aligned}
 	Y_{t}(s) &=X(t+s)-X(t-\tau) \\
 	&=\int_{t-\tau}^{t+s}\mathbf A(X)(u) \,\mathrm{d} u+\int_{(t-\tau,t+s]} \mathbf B(X)(u-) \,\mathrm{d} M(u),
 \end{aligned}
\end{equation}
for $s \in[-\tau,0].$ Since $X\in\mathbb D[-\tau,\infty)$, we obtain that the  $Y_t$  are $D[-\tau,0]$-valued random variables; see Remark \ref{remark:segment}. Let us recall $X$ is a semimartingale with respect to the filtered probability space $(\Omega,\mathcal F,\mathbb F,\mathbb P)$  with $\mathbb F=(\mathcal F_s)_{-\tau\leq s<\infty}$. Introduce now for every  $t\geq \tau$  the filtration
$
	\mathbb F_t=(\mathcal F_s)_{s\in[t-\tau,t]} 
$ and observe that a segment $Y_t$ can be seen as semimartingale on $[-\tau,0]$ adapted to  $\mathbb F_t$.

Thanks to Lemma \ref{lem:sum}, it suffices to show that the families $(J_t)_{t\geq \tau}$ and $(I_t)_{t\geq \tau}$, defined by
% WRONG: \todo{even nadenken of $J_t$ met absolute waarde moet. Namelijk, zonder is het geen non-decreasing process. Gebruik direct lema 25.}
\begin{equation}
	 J_{t}(s)=\int_{t-\tau}^{t+s} \mathbf A(X)(u)  \mathrm{d} u\quad\text{and}\quad   I_{t}(s)=\int_{(t-\tau,t+s]}\textbf{B}(X)(u-) \,\mathrm{d} M(u), \quad  s \in[-\tau,0],\label{eq:It-Jt}
\end{equation} 
are tight families, as  all the $J_t$ are $C[-\tau,0]$-valued random variables; see Corollary \ref{cor:Ctight}. 
% that strongly majorises the first term of the process $Y_t$. 
We obtain ($C$-)tightness of $(J_t)_{t\geq \tau}$ in case the following holds:
	\begin{equation} 
	\lim _{R \rightarrow \infty} \sup _{t \geq \tau}\mathbb  P\left(\sup _{u\in [t-\tau,t]} | \mathbf A(X)(u) |>R\right)=0,
\end{equation}	
which is  immediate from Lemma \ref{lem:reduction}.
 
 Consequently, we are left with showing that $(I_t)_{t\geq \tau}$ is a tight family. 
 % Only now we are going to  be exploiting the fact that $M=(M(t))_{t\geq 0}$  is a square integrable martingale. 
 We have  that $I_t$ is a square integrable martingale on $(\Omega,\mathcal F,\mathbb F_t,\mathbb P)$. The predictable quadratic variation of $(I_t(s))_{s\in[-\tau,0]}$ reads
% \begin{equation}
% 	[I_t](s)=\int_{t-\tau}^{t+s}|\textbf{B}_0(X)(u-)|^2\,\mathrm d[L](s),\quad s\in[-\tau,0].
% \end{equation}
%Typically, in terms of compensators, i.e., predictable quadratic variations, we have\mark{Verwerk dit in Chapter 1!}
 \begin{equation}
	\langle I_t\rangle (s)=\int_{t-\tau}^{t+s}|\textbf{B}(X)(u-)|^2\,\mathrm d\langle M\rangle (s),\quad s\in[-\tau,0].
\end{equation}
 In accordance with Theorem \ref{thm:squaretight}, we are expected to   prove that  the family of predictable quadratic variations $(	\langle I_t\rangle )_{t\geq \tau}$ is $C$-tight. Since the functional   $\mathbf B$ is  bounded, say by $\beta\geq 0$, we find
\begin{align}
	\langle I_t\rangle(s) \leq  \beta^2[\langle M\rangle(t+s) -\langle M\rangle (t-\tau)]= \lambda \beta^2(s+\tau).
\end{align}
 % where, by assumption, we have $\langle M\rangle(u)=\lambda u,$ $u\geq 0.$   
 Each process  $  \langle I_t \rangle $ gets strongly majorised by the same continuous process $s\mapsto \lambda \beta^2(s+\tau)$. According  to Lemma \ref{lem:increasing}, it now suffices to prove $C$-tightness of the continuous-time family $(Q_t)_{t\geq \tau}$, where each $Q_t$ equals the $t$-independent process $s\mapsto \lambda \beta^2(s+\tau)$. The $C$-tightness of $(Q_t)_{t\geq \tau}$ will follow from tightness, due to  Corollary \ref{cor:Ctight}, and tightness is  trivial due to the fact that a family consisting of a single measure only is tight in a complete separable metric space; see Appendix \ref{Sec2.2.3}.
\end{proof}
 \begin{proof}[Proof of Proposition \ref{prop:main}] 
 % This proof is   inspired by a similar result  in \cite{article:reiss}. 
 As in the previous proof, we consider  the family $(Y_t)_{t\geq \tau}$, where
 \begin{equation} 
 \begin{aligned}
 	Y_{t}(s) &=X(t+s)-X(t-\tau) \\
 	&=\int_{t-\tau}^{t+s}\mathbf{A}(X)(u) \,\mathrm{d} u+\int_{(t-\tau,t+s]} \mathbf{B}(X)(u-) \,\mathrm{d} M(u),\label{eq:Yt}
 \end{aligned}
\end{equation}
for $s \in[-\tau,0].$ 
% Since $X\in\mathbb D[-\tau,\infty)$ holds, we have that the $Y_t$ are $D[-\tau,0]$-valued random variables by Remark \ref{remark:segment}. While $X$ is a semimartingale with respect to the filtered probability space $(\Omega,\mathcal F,\mathbb F,\mathbb P)$,  where $\mathbb F=(\mathcal F_s)_{-\tau\leq s<\infty}$, we can introduce, for all $t\geq 0$ fixed, the filtrations
% $
	% \mathbb F_t:=(\mathcal F_s)_{s\in[t-\tau,t]} 
% $ and observe that all  $Y_t$ become semimartingales on $[-\tau,0]$ adapted to  $\mathbb F_t$.
Recall that for all $t \geq \tau$, the process $ I_t=(I_t(s))_{s\in[-\tau,0]} $ defined by 
\begin{equation}
 I_{t}(s)=\int_{(t-\tau,t+s]}\textbf{B}(X)(u-) \,\mathrm{d} M(u), \quad   s \in[- \tau,0],
 \end{equation}
 is a semimartingale on the filtered probability space $(\Omega,\mathcal F,\mathbb F_t,\mathbb P)$ with $
	\mathbb F_t=(\mathcal F_s)_{s\in[t-\tau,t]} 
$.
% As a matter of fact, when using that   the process $L=(L(s))_{s\geq 0}$ is not just an ordinary Lévy process with characteristic triplet $(b,\sigma,\nu)$---with respect to the typical truncation function---but a regulated Lévy martingale (hence $L^2$-integrable), we have $I_t$ is a square integrable martingale on $(\Omega,\mathcal F,\mathbb F_t,\mathbb P)$, for any $t
% \geq \tau$ fixed, too. In this proof, however, we are  not going to be needing any square integrability. We do exploit this in the alternative proof below.
Combining the results in Example \ref{ex:char} and Proposition \ref{thm:char}
% ---and exploiting that $M=(M(t))_{t\geq 0}$ is in (HIntL) $\subset $ (HSpec)---
gives us  that the semimartingale characteristic of   $I_t$, denoted by $\left(B_{I_{t}}, C_{I_{t}}, v_{I_{t}}\right)$,  is given by
 % \footnote{In case of regulated Lévy martingales, we know    $b=0$   for instance. We deliberately keep the calculation   more general to illustrate that it indeed   holds for many more   Lévy processes as well.}
 \begin{equation}
 	\begin{aligned}
 	B_{I_{t}}(s)&=  \int_{t-\tau}^{t+s}\Big(b\, \mathbf{B}(X)(u-) +\int_{\R} x \mathbf{B}(X)(u-)\left(\mathbf{1}_{\{|x \mathbf{B}(X)(u-)|\leq 1\}}-\mathbf{1}_{\{|x|\leq 1\}} \right) \nu(\mathrm{d} x)\Big) \mathrm{d} u,\\
  C_{I_{t}}(s)&=  \sigma^{2} \int_{t-\tau}^{t+s}\left|\mathbf{B}(X)(u-)\right|^2 \,\mathrm{d} u,
 	\end{aligned}
 \end{equation}
for $s\in[-\tau,0]$ and where we chose $h(x)=x\mathbf{1}_{\{|x|\leq 1\}}$ to be the truncation function, together with
% \footnote{In other words, for all   measurable   $S\in\mathcal B( [-\tau,0])$ and $A\in\mathcal B(\R)$, we have $\nu_{I_t}(S,A)=\int_S K_{I_t}(t+s,A)\,\mathrm ds$.}
% \begin{align}
%  	C_{I_{t}}(s)&=  \sigma^{2} \int_{t-\tau}^{t+s}\left|\mathbf{B}(X)(u-)\right|^2 \,\mathrm{d} u,\\
%  	v_{I_{t}}(\mathrm{d} s, \mathrm{d} x)  &=K_{I_{t}}(t+s,\mathrm{d} x)\,\mathrm{d} s  ,
%  \end{align}
\begin{equation}
    \nu_{I_t}(S,A)=\int_S K_{I_t}(t+s,A)\,\mathrm ds,\quad S\in\mathcal B( [-\tau,0]),\quad  A\in\mathcal B(\R)
\end{equation}
    where the transition kernel is of the form
\begin{equation}
 K_{I_{t}}(u,A)=\int_{\R} \mathbf{1}_{A \backslash\{0\}}(\mathbf{B}(X)(u-) x) \,\nu(\mathrm{d} x),\quad u\in[t-\tau,t],\quad A \in \mathcal{B}(\mathbb{R}). 
 \end{equation}
% for all $u\in[t-\tau,t]$ and  Borel measurable sets $A \in \mathcal{B}(\mathbb{R})$.
Appealing to Example \ref{ex:sum} subsequently yields that  the semimartingale   characteristic $\left(B_{Y_{t}}, C_{Y_{t}}, v_{Y_{t}}\right)$ of $Y_t$ is given by $C_{Y_{t}}=C_{I_{t}},$ $v_{Y_{t}}=v_{I_{t}}$, and 
%  \begin{equation}
%  	C_{Y_{t}}=C_{I_{t}}\quad\text{and}\quad v_{Y_{t}}=v_{I_{t}},
%  \end{equation}
% and
\begin{equation}
 B_{Y_{t}}(s)=B_{I_{t}}(s)+\int_{t-\tau}^{t+s}\mathbf A(X)(u)\,\mathrm{d} u,\quad s\in[-\tau,0].
 \end{equation}
We shall now investigate tightness of $(Y_{t})_{t \geq \tau}$  by means of Theorem \ref{thm:semi}. It suffices to verify that the family $(a_{Y_t})_{t\geq \tau}$ is $C$-tight, which is defined by
 \begin{equation}
 a_{Y_{t}}(s)=\operatorname{TV}\left(B_{Y_{t}}\right)(s)+C_{Y_{t}}(s)+\int_{[-\tau, s] \times \mathbb{R}}(|x|^{2} \wedge 1) \,v_{Y_{t}}(\mathrm{d} u, \mathrm{d} x), \quad s \in[-\tau,0].
 \end{equation}
This is because parts (i) and (ii) of  Theorem \ref{thm:semi} are immediate; part (ii) is a simple consequence of the fact that $\mathbf{B}$ is  bounded.
For any $t\geq \tau$, we have  that  $(a_{Y_{t}}(s))_{s\in[-\tau,0]}$ defines a non-decreasing process.
Hence, due to   Lemma \ref{lem:increasing}, it suffices to find a $C$-tight family $(A_{Y_t})_{t\geq \tau}$ of non-decreasing processes such that it {strongly} majorises the family 
  $(a_{Y_{t}})_{t \geq \tau}$, i.e.,  we need $A_{Y_t}-a_{Y_t}$, for all $t\geq \tau$, to be   a non-decreasing process as well.

We claim that  the process $(a_{Y_{t}}(s))_{s\in[-\tau,0]}$ gets  strongly majorised by $(A_{Y_t}(s))_{s\in[-\tau,0]},$ where the latter is given by
   \begin{equation}
   	\begin{aligned}\label{eq:AYt}
   		A_{Y_{t}}(s)=& \int_{t-\tau}^{t+s}\left|\mathbf{A}(X)(u)\right| \mathrm{d} u  +(s+\tau)\left((|b| \beta+c)+\sigma^{2} \beta^{2}+\int_{\mathbb{R}}\big((\beta^{2} x^{2}) \wedge 1\big) \,\nu(\mathrm{d} x)\right), 
   	\end{aligned}
   \end{equation}
   with $\beta> 0$   some bound on the functional $\mathbf{B}$, and   
   % \footnote{Clearly, this constant $c$ can be bounded by   $(\beta\wedge 1) \nu(\mathbb R)$, which is finite in case $L$ is of finite intensity. Recall that we are in this setting when considering regulated Lévy martingales.}\todo{Kort bij stil staan! Finite first moment!!!}
  \begin{equation}
   c=\int_{\beta^{-1} \leq|x|\leq 1} \beta|x|\, \nu (\mathrm{d} x)+\nu(\mathbb{R} \backslash[-1,1]).
   \end{equation}
Note that $c$ is finite due to the integrability assumption on the noise $M$.
   The claim basically follows from straightforward estimation. We  demonstrate the most insightful   estimate below:
 \begin{equation}
 \begin{aligned}
 	\operatorname{TV}\left(B_{Y_{t}}\right)(s)=& \int_{t-\tau}^{t+s} \bigg| \mathbf{A}(X)(u)+b\, \mathbf{B}(X)(u-)\\
 	&\quad\quad \left.+\int_{\R} x \mathbf{B}(X)(u-)\left(\mathbf{1}_{\{|x \mathbf{B}(X)(u-)|\leq 1\}}-\mathbf{1}_{\{|x|\leq 1\}} \right) \nu(\mathrm{d} x)  \right| \mathrm{d} u \\
 	\leq& \int_{t-\tau}^{t+s} | \mathbf{A}(X)(u)|+|b|\int_{t-\tau}^{t+s}  |\mathbf{B}(X)(u-)|\,\mathrm du\\
 	&\quad\quad  +\int_{t-\tau}^{t+s}\int_{\R}\left| x \mathbf{B}(X)(u-)\right|\left|\mathbf{1}_{\{|x \mathbf{B}(X)(u-)|\leq 1\}}-\mathbf{1}_{\{|x|\leq 1\}} \right| \nu(\mathrm{d} x)  \,\mathrm{d} u \\
 	\leq & \int_{t-\tau}^{t+s} \left| \mathbf{A}(X)(u)\right|  \mathrm{d} u+(s+\tau)(|b| \beta+c).
 \end{aligned}
 \end{equation}
 Observe that $A_{Y_{t}}$ is a continuous process, for all $t\geq \tau$, thus proving $C$-tightness  reduces to proving tightness, according to  Corollary \ref{cor:Ctight}. The second term of \eqref{eq:AYt} is independent of  $t$. Again, recall that a family consisting of a single measure only is tight in a complete separable metric space; see Appendix \ref{Sec2.2.3}. Hence, it suffices  to show that $(|J|_t)_{t\geq 0}$, defined by
 \begin{equation}
 |J|_{t}(s)=\int_{t-\tau}^{t+s}\left|\mathbf{A}(X)(u)\right| \mathrm{d} u, \quad  s \in[-\tau,0],\label{eq:J_t}
 \end{equation}
is a tight family in $C[-\tau,0]$, thanks to Lemma \ref{lem:sum}. 
An immediate consequence of Lemma \ref{lem:reduction} is that this is realised by the condition
% \todo{vervang criterion}
\begin{equation}\label{eq:conclusion}
	\lim _{R \rightarrow \infty} \sup _{t \geq \tau}\mathbb  P\left(\sup _{u\in [t-\tau,t]} | \mathbf{A}(X)(u) |>R\right)=0.
\end{equation}
Recall that the condition above is also sufficient for tightness of the first term in \eqref{eq:Yt}, i.e., $(J_t)_{t\geq \tau}$ in \eqref{eq:It-Jt}.
%WRONG: strongly majorises the first term of $Y_t$.
      In conclusion, it all comes down to verifying   \eqref{eq:conclusion}  in order to conclude that $(Y_t)_{t\geq \tau}$ is a tight family of   $D[-\tau,0]$-random variables.
  % This completes the proof.
 \end{proof}
% In case of Brownian noise, we can compare the slightly different bounded in probability criterion in Theorem \ref{thm:main_cont} and  Theorem \ref{cor:main}.  If the functional $a$ satisfies the linear growth condition, we observe that \eqref{eq:difficult} implies \eqref{eq:difficult2}, but generally speaking such an implication is not so clear.

\begin{proof}[Proof of Theorem \ref{cor:main}]
	Combining  the results in Lemma \ref{lem:need2show} and Proposition \ref{prop:main},  together with the Krylov--Bogoliubov existence theorem in Appendix \ref{app:D}, yields the assertion.
\end{proof}
\begin{proof}[Proof of Theorem \ref{cor:cor:main}]
% It is clear that  \eqref{eq:difficult} implies \eqref{eq:difficult2} in the case where $a$ is of linear growth. 
% Let us revisit either proof in Proposition \ref{prop:main}.
In line with the proof of Proposition \ref{thm:tightness},   introduce for any $R>0$ and $t\geq 0$ the sets
% truncated processes $Y_t^R=Y_t\mathbf 1_{A_{R;t}}$ with $Y_t$ as in \eqref{eq:Yt} and
$A_{R;t}=\{\omega\in\Omega\colon\, \|X_r(\omega)\|_\infty\le R \mbox{ for all }r\in [t-\tau,t]\}$. By the local character of stochastic integrals  \cite[Thm. II.18]{book:protter}, we obtain that
\begin{equation}\begin{aligned}
    J_{t}^R(s)&=\int_{t-\tau}^{t+s}\mathbf A(X)(u)\mathbf 1_{\{\|X_u\|_\infty \leq R\}}\,\mathrm du,\\
I_t^R(s)&=\int_{(t-\tau,t+s]}\mathbf B(X)(u-)\mathbf 1_{\{\|X_{u-}\|_\infty\leq R\}}\,\mathrm dM(u),
\end{aligned}
\end{equation}
for $s\in[-\tau,0],$ coincide on the event  $A_{R;t}$ with $J_t$ and $I_t$ as in \eqref{eq:It-Jt}, respectively. Following the proof of Proposition \ref{prop:main} with $\beta$ replaced by $\beta_R$, see \eqref{eq:beta_R}, enables us to conclude that $(J_t^R)_{t\geq \tau}$ is $C$-tight and $(I_t^R)_{t\geq \tau}$ is tight, for any $R>0$. This is because \eqref{eq:conclusion} is automatically satisfied now. Hence,  the family $(Y_t^R)_{t\geq 0}$ defined by $Y_t^R=J_t^R+I_t^R$ is tight in $D[-\tau,0]$ by Lemma \ref{lem:sum}. 
Observe that we have $Y_t^R=Y_t$ on $A_{R;t}$. Just like in the proof of Proposition \ref{thm:tightness},
% Now, note that $Y_t^R=J_t^R\mathbf 1_{A_{R;t}}+I_t^R\mathbf 1_{A_{R;t}}$, hence define $\mathcal J_t^R\mathbf =J_t^R\mathbf 1_{A_{R;t}}$ and similarly $\mathcal I_t^R\mathbf =I_t^R\mathbf 1_{A_{R;t}}$. Note that  and $(\mathcal I_t^R)_{t\geq \tau}$ are tight families as well.
% This is because
we find for all $\varepsilon>0$ that there exists a compact set $K_\varepsilon\subset D[-\tau,0]$ such that
\begin{equation}\begin{aligned}
    \mathbb P (Y_t\not\in K_\varepsilon)&=\mathbb P(Y_t\not\in K_\varepsilon,A_{R_\varepsilon;t})+\mathbb P(Y_t\not\in K_\varepsilon,A_{R_\varepsilon;t}^c)\\&\leq \mathbb P(Y_t^{R_\varepsilon}\not\in K_\varepsilon)+\mathbb P(A_{R_\varepsilon;t}^c)\\
    % &= \mathbb P(X_t^{R_\varepsilon}\not\in K_\varepsilon)\color{black}+\mathbb P(\exists u\in[t-\tau,t]:\|X_u\|_{\infty}> R_\varepsilon)\\
    % &\textstyle
    % \leq \mathbb P(X_t^{R_\varepsilon}\not\in K_\varepsilon)\color{black}+\mathbb P(\sup_{u\in[t-\tau,t]}\|X_u\|_{\infty}> R_\varepsilon)\\
    &<\varepsilon/2+\varepsilon/2=\varepsilon,
\end{aligned}\end{equation}
since $(\sup_{u\in[t-\tau,t]}\|Y_u\|_\infty)_{t\geq \tau}$ is bounded in probability by assumption. This implies that the family  $(Y_t)_{t\geq \tau}$ is tight, which completes the proof.%, just like in Proposition \ref{thm:tightness}.
% Similarly, we find that $(\mathcal J_t^R )_{t\geq \tau}$ is $C$-tight. By Lemma \ref{lem:sum}, we conclude that $(Y_t^R)_{t\geq \tau}$ is tight for any $R>0.$
% Exploiting that $\mathbf B(X)(u-)$ is bounded by some $\beta_R>0$  on $A_{R;t}$ for $t-\tau<u\leq t$, we can just follow either proof of Proposition \ref{prop:main} with $\beta$ replaced by $\beta_R$ and conclude that $(Y_t^R)_{t\geq \tau}$ is tight in $D[-\tau,0]$, for any $t\geq \tau,$ since the process $J_t$ as in  \eqref{eq:J_t} gets strongly majorised by the single continuous process $s\mapsto \alpha_R(s+\tau)$, for some $\alpha_R>0$. 
\end{proof}

   \subsection{Delay equations with stochastic  negative feedback}\label{Sec3.4.2}
    
   % Via the general theory in Chapter \ref{Ch2}, see especially subsections \ref{Sec2.2.2} and \ref{Sec2.2.5},  we find---under  relatively less technical and restrictive conditions than  for boundedness in probability---criteria which ensure  the existence of invariant measures for Mackey--Glass type equations.\todo{This clearly shows that for this type of equations, it suffices to prove global existence and boundedness in probability of at least one solution.}

In this section we demonstrate how to apply Theorem \ref{cor:main}. A general approach  would be to search for a real-valued family $(Z_t)_{t\geq \tau}$ that is bounded in probability and such that
\begin{equation}
	\sup _{u\in [t-\tau,t]} | a(X_u)|\leq Z_t,\quad \text{for all }t\geq \tau.
\end{equation}
% One may obtain such $Z_t$, $t\geq \tau$, by means of Theorem \ref{thm:interstep}. More concretely, we follow these lines in subsection \ref{Sec3.4.2} for our Mackey--Glass type equations.  
We  perform this method on   delay equations of the form \eqref{eq:MGeq4}, i.e., delay equations with stochastic negative feedback, as preparation for \cite{artikel2-MG}. Of course, invoking Theorem \ref{cor:cor:main} would also have sufficed.

 \begin{corollary}\label{cor:application}Suppose that $f:\mathbb R\to \mathbb R$ is  locally Lipschitz continuous, positive on $(0,\infty)$, and bounded from above.  In addition, assume    $\gamma,r>0$.  	Consider the    stochastic delay differential equation
 % \todo{Misschien in Introductie. Nah, in hoofdstuk 5 zijn algemenere stellingen! $f(0)>0$ en dan invariant measure en als $f(0)=0$ dan igg global existence} 
 	\begin{equation}
 		\mathrm d Y(t)=\left[-\gamma+re^{-Y(t)}f\big(e^{Y(t-1)}\big)\right] \mathrm d t+a\left(Y_{t}\right) \mathrm d t+b\left(Y_{t-}\right) \mathrm d L(t),\label{eq:MGeq4}
 	\end{equation}
 	where $a,b$ are  time-independent and proper locally Lipschitz. Further, assume $L=(L(t))_{t\geq 0} $ is of class \textnormal{(HIntL).}   	Let  $\alpha,\beta\geq 0$ be  non-negative constants   such that
% 	\begin{equation}\label{eq:byproduct2}
% 		\tilde \gamma >\alpha_{\textnormal{max}}+2\lambda \zeta\beta,
% 	\end{equation}
% 	where   $\lambda\geq 0$ is the rate of the   Poisson process $N$ associated to $L$, and
 	\begin{equation}
 		|a(\varphi)| \leq \alpha \quad \text { and } \quad b(\varphi)^{2} \leq \beta^{2}, \quad \text { for all } \varphi \in D[-1,0].
 	\end{equation}
 	Finally, assume that for every initial process  $\Phi$ 
    % independent of $L$
    the corresponding solution exists globally.  If one of these   solutions   is bounded   in probability,
  % for all time $t\geq -1$,
  then there exists an invariant measure and hence a stationary solution. 
\end{corollary}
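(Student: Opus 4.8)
The plan is to recognise \eqref{eq:MGeq4} as a special case of \eqref{eq:SDDE2.2} and then verify the hypotheses of Theorem \ref{cor:main}. Collecting the entire drift into the single functional
\[
\tilde a(\varphi)=-\gamma+re^{-\varphi(0)}f\big(e^{\varphi(-1)}\big)+a(\varphi),\qquad \varphi\in D[-1,0],
\]
equation \eqref{eq:MGeq4} becomes $\mathrm dY(t)=\tilde a(Y_t)\,\mathrm dt+b(Y_{t-})\,\mathrm dL(t)$, which is of the form \eqref{eq:SDDE2.2} with integrator $L$. The functional $\tilde a$ is proper locally Lipschitz: the feedback term falls under Example \ref{example:MG} with the constant (hence time-independent) rates $\gamma,r$, and adding the proper locally Lipschitz $a$ preserves the property. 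Since $L$ is of class \textnormal{(HIntL)}, every solution exists globally by assumption, some solution $Y=(Y(t))_{-1\le t<\infty}$ is bounded in probability, and $b$ is bounded by $\beta$, the only remaining hypothesis of Theorem \ref{cor:main} to establish is the drift condition \eqref{eq:difficult2}, i.e.\ that $\big(\sup_{u\in[t-1,t]}|\tilde a(Y_u)|\big)_{t\ge 1}$ is bounded in probability.

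The heart of the argument is this verification, and here the negative-feedback structure is decisive. First I would record the one-sided lower bound on $\tilde a$: positivity of $f$ on $(0,\infty)$ gives $re^{-\varphi(0)}f(e^{\varphi(-1)})\ge 0$, while $|a(\varphi)|\le\alpha$ gives $a(\varphi)\ge-\alpha$, so that $\tilde a(\varphi)\ge-(\gamma+\alpha)$ for all $\varphi\in D[-1,0]$. With this lower bound, the bounded noise coefficient, and $(Y(t))_{t\ge0}$ bounded in probability, Corollary \ref{cor:intervals} applied with $\tau^*=1$ shows that $\big(\sup_{u\in[t-1,t]}|Y(u)|\big)_{t\ge 0}$ is bounded in probability.

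Next I would dominate the drift by an exponential of this supremum. Writing $M:=\sup_{x>0}f(x)<\infty$, for every $u\in[t-1,t]$ we have $|\tilde a(Y_u)|\le\gamma+\alpha+rMe^{-Y(u)}\le\gamma+\alpha+rMe^{|Y(u)|}$ (the factor $f(e^{Y(u-1)})$ being bounded by $M$), whence
\[
\sup_{u\in[t-1,t]}|\tilde a(Y_u)|\le Z_t:=\gamma+\alpha+rM\exp\Big(\sup_{u\in[t-1,t]}|Y(u)|\Big).
\]
As $x\mapsto\gamma+\alpha+rMe^{x}$ is continuous and increasing and $\big(\sup_{u\in[t-1,t]}|Y(u)|\big)_{t\ge0}$ is bounded in probability, the family $(Z_t)_{t\ge1}$ is bounded (above) in probability; by the comparison property for such families recorded in \S\ref{sec:bound-prelim}, so is $\big(\sup_{u\in[t-1,t]}|\tilde a(Y_u)|\big)_{t\ge1}$, which is exactly \eqref{eq:difficult2}.

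All hypotheses of Theorem \ref{cor:main} now hold, so the partial segment process $(Y_t)_{t\ge1}$ is tight in $D[-1,0]$; together with global existence of all solutions and the Krylov--Bogoliubov theorem in Appendix \ref{app:D}, this yields an invariant measure and hence a stationary solution. I expect the main obstacle to be the term $e^{-Y(u)}$, which could a priori explode as $Y\to-\infty$; the resolution is precisely that positivity of $f$ makes $\tilde a$ bounded below, so Corollary \ref{cor:intervals} keeps $\sup_{u\in[t-1,t]}|Y(u)|$ controlled and the exponential bounded in probability.
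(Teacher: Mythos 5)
Your proposal is correct and follows essentially the same route as the paper: both reduce the corollary to the drift condition \eqref{eq:difficult2} of Theorem \ref{cor:main}, dominate $\sup_{u\in[t-1,t]}|\mathbf A(Y)(u)|$ by $\gamma+\alpha+rM\exp\big(\|Y_t\|_\infty\big)$ using $0\le f\le M$, and obtain boundedness in probability of the segment norms from the one-sided drift bound via Proposition \ref{thm:interstep} (your Corollary \ref{cor:intervals} is its immediate consequence). The only difference is cosmetic: you establish segment boundedness first and then dominate the drift, and you spell out the lower bound $\tilde a(\varphi)\ge-(\gamma+\alpha)$ explicitly, which the paper leaves implicit when invoking Proposition \ref{thm:interstep}.
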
   
\begin{proof} 
% Without loss of generality  we consider the solution with $\Phi=0$, but any solution suffices as long as the initial data $\Phi$ is also bounded in probability; recall the discussion in subsection \ref{Sec2.2.5}, see also Proposition \ref{lem:need2show}. Let us denote by $Z=(Z(t))_{-1\leq t<\infty}$    the solution to   \eqref{eq:MGeq4} with initial condition $z_0=0.$ Its local existence and uniqueness is guaranteed by Theorem \ref{thm:main-exist}, but the global existence (and uniqueness) follows from the assumptions stated.  
	According to Theorem \ref{cor:main},  it suffices to show
	 	\begin{equation} \label{eq:bound!}
	 	\lim _{R \rightarrow \infty} \sup _{t \geq 1}\mathbb  P\left(\sup _{u\in [t-1,t]} | \mathbf A(Y)(u) |>R\right)=0, 
	 \end{equation}
	for some solution $Y=(Y(t))_{t\geq 0}$ bounded in probability, where  
	\begin{equation}
	\mathbf A(Z)(t)=-\gamma +r e^{-Z(t)}f\big(e^{Z(t-1)}\big)+a(Z_t), \quad t\geq 0.
	\end{equation}
Let $M$ be an upper bound for the nonlinearity $f$. A straightforward computation shows
	\begin{equation}\begin{aligned}
		\sup_{u\in[t-1,t]}|\mathbf A(Y)(u)|&\leq \alpha+\gamma +   r \sup_{u\in[t-1,t]}e^{-Y(u)}|f(e^{Y(u-1)})|\\
		&\leq\alpha+\gamma + r M \sup_{u\in[t-1,t]}e^{-Y(u)}\\
		&\leq\alpha+\gamma + r M \exp{\|Y_t\|_\infty}.	
	\end{aligned}\end{equation}
Hence, due to the fact that the exponential is invertible,  condition \eqref{eq:bound!} is satisfied whenever the  family $(\|Y_t\|_\infty)_{t\geq 0}$ is bounded in probability. Appealing to Proposition \ref{thm:interstep} completes the proof.
\end{proof}

\appendix

% % Save the original section numbering
% \setcounter{section}{0}
% % Redefine to use letters (A, B, C, ...)
% \renewcommand{\thesection}{4.\Alph{section}}

% %\documentclass[../klad-theory.tex]{subfiles}

\section{Classes of stochastic integrators}\label{app:A}
Throughout this article, we fix a probability space $(\Omega,\mathcal F,\mathbb P)$, let $\mathbb F=(\mathcal F_t)_{t\geq 0}$ be a filtration that satisfies the usual conditions, and consider   \cadlag semimartingales adapted to $\mathbb F$. In both \cite{artikel2-MG} and this paper, we  specifically  work with  the following   classes of \cadlag integrators: (HSpec), (HIntL), (HDol), (HSqL), (HSqLM), (HJudi), (HReg),  and (HRegM). These classes  satisfy  the inclusions
% \begin{equation}
%     \text{(HRegM)}\subseteq \text{(HJudiM)}\subseteq \text{(HDol)}\subseteq \text{(HSpec)}\supseteq \text{(HJudi)}\supseteq \text{(HReg)}.
% % \end{equation}
\begin{equation}\begin{array}{ccccccccc}
\text{(HReg)}&\subset &\text{(HJudi)}&\subset &\text{(HSqL)}&\subset &\text{(HIntL)}&\subset &\text{(HSpec)}\\[.3cm]%\subset &\text{(HSqint)}
\cup&&&&\cup&&&&\cup\\[.3cm]
    \text{(HRegM)}&&\subset &&\text{(HSqLM)}&&\subset &&\text{(HDol)}
\end{array}\end{equation}
and a Brownian motion belongs to all  classes.  For stochastic integration theory with respect to continuous integrators we refer to, e.g., \cite{da2014stochastic,book:Kurtz,evans2012stoch,book:kallenberg,book:karatzas,book:kloeden,book:mao,book:revuz,unpublished:peter,twardowska1996wong}.
In the text below we clarify what is  meant by each of these hypotheses and provide examples and additional information accordingly. 
%\todo{We do not need all classes; what to do?}
 \begin{itemize}\item[]\begin{itemize}
 \item[(HSpec)] The process $Z=(Z(t))_{t\geq 0}$ starts at zero, i.e., $Z(0)=0$, and is a special semimartingale, i.e., a semimartigale which admits a unique decomposition $Z(t)=A(t)+M(t)$, where $(M(t))_{t\geq 0}$ is a local martingale and $(A(t))_{t\geq 0}$ a finite variation process that is predictably measurable.
 \end{itemize} \end{itemize}
 \begin{itemize}\item[]\begin{itemize}
 \item[(HIntL)] The process $L=(L(t))_{t\geq 0}$ is  a L\'evy process and integrable.
 \end{itemize} \end{itemize}

  L\'evy processes are semimartingales, as a consequence of the  L\'evy--It\^o decomposition \cite{ book:applebaum,book:kyp,book:protter,book:sato}. In particular, any L\'evy process is a special semimartingale if and only if its L\'evy measure $\nu$ satisfies $\int_\mathbb R (x^2\wedge |x|)\nu(\mathrm dx)<\infty$, i.e., when the process has a finite first moment \cite[Prop. II.2.29]{book:jacod}. 
  % This class of integrators (HIntL)  appears in section \ref{Sec2.2.5} and Appendix \ref{app:C}.% and \ref{app:D}.
  
  % Lastly,  special semimartingales are exactly those martingales for which the L\'evy--It\^o decomposition can be generalised (see Appendix \ref{app:C}).\todo{Dit kan volgens mij voor alle semimartingales; zie MAINTEX Remark 2.2.41}
  
  % In the \cadlag case, the predictable measurability plays a crucial part to get a unique decomposition \cite[p. 130]{book:protter}, but predictability is also important when we talk about to suitable integrands for a very general semimartingales as integrators, see section \ref{Sec5}

\begin{itemize}\item[]\begin{itemize}
    \item[\quad(HDol)] The  process $M=(M(t))_{t\geq 0}$  is a square integrable martingale whose Dol\'eans measure $\mu_M$ is absolutely continuous with respect to the product measure $ \mathrm ds\times \mathbb P$---which is abbreviated by $\mu_M\ll \mathrm ds\times \mathbb P$---and  its Radon--Nikodym derivative is bounded by $\lambda>0$. 
  \end{itemize}  \end{itemize}  

The Dol\'eans measure \cite{book:chung,unpublished:timo} is defined on the predictable $\sigma$-algebra $\mathcal P$ by
\begin{equation}
    \mu_M(A)=\int_\Omega\int_0^\infty \mathbf 1_A(s,\omega)\,\mathrm d[M](s,\omega)\mathbb P(\mathrm d\omega)=\mathbb E\int_0^\infty\mathbf 1_A\mathrm d[M],\quad A\in\mathcal P,
\end{equation}
where $([M](t))_{t\geq 0}$, given by $[M](t)=M(t)^2-2\int_0^tM(s-)\,\mathrm dM(s),$  is the quadratic variation process. Because $M$ is square integrable, there exists a unique predictable increasing process $(\langle M\rangle (t))_{t\geq 0}$, known as the predictable quadratic variation or the angle bracket process, such that $M^2-\langle M\rangle$ is a martingale; see \cite[p. 24]{book:jacod} and \cite[p. 116]{book:protter}. The process $\langle M\rangle$ is  known as the compensator of $[M]$. In the continuous setting, we  have  $[M]=\langle M\rangle$. Note that the  predictable quadratic variation may  not exist for a general semimartingale \cite[{p. 125}]{book:protter}. Lastly, a simple computation in line with \cite[p. 33]{book:chung} shows that $\mu_M=\nu_M,$ where
\begin{equation}
     \nu_M(A)=\int_\Omega\int_0^\infty \mathbf 1_A(s,\omega)\,\mathrm d\langle M\rangle (s,\omega)\mathbb P(\mathrm d\omega)=\mathbb E\int_0^\infty\mathbf 1_A\mathrm d\langle M\rangle,\quad A\in\mathcal P.
\end{equation}
Surprisingly,
this observation is---to the best of the authors'  knowledge---nowhere highlighted.  We refer to \cite{Bosch24} for  more details. 

The importance of the bounded Radon--Nikodym derivative assumption in (HDol) is due to the fact that this results into the following inequality: 
\begin{equation}
     \int_{[0,t]\times \Omega}f^2\mathrm d\mu_M=\mathbb E\int_0^tf(s)^2\mathrm d[M](s)=\mathbb E\int_0^tf(s)^2\mathrm d\langle M\rangle(s)\leq \lambda \mathbb E\int_0^tf(s)^2\,\mathrm ds,\label{eq:Dol-ineq}
 \end{equation}
for any appropriate process $(f(t))_{t\geq 0}$. This property is exploited throughout the entire paper.
% Before heading toward the last two classes of integrators, we provide an example first. 
Furthermore, note that all results  assuming  (HDol)
%or (HReg) are
are stated for square integrable (true) martingales, but by localisation extend to locally square integrable local martingales. Recall that  for continuous processes  any local martingale is also   locally square integrable \cite[p. 26]{unpublished:peter}, but this is not necessarily true for  \cadlag processes and so there is a distinction.

\begin{itemize}\item[]\begin{itemize}
 \item[(HSqL)] The process $L=(L(t))_{t\geq 0}$ is a  L\'evy process and square integrable.
 \end{itemize} \end{itemize}

 \begin{itemize}\item[]\begin{itemize}
 \item[(HSqLM)] The process $L=(L(t))_{t\geq 0}$ is of class (HSqL) and  a (true) martingale, therefore a square integrable L\'evy martingale.
 \end{itemize} \end{itemize}

Suppose   $M$ is a square integrable L\'evy martingale. Then the L\'evy--Khintchine formula \cite{book:applebaum, book:sato} tells us that the predictable quadratic variation is deterministic and given by $\langle M\rangle(t)= \lambda t,$  $ t\geq 0,$ where $\lambda=\mathbb E[M(1)^2].$   This implies  \eqref{eq:Dol-ineq}; with equality in fact. We obtain
\begin{equation}
     \mu_M( (s,t]\times A))=\lambda(t-s)\mathbb P(A)=\lambda \mathrm (\mathrm ds\times \mathbb P)( (s,t]\times A)),\quad 0\leq s<t,\,A\in\mathcal P,
 \end{equation}
 hence $\mu_M\ll\mathrm ds\times \mathbb P$, and the Radon--Nikodym derivative is the constant $\lambda$. In conclusion, square integrable L\'evy martingales satisfy (HDol).

 Let $(\gamma,\sigma^2,\nu)$ denote the characteristic triplet of a L\'evy process $L=(L(t))_{t\geq 0}$ with respect to the truncation function $x\mapsto x\mathbf 1_{[-1,1]}(x)$. If $L$ is of class (HSqL), then we know from the L\'evy--It\^o decomposition that $L$ can be written as
 \begin{equation}
     L(t)=\left(\gamma + \int_{|x|>1}x\nu(\mathrm dx)\right)t+M(t),\quad t\geq 0,\label{eq:SqL}
 \end{equation}
 where $M=(M(t))_{t\geq 0}$ is the martingale part of $L$, which is   of class (HSqLM). This means that in practice, when considering stochastic differential equations, it suffices to take processes in (HSqLM) instead of  (HSqL); the predictable part of $L$ is directly proportional to $t$ and  can be substituted in the other part of the equation. So, for
$
    \mathrm dX(t)=a(X_t,t)\,\mathrm d t+b(X_{t-},t-) \,\mathrm d L(t),
$ we can also write it as $
    \mathrm dX(t)=a_{\rm new}(X_t,t)\,\mathrm d t+b(X_{t-},t-) \,\mathrm d M(t),
$ with $a_{\rm new}(X_t,t)=a(X_t,t)+(\gamma+\int_{\{|x|>1\}}x\nu(\mathrm dx))b(X_t,t)$.

 % We  now introduce  a few special classes of square integrable L\'evy processes that experience finite activity.

  % Recall from section \ref{Sec1.2} that the compensator of the quadratic variation of a regulated Lévy martingale $L=(L(t))_{t\geq 0}$, i.e., the predictable quadratic variation $\langle L\rangle =(\langle L\rangle (t))_{t\geq 0}$, is directly proportional to time $t$. If \todo{Sufficient is Levy process with finite first and second moment. For invariant measure, only first moment is needed; I think? } $\langle L\rangle (t)=\mu t$, then  $\mu\leq  \sigma^2+\lambda\zeta^2$.

% Upon first reading, just  assume $M$ is a Brownian motion and everything is cool.

% Notice that all results  assuming  (HDol)
% %or (HReg) are
% are stated for square integrable true martingales only, but by localization extend to locally square integrable local martingales. Recall,  for continuous processes that any local martingale is also   locally square integrable \cite[p. 26]{unpublished:peter}, but for the \cadlag this is not necessarily true and so there is a distinction. 

% \begin{itemize}\item[]\begin{itemize}
%  \item[(HSqint)] The process $L=(L(t))_{t\geq 0}$ is  L\'evy process and square integrable.
 % \end{itemize} \end{itemize}
 \begin{itemize}\item[] \begin{itemize}
 \item[(HJudi)] The process $L=(L(t))_{t\geq 0}$ is a square integrable L\'evy process that is of finite intensity, i.e., $\nu(\mathbb R)<\infty$, where $\nu$ is the associated L\'evy measure.
\end{itemize}\end{itemize}  
% \begin{itemize}\item[] \begin{itemize}
 % \item[(HJudiM)] The process $L=(L(t))_{t\geq 0}$ is a L\'evy process of class (HJudi) and a martingale.
% \end{itemize}\end{itemize}

The finite intensity property above implies that  $L$ has a finite number of jumps on  any compact time interval \cite[Thm. 21.3]{book:sato}. Either \cite[Thm. 2.3.9]{book:applebaum} or \cite[Lem. 2.8]{book:kyp} subsequently tells us that  any $L$ of type (HJudi)  is a \texttt{jump\,\,diffusion\,\,process}. That is, a sum of two independent processes: a Brownian motion $W=(W(t))_{t\geq 0}$ which is scaled with the dispersion coefficient $\sigma^2$ and includes a drift; and a compound Poisson process $Z=(Z(t))_{t\geq 0}$   with jump measure $\frac1{\nu(\mathbb R)}\nu.$ Indeed, we have
\begin{equation}
    L(t)=\left(\gamma - \int_{\{|x|\leq 1\}}x\nu(\mathrm dx)\right)t+\sigma W(t)+Z(t),\quad Z(t)=\sum_{k=1}^{N(t)}Z_k,\quad t\geq 0.\label{eq:not_canon}
\end{equation}
We say  $N=(N(t))_{t\geq 0}$  is a Poisson process associated to the L\'evy process $L$.       % In conclusion, a jump diffusion process is a regulated L\'evy process and vice versa.
% \begin{itemize}\item[] \begin{itemize}
%  \item[(HReg0)] The process $L=(L(t))_{t\geq 0}$ is a regulated L\'evy process 
% \end{itemize}\end{itemize}  
Writing the above in terms of the canonical decomposition of special semimartingales, as in \eqref{eq:SqL}, yields
\begin{equation}
    L(t)=\left(\gamma + \int_{|x|>1}x\nu(\mathrm dx)\right)t+\sigma W(t)+\big[Z(t)-\lambda_N \mathbb EZ_1t\big],\quad t\geq 0.
\end{equation}
In particular, we have $\lambda_N \mathbb EZ_1=\int_{\mathbb R}x\nu(\mathrm dx).$ A L\'evy process of finite intensity is a martingale if and only if $\gamma = -\int_{\{|x|>1\}}x\nu(\mathrm dx).$ Suppose  $Z_1$ is centred, i.e.,  $\mathbb EZ_1=0,$ then we have  $\gamma = \int_{\{|x|\leq 1\}}x\nu(\mathrm dx)$ and  $Z$ is a compound Poisson process.

Note that  $\mathbb EZ_1^2<\infty$ holds if and only if $L$ is of class (HJudi), and then the quadratic variation  and its compensator (whether $L$ is a martingale or not) equal
\begin{equation}
    [L](t)=\sigma^2t+\sum_{s\leq t}(\Delta Z(t))^2=\sigma^2t+\sum_{k=1}^{N(t)}Z_k^2\quad\text{and}\quad\langle L\rangle(t)=\lambda t,\quad t\geq 0,
\end{equation}
respectively, where $\lambda =\sigma^2+\lambda_N\mathbb EZ_1^2$ and  $\lambda_N=\mathbb E [N(1)]$  is the intensity of the Poisson process $N$.

% In practice, when considering stochastic differential equations, it suffices to consider processes in (HJudiM) instead of  (HJudi); the predictable part of $L$ is directly proportional to $t$ and  can be substituted in the other part of the equation. So, for
% $
%     \mathrm dX(t)=a(X_t,t)\,\mathrm d t+b(X_{t-},t-) \,\mathrm d L(t),
% $ where $L$ is of class (HJudi), we can also write it as $
%     \mathrm dX(t)=a_{\rm new}(X_t,t)\,\mathrm d t+b(X_{t-},t-) \,\mathrm d M(t),
% $ with $a_{\rm new}(X_t)=a(X_t,t)+(\gamma+\int_{\{|x|>1\}}x\nu(\mathrm dx))b(X_t,t)$ and $M$ the martingale part of $L$, thus of class (HJudiM).

\begin{itemize}\item[] \begin{itemize}
 \item[(HReg)] The process $L=(L(t))_{t\geq 0}$ is  a L\'evy process of class (HJudi)  and   satisfies the following two additional properties:
 % \todo{Maybe loos martingale assumption, see also prop 2.2.34}
  \begin{itemize}\item[] \begin{itemize}
     \item[(P1)] the process experiences no continuous drift, i.e., $\gamma = \int_{\{|x|\leq 1\}}x\nu(\mathrm dx);$
      \item[(P2)] jumps are $\mathbb P$-a.s.\ uniformly bounded by some $\zeta\geq 0,$ i.e., we have $|\Delta L(t)|\leq \zeta $  $\mathbb P$-a.s., where $\Delta L(t)=L(t-)-L(t)$ and $L(t-)=\lim_{s\nearrow t}L(s).$
 \end{itemize}\end{itemize}
 Such processes are referred to as \texttt{regulated\,\,L\'evy\,\,processes}.
 \end{itemize}\end{itemize}
  \begin{itemize}\item[] \begin{itemize}
 \item[(HRegM)] The process $L=(L(t))_{t\geq 0}$ is a regulated L\'evy process and a martingale,
 hence called a \texttt{regulated\,\,L\'evy\,\,martingale}.
\end{itemize}\end{itemize} 
Regulated Lévy processes do not arise in the present paper, but they play a central role in \cite{artikel2-MG}.
% Property (P2) implies that $|Z_1|\leq \zeta$ holds $\mathbb P$-a.s., which enables us to keep jumps under control in a   pathwise manner; this is an essential property for our estimates in \cite[Sec. 4 and 5]{artikel2-MG}.  Observe that 
%  $\lambda=\sigma^2+\lambda_N\mathbb EZ_1^2\leq \sigma^2+\lambda_N\zeta^2$. Property (P1) is not really important, since it will only keep certain expressions simple. As mentioned previously,  any other drift term could simply be incorporated into the deterministic part of the SDDE.
%  Finally, note that (HRegM) implies $\mathbb EZ_1=0$ or $\lambda_N=0$. The latter  means that $L(t)=\sigma W(t), $ $t\geq 0,$ is a scalar multiple of a Brownian motion.

%%%WEL IN THESIS, MAAR NIET HIER...
 % There are many  examples of L\'evy processes with infinite activity,
% such as Gamma processes and $\alpha$-stable processes \cite{book:applebaum,book:kyp,article:manuge,book:protter}. We will, however, not encounter
% those  in this work.

 % REFERENCES!\todo{Hier nog wat mee doen?}

 %  General: \cite{book:protter,book:applebaum,book:sulem,book:chung,book:chung2,book:jacod,unpublished:timo,book:Rao}

%\begin{document}

\section{Preliminaries on properties of segments}\label{app:B}
% \todo{Preliminaries + prop of segments twee aparte appendices maken! B.3+begin B.4 in de preliminaries :)}

Suppose $(\Omega,\mathcal F,\mathbb F,\mathbb P)$  is a filtered probability space satisfying the usual conditions. We will encounter $X$-valued random variables, where $X$ is some topological space; typically, a normed or metric space. We turn $X$ into a measurable space by taking  the Borel $\sigma$-algebra $\mathcal B(X)$ into account. 

Appendices \ref{Sec2.2.2} and \ref{sec:AA}  cover the basics of the function space $D[-\tau,0]$ and its possible topologies. In particular, we show that   the segment process $(Y_t)_{t\geq 0}$ of a process $(Y(t))_{-\tau\leq t<\infty}\in\mathbb D[-\tau,\infty)$ may be regarded as an $\mathbb F$-adapted $D[-\tau,0]$-valued stochastic process\footnote{That is, each $Y_t:(\Omega,\mathcal F_t)\mapsto (X,\mathcal B(X))$ is a $\mathcal F_t$-measurable $X$-valued random variable with $X=D[-\tau,0]$.},  but one needs to be cautious  as  the  topology matters. Next, in Appendix \ref{Sec2.2.3} we introduce the notion of tightness and provide  necessary and sufficient conditions for segment processes to be tight. For the proofs in {\S}\ref{sec:towards}, we require the additional tightness results  in Appendices \ref{sec:applications} and \ref{app:C}.

\subsection{Uniform topology and Skorokhod topology}\label{Sec2.2.2}
  \noindent An extensive study of both the uniform topology and Skorokhod topology on the space of \cadlag functions can be found in, e.g., \cite{book:billingsley,book:jacod}; they examine \cadlag functions with  domain  $[0,1]$ and $[0,\infty)$, respectively.  Either setting is easily converted to  one with  \cadlag functions defined on $[-\tau,0]$, and vice versa. 
  % convenient for our intents and purposes.
  \

  We can equip  $C[-\tau,0]$ and $D[-\tau,0]$ with the supremum norm $\|\cdot\|_\infty$, i.e., we consider \begin{equation}(C[-\tau,0],\|\cdot\|_\infty) \quad\text{and}\quad (D[-\tau,0],\|\cdot\|_\infty),\end{equation} 
  and note that both are Banach spaces. Further, we  say that   $\|\cdot\|_\infty$ induces the \texttt{uniform topology} on the function spaces. An   additional feature is that $(C[-\tau,0],\|\cdot\|_\infty)$  is a separable space, while $(D[-\tau,0],$ ${\|\cdot\|_\infty})$ fails to be separable \cite[p. 325]{book:jacod}. 
    Recall that separability is a topological property,
    % ---see the discussion at the beginning of subsection \ref{Sec2.2.3}---and therefore we are required to consider a different topology on the space of \cadlag functions.
    which leads us to the Skorokhod topology.

  \begin{definition}
  	Let $a,b\in\R$ with $a<b.$ The space $D[a, b]$ of \cadlag functions $\varphi:[a,b]\to\R$ can be endowed with the \text{Skorokhod metric} $d_{S}$\index{$d_S$}, which is given by
  	\begin{equation}\label{eq:sko1}
  	d_{S}(\varphi, \psi):=\inf _{\lambda \in \Lambda[a, b]}\big(\|\varphi \circ \lambda-\psi\|_{\infty}+\|\mathrm{Id}-\lambda\|_{\infty}\big),\quad \varphi,\psi\in D[a,b],
  	\end{equation}
  	where $\Lambda[a, b]:=\{\lambda:[a, b] \rightarrow[a, b]: \lambda$ is an increasing homeomorphism$\} .$ The metric $d_S$ induces a topology called the \texttt{Skorokhod\,\,topology}. Any $\lambda\in\Lambda[a,b]$ is called a \texttt{change\,\,of\,\,time}.
  \end{definition}
  
  Clearly, we have $d_S(\varphi,\psi)\leq \|\varphi-\psi\|_\infty$---choose  $\lambda=\text{Id}$ as homeomorphism---which yields that the Skorokhod topology is weaker than the uniform topology.  A slightly less trivial fact is that the subspace topology of the Skorokhod topology on the space of continuous functions $C[a,b]$ coincides with the uniform topology  \cite[Prop. 1.17]
  {book:jacod}.
In the uniform topology, two functions $\varphi$ and $\psi$ are near one another  if the graph of $\varphi$ can be carried onto the graph of the function  $\psi$
by a uniformly small perturbation of the ordinates, i.e., a uniform deformation in the vertical axis, while the abscissas is
kept fixed, i.e., no deformations are made on the horizontal  axis. On the other hand, we also allow a uniformly small change of the
time scale in the Skorokhod topology. While doing this, however, we need to take a penalisation factor into account. We illustrate the effect of a change of time in Figure \ref{fig:Skorokhod}.
% \begin{figure}[!ht]
% 	\centering
% %	\includegraphics[width=0.32\linewidth]{SUB/Skorokhod1crop}
% 	\includegraphics[width=0.4\linewidth]{Skorokhod-deform-crop}
% %		\includegraphics[width=0.32\linewidth]{SUB/Skorokhod2crop}
% 	\caption{An illustration of some change of time $\lambda\in\Lambda[-1,0]$. See also Figure \ref{fig:Skorokhod}.  }
% 	\label{fig:timechange}
% \end{figure}

\begin{figure}[!t]
	\centering
 \includegraphics[width=0.307\linewidth]{Skorokhod-deform-crop}
	\includegraphics[width=0.332\linewidth]{Skorokhod1crop}
	\includegraphics[width=0.332\linewidth]{Skorokhod2crop}
	\caption{On the left, we see an illustration of some change of time $\lambda\in\Lambda[-1,0]$. In the centre, we see two \cadlag functions $\psi$ and $\varphi$ in $D[-1,0]$ with $\|\varphi-\psi\|_\infty=7.75.$ The right illustration shows the graph of the two \cadlag functions  $\psi$ and $\varphi\circ \lambda$ whose difference in supremum equals  0.6.}
	\label{fig:Skorokhod}
\end{figure}

We wish to endow the space of \cadlag functions $D[-\tau,0]$ with a topology for which it becomes separable, because this is essential  for tightness. One can achieve this via the Skorokhod topology; see  Theorem \ref{thm:Polish}. Nevertheless, the space of \cadlag functions is not complete under the metric $d_S$ \cite[Ex. 12.2]{book:billingsley}. Due to the fact that separability is a topological property, the problem can be solved by finding an equivalent metric for which   $D[-\tau,0]$ becomes complete as well.  
%  \mark{Poolse ruimten zijn raar}
  \begin{theorem}\label{thm:Polish}
  	The metrics $d_S$ and $d_S^\circ$\index{$d_S^\circ$} on $D[a,b]$, with
  	\begin{equation}\label{eq:sko2}
  		d_S^\circ(\varphi,\psi):=\inf_{\lambda \in \Lambda[a, b]}\big(\|\varphi \circ \lambda-\psi\|_{\infty}+\|\lambda\|^\circ_\infty\big),\quad \varphi,\psi\in D[a,b],
  	\end{equation}
  where
  \begin{equation}
  	\|\lambda\|^{\circ}_\infty=\sup _{s<t}\left|\log \frac{\lambda (t)-\lambda (s)}{t-s}\right|,\quad \lambda\in\Lambda[a,b],
  \end{equation}
  	are equivalent. The space $D[a,b]$ is separable whenever the topology is induced by either $d_S$ or $d_S^\circ$. Moreover,  the  metric space $(D[a,b],d_S^\circ)$ is complete.  	 
  \end{theorem}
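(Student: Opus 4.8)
The plan is to deduce all three assertions from the classical theory of the Skorokhod space on $[0,1]$ via an affine reduction, while sketching the genuine arguments. First I would set up the reduction: the increasing affine homeomorphism $\phi:[a,b]\to[0,1]$, $\phi(t)=(t-a)/(b-a)$, induces a bijection $\Phi:D[a,b]\to D[0,1]$, $\Phi(\varphi)=\varphi\circ\phi^{-1}$, and conjugation $\lambda\mapsto\phi\circ\lambda\circ\phi^{-1}$ carries $\Lambda[a,b]$ bijectively onto $\Lambda[0,1]$. A direct computation shows the penalised supremum $\|\varphi\circ\lambda-\psi\|_\infty$ and the log-slope penalty $\|\cdot\|^\circ_\infty$ are both \emph{invariant} under this conjugation, while the displacement penalty $\|\mathrm{Id}-\lambda\|_\infty$ merely rescales by $(b-a)$. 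Hence $\Phi$ is an isometry for $d_S^\circ$ and a bi-Lipschitz equivalence for $d_S$, so it suffices to establish the statement on $D[0,1]$, where it is the content of \cite[§12]{book:billingsley} and \cite{book:jacod}.

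For the equivalence of the two metrics (meaning they generate the same topology) one implication is elementary: if $\|\lambda\|^\circ_\infty\le\delta$, then taking $s=a$ in the defining supremum gives $e^{-\delta}(t-a)\le\lambda(t)-a\le e^{\delta}(t-a)$, whence $\|\mathrm{Id}-\lambda\|_\infty\le(e^{\delta}-1)(b-a)$; thus $d_S^\circ$-convergence forces $d_S$-convergence. The reverse implication is the delicate one, since a time change close to the identity in sup-norm may have wildly oscillating slopes. Given $d_S(\varphi_n,\varphi)\to 0$ with witnessing changes $\lambda_n$, one must \emph{replace} each $\lambda_n$ by a piecewise-linear interpolant $\mu_n$ on a finite grid adapted to the (finitely many) large jumps of the limit $\varphi$. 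On the complementary subintervals $\varphi$ oscillates little, so the replacement perturbs $\varphi_n\circ\lambda_n$ only slightly in sup-norm while forcing $\|\mu_n\|^\circ_\infty\to 0$; I would cite this interpolation lemma rather than reproduce it.

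Separability I would obtain by exhibiting the countable set $\mathcal{S}$ of step functions with finitely many \emph{rational} breakpoints and \emph{rational} values. Using the \cadlag oscillation modulus, every $\varphi\in D[a,b]$ admits, for each $\varepsilon>0$, a partition on whose pieces $\varphi$ oscillates by less than $\varepsilon$; rounding breakpoints and values to rationals yields an element of $\mathcal{S}$ within $\varepsilon$ in sup-norm, hence within $\varepsilon$ for $d_S\le\|\cdot\|_\infty$, and $\mathcal{S}$ is then $d_S^\circ$-dense since the two metrics share a topology.

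Completeness of $(D[a,b],d_S^\circ)$ is where the theorem has genuine content, as $(D[a,b],d_S)$ is \emph{not} complete. The crux is the subadditivity $\|\lambda\circ\mu\|^\circ_\infty\le\|\lambda\|^\circ_\infty+\|\mu\|^\circ_\infty$, immediate from the additive splitting of $\log\frac{(\lambda\circ\mu)(t)-(\lambda\circ\mu)(s)}{t-s}$ together with the triangle inequality for the supremum. Given a $d_S^\circ$-Cauchy sequence I would pass to a subsequence with $d_S^\circ(\varphi_{n_k},\varphi_{n_{k+1}})<2^{-k}$, pick witnesses $\mu_k\in\Lambda[a,b]$ with $\|\mu_k\|^\circ_\infty<2^{-k}$, and form the compositions $\nu_k=\mu_1\circ\cdots\circ\mu_k$. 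Subadditivity bounds $\|\nu_k\|^\circ_\infty$ uniformly and keeps the slopes of $\nu_k$ bounded away from $0$ and $\infty$, so $(\nu_k)$ is uniformly Cauchy and converges to some $\nu\in\Lambda[a,b]$; the reparametrised functions then converge uniformly to a limit, which is \cadlag as a uniform limit of \cadlag functions, and one verifies $d_S^\circ(\varphi_{n_k},\cdot)\to 0$, so the whole Cauchy sequence converges. The main obstacle is precisely this last step: ensuring the infinite composition of time changes converges to a genuine increasing homeomorphism of $[a,b]$ — exactly what the $\log$-slope penalty in $d_S^\circ$ secures and what $d_S$ fails to provide.
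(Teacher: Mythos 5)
Your overall strategy---conjugate $D[a,b]$ onto $D[0,1]$ by the affine map and then invoke the classical theory---is exactly what the paper does: its proof of this theorem is nothing more than a citation of \cite[Thms. 12.1--12.2]{book:billingsley} and \cite[Ch. VI.1c]{book:jacod}, together with the remark that Billingsley's use of a maximum in place of the ``$+$'' is immaterial (a point you do not address, but which is harmless since $\max\leq\text{sum}\leq 2\max$). Your reduction is correct: the sup-distance and the log-slope norm are invariant under conjugation by the affine homeomorphism, the displacement penalty merely rescales, so $\Phi$ is a $d_S^\circ$-isometry and bi-Lipschitz for $d_S$; your ``easy direction'' estimate $\|\mathrm{Id}-\lambda\|_\infty\leq(e^{\delta}-1)(b-a)$ when $\|\lambda\|^\circ_\infty\leq\delta$ is also right. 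Had you stopped at ``reduce and cite,'' the proposal would coincide with the paper's proof.

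However, the two classical arguments you chose to sketch rather than cite both contain steps that fail as written. First, separability: rounding the \emph{breakpoints} of a step function to rationals is not a small sup-norm perturbation. If $\varphi=\mathbf 1_{[c,b]}$ with $c$ irrational, then every step function with rational breakpoints is at sup-distance at least $1/2$ from $\varphi$, so your claim ``within $\varepsilon$ in sup-norm'' is false; only the \emph{values} can be rounded in sup-norm, while the breakpoints must be matched by a piecewise-linear time change, which is small precisely in $d_S$. The conclusion (density of rational step functions) is true, but for the reason the Skorokhod metric exists, not the one you give. Second, completeness: with the front compositions $\nu_k=\mu_1\circ\cdots\circ\mu_k$ the reparametrised functions $z_k=\varphi_{n_{k+1}}\circ\nu_k^{-1}$ do converge uniformly to a \cadlag function $x$ (your subadditivity and slope bounds give this), but $x$ is \emph{not} the $d_S^\circ$-limit of the sequence: one has $\|\nu_k\|^\circ_\infty\leq\sum_j 2^{-j}$, bounded but not vanishing, so the final verification $d_S^\circ(\varphi_{n_k},x)\to0$ cannot be carried out, and indeed the true limit is $x\circ\nu$ with $\nu=\lim_k\nu_k$. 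The standard repair (Billingsley's) is to use the \emph{tail} compositions $\rho_k=\lim_m\mu_{k+1}\circ\cdots\circ\mu_m$, for which subadditivity gives $\|\rho_k\|^\circ_\infty\leq 2^{-k}$ and $\nu=\nu_k\circ\rho_k$, whence $d_S^\circ\bigl(\varphi_{n_{k+1}},x\circ\nu\bigr)\leq\|x\circ\nu\circ\rho_k^{-1}-\varphi_{n_{k+1}}\|_\infty+\|\rho_k\|^\circ_\infty=\|x-z_k\|_\infty+\|\rho_k\|^\circ_\infty\to0$. Both gaps are repairable, and the repairs are exactly what the sources you cite carry out, but as written these two steps do not establish what they claim.
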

  \begin{proof} We refer to either	  \cite[Thm. 12.1]{book:billingsley} and \cite[Thm. 12.2]{book:billingsley} or \cite[Ch. VI.1c]{book:jacod}. The `$+$'  in the definitions of the metrics  \eqref{eq:sko1} and \eqref{eq:sko2} is conform \cite{book:jacod}. In \cite{book:billingsley},  the `$+$' should be replaced by a maximum, but we like to point out that essentially  there is no  difference. 
  	\end{proof}
  Often, one introduces the metric $d_S^\circ$ as the Skorokhod metric and excludes the intermediate step of defining $d_S$; see \cite{article:stojkovic} for instance.  From now one, the results will be stated for   the space of either continuous or \cadlag functions defined on $[-\tau,0]$ instead of an arbitrary compact interval.
  
  \begin{remark}\label{remark:Sko}
In the remainder of this appendix, the space $C[-\tau,0]$ will always be endowed with the uniform  topology and $D[-\tau,0]$ with the Skorokhod topology, unless specified otherwise. Viewed as a metric space, we  consider $D[-\tau,0]$ with $d_S^\circ$ for the completeness property.
  \end{remark}
  
 There is a  strict inclusion between the Borel $\sigma$-algebra   of $D[-\tau,0]$ with its Skorokhod topology and the Borel $\sigma$-algebra of the space of \cadlag functions equipped with the uniform topology, i.e.,
\begin{equation}\label{eq:inclusion}\mathcal B((D[-\tau,0],d_S))\subsetneq \mathcal B((D[-\tau,0],\|\cdot\|_\infty)).\end{equation}
In particular, an important feature of the Skorokhod topology is that  generated Borel sets satisfy a  desired measurability property. That is, the Borel $\sigma$-algebra coincides with the cylindrical $\sigma$-algebra.
  
  \begin{theorem}\label{thm:Borel}
  	The Borel $\sigma$-algebra of $C[-\tau,0]$, i.e., $\mathcal B(C[-\tau,0])$, coincides with the smallest $\sigma$-algebra of subsets of $C[-\tau,0]$ such that the maps $\pi^{t}: x \rightarrow x(t)$ are measurable for all $t \in[-\tau,0] . $ Likewise,  the Borel $\sigma$-algebra of $D[-\tau,0]$, i.e., $\mathcal B(D[-\tau,0])$, coincides with the smallest $\sigma$-algebra of subsets of $D[-\tau,0]$ such that the maps $\pi^{t}: x \rightarrow x(t)$ are measurable for all $t \in[-\tau,0] . $ 
  \end{theorem}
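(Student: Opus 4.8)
The plan is to prove, for both $E=C$ and $E=D$, the two set inclusions between the Borel $\sigma$-algebra of $E[-\tau,0]$ and the cylindrical $\sigma$-algebra $\mathcal C_E$, by which I mean the smallest $\sigma$-algebra rendering every evaluation map $\pi^t$, $t\in[-\tau,0]$, measurable. Throughout I will use that a countable supremum, or a pointwise limit, of $\mathcal C_E$-measurable real maps is again $\mathcal C_E$-measurable.

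In the continuous case, for $\mathcal C_C\subseteq\mathcal B(C[-\tau,0])$ I note that each $\pi^t$ is $1$-Lipschitz with respect to $\|\cdot\|_\infty$, since $|\pi^t(x)-\pi^t(y)|\le\|x-y\|_\infty$; hence $\pi^t$ is continuous, so Borel measurable, and the smallest $\sigma$-algebra making all $\pi^t$ measurable sits inside the Borel one. For the reverse inclusion I invoke separability of $(C[-\tau,0],\|\cdot\|_\infty)$: fixing a countable dense subset $Q\subset[-\tau,0]$ and using (uniform) continuity, $x\mapsto\|x-x_0\|_\infty=\sup_{t\in Q}|\pi^t(x)-x_0(t)|$ is $\mathcal C_C$-measurable, so every open ball lies in $\mathcal C_C$; as $C[-\tau,0]$ is separable, each open set is a countable union of such balls, giving $\mathcal B(C[-\tau,0])\subseteq\mathcal C_C$.

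The càdlàg case requires more care in the direction $\mathcal C_D\subseteq\mathcal B(D[-\tau,0])$, since for interior $t$ the map $\pi^t$ fails to be Skorokhod-continuous. I would first dispatch the endpoints: as every change of time $\lambda\in\Lambda[-\tau,0]$ fixes $-\tau$ and $0$, Skorokhod convergence $x_n\to x$ forces $|x_n(0)-x(0)|\le\|x_n\circ\lambda_n-x\|_\infty\to0$ and likewise at $-\tau$, so $\pi^{-\tau}$ and $\pi^0$ are continuous. For interior $t\in(-\tau,0)$ I approximate by averaged evaluations $A_h^t(x)=\frac1h\int_t^{t+h}x(s)\,\mathrm ds$. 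Each $A_h^t$ is Skorokhod-continuous: a Skorokhod-convergent sequence is uniformly bounded on $[-\tau,0]$ and converges pointwise at every continuity point of the limit, whose complement is at most countable hence Lebesgue-null, so dominated convergence yields $A_h^t(x_n)\to A_h^t(x)$. By right-continuity $A_h^t(x)\to x(t)=\pi^t(x)$ as $h\downarrow0$, exhibiting $\pi^t$ as a pointwise limit of continuous maps, hence Borel measurable; therefore $\mathcal C_D\subseteq\mathcal B(D[-\tau,0])$.

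The remaining inclusion $\mathcal B(D[-\tau,0])\subseteq\mathcal C_D$ is the main obstacle. By Theorem \ref{thm:Polish} the space $(D[-\tau,0],d_S^\circ)$ is separable, so its Borel $\sigma$-algebra is generated by the open balls $\{x:d_S^\circ(x,y)<r\}$; it thus suffices to show that $x\mapsto d_S^\circ(x,y)$ is $\mathcal C_D$-measurable for each fixed $y$. The difficulty is that $d_S^\circ$ is defined through an infimum over the uncountable family $\Lambda[-\tau,0]$ of changes of time. The plan is to reduce this infimum to a countable one: approximate the supremum norms in $\|x\circ\lambda-y\|_\infty+\|\lambda\|^\circ_\infty$ by evaluations over a fixed countable dense grid, and replace $\Lambda[-\tau,0]$ by a countable family of piecewise-linear time changes with rational breakpoints, for which the resulting functionals depend on only finitely many coordinates and are therefore $\mathcal C_D$-measurable; taking the countable infimum recovers $d_S^\circ(\cdot,y)$. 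I would carry out this reduction following the classical treatments in \cite{book:billingsley,book:jacod}, which establish precisely this joint measurability of the Skorokhod metric. Combining the two inclusions in each case completes the proof.
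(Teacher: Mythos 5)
Your overall architecture is sound, and three of the four inclusions are correctly and completely executed: both directions for $C[-\tau,0]$ (Lipschitz continuity of $\pi^t$, and recovery of $\|\cdot-x_0\|_\infty$ as a countable supremum plus separability), and the inclusion of the cylindrical $\sigma$-algebra in $\mathcal B(D[-\tau,0])$ via endpoint continuity and the averaged evaluations $A_h^t$ (Skorokhod convergence gives uniform boundedness and pointwise convergence off a countable set, so dominated convergence applies, and right-continuity gives $A_h^t(x)\to\pi^t(x)$). This is already more than the paper provides, since the paper's ``proof'' is a bare citation of Parthasarathy. The problem lies in the last step, which is also the only genuinely hard one.

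The reduction you propose there is false as stated. For the family of piecewise-linear time changes to be countable, both the breakpoints and the values at the breakpoints must be taken rational (if the values were arbitrary reals, the family is uncountable and an uncountable infimum of measurable functions need not be measurable, so nothing is gained). But then the infimum of $\|x\circ\lambda-y\|_\infty+\|\lambda\|_\infty^\circ$ over this countable family does not recover $d_S^\circ(x,y)$: take $a,b\in(-\tau,0)$ with $\{1,a,b\}$ linearly independent over $\mathbb Q$, and $x=\mathbf 1_{[a,0]}$, $y=\mathbf 1_{[b,0]}$. Every $\lambda$ in your family is linear on a rational interval containing $b$ with rational endpoint values, so $\lambda(b)=rb+s$ with $r,s\in\mathbb Q$, hence $\lambda(b)\neq a$; consequently $x\circ\lambda$ and $y$ have misaligned jumps and $\|x\circ\lambda-y\|_\infty=1$ for every such $\lambda$, so the countable infimum equals $1$, whereas $d_S^\circ(x,y)$ is of order $|a-b|$ (match the jumps with the two-piece time change sending $-\tau\mapsto-\tau$, $b\mapsto a$, $0\mapsto 0$). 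The same obstruction rules out any $y$-independent countable family: jumps must be matched exactly, at locations that can be arbitrary reals. Your appeal to the ``classical treatments'' does not close this, because that is not how Billingsley argues. His proof (Thm.\ 12.5 there) never restricts the infimum over time changes; instead it characterises the closed ball $\{x: d(x,y)\leq\alpha\}$ by the condition that for every $\varepsilon>0$ and every finite collection $t_1<\cdots<t_k$ from a fixed countable dense set there exist reals $s_1<\cdots<s_k$ with $|s_i-t_i|\leq\alpha+\varepsilon$ and $|x(t_i)-y(s_i)|\leq\alpha+\varepsilon$. For fixed $y,\alpha,\varepsilon$ and grid, this is the preimage under $(\pi^{t_1},\ldots,\pi^{t_k})$ of a Borel subset of $\mathbb R^k$, so the ball is a countable intersection of cylinder sets; the nontrivial content, entirely absent from your sketch, is the compactness/diagonal argument showing that these finite-dimensional conditions conversely produce a single global time change. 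Replacing your reduction by this finite-dimensional characterisation (or by a Blackwell-type argument for countably generated point-separating sub-$\sigma$-algebras of a standard Borel space) is what is needed to complete the proof.
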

\begin{proof}
We refer to \cite[Thm. 2.1]{book:partha} and \cite[Thm. 7.1]{book:partha}. 
% Note that Theorem 2.1 is in fact a corollary of Theorem 7.1 because the two topologies coincide on the space of continuous functions.
\end{proof}

This means  the Skorokhod topology is really the right topology if one wants to interpret segments of stochastic processes as random variables; see the corollary below and \cite[p. 135]{book:billingsley} for additional information.

\begin{corollary}
    [Measurability of  segment processes]\label{remark:segment} Only under the Skorokhod topology, we have that $X$ is a $D[-\tau,0]$-valued random variable if and only if, for any $t\in [-\tau,0]$, $X(\omega,t)=\pi^t(X(\omega))$ defines a random variable on $\mathbb R$. In particular, for any $Y\in \mathbb D[-\tau,\infty),$ the segment process $(Y_t)_{t\geq 0}$ is an $\mathbb F$-adapted $D[-\tau,0]$-valued stochastic process.
	  % This is immediate from the fact that, according to Theorem \ref{thm:Borel}, the Borel $\sigma$-algebra of $D[-\tau, 0]$  endowed with the {Skorokhod} topology coincides with the {cylindrical $\sigma$-algebra}. Clearly, $Y(t)$ is a random variable for $Y\in\mathbb D[-\tau,\infty).$
   % , i.e., the $\sigma$-algebra generated by all point evaluations $\varphi \mapsto \varphi(c)$ for any function $\varphi \in D[-\tau, 0]$ and arbitrary $c \in[-\tau, 0]$. 
   % This yields that the segment $Y_{t}$ at time $t$, for all time $t\geq 0$, is a $D[-\tau, 0]$-valued random variable for any process $Y\in \mathbb D[-\tau,\infty)$.
\end{corollary}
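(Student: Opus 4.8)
The statement to prove is Corollary~\ref{remark:segment}, which asserts two things: first, that a map $X\colon\Omega\to D[-\tau,0]$ is a $D[-\tau,0]$-valued random variable (measurable with respect to the Skorokhod Borel $\sigma$-algebra) if and only if each coordinate evaluation $\omega\mapsto \pi^t(X(\omega))=X(\omega,t)$ is an $\mathbb R$-valued random variable for every $t\in[-\tau,0]$; and second, the adaptedness consequence that for any $Y\in\mathbb D[-\tau,\infty)$ the segment process $(Y_t)_{t\ge 0}$ is an $\mathbb F$-adapted $D[-\tau,0]$-valued process. The plan is to derive both parts as essentially immediate consequences of Theorem~\ref{thm:Borel}, which identifies the Skorokhod Borel $\sigma$-algebra with the cylindrical ($\sigma$-algebra generated by the coordinate projections $\pi^t$).

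First I would prove the equivalence for the first part. For the forward direction, suppose $X$ is measurable into $(D[-\tau,0],\mathcal B(D[-\tau,0]))$. By Theorem~\ref{thm:Borel} each projection $\pi^t\colon D[-\tau,0]\to\mathbb R$ is $\mathcal B(D[-\tau,0])$-measurable, so the composition $\pi^t\circ X=X(\cdot,t)$ is measurable, i.e.\ a real random variable. For the converse, suppose $X(\cdot,t)=\pi^t\circ X$ is a random variable for every $t\in[-\tau,0]$. Then $X^{-1}\bigl((\pi^t)^{-1}(B)\bigr)=(\pi^t\circ X)^{-1}(B)\in\mathcal F$ for every Borel $B\subseteq\mathbb R$ and every $t$. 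Hence $X^{-1}$ maps every generator of the cylindrical $\sigma$-algebra into $\mathcal F$; since preimages commute with the $\sigma$-algebra operations, the collection $\{A\in\mathcal B(D[-\tau,0]):X^{-1}(A)\in\mathcal F\}$ is a $\sigma$-algebra containing all these generators, and by Theorem~\ref{thm:Borel} this cylindrical $\sigma$-algebra equals $\mathcal B(D[-\tau,0])$. Therefore $X$ is measurable. The crucial point to emphasise—and the only place where the topology genuinely matters—is that this argument fails under the uniform topology precisely because of the strict inclusion \eqref{eq:inclusion}: there the Borel $\sigma$-algebra is strictly larger than the cylindrical one, so coordinate-wise measurability no longer suffices.

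For the second part I would fix $Y\in\mathbb D[-\tau,\infty)$ and $t\ge 0$, and consider the segment $Y_t$ defined by $Y_t(\theta)=Y(t+\theta)$ for $\theta\in[-\tau,0]$, as in \eqref{eq:segment}. To show $Y_t$ is an $\mathcal F_t$-measurable $D[-\tau,0]$-valued random variable, by the equivalence just established it suffices to check that for each fixed $\theta\in[-\tau,0]$ the map $\omega\mapsto\pi^\theta(Y_t(\omega))=Y(t+\theta,\omega)$ is $\mathcal F_t$-measurable. But $t+\theta\le t$, and since $Y$ is an $\mathbb F$-adapted (hence $(\mathcal F_s)_{-\tau\le s<\infty}$-adapted) process, $Y(t+\theta)$ is $\mathcal F_{t+\theta}$-measurable, and $\mathcal F_{t+\theta}\subseteq\mathcal F_t$ by monotonicity of the filtration. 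Thus each coordinate of $Y_t$ is $\mathcal F_t$-measurable, so $Y_t$ is an $\mathcal F_t$-measurable $D[-\tau,0]$-valued random variable, and the segment process $(Y_t)_{t\ge 0}$ is $\mathbb F$-adapted.

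There is essentially no serious obstacle here; the content of the corollary is entirely carried by Theorem~\ref{thm:Borel}, and the proof is a routine measurable-generator argument together with the filtration-monotonicity observation $t+\theta\le t$. If I had to name the one subtlety worth stating explicitly, it is the necessity of the Skorokhod topology: the converse implication (coordinate-wise measurability $\Rightarrow$ measurability) is exactly what breaks under the uniform topology, and I would include a one-sentence remark pointing to \eqref{eq:inclusion} to make clear why the hypothesis ``only under the Skorokhod topology'' appears in the statement. Beyond that, one should take care that $Y\in\mathbb D[-\tau,\infty)$ really does have \cadlag sample paths so that each segment $Y_t$ indeed lands in $D[-\tau,0]$, which is immediate from the definition of $\mathbb D[-\tau,\infty)$.
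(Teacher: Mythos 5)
Your proposal is correct and follows essentially the same route as the paper: the paper presents this corollary as an immediate consequence of Theorem \ref{thm:Borel} (the identification of the Skorokhod--Borel $\sigma$-algebra with the cylindrical $\sigma$-algebra), with the failure under the uniform topology attributed to the strict inclusion \eqref{eq:inclusion}, exactly as you argue. Your write-up merely makes explicit the standard generator argument and the filtration-monotonicity step $\mathcal F_{t+\theta}\subseteq\mathcal F_t$ that the paper leaves implicit, which is a faithful (and slightly more detailed) rendering of the intended proof.
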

	 Indeed, this statement is no longer  valid when $D[-\tau,0]$ is equipped with the {uniform} norm, due to the strict inclusion in \eqref{eq:inclusion}. Regardless of the topology on $D[-\tau,0]$,   we have for   $Y\in\mathbb D[-\tau,\infty)$ that, for each $t\geq 0$,  $\|Y_t\|_\infty=\sup_{s\in[-\tau,0]}|Y_t(s)|$ is a real-valued random variable.  
  
  % In addition, observe  Theorem \ref{thm:Borel} particularly clarifies Definition \ref{def:invariantm} and Remark \ref{remark:segment}; recall the introduction of this section. 
  
\subsection{Arzel\`a--Ascoli theorems:  compactness and modulus of continuity}\label{sec:AA}
% Furthermore, one knows
  % how to
      In this section, our goal is to characterise  compact subsets of  $C[-\tau,0]$ and $D[-\tau,0].$ But first, we will state a general Arzelà--Ascoli result; see Theorem \ref{thm:ArzAsc-gen}. Throughout this section, we will assume that $X$ is  a compact Hausdorff space and we denote by  $C(X)$  the space of real-valued continuous functions on $X$,  endowed with the topology induced by the supremum norm. 

\begin{definition}
  	% Let $X$ be a compact Hausdorff space.
   A subset $A\subset C(X)$ is said to be \texttt{equicontinuous} if for every  $\varepsilon>0, $ and every $x\in X$, there is a neighbourhood $U_{x}$ about $x$
  	such that   for all $ y\in U_x$ we have $|f(y)-f(x)|<\varepsilon$,  for every $f\in A$. Further, a subset $A\subset C(X)$ is called \texttt{pointwise bounded} if for every $x \in X$ we have
  	$\sup_{f\in A} |f(x)|<\infty$.
  	%  $$
  	%  \forall y \in U_{x}, \forall f \in \mathbf{F}: \quad|f(y)-f(x)|<\varepsilon
  	%  $$
  \end{definition}
We see that a family of functions $A$ is equicontinuous when all   functions are continuous, hence uniformly continuous, and when  all   functions    variate controllably over an appropriately  given neighbourhood about any point in space.  
 % \begin{definition}
 %  	% Let $X$ be a compact Hausdorff space.
 %   A set $A\subset C(X)$ is called \texttt{pointwise bounded} if for every $x \in X$ we have
 %  	$\sup_{f\in A} |f(x)|<\infty$.
 %  \end{definition}
 We like to point out that the Arzelà--Ascoli
  theorem completely characterises relative compactness in $C(X)$; a set $A$ is called \texttt{relatively compact} if the closure of $A$ is compact. The following result  is analogous, in some sense, to the Heine–Borel theorem.
  
  \begin{theorem}[Arzelà--Ascoli]\label{thm:ArzAsc-gen}
  	Suppose $X$ is a compact Hausdorff space. Then any $A\subset C(X)$  is relatively compact if and only if it is equicontinuous and pointwise bounded.
  \end{theorem}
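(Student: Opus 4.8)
The plan is to prove the classical Arzelà--Ascoli theorem, which asserts that a subset $A \subset C(X)$ of the space of continuous functions on a compact Hausdorff space $X$ is relatively compact (in the uniform topology) if and only if $A$ is equicontinuous and pointwise bounded. I would split the argument into the two implications and treat them separately.

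\textbf{Necessity (relative compactness $\Rightarrow$ equicontinuous and pointwise bounded).} Suppose $\overline A$ is compact in $C(X)$. Since $C(X)$ carries the sup-norm, compactness implies total boundedness, so for any $\varepsilon>0$ there is a finite $\varepsilon/3$-net $f_1,\dots,f_n \in A$. Pointwise boundedness follows immediately: for each fixed $x$, the values $|f(x)| \le \|f\|_\infty$ are uniformly controlled by $\max_i \|f_i\|_\infty + \varepsilon/3$. For equicontinuity, fix $x \in X$ and $\varepsilon > 0$; each $f_i$ is continuous, hence there is a neighbourhood $U_i$ of $x$ on which $|f_i(y)-f_i(x)| < \varepsilon/3$. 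Taking $U_x = \bigcap_{i=1}^n U_i$ (a finite intersection, so still a neighbourhood) and approximating an arbitrary $f \in A$ by the nearest $f_i$ via the triangle inequality $|f(y)-f(x)| \le |f(y)-f_i(y)| + |f_i(y)-f_i(x)| + |f_i(x)-f(x)|$ yields $|f(y)-f(x)| < \varepsilon$ for all $y \in U_x$. This gives equicontinuity.

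\textbf{Sufficiency (equicontinuous and pointwise bounded $\Rightarrow$ relatively compact).} This is the main obstacle and the more delicate direction. Since $C(X)$ is a complete metric space, relative compactness is equivalent to total boundedness, so it suffices to produce for each $\varepsilon > 0$ a finite $\varepsilon$-net. I would exploit equicontinuity to cover $X$: for each $x$ choose the neighbourhood $U_x$ from equicontinuity (with tolerance $\varepsilon/4$, say), and use compactness of $X$ to extract a finite subcover $U_{x_1},\dots,U_{x_m}$. On the finite set $\{x_1,\dots,x_m\}$, pointwise boundedness guarantees the values $\{(f(x_1),\dots,f(x_m)) : f \in A\}$ lie in a bounded subset of $\mathbb R^m$, which is totally bounded; pick a finite set of functions $f_1,\dots,f_k$ whose value-vectors form an $\varepsilon/4$-net for these vectors. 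The crux is then to verify that $f_1,\dots,f_k$ form an $\varepsilon$-net for $A$ in sup-norm: given $f \in A$, choose $f_j$ close in value at the $x_i$'s, and for arbitrary $y \in X$ pick the patch $U_{x_i} \ni y$, so that $|f(y)-f_j(y)| \le |f(y)-f(x_i)| + |f(x_i)-f_j(x_i)| + |f_j(x_i)-f_j(y)| < \varepsilon$, where the outer two terms are controlled by equicontinuity and the middle by the net. Taking the supremum over $y$ gives $\|f-f_j\|_\infty \le \varepsilon$.

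\textbf{Remark on the obstacle.} The genuinely subtle point is marshalling the two finiteness mechanisms in the sufficiency direction: equicontinuity converts a pointwise bound at finitely many points into a uniform bound, but one must interleave the finite subcover of $X$ (from compactness) with the finite net in $\mathbb R^m$ (from pointwise boundedness and Heine--Borel) so that the single triangle-inequality estimate closes with a \emph{uniform} bound in $y$. I would take care that the neighbourhoods $U_{x}$ used in the subcover are exactly those supplied by equicontinuity so the estimate $|f(y)-f(x_i)| < \varepsilon/4$ is available for every $f$ simultaneously, which is precisely what equicontinuity (as opposed to mere continuity of each $f$) provides. Completeness of $C(X)$ then upgrades total boundedness of $A$ to relative compactness of $\overline A$.
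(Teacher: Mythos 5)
Your proof is correct, and it is worth noting that the paper itself gives no argument at all for this theorem: its ``proof'' is a one-line citation to Dunford--Schwartz. You have therefore supplied a genuine, self-contained proof where the paper outsources one. Both implications are handled soundly: the necessity direction via a finite $\varepsilon/3$-net (total boundedness of a compact set) plus the three-term triangle inequality, and the sufficiency direction via the standard interleaving of a finite subcover of $X$ (from equicontinuity plus compactness) with a finite net in $\mathbb{R}^m$ (from pointwise boundedness plus Heine--Borel), followed by the upgrade from total boundedness to relative compactness using completeness of $C(X)$. One merit of your route deserves emphasis: because you argue through total boundedness rather than through a diagonal-sequence extraction, your proof works in the full generality stated---$X$ an arbitrary compact Hausdorff space---whereas diagonal arguments require a countable dense subset of $X$ and hence only cover the compact metric case. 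The only points needing care, which you handle correctly, are that the equicontinuity neighbourhoods $U_{x_i}$ serve \emph{all} $f \in A$ simultaneously (this is exactly what makes the estimate $|f(y)-f(x_i)| < \varepsilon/4$ uniform in $f$), and that the finite net in $\mathbb{R}^m$ can be chosen with centres among the value-vectors of $A$ itself, a standard fact about totally bounded sets.
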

  \begin{proof}
  	A proof can be found in \cite[Thm. 7]{book:dunford}.
  \end{proof}

One can characterise compactness even more concretely by introducing a {modulus of continuity}. We  restrict ourselves to $X=[-\tau,0]$ together with the usual Euclidean topology.

  \begin{definition}
  	  For any real-valued function $\varphi$ with domain $[-\tau,0]$, and $T\subset [-\tau,0]$, we introduce
  \begin{equation}
  	\omega(\varphi,T):=\sup_{s,t\in T}|\varphi(s)-\varphi(t)|.
  \end{equation} The \texttt{modulus\,\,of\,\,continuity} for any $\varphi:[-\tau,0]\to\R$ is given by
  	\begin{equation}
  		\omega(\varphi,\delta):=\sup_{-\tau\leq t\leq-\delta}\omega(\varphi,[t,t+\delta])=\sup_{|s-t|\leq \delta}|\varphi(s)-\varphi(t)|,\quad 0<\delta\leq \tau.
  	\end{equation}
  \end{definition}
Clearly, a necessary and sufficient condition for $\varphi$ to be (uniformly) continuous over $[-\tau,0]$, i.e., $\varphi\in C[-\tau,0]$, is
  \begin{equation}
  \lim _{\delta \rightarrow 0} \omega(\varphi,\delta)=0.\label{eq:continuity}
  \end{equation}

    \begin{theorem}[{Arzelà--Ascoli for $C[-\tau,0]$}]\label{thm:ArzAsc}
  	   A necessary and sufficient condition for a subset $A\subset C[-\tau,0]$ to be relatively compact in the uniform topology is to require
  \begin{equation}
  	\sup _{\varphi \in A}|\varphi(s)|<\infty,\label{eq:cond1}\quad\text{for some $s\in[-\tau,0]$},
  	\end{equation}
  	and
  	\begin{equation}
  	\lim _{\delta \rightarrow 0} \sup _{\varphi \in A} \omega(\varphi,\delta)=0.\label{eq:cond2}
  	\end{equation}
  	
  \end{theorem}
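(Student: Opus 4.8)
The final statement is the Arzelà--Ascoli theorem for $C[-\tau,0]$ (Theorem \ref{thm:ArzAsc}), which I would prove by deriving it as a concrete specialisation of the general Arzelà--Ascoli result (Theorem \ref{thm:ArzAsc-gen}) already available in the excerpt. The plan is to show that the two displayed conditions \eqref{eq:cond1}--\eqref{eq:cond2} are together equivalent to the pair of hypotheses (equicontinuity and pointwise boundedness) appearing in Theorem \ref{thm:ArzAsc-gen}, specialised to $X=[-\tau,0]$. Since relative compactness is characterised by the general theorem, matching the two sets of conditions yields the claim.

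First I would establish that condition \eqref{eq:cond2}, namely $\lim_{\delta\to 0}\sup_{\varphi\in A}\omega(\varphi,\delta)=0$, is equivalent to equicontinuity of $A$ on the compact interval $[-\tau,0]$. The forward direction is immediate: if the uniform modulus of continuity tends to zero, then for any $\varepsilon>0$ one picks $\delta$ with $\sup_{\varphi\in A}\omega(\varphi,\delta)<\varepsilon$, and the neighbourhood $U_t=(t-\delta,t+\delta)\cap[-\tau,0]$ witnesses equicontinuity at every point $t$ simultaneously. For the converse one uses compactness of $[-\tau,0]$: equicontinuity at each point combined with a finite subcover argument upgrades pointwise equicontinuity to the uniform statement \eqref{eq:cond2}. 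This is the standard passage from pointwise to uniform equicontinuity on a compact metric space, so I would state it but not belabour the $\varepsilon$--$\delta$ bookkeeping.

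Next I would show that, in the presence of equicontinuity, the single-point boundedness condition \eqref{eq:cond1} (boundedness of $\sup_{\varphi\in A}|\varphi(s)|$ at one point $s\in[-\tau,0]$) is equivalent to pointwise boundedness $\sup_{\varphi\in A}|\varphi(t)|<\infty$ for every $t$. Pointwise boundedness at every point trivially implies boundedness at the chosen point $s$. Conversely, given \eqref{eq:cond1} at $s$ together with uniform equicontinuity, one chooses a finite chain of points spaced less than $\delta$ apart connecting $s$ to any $t\in[-\tau,0]$ and telescopes the bound $|\varphi(t)|\le |\varphi(s)| + \sum|\varphi(t_{k+1})-\varphi(t_k)|$, where each increment is controlled uniformly by $\omega(\varphi,\delta)$. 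Because $[-\tau,0]$ has finite length, only finitely many steps are needed and the total bound is uniform over $\varphi\in A$; this gives pointwise (indeed uniform) boundedness. Having matched both pairs of conditions, I would invoke Theorem \ref{thm:ArzAsc-gen} to conclude.

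I do not expect a genuine obstacle here; the content is classical and every ingredient is supplied by the general Arzelà--Ascoli theorem and compactness of $[-\tau,0]$. The only point requiring mild care---and the step I would treat as the crux---is the upgrade from the pointwise formulation of equicontinuity used in Theorem \ref{thm:ArzAsc-gen} to the uniform modulus-of-continuity formulation \eqref{eq:cond2}, since the two theorems phrase continuity differently. This is resolved by the finite-subcover argument on the compact interval, and I would make sure to note explicitly that it is compactness of $[-\tau,0]$ that permits both the uniformisation of equicontinuity and the telescoping that transfers boundedness from a single point to the whole interval.
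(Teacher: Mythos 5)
Your proposal is correct and follows essentially the same route as the paper: both reduce the statement to the general Arzelà--Ascoli theorem (Theorem \ref{thm:ArzAsc-gen}) by showing that \eqref{eq:cond2} amounts to equicontinuity and that \eqref{eq:cond1} together with \eqref{eq:cond2} upgrades to uniform (hence pointwise) boundedness, which is exactly the chaining observation the paper records in \eqref{eq:uni1}. Your write-up simply fills in the details the paper leaves as ``see further discussion.''
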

  \begin{proof}Conditions \eqref{eq:cond1} and \eqref{eq:cond2} combined is equivalent to being pointwise bounded and equicontinuous (see further discussion), hence the statement is an immediate consequence of the general Arzelà-Ascoli theorem in Theorem \ref{thm:ArzAsc-gen}.
  % \pagebreak
  	
  	 On the other hand, a much more direct proof for this statement can be found, for instance, in \cite[Thm. 7.2]{book:billingsley}. Note  there is a small difference in \eqref{eq:cond1}  present; we only assume $	\sup _{\varphi \in A}|\varphi(s)|<\infty,$ for some $s\in[-\tau,0]$, instead of $s$ necessarily being the starting point $s=-\tau$. We can do this because condition \eqref{eq:cond2}, which is   (uniform) equicontinuity over $[-\tau,0]$, together with  \eqref{eq:cond1} yields  
  \begin{equation}
  	\sup_{t\in[-\tau,0]}\sup_{\varphi \in A}|\varphi(t)| <\infty.\label{eq:uni1}
  \end{equation}
  This uniform bound implies pointwise boundedness and allows us to let $s$ in \eqref{eq:cond1} be arbitrary.
% This is in fact stronger than being pointwise bounded, namely we obtain a uniform bound. 
% To fill in the gaps, all details are completely conform to \cite{book:billingsley}. \todo{page 81}
  \end{proof}
  
The  idea would now be  to   introduce a modulus ``of continuity'' for the space of \cadlag functions. Differently put, we would like to have a mapping $\varpi$ which ensures us that  $\varphi\in D[-\tau,0]$ holds true whenever a similar condition as in  \eqref{eq:continuity} is satisfied.
  \begin{proposition}
  	A function $\varphi:[-\tau,0]\to \R$ is in $D[-\tau,0]$ if and only if
  	\begin{equation}
  		 \lim _{\delta \rightarrow 0} \varpi(\varphi,\delta)=0,
  	\end{equation}
  where
  \begin{equation}
  	\varpi(\varphi,\delta):=\inf\left\{\max_{1\leq i\leq k}\omega(\varphi,[t_{i-1},t_i)):k\in\N,\, \{t_i\}_{i=0}^k\in \Xi\right\},\quad 0<\delta\leq \tau,
  \end{equation}
with $\Xi$   the set of all finite sequences $\{t_i\}_{i=0}^k$, $k\in \N$, with $-\tau=t_0<t_1<...<t_{k-1}<t_k=0$ and $\min_{1 \leq i \leq k}|t_i-t_{i-1}|>\delta$.
  \end{proposition}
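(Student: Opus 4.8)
The plan is to prove the two implications separately, establishing that $\varpi(\varphi,\delta)\to 0$ as $\delta\searrow 0$ exactly characterises the càdlàg property. This is the natural analogue of the characterisation \eqref{eq:continuity} for continuous functions, where now, instead of controlling oscillation over \emph{every} small window, we only require control over a suitably chosen \emph{partition} into half-open intervals. The half-open structure $[t_{i-1},t_i)$ is precisely what accommodates jumps: a jump at a partition point $t_i$ does not contribute to any $\omega(\varphi,[t_{i-1},t_i))$, so we only penalise oscillation \emph{within} the continuity pieces.

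First I would prove the forward implication, assuming $\varphi\in D[-\tau,0]$. The key step is to show that a càdlàg function admits, for each $\varepsilon>0$, a finite partition $-\tau=t_0<t_1<\cdots<t_k=0$ such that $\omega(\varphi,[t_{i-1},t_i))<\varepsilon$ for every $i$. This is a standard covering argument: for each point $s$, right-continuity supplies a right neighbourhood $[s,s+\eta_s)$ on which $\varphi$ varies by less than $\varepsilon/2$, and existence of the left limit supplies a left neighbourhood $(s-\eta_s',s)$ on which $\varphi$ varies by less than $\varepsilon/2$. A compactness argument (Heine--Borel on $[-\tau,0]$) then extracts a finite subcover, from which one assembles the desired partition. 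Once such a partition exists for each $\varepsilon$, the only remaining issue is the constraint $\min_i|t_i-t_{i-1}|>\delta$ built into $\Xi$; I would argue that by refining slightly one may assume the mesh points are spaced more than some $\delta_\varepsilon>0$ apart (dropping points that are too close only enlarges the intervals, and one checks this does not destroy the oscillation bound by more than a controlled amount, or alternatively one builds the partition with a minimal spacing from the start). Hence $\varpi(\varphi,\delta)\le\varepsilon$ for all $\delta<\delta_\varepsilon$, giving $\lim_{\delta\to 0}\varpi(\varphi,\delta)=0$.

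For the reverse implication, assuming $\lim_{\delta\to 0}\varpi(\varphi,\delta)=0$, I would show $\varphi$ is càdlàg. Fix $s\in[-\tau,0)$ and show the right limit exists and equals $\varphi(s)$ (right-continuity). Given $\varepsilon>0$, pick $\delta$ with $\varpi(\varphi,\delta)<\varepsilon$ and a near-optimal partition $\{t_i\}$ with $\max_i\omega(\varphi,[t_{i-1},t_i))<\varepsilon$. The point $s$ lies in some half-open interval $[t_{j-1},t_j)$; since the interval is half-open and contains $s$, every point of $[s,t_j)$ lies in the same piece, so $|\varphi(u)-\varphi(s)|\le\omega(\varphi,[t_{j-1},t_j))<\varepsilon$ for all $u\in[s,t_j)$. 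As $t_j>s$, this gives right-continuity at $s$. For the left limit at $s\in(-\tau,0]$, a symmetric argument shows that points approaching $s$ from the left eventually all lie in a single piece $[t_{i-1},t_i)$ with $t_i=s$ (or within one such piece), forcing the Cauchy property of $\varphi(u)$ as $u\nearrow s$, hence existence of $\varphi(s-)$.

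The main obstacle I anticipate is the bookkeeping around the spacing constraint $\min_i|t_i-t_{i-1}|>\delta$ in the definition of $\Xi$, which couples the partition to the parameter $\delta$ in a way that the plain covering argument ignores. In the forward direction one must ensure the covering partition can be taken with mesh bounded \emph{below}; the subtlety is that jump points may cluster, so one cannot naively demand large gaps everywhere. The resolution is that a càdlàg function has only finitely many jumps exceeding any fixed size on a compact interval, so the problematic points are finite in number and can be isolated as partition points with the continuity pieces between them kept short but still of length exceeding a uniform $\delta_\varepsilon$. Making this interplay precise—reconciling ``small oscillation per piece'' with ``pieces not too short''—is the delicate point; the rest reduces to the routine covering and compactness arguments sketched above.
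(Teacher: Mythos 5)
Your proposal is correct in both directions, but it takes a genuinely different route from the paper: the paper does not prove this proposition at all, it simply cites Lemma 1 of Billingsley's \emph{Convergence of Probability Measures} (Section 12) together with the discussion following it. Your forward-direction key step---for every $\varepsilon>0$ a c\`adl\`ag function admits a finite partition $-\tau=t_0<\cdots<t_k=0$ with $\omega(\varphi,[t_{i-1},t_i))<\varepsilon$ for all $i$---is exactly that cited lemma; Billingsley proves it by a least-upper-bound argument (consider the supremum of all $t$ such that $[-\tau,t)$ admits such a decomposition, and use the left limit at that supremum and right-continuity there to show it is attained and equals $0$), whereas you propose a compactness/covering argument. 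Both work; the covering argument is perhaps more familiar (it is the standard proof that regulated functions are uniform limits of step functions), while the supremum argument avoids the bookkeeping of ordering a finite subcover and splicing overlapping one-sided neighbourhoods into a partition. Your reverse direction (right-continuity from membership of $[s,t_j)$ in a single piece, left limits via the Cauchy criterion on $[t_{j-1},s)$) is the same argument Billingsley sketches in the discussion the paper cites, and it correctly never needs the spacing constraint in $\Xi$.

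One concrete caution on the point you yourself flag as delicate. Your first proposed fix---``dropping points that are too close only enlarges the intervals, and one checks this does not destroy the oscillation bound by more than a controlled amount''---is false in general: merging two adjacent pieces across a partition point at which $\varphi$ jumps increases the oscillation by at least the jump size, which is not controlled by $\varepsilon$. Fortunately no repair of the partition is needed, and your worry that clustering jumps force short pieces is a red herring: once you have \emph{any} finite partition with every $\omega(\varphi,[t_{i-1},t_i))<\varepsilon$, its minimal gap $\delta_\varepsilon:=\min_{1\leq i\leq k}(t_i-t_{i-1})$ is automatically positive, and that very same partition belongs to $\Xi$ for every $\delta<\delta_\varepsilon$, whence $\varpi(\varphi,\delta)<\varepsilon$ for all $\delta<\delta_\varepsilon$. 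Since $\delta\mapsto\varpi(\varphi,\delta)$ is non-increasing as $\delta\downarrow 0$, this already gives $\lim_{\delta\to 0}\varpi(\varphi,\delta)=0$; there is no interplay between ``small oscillation per piece'' and ``pieces not too short'' to reconcile.
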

\begin{proof}
 This is an immediate consequence of \cite[Lem. 1]{book:billingsley} and the discussion thereafter.
\end{proof}
   Notice that $\varpi(\varphi,\delta)$ is unaffected if the value of $\varphi(0)$ changes, which is a relevant property. Let us now compare the moduli $\omega$ and $\varpi$ for functions in $D[-\tau,0]$.  Observe that the interval $[-\tau,0)$ can be divided into subintervals $\left[t_{i-1}, t_{i}\right)$ satisfying $\delta<t_{i}-t_{i-1} \leq 2 \delta<\tau,$ hence
  \begin{equation}\varpi (\varphi,\delta) \leq \omega(\varphi,2 \delta), \end{equation} 
  in case $\delta<\tau / 2$ holds. Obviously, 
  there cannot be such an inequality in the other direction, because the condition in \eqref{eq:continuity} does not hold for discontinuous functions $\varphi$. Nonetheless, we do have
  % along the lines of \cite{book:billingsley}, we do obtain 
  \begin{equation}
  	\omega(\varphi,\delta)\leq 2\varpi(\varphi,\delta)+\Delta_{\text{sup}}(\varphi),
  \end{equation}
for all $0<\delta\leq \tau$ and $\varphi\in D[-\tau,0]$, where
\begin{equation}
	\Delta_{\text{sup}}(\varphi)=\sup_{t\in[-\tau,0]}|\varphi(t)-\varphi(t-)|,\quad \varphi\in D[-\tau,0].
\end{equation}
 Any \cadlag function defined on a compact space only allows  finitely many jumps to exceed a given positive number \cite[p. 122]{book:billingsley}. Therefore, the overall maximum absolute jump $	\Delta_{\text{sup}}(\varphi)$ of any \cadlag function $\varphi$ is finite and attained.
    In conclusion, for $\delta>0$ sufficiently small, we find
  \begin{equation}\label{eq:modcad}
  	\varpi (\varphi,\delta/2) \leq \omega(\varphi,\delta)\leq 2\varpi(\varphi,\delta)+\Delta_{\text{sup}}(\varphi),\quad\varphi \in D[-\tau,0].
  \end{equation}
  It is interesting to explicitly mention the fact 
  \begin{equation}\label{eq:modcont}
  	\varpi (\varphi,\delta/2) \leq \omega(\varphi,\delta)\leq 2\varpi(\varphi,\delta),\quad \varphi\in C[-\tau,0],
  \end{equation}
  and thus the moduli $\omega$ and $\varpi$ are essentially equivalent for continuous functions. This observation is extremely  useful in Corollary \ref{cor:Ctight}, for example.
  % \newpage
 % Back to our problem regarding compactness. Theorem \ref{thm:ArzAsc-gen} is equivalent with saying the following, when $X=[-\tau,0]$.

It turns out that  the modulus $\varpi$ ``of continuity'' allows us to state an analogue of Theorem \ref{thm:ArzAsc} and characterise compact sets in the Skorokhod space $D[-\tau,0]$.

 \begin{theorem}[{Arzelà--Ascoli for $D[-\tau,0]$}]\label{thm:ArzAsc-sko}
	  A necessary and sufficient condition for a set $A\subset D[-\tau,0]$ to be relatively compact in the Skorokhod topology is to require
	\begin{equation}
	\sup _{\varphi \in A}\|\varphi\|_\infty<\infty,\label{eq:uni2}
	\end{equation}
	and
	\begin{equation}
\label{eq:cond3}	\lim _{\delta \rightarrow 0} \sup _{\varphi \in A} \varpi(\varphi,\delta)=0.
	\end{equation}
\end{theorem}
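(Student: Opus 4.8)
The plan is to prove the two implications separately, leveraging the general Arzelà--Ascoli theorem (Theorem \ref{thm:ArzAsc-gen}) is not directly available here since $D[-\tau,0]$ consists of discontinuous functions; instead I would follow the direct combinatorial route as in \cite[Thm. 12.3]{book:billingsley}, using the characterisation of relative compactness via total boundedness in the complete metric $d_S^\circ$ (recall Theorem \ref{thm:Polish} guarantees completeness, so relative compactness is equivalent to total boundedness together with closure).

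First I would prove necessity. Assuming $A$ is relatively compact in the Skorokhod topology, its closure $\bar A$ is compact, hence totally bounded. For the uniform bound \eqref{eq:uni2}, I note that the map $\varphi \mapsto \|\varphi\|_\infty$ is lower semicontinuous with respect to the Skorokhod topology (a supremum of the continuous evaluation-type functionals, modulo the change-of-time penalty), and a compact set cannot contain functions of arbitrarily large sup-norm; a cleaner argument covers $\bar A$ by finitely many $d_S^\circ$-balls of radius $1$ around centres $\psi_1,\dots,\psi_m$, and since $d_S^\circ(\varphi,\psi_j)<1$ forces $\|\varphi\|_\infty \le \|\psi_j\|_\infty + C$ for a universal constant $C$ (the time-change contributes boundedly), one gets \eqref{eq:uni2}. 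For the modulus condition \eqref{eq:cond3} I would argue by contradiction: if $\sup_{\varphi\in A}\varpi(\varphi,\delta) \not\to 0$, there exist $\varepsilon>0$, a sequence $\delta_n \downarrow 0$ and functions $\varphi_n \in A$ with $\varpi(\varphi_n,\delta_n)\ge\varepsilon$. By relative compactness pass to a subsequence $\varphi_n \to \varphi$ in $d_S^\circ$; then $\varphi \in D[-\tau,0]$, so $\varpi(\varphi,\delta)\to 0$, and one shows that Skorokhod convergence together with $\delta_n\downarrow 0$ forces $\varpi(\varphi_n,\delta_n)\to 0$, contradicting the lower bound $\varepsilon$. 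This last continuity-of-modulus step under time changes is the delicate point and mirrors \cite[Lem. 12.7]{book:billingsley}.

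Next I would prove sufficiency, which is the substantive direction. Assuming \eqref{eq:uni2} and \eqref{eq:cond3}, I would show $A$ is totally bounded in $d_S^\circ$; completeness then yields relative compactness. Given $\eta>0$, choose $\delta>0$ with $\sup_{\varphi\in A}\varpi(\varphi,\delta)<\eta$. The defining property of $\varpi$ says each $\varphi\in A$ admits a partition $-\tau=t_0<\cdots<t_k=0$ with mesh exceeding $\delta$ on which $\varphi$ oscillates by less than $\eta$ on each $[t_{i-1},t_i)$. Since the number of such intervals is bounded by $\lceil \tau/\delta\rceil$ and the ranges lie in $[-M,M]$ with $M=\sup_{\varphi\in A}\|\varphi\|_\infty$, I would construct a finite $\eta$-net by discretising both the grid points (finitely many choices once the mesh is bounded below by $\delta$) and the values (finitely many using a fine grid of $[-M,M]$), producing finitely many piecewise-constant ``step-function'' prototypes. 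Each $\varphi\in A$ is within $d_S^\circ$-distance $O(\eta)$ of a suitable prototype via a change of time $\lambda$ that matches its partition points to the grid; the penalty $\|\lambda\|^\circ_\infty$ is controlled because the grid points are separated by at least $\delta$, keeping the logarithmic slope distortion small. This exhibits a finite net and hence total boundedness.

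The hard part will be the sufficiency direction's control of the time-change penalty $\|\lambda\|^\circ_\infty$ simultaneously with the sup-norm matching: one must choose the prototype grid fine enough to approximate values within $\eta$ while coarse enough (spacing comparable to $\delta$) that the homeomorphism $\lambda$ realigning an arbitrary $\varphi$'s jump locations to the fixed grid has slope bounded away from $0$ and $\infty$, so that $\|\lambda\|^\circ_\infty = \sup_{s<t}\lvert\log\frac{\lambda(t)-\lambda(s)}{t-s}\rvert$ stays small. Balancing these two requirements is exactly the technical heart of the \cite{book:billingsley} argument, and I would import their quantitative estimates rather than rederive them. I would close by invoking Theorem \ref{thm:Polish} for completeness to upgrade total boundedness to relative compactness, and cite \cite[Thm. 12.3]{book:billingsley} for the full quantitative details.
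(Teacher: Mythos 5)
Your proposal is correct and follows essentially the same route as the paper: the paper's proof of Theorem \ref{thm:ArzAsc-sko} is simply a citation of \cite[Thm. 12.3]{book:billingsley} (and \cite[Thm. 1.14]{book:jacod} for the half-line case), and what you have written is precisely an outline of that Billingsley argument---total boundedness in the complete metric $d_S^\circ$ via step-function nets, with the time-change penalty $\|\lambda\|^\circ_\infty$ as the technical heart---whose quantitative details you likewise import from the same reference. Nothing in your sketch deviates from or conflicts with the cited proof, so the two are in full agreement.
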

\begin{proof}
	We refer to the proof of \cite[Thm. 12.3]{book:billingsley}. In the setting of \cadlag functions with their  domain being the half line $\Rplus$, see the slightly more involved proof in  \cite[Thm. 1.14]{book:jacod}.
\end{proof}
  Obviously, the    theorem   above cannot be a corollary of  Theorem \ref{thm:ArzAsc-gen},  because we are simply no longer in the continuous setting. Due to its high resemblance  with Theorem \ref{thm:ArzAsc}, we will still call it an  Arzelà--Ascoli theorem.  
  It is interesting to note that  \eqref{eq:uni1} and \eqref{eq:uni2} coincide, since we may interchange the order of the suprema, i.e.,
  \begin{equation}
  \sup_{t\in[-\tau,0]}\sup_{\varphi\in A}|\varphi(t)|\quad\text{and}\quad   \sup_{\varphi\in A}\sup_{t\in[-\tau,0]}|\varphi(t)|
  \end{equation} are  equal. The main  difference with the Skorokhod space  is that now no longer a single $s$ satisfying $	\sup _{\varphi \in A}|\varphi(s)|<\infty$ together with condition \eqref{eq:cond3} implies \eqref{eq:uni2}. In order to see this, we construct a  counterexample: for $A=\{n\mathbf{1}_{[-\tau/2,0]}:n\in\N\}$, we   have $\varpi(\varphi,\delta)=0$ for all $\varphi\in A$ if $\delta<\tau/2$ holds, but at the same time we have $\sup_{\varphi\in A}\|\varphi\|_\infty=\infty$.

Ultimately, the important part of  Theorem \ref{thm:ArzAsc-sko} is in fact the sufficiency, which will be used to prove tightness, as we will particularly see in Proposition \ref{prop:Dtight}.

\subsection{Tightness}
% , Prokhorov's theorem, and \texorpdfstring{$C$}{C}-tightness}
\label{Sec2.2.3}
 \noindent  Suppose  $X$ is  a Hausdorff space. Denote by $\mathscr{M}(X)$  the  set of   Borel signed measures  $\mu$ on $X$. We write $\mathscr{M}^+(X)\subset \mathscr M(X) $ for the subset of all Borel measures.

   \begin{definition}
   A collection $M\subset \mathscr M^+(X)$   is said to be \texttt{tight} if for every $\varepsilon>0$ there exists a compact  set $K_{\varepsilon} \subset X$ with
 	\begin{equation} 
 		\mu(X\backslash K_{\varepsilon}) <\varepsilon, \quad \text { for all } \mu \in M.
 	\end{equation}
 \end{definition}
 
Limiting ourselves to probability distributions, or in other words  random variables, results into the following definition of tightness.
 
 \begin{definition}
  A family of $X$-valued random variables $(Z_{\eta})_{\eta\in I}$ is called \texttt{tight} in $X$
  % \footnote{Sometimes we   say ``tight in $X$'' to emphasise the space the random variables live in.}
  if for every $\varepsilon>0$ there exists a compact set $K_{\varepsilon} \subset X$ such that
	\begin{equation} 
	\mathbb{P}\left(Z_{\eta} \notin K_{\varepsilon}\right) <\varepsilon, \quad \text { for all } \eta \in I.
\end{equation}
  \end{definition}

For $X$ a finite dimensional normed space, we have that $(Z_{\eta})_{\eta \in I}$ is tight if and only if it is bounded in probability. On the other hand, if the normed space is infinite dimensional, e.g., $ C[-\tau,0] $, then tightness is no longer equivalent with boundedness in probability. This is  due to the fact    closed balls in a normed space $X$ are compact  if and only if the dimension of $X$ is finite. 
Moreover, if $X=(X,d)$ is a {complete} metric space and {separable}, then this results into any $X$-valued random variable, or any finite sequence of $X$-valued random variables, to be tight. This is because separability together with completeness  implies that  Borel measures are  also Radon \cite{book:bogachev}. 

% Related to the previous observation is Prokhorov's theorem (Theorem \ref{thm:prokhorov}).
Now assume $X=(X,d)$ is a separable metric space and write  $\mathscr P(X)$  for the space of all Borel probability measures $\mu$ on $X$. Thus, we have $\mu(X)=1$ for all $\mu\in\mathscr{P}(X)$. We endow $\mathscr P(X)$ with  the \text{weak topology}. This topology is, for example, induced by the metric
  \begin{equation}
 \index{$d_0$}	d_{0}(\mu,\nu)=\sup_{f\in \rm Lip_{\infty}}\left|\int_X f\,\mathrm d\mu-\int_X f\,\mathrm d\nu\right|,\quad \mu,\nu\in\mathscr{P}(X),\label{eq:KRmetric}
  \end{equation} 
where $\rm Lip_{\infty}$ is the space of  functions $f:X\to\R$
 with Lipschitz constant at most 1 and $\|f\|_\infty\leq 1.$
% There are numerous alternative definitions for the weak topology. 
Separability ensures that $\mathscr P(X)$ is metrisable, where $d_0$ is an appropriate metric \cite[p. 193]{book:bogachev}, and $\mathscr P(X)$  itself becomes separable within the weak topology  \cite[p. 213]{book:bogachev}. Furthermore, if the  metric space  $X$  is in addition assumed to be complete,   then $(\mathscr P(X),d_0)$ is complete too \cite[p. 232]{book:bogachev}.

   \begin{theorem}[{Prokhorov}]\label{thm:prokhorov}   	Let $(X,d)$ be a complete separable metric space and suppose $\Gamma$ is a subset of $\mathscr{P}(X)$. Then the following two statements are equivalent:
   \begin{enumerate}[\normalfont(i)]
   	\item  $ \Gamma $ is relatively compact in the weak topology;
   \item $\Gamma$ is tight.
   \end{enumerate}
 \end{theorem}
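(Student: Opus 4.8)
The plan is to prove the two implications separately, exploiting throughout that $\mathscr P(X)$ is metrizable by the metric $d_0$ introduced above, so that weak relative compactness coincides with weak \emph{sequential} relative compactness; it therefore suffices to argue with sequences of probability measures. Both directions rely on the portmanteau theorem, which I would either cite or establish as a short preliminary lemma: for weakly convergent $\mu_n\to\mu$ one has $\limsup_n\mu_n(F)\le\mu(F)$ for closed $F$ and $\liminf_n\mu_n(G)\ge\mu(G)$ for open $G$.

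First I would treat the implication \emph{tightness} $\Rightarrow$ \emph{relative compactness}, which I expect to be the main obstacle. Fix a sequence $(\mu_n)_n\subset\Gamma$. Since $X$ is separable and metric, it embeds homeomorphically into the Hilbert cube $[0,1]^{\mathbb N}$; let $\bar X$ denote the closure of the image, a compact metric space. Each $\mu_n$ pushes forward to a Borel probability measure on $\bar X$, concentrated on the image of $X$. As $\bar X$ is compact metric, $C(\bar X)$ is separable and the unit ball of its dual is weak-$*$ sequentially compact (Banach--Alaoglu together with separability); hence, after passing to a subsequence, $\mu_{n_k}\to\bar\mu$ weakly in $\mathscr P(\bar X)$ for some $\bar\mu$. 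The crux is to show $\bar\mu(X)=1$ and that the convergence descends to weak convergence in $\mathscr P(X)$. For the first point I invoke tightness: choosing compact $K_m\subset X$ with $\mu(X\setminus K_m)<1/m$ uniformly over $\Gamma$, each $K_m$ is closed in $\bar X$, so the portmanteau inequality for closed sets gives $\bar\mu(K_m)\ge\limsup_k\mu_{n_k}(K_m)\ge 1-1/m$, whence $\bar\mu(X)=1$ and $\bar\mu$ restricts to an element of $\mathscr P(X)$. The descent of weak convergence then follows by approximating a bounded continuous function on $X$ on the sets $K_m$ while controlling the complementary mass uniformly.

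Next I would prove \emph{relative compactness} $\Rightarrow$ \emph{tightness}; here completeness and separability of $X$ are essential. Fix $\varepsilon>0$ and a countable dense set $\{x_i\}$. For each $n$ put $G_k^{(n)}=\bigcup_{i=1}^k B(x_i,1/n)$, an increasing sequence of open sets with $G_k^{(n)}\uparrow X$ as $k\to\infty$. I claim that for each $n$ there is a $k_n$ with $\mu\bigl(G_{k_n}^{(n)}\bigr)>1-\varepsilon 2^{-n}$ for all $\mu\in\Gamma$. Were this false, one could select $\mu_k\in\Gamma$ with $\mu_k\bigl(G_k^{(n)}\bigr)\le 1-\varepsilon 2^{-n}$; a weakly convergent subsequence $\mu_{k_j}\to\nu$ would then satisfy, by lower semicontinuity of $\nu\mapsto\nu(G)$ on open sets,
\begin{equation}
\nu\bigl(G_m^{(n)}\bigr)\le\liminf_j\mu_{k_j}\bigl(G_m^{(n)}\bigr)\le 1-\varepsilon 2^{-n}\quad\text{for every }m,
\end{equation}
and letting $m\to\infty$ would give $\nu(X)\le 1-\varepsilon 2^{-n}<1$, contradicting $\nu\in\mathscr P(X)$. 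Granting the claim, set $K=\bigcap_n\overline{G_{k_n}^{(n)}}$. Then $K$ is closed and, for each $n$, contained in finitely many balls of radius $1/n$, hence totally bounded; as $X$ is complete, $K$ is compact. Finally $\mu(X\setminus K)\le\sum_n\mu\bigl(X\setminus G_{k_n}^{(n)}\bigr)\le\sum_n\varepsilon 2^{-n}=\varepsilon$ uniformly in $\mu\in\Gamma$, which is precisely tightness.

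The hardest part, as flagged, is the sufficiency direction: producing a candidate limit inside a compact enclosure and then certifying both that it charges all of its mass to $X$ and that the convergence genuinely takes place in the weak topology of $\mathscr P(X)$ rather than merely in $\mathscr P(\bar X)$. Both certifications hinge on the portmanteau estimates and on the uniform tightness bounds, so I would make sure those preliminaries are in place before assembling the two implications.
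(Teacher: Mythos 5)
Your proposal is correct, but note that the paper does not actually prove this theorem: its ``proof'' is a pointer to the standard references (Billingsley, Ch.~5, and Bogachev, Thm.~8.6.2). What you have written is, in substance, a faithful reconstruction of exactly the classical argument those references contain: the necessity direction (relative compactness $\Rightarrow$ tightness) via the countable dense set, the increasing unions of balls $G_k^{(n)}$, the contradiction through lower semicontinuity of $\nu \mapsto \nu(G)$ on open sets, and the totally bounded closed set $K=\bigcap_n \overline{G_{k_n}^{(n)}}$ made compact by completeness; and the sufficiency direction via embedding $X$ in the Hilbert cube, extracting a weak-$*$ limit $\bar\mu$ over the compactification, and using tightness plus the portmanteau inequality for closed sets to pin the mass of $\bar\mu$ on $X$. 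Two points would need tightening in a full write-up, though neither is a genuine gap. First, the ``descent'' of weak convergence from $\mathscr P(\bar X)$ to $\mathscr P(X)$ deserves more than one sentence, since a function in $C_b(X)$ need not extend continuously to $\bar X$; the cleanest route is to verify the portmanteau criterion directly: for $F$ closed in $X$ with closure $\bar F$ in $\bar X$, one has $\limsup_k \mu_{n_k}(F) \le \bar\mu(\bar F) = \bar\mu(\bar F \cap X)$, the last equality because $\bar\mu$ gives full mass to $\bigcup_m K_m \subset X$. Second, your phrase ``$\bar\mu(X)=1$'' glosses over the fact that $X$ need not a priori be Borel in $\bar X$; but your own tightness sets repair this, since the $\sigma$-compact set $\bigcup_m K_m$ is Borel in $\bar X$, carries full $\bar\mu$-measure, and lies inside $X$, so one restricts $\bar\mu$ to it. With these two remarks incorporated, your argument is a complete and standard proof of the statement the paper merely cites.
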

\begin{proof}
	A proof  can be found in many textbooks; see, e.g.,   \cite[Ch. 5]{book:billingsley} and \cite[Thm. 8.6.2]{book:bogachev}. 
\end{proof}

% This result  will particularly be useful  in Appendix \ref{app:D}, where we prove a Krylov--Bogoliubov result and show that  tight solution segments imply   the existence of an invariant measure. 

When sufficient regularity of a continuous-time family $(Z_t)_{t\geq 0}$ is presumed, we find with the help of Prokhorov's theorem that any finite time horizon process $(Z_t)_{t\in[0,T]}$, $T\geq 0$, is tight.  Sufficient regularity turns out  to be  continuity in probability in   metric spaces. 
% For the sake of completeness, we state the definition below.
\begin{definition}
	Suppose $(X,d)$ is a metric space. An $(X,d)$-valued process $Z=(Z_t)_{t\geq 0}$ is said to be \texttt{continuous\,\,in\,\,probability}, or \texttt{stochastically\,\,continuous}, if for any time $t_0\geq 0$ fixed, we have
	\begin{equation}
		\mathbb P(d(Z_t,Z_{t_0})>\varepsilon)\to 0,\quad \text{as }t\to t_0,
	\end{equation}
for all $\varepsilon>0.$
\end{definition}

The next result  can be interpreted as a generalisation of  tightness of finite sequences of measures.
 \begin{proposition}\label{prop:wow}
 	Let $(X,d)$ be a complete separable metric space and consider $(Z_t)_{t\geq 0}$ to be a\linebreak continuous-time family of $X$-valued random variables. If  $(Z_t)_{t\geq 0}$ is  stochastically  continuous in   $X$, then any finite time horizon   process $(Z_t)_{t\in[0,T]}$, $T\geq 0$,  is tight in $X$.
 \end{proposition}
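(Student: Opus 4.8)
The plan is to pass to the family of laws $\mu_t := \mathbb{P}\circ Z_t^{-1}\in\mathscr{P}(X)$, $t\in[0,T]$, and to show that this family is relatively compact in the weak topology; tightness will then follow at once from Prokhorov's theorem (Theorem \ref{thm:prokhorov}), since $X$ is complete and separable. Because $X$ is separable, $(\mathscr{P}(X),d_0)$ is a metric space with $d_0$ as in \eqref{eq:KRmetric}, so relative compactness can be verified through compactness arguments. In fact I would establish the stronger statement that $\{\mu_t:t\in[0,T]\}$ is \emph{compact}, by exhibiting it as the continuous image of the compact interval $[0,T]$.

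First I would show that the map $[0,T]\to(\mathscr{P}(X),d_0)$, $t\mapsto\mu_t$, is continuous. Fix $t_0\in[0,T]$. For any $f\in\mathrm{Lip}_\infty$ one has the pointwise bound $|f(Z_t)-f(Z_{t_0})|\leq\min\{d(Z_t,Z_{t_0}),2\}$, because $f$ has Lipschitz constant at most $1$ and satisfies $\|f\|_\infty\leq 1$. Taking expectations and then the supremum over $f\in\mathrm{Lip}_\infty$ yields
\[
d_0(\mu_t,\mu_{t_0})\leq\mathbb{E}\big[\min\{d(Z_t,Z_{t_0}),2\}\big].
\]
By stochastic continuity, $d(Z_t,Z_{t_0})\to 0$ in probability as $t\to t_0$; since the integrand is bounded by $2$, the elementary estimate $\mathbb{E}[\min\{d(Z_t,Z_{t_0}),2\}]\leq\varepsilon+2\,\mathbb{P}(d(Z_t,Z_{t_0})>\varepsilon)$, valid for every $\varepsilon>0$, shows that the right-hand side tends to $0$. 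Hence $d_0(\mu_t,\mu_{t_0})\to 0$, proving continuity at $t_0$, and thus on all of $[0,T]$.

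Finally, the continuous image $K:=\{\mu_t:t\in[0,T]\}$ of the compact set $[0,T]$ is a compact subset of $(\mathscr{P}(X),d_0)$, hence a fortiori relatively compact in the weak topology. Prokhorov's theorem then gives that $K$ is tight, which is precisely the assertion that $(Z_t)_{t\in[0,T]}$ is tight in $X$. I expect the only genuinely delicate point to be the continuity step, and specifically the passage from convergence in probability to convergence of the expectations $\mathbb{E}[\min\{d(Z_t,Z_{t_0}),2\}]$; this is where boundedness of the truncated metric is essential, and where separability of $X$ is used so that $d_0$ indeed metrizes the weak topology and Prokhorov's theorem applies. Everything else reduces to a direct invocation of Theorem \ref{thm:prokhorov} together with the elementary fact that continuous images of compacta are compact.
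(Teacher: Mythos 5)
Your proposal is correct and follows essentially the same route as the paper: both prove continuity of the map $t\mapsto\mathcal L(Z_t)$ from $[0,T]$ into $(\mathscr P(X),d_0)$ using the structure of $\mathrm{Lip}_\infty$ test functions, conclude that the image of the compact interval is compact in the weak topology, and invoke Prokhorov's theorem to obtain tightness. Your truncated-metric estimate $d_0(\mu_t,\mu_{t_0})\leq\mathbb E[\min\{d(Z_t,Z_{t_0}),2\}]\leq\varepsilon+2\,\mathbb P(d(Z_t,Z_{t_0})>\varepsilon)$ is just a more compact packaging of the paper's splitting of $\Omega$ into $\Omega_{n,\varepsilon}$ and its complement, so the two arguments coincide in substance.
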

\begin{proof}
Let $d_E$ denote the Euclidean metric on the non-negative reals $\Rplus$, and  introduce the map
\begin{equation}
	P:(\Rplus,d_E)\to (\mathscr P(X),d_0),\,t\mapsto \mathcal L(Z_t),
\end{equation}
where $\mathcal L(Z_t)$ denotes the law of $Z_t$. We claim that $P$ is continuous, hence the image
\begin{equation}
	P([0,T])=\{\mathcal L(Z_t):t\in[0,T]\},
\end{equation}
for any $T\geq 0$, is compact in the weak topology.   Theorem \ref{thm:prokhorov} concludes that $\{\mathcal L(Z_t):t\in[0,T]\}$ is tight. In other words, the finite time horizon process $(Z_t)_{t\in[0,T]}$ is tight in $X$.

We now prove that $P$ is indeed continuous. 
% Since $P$ is a mapping between two metric spaces, we will show continuity by means of sequential continuity.
Let $(t_n)_{n\in\N}$ be any sequence such that $t_n\to t_0$, as $n\to\infty$, with $t_0\geq 0$ arbitrary yet fixed. Define
\begin{equation}
	\Omega_{n,\varepsilon}:=\big\{\omega\in\Omega:d\big(Z_{t_n}(\omega),Z_{t_0}(\omega)\big)>\varepsilon\big\}, \quad n\in\N,
\end{equation}
for any $\varepsilon>0$. Since $(Z_t)_{t\geq 0}$ is stochastically continuous in $(X,d)$, we have for any $\varepsilon'>0$ that there exists some natural number $n_0\in\N$ such that $\mathbb P(\Omega_{n,\varepsilon})<\varepsilon'$ holds for all $n\geq n_0.$
Let $f\in\rm Lip_{\infty}$ be arbitrarily given---recall the notation in \eqref{eq:KRmetric}---and observe
\begin{align}
	\left|\int_Xf\,\mathrm d\mathcal L(Z_t)-\int_Xf\,\mathrm d\mathcal L(Z_{t_0})\right|\nonumber&=	\left|\int_\Omega f(Z_t)\,\mathrm d\mathbb P-\int_\Omega f(Z_{t_0})\,\mathrm d\mathbb P\right|\\
	\nonumber&\leq \int_{\Omega}\big|f(Z_t)-f(Z_{t_0})\big|\,\mathrm d\mathbb P\\
	\nonumber&\leq 	\int_{\Omega\backslash \Omega_{n,\varepsilon}}\big|f(Z_t)-f(Z_{t_0})\big|\,\mathrm d\mathbb P+\int_{  \Omega_{n,\varepsilon}}\Big(\big|f(Z_t)\big|+\big|f(Z_{t_0})\big|\Big)\,\mathrm d\mathbb P  \\
		\nonumber&\leq \text{Lip}(f)	\int_{\Omega\backslash \Omega_{n,\varepsilon}}d(Z_t,Z_{t_0})\,\mathrm d\mathbb P+2\|f\|_\infty\mathbb P(\Omega_{n,\varepsilon})  \\
		\nonumber&\leq \text{Lip}(f)\,\varepsilon\,	 \mathbb P(\Omega\backslash \Omega_{n,\varepsilon})+2\|f\|_\infty\mathbb P(\Omega_{n,\varepsilon})  \\
			&\leq  \varepsilon +2  \varepsilon',
\end{align}
for any $n\geq n_0$, where 
$\text{Lip}(u)=\sup _{x,y\in X,\,x\neq y} {|u(x)-u(y)|}/{d(x,y)}$ is the Lipschitz constant of a function $u:X\to\R$. Since we may take $\varepsilon\to 0$ and $\varepsilon'\to 0$, we obtain
\begin{equation}
	d_0\big(P(t_n),P(t_0)\big)=d_0\big(\mathcal L(Z_{t_n}),\mathcal L(Z_{t_0})\big)\to 0,\quad\text{as } n\to \infty.
\end{equation}
Because this holds for any sequence $(t_n)_{n\in\N}$ with $t_n\to t_0$, for every instant $t_0\geq 0$, we deduce the (sequential) continuity of the map $P$.
\end{proof}

\begin{remark} If   $Y\in\mathbb D[0,\infty)$  is  stochastically continuous, e.g., a Lévy process, then the segment process $(Y_t)_{t\geq \tau}$ is a family of $D[-\tau,0]$-valued random variables which is stochastically continuous  \cite[Lem. 2.3]{article:reiss}.
\end{remark}
% The latter observation, essential for the theory in subsection \ref{Sec2.2.6}, is a natural example of where we encounter continuity in probability for more general metric spaces.  
 
Remember  $D[-\tau,0]$ is endowed  with the Skorokhod topology, recall Remark \ref{remark:Sko}, and that this  is a separable completely metrisable space; see the previous section.  Due to the fact Prokhorov's theorem expresses tightness in terms of compactness, the  Arzelà--Ascoli theorem is often encountered in combination with Prokhorov's theorem. In the continuous setting,  tightness can be characterised in terms of the modulus of continuity; see, e.g., \cite{book:billingsley, book:revuz}. Similar results hold for  the right-continuous setting; see, e.g., \cite{book:billingsley,book:jacod}, but observe that in these references they restrict to sequences. 
% All  references state these results for sequences only.

 \begin{proposition}\label{prop:Dtight}
 	  A continuous-time family of  $  D[-\tau,0]$-valued   random variables $(Z_{t})_{t\geq 0}$  is tight if and only if for every $\varepsilon>0$ we have
 	  	\begin{enumerate}[\normalfont(i)]
 	  	\item $\lim _{R \rightarrow \infty} \sup _{t \geq 0} \mathbb{P}\left(\left\|Z_{t}\right\|_{\infty} \geq R\right)=0;$
 	  	\item $\lim _{\delta \rightarrow 0} \sup _{t \geq 0} \mathbb{P} (\varpi \left(Z_{t}, \delta )\geq \varepsilon\right)=0$.
 	  \end{enumerate}
 \end{proposition}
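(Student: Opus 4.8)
The plan is to prove Proposition~\ref{prop:Dtight} by combining Prokhorov's theorem (Theorem~\ref{thm:prokhorov}) with the Arzelà--Ascoli characterisation of relative compactness in the Skorokhod space $D[-\tau,0]$ (Theorem~\ref{thm:ArzAsc-sko}). The key observation is that tightness of $(Z_t)_{t\geq 0}$ means precisely that for every $\varepsilon>0$ there is a compact set $K_\varepsilon\subset D[-\tau,0]$ with $\mathbb P(Z_t\notin K_\varepsilon)<\varepsilon$ for all $t\geq 0$, and that Theorem~\ref{thm:ArzAsc-sko} tells us exactly which sets are relatively compact: those that are uniformly bounded in supremum norm and have uniformly vanishing modulus $\varpi(\cdot,\delta)$ as $\delta\to 0$. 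So conditions (i) and (ii) are the probabilistic shadows of the two deterministic conditions \eqref{eq:uni2} and \eqref{eq:cond3}.

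For the \emph{sufficiency} direction, I would assume (i) and (ii) and construct, for a given $\varepsilon>0$, a relatively compact set on which $Z_t$ lives with probability at least $1-\varepsilon$. First, by (i), choose $R=R_\varepsilon$ so large that $\sup_{t\geq 0}\mathbb P(\|Z_t\|_\infty\geq R_\varepsilon)<\varepsilon/2$. Next, by (ii), for each $k\in\mathbb N$ pick $\delta_k>0$ such that $\sup_{t\geq 0}\mathbb P(\varpi(Z_t,\delta_k)\geq 1/k)<\varepsilon 2^{-(k+1)}$. Define
\begin{equation}
A=\Big\{\varphi\in D[-\tau,0]\colon \|\varphi\|_\infty\leq R_\varepsilon,\ \varpi(\varphi,\delta_k)\leq 1/k\text{ for all }k\in\mathbb N\Big\}.
\end{equation}
Then $\overline A$ satisfies conditions \eqref{eq:uni2} and \eqref{eq:cond3} of Theorem~\ref{thm:ArzAsc-sko}, so $\overline A$ is compact. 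A union bound gives
\begin{equation}
\mathbb P(Z_t\notin A)\leq \mathbb P(\|Z_t\|_\infty> R_\varepsilon)+\sum_{k=1}^\infty \mathbb P\big(\varpi(Z_t,\delta_k)> 1/k\big)<\frac{\varepsilon}{2}+\sum_{k=1}^\infty \frac{\varepsilon}{2^{k+1}}=\varepsilon,
\end{equation}
uniformly in $t\geq 0$, whence $\mathbb P(Z_t\in \overline A)>1-\varepsilon$ and tightness follows.

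For the \emph{necessity} direction, I would assume tightness and derive (i) and (ii). Given $\varepsilon>0$, take the compact $K_\varepsilon$ from the definition of tightness. By Theorem~\ref{thm:ArzAsc-sko} the relatively compact set $K_\varepsilon$ satisfies $\sup_{\varphi\in K_\varepsilon}\|\varphi\|_\infty=:R_\varepsilon<\infty$ and $\lim_{\delta\to 0}\sup_{\varphi\in K_\varepsilon}\varpi(\varphi,\delta)=0$. For (i), note that $\{\|Z_t\|_\infty\geq R\}\subset\{Z_t\notin K_\varepsilon\}$ whenever $R>R_\varepsilon$, so $\sup_{t\geq 0}\mathbb P(\|Z_t\|_\infty\geq R)\leq\varepsilon$ for all such $R$; letting $R\to\infty$ and then $\varepsilon\to 0$ yields (i). For (ii), fix $\varepsilon'>0$ and choose $\delta_0$ so small that $\sup_{\varphi\in K_\varepsilon}\varpi(\varphi,\delta)<\varepsilon'$ for $\delta<\delta_0$; then $\{\varpi(Z_t,\delta)\geq\varepsilon'\}\subset\{Z_t\notin K_\varepsilon\}$, giving $\sup_{t\geq 0}\mathbb P(\varpi(Z_t,\delta)\geq\varepsilon')\leq\varepsilon$ for $\delta<\delta_0$, and letting $\varepsilon\to 0$ proves (ii). The main obstacle I anticipate is a measurability subtlety: I must confirm that $\omega\mapsto\varpi(Z_t(\omega),\delta)$ and $\omega\mapsto\|Z_t(\omega)\|_\infty$ are genuine random variables so the probabilities are well-defined; this should follow from Theorem~\ref{thm:Borel} (the Borel $\sigma$-algebra is generated by the coordinate projections) together with the fact that $\varpi(\cdot,\delta)$ and $\|\cdot\|_\infty$ can be expressed as suitable suprema over countable dense sets of time points, exploiting the \cadlag structure. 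A secondary technical point is verifying that the set $A$ constructed above (or its closure) is genuinely compact rather than merely closed; this is immediate from the sufficiency part of Theorem~\ref{thm:ArzAsc-sko}, since $A$ is visibly contained in a set satisfying \eqref{eq:uni2}--\eqref{eq:cond3}.
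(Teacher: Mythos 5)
Your proof is correct and follows essentially the same route as the paper, whose own proof simply invokes the Arzelà--Ascoli characterisation (Theorem \ref{thm:ArzAsc-sko}) and defers to the sequence versions in \cite[Thm. 13.2]{book:billingsley} and \cite[Prop. VI.3.26]{book:jacod}, adapted by replacing $\limsup_{n\to\infty}$ with $\sup_{t\geq 0}$ exactly as you do. Two small points: your set $A$ satisfies \eqref{eq:cond3} only because $\varpi(\varphi,\cdot)$ is non-decreasing in $\delta$ (smaller $\delta$ admits more partitions), which is worth stating explicitly, and Prokhorov's theorem, announced in your opening plan, is never actually needed since tightness is defined here directly via compact sets rather than via relative compactness of laws.
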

 \begin{proof}
In essence, the result directly follows from    Arzelà--Ascoli   in $D[-\tau,0]$; see Theorem \ref{thm:ArzAsc-sko}. 	The proof is entirely conform to the proof  of   \cite[Thm. 13.2]{book:billingsley} or \cite[Prop. VI.3.26]{book:jacod}. For  continuous-time families we are required  to replace  the $ \limsup _{n \rightarrow \infty}$-part in \cite{book:billingsley,book:jacod} with the slightly stronger $\sup _{t \geq 0}$-part, since we can no longer use the fact that we have tightness for a finite horizon of random variables.
% , i.e., for the first $0\leq n<n_0$ for some $n_0\in\N$   large enough).
\end{proof}

The following   is a direct consequence of Proposition \ref{prop:wow} and Proposition \ref{prop:Dtight}. 
% Although we will not need the subsequent result, except indirectly for Theorem \ref{thm:kolm}, i
It is worthwhile to point out  that, under sufficient regularity of the family, we are only interested in what happens at infinity like in the case of sequences.
 \begin{corollary} \label{cor:stoch}
	A stochastically continuous process $(Z_{t})_{t\geq 0}$ in $D[-\tau,0]$  is tight if and only if for every $\varepsilon>0$ we have
	\begin{enumerate}[\normalfont(i)]
		\item $\lim _{R \rightarrow \infty} \limsup _{t \to \infty} \mathbb{P}\left(\left\|Z_{t}\right\|_{\infty} \geq R\right)=0;$
		\item $\lim _{\delta \rightarrow 0} \limsup _{t \to\infty} \mathbb{P} (\varpi \left(Z_{t}, \delta )\geq \varepsilon\right)=0$.
	\end{enumerate}
\end{corollary}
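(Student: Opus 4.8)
The plan is to deduce the corollary directly from Proposition \ref{prop:Dtight} by showing that, under stochastic continuity, the two $\limsup_{t\to\infty}$ conditions are equivalent to the corresponding $\sup_{t\geq 0}$ conditions appearing there. The forward implication is immediate and needs no regularity at all: if $(Z_t)_{t\geq 0}$ is tight, then Proposition \ref{prop:Dtight} supplies the $\sup_{t\geq 0}$ versions of (i) and (ii), and since $\limsup_{t\to\infty}a_t\leq\sup_{t\geq 0}a_t$ for any family $(a_t)$ of numbers in $[0,1]$, conditions (i) and (ii) of the corollary follow at once. So the whole content lies in the reverse direction.

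For the reverse implication I would fix $\varepsilon>0$ and split $\sup_{t\geq 0}$ into a tail part $t\geq T$, handled by the $\limsup$ hypotheses, and a finite-horizon part $t\in[0,T]$, handled by Proposition \ref{prop:wow}. For condition (i), the hypothesis $\lim_{R\to\infty}\limsup_{t\to\infty}\mathbb{P}(\|Z_t\|_\infty\geq R)=0$ yields an $R_1$ and then, by the definition of $\limsup$, a time $T_1$ with $\sup_{t\geq T_1}\mathbb{P}(\|Z_t\|_\infty\geq R)<\varepsilon$ for every $R\geq R_1$, the threshold $T_1$ being uniform in $R$ because $R\mapsto\mathbb{P}(\|Z_t\|_\infty\geq R)$ is non-increasing. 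Proposition \ref{prop:wow} guarantees that the finite-horizon family $(Z_t)_{t\in[0,T_1]}$ is tight, so there is a compact $K\subset D[-\tau,0]$ with $\mathbb{P}(Z_t\notin K)<\varepsilon$ on $[0,T_1]$; by the Arzelà--Ascoli characterisation \eqref{eq:uni2} of Theorem \ref{thm:ArzAsc-sko} the set $K$ is uniformly bounded, say $\sup_{\varphi\in K}\|\varphi\|_\infty\leq R_K$, whence $\{\|Z_t\|_\infty\geq R\}\subseteq\{Z_t\notin K\}$ for $R>R_K$. Taking $R>\max\{R_1,R_K\}$ bounds $\sup_{t\geq 0}\mathbb{P}(\|Z_t\|_\infty\geq R)$ by $\varepsilon$, and letting $\varepsilon\downarrow 0$ gives the $\sup$-version of (i).

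The analogue for condition (ii) is where the single genuine subtlety lies, and I expect it to be the main obstacle: the tail threshold $T_\delta$ produced by $\limsup_{t\to\infty}\mathbb{P}(\varpi(Z_t,\delta)\geq\varepsilon)$ could a priori depend on $\delta$, which would make the compact set in the finite-horizon step depend on $\delta$ and render the Arzelà--Ascoli estimate circular. I would remove this dependence by exploiting the monotonicity of the càdlàg modulus, namely that $\delta\mapsto\varpi(\varphi,\delta)$ is non-decreasing because the admissible partition class $\Xi$ shrinks as $\delta$ grows. Fixing one $\delta_1$ from $\lim_{\delta\to 0}\limsup_{t\to\infty}\mathbb{P}(\varpi(Z_t,\delta)\geq\varepsilon)=0$ together with its associated time $T:=T_{\delta_1}$, monotonicity gives $\varpi(Z_t,\delta)\leq\varpi(Z_t,\delta_1)$ for all $\delta\leq\delta_1$, so that $\sup_{t\geq T}\mathbb{P}(\varpi(Z_t,\delta)\geq\varepsilon)$ stays below a prescribed $\varepsilon'$ simultaneously for every $\delta\leq\delta_1$. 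On $[0,T]$ the same compact set $K$ furnished by Proposition \ref{prop:wow} satisfies $\lim_{\delta\to 0}\sup_{\varphi\in K}\varpi(\varphi,\delta)=0$ by property \eqref{eq:cond3}, so $\{\varpi(Z_t,\delta)\geq\varepsilon\}\subseteq\{Z_t\notin K\}$ for all sufficiently small $\delta$. Combining the tail and finite-horizon estimates yields the $\sup_{t\geq 0}$ version of (ii); Proposition \ref{prop:Dtight} applied to $(Z_t)_{t\geq 0}$ then delivers tightness, completing the argument.
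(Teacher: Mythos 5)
Your proof is correct and takes essentially the same route as the paper, which justifies the corollary in one line as ``a direct consequence of Proposition \ref{prop:wow} and Proposition \ref{prop:Dtight}'': your tail-plus-finite-horizon splitting, the monotonicity of $\delta\mapsto\varpi(\varphi,\delta)$, and the Arzel\`a--Ascoli characterisation of compact sets in $D[-\tau,0]$ are precisely the details needed to make that one-line argument rigorous. In particular, your resolution of the $\delta$-dependence of the tail threshold is the right (and only) subtlety, and it is handled correctly.
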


Conform to \cite{book:jacod}, we will introduce the concept of $C$-tightness for continuous-time families. 

% Instead  working with sequences $(Z_{n})_{n\in\N}$, we are going to work again with families   with index set $I=\Rplus$.
 \begin{definition}\label{def:Ctight}
	    A continuous-time family of  $  D[-\tau,0]$-valued   random variables $(Z_{t})_{t\geq 0}$  is said to be \texttt{$C$-tight} if  for every $\varepsilon>0$ we have
	\begin{enumerate}[\normalfont(i)]
	\item $\lim _{R \rightarrow \infty} \sup _{t \geq 0} \mathbb{P}\left( \left\|Z_{t} \right\|_{\infty} \geq R\right)=0$;
	\item $\lim _{\delta \rightarrow 0} \sup _{t \geq 0} \mathbb{P}\left(\omega (Z_{t}, \delta )\geq \varepsilon\right)=0$.
\end{enumerate}
\end{definition}
Every $C$-tight family is obviously tight and follows from   \eqref{eq:modcad}. 
% Likewise, one may consider stochastically continuous families and obtain $C$-tightness as in Corollary \ref{cor:stoch}.
 Introducing $C$-tightness is done in a more abstract way in \cite{book:jacod} by defining it as a tight sequence or family whose  laws converge  to the laws of continuous processes (where the convergence is in the sense of subsequences, and thus ``limit points'' is indeed plural). 
 % This is exactly how \cite{book:jacod} approaches this concept, yet we rather stick with Definition \ref{def:Ctight}.
 The equivalence of these two approaches is due to    \cite[Prop. VI.3.26]{book:jacod}.
As a result of the inequalities in equation \eqref{eq:modcont}, we obtain the following handy corollary for  families of continuous processes.
\begin{corollary}
	A tight family $(Z_t)_{t\geq 0}$ of $C[-\tau,0]$-valued random variables is   $C$-tight.\label{cor:Ctight}
\end{corollary}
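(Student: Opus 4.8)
The plan is to deduce $C$-tightness (Definition \ref{def:Ctight}) from the tightness characterisation for $D[-\tau,0]$-valued families in Proposition \ref{prop:Dtight}, exploiting that the two moduli $\omega$ and $\varpi$ are comparable on continuous functions, as recorded in \eqref{eq:modcont}. The whole argument amounts to transferring tightness from the uniform to the Skorokhod setting and then invoking one deterministic inequality, so no hard estimates are involved.

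First I would argue that a family $(Z_t)_{t\geq 0}$ tight in $(C[-\tau,0],\|\cdot\|_\infty)$ is also tight when its members are regarded as $D[-\tau,0]$-valued random variables. Since the subspace topology that the Skorokhod topology induces on $C[-\tau,0]$ coincides with the uniform topology \cite[Prop. 1.17]{book:jacod}, the inclusion $C[-\tau,0]\hookrightarrow D[-\tau,0]$ is continuous, hence Borel measurable; together with $\mathcal B((C[-\tau,0],\|\cdot\|_\infty))=\mathcal B((C[-\tau,0],d_S))$ this shows each $Z_t$ is genuinely a $D[-\tau,0]$-valued random variable (cf.\ \eqref{eq:inclusion} and Corollary \ref{remark:segment}). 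Moreover, any compact $K\subset(C[-\tau,0],\|\cdot\|_\infty)$ is compact in $(D[-\tau,0],d_S)$, so for each $\varepsilon>0$ the compact set witnessing tightness in $C[-\tau,0]$ witnesses tightness in $D[-\tau,0]$ as well.

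Next I would apply Proposition \ref{prop:Dtight} to the family viewed in $D[-\tau,0]$, which yields both
\begin{equation}
\lim_{R\to\infty}\sup_{t\geq 0}\mathbb P(\|Z_t\|_\infty\geq R)=0
\qquad\text{and}\qquad
\lim_{\delta\to 0}\sup_{t\geq 0}\mathbb P(\varpi(Z_t,\delta)\geq\varepsilon)=0
\end{equation}
for every $\varepsilon>0$. The first limit is precisely condition (i) of Definition \ref{def:Ctight}, so nothing further is needed there. For condition (ii), I would use that each $Z_t$ takes values in $C[-\tau,0]$ $\mathbb P$-a.s., so the right inequality in \eqref{eq:modcont} gives $\omega(Z_t,\delta)\leq 2\varpi(Z_t,\delta)$ $\mathbb P$-a.s.; hence $\{\omega(Z_t,\delta)\geq\varepsilon\}\subset\{\varpi(Z_t,\delta)\geq\varepsilon/2\}$ and $\mathbb P(\omega(Z_t,\delta)\geq\varepsilon)\leq\mathbb P(\varpi(Z_t,\delta)\geq\varepsilon/2)$. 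Taking $\sup_{t\geq 0}$ and letting $\delta\to 0$ turns the $\varpi$-limit above into the required $\omega$-limit, establishing condition (ii) of $C$-tightness.

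The only genuinely delicate point is the first step: one must be careful that tightness, which is a statement about compact sets, really does transfer between the uniform and Skorokhod topologies, and that the objects $Z_t$ remain valid random variables under the coarser Skorokhod Borel structure. Once that topological bookkeeping is settled, the passage from $\varpi$ to $\omega$ via \eqref{eq:modcont} is immediate and the conclusion follows.
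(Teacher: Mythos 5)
Your proposal is correct and follows essentially the same route the paper intends: the remark preceding Corollary \ref{cor:Ctight} derives it precisely from the moduli comparison \eqref{eq:modcont}, i.e., by viewing the family as tight in $D[-\tau,0]$, invoking the characterisation of Proposition \ref{prop:Dtight} in terms of $\varpi$, and then passing from $\varpi$ to $\omega$ on continuous functions. Your additional care about measurability under the Skorokhod Borel structure and the transfer of compact sets is exactly the bookkeeping the paper leaves implicit.
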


We end this section by stating (and proving) two useful lemmas.   All results in \cite{book:jacod} are actually for sequences, i.e., with index set $I=\mathbb N,$ only.  Nonetheless, those results   that we implement in our work easily extend to families with index set $I=\Rplus.$  In particular, we see that the proof of the result below is no different and simply requires  Proposition \ref{prop:Dtight} and our  definition of $C$-tightness.
 \begin{lemma}[Corollary 3.33 of \cite{book:jacod}]\label{lem:sum}
  Let $(Y_t)_{t\geq 0}$ be a $C$-tight family. In addition, assume
 $ (Z_t )_{t\geq 0}$ is a tight $($resp., $C$-tight\,$)$ family of $D[-\tau,0]$-valued random variables. Then  the sum $\left(Y_t+Z_t\right)_{t\geq 0}$ is  also tight $($resp., $C$-tight\,$)$.
 \end{lemma}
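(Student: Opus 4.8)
The plan is to verify the two characterising conditions of tightness and of $C$-tightness directly, using Proposition \ref{prop:Dtight} and Definition \ref{def:Ctight}. The uniform-bound condition (i) is common to both notions and is the easy part: from $\|Y_t+Z_t\|_\infty\le\|Y_t\|_\infty+\|Z_t\|_\infty$ one gets, for every $R>0$,
\begin{equation}
\mathbb P\bigl(\|Y_t+Z_t\|_\infty\ge R\bigr)\le\mathbb P\bigl(\|Y_t\|_\infty\ge R/2\bigr)+\mathbb P\bigl(\|Z_t\|_\infty\ge R/2\bigr),
\end{equation}
so taking $\sup_{t\ge0}$ and letting $R\to\infty$ kills both terms, since a $C$-tight family and a tight family each satisfy condition (i). It remains to control the relevant oscillation modulus, and this is where the two cases diverge.

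For the case in which both $(Y_t)_{t\ge0}$ and $(Z_t)_{t\ge0}$ are $C$-tight I would only need condition (ii) of Definition \ref{def:Ctight}, phrased with the ordinary modulus $\omega$. Here the argument is immediate because $\omega$ is subadditive: for any interval of length at most $\delta$ the oscillation of $Y_t+Z_t$ is dominated by the sum of oscillations, whence $\omega(Y_t+Z_t,\delta)\le\omega(Y_t,\delta)+\omega(Z_t,\delta)$. The same splitting of the probability as above, now at threshold $\varepsilon/2$, then shows $\lim_{\delta\to0}\sup_{t\ge0}\mathbb P(\omega(Y_t+Z_t,\delta)\ge\varepsilon)=0$, so $(Y_t+Z_t)_{t\ge0}$ is $C$-tight.

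The genuinely delicate case is the mixed one: $(Y_t)$ is $C$-tight and $(Z_t)$ is merely tight, and I must control the \caglad/Skorokhod modulus $\varpi$ of the sum. The crucial deterministic inequality I intend to establish is
\begin{equation}\label{eq:keymod}
\varpi(\varphi+\psi,\delta)\le\omega(\varphi,2\delta)+\varpi(\psi,2\delta),\qquad 0<\delta<\tau/2,
\end{equation}
valid for arbitrary $\varphi,\psi\in D[-\tau,0]$. To prove \eqref{eq:keymod} I would fix a near-optimal partition $\{t_i\}\in\Xi$ for $\psi$ at scale $2\delta$ (so its mesh exceeds $2\delta$ and $\max_i\omega(\psi,[t_{i-1},t_i))\le\varpi(\psi,2\delta)+\epsilon'$), and then \emph{refine} it: each block of length $L>2\delta$ is cut into equal pieces of length $L/n\in(\delta,2\delta]$, which is possible because the interval $[L/(2\delta),L/\delta)$ has length $>1$ and hence contains an integer $n$. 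The refined partition $\{s_j\}$ has mesh $>\delta$, so it is admissible at scale $\delta$; on each of its subintervals (of length $\le2\delta$) subadditivity of $\omega$ together with $\omega(\varphi,[s_{j-1},s_j))\le\omega(\varphi,2\delta)$ and $\omega(\psi,[s_{j-1},s_j))\le\omega(\psi,[t_{i-1},t_i))$ yields \eqref{eq:keymod} after $\epsilon'\to0$. Applying \eqref{eq:keymod} pathwise and splitting the probability gives
\begin{equation}
\mathbb P\bigl(\varpi(Y_t+Z_t,\delta)\ge\varepsilon\bigr)\le\mathbb P\bigl(\omega(Y_t,2\delta)\ge\varepsilon/2\bigr)+\mathbb P\bigl(\varpi(Z_t,2\delta)\ge\varepsilon/2\bigr),
\end{equation}
and $\sup_{t\ge0}$ followed by $\delta\to0$ annihilates the first term by $C$-tightness of $(Y_t)$ (condition (ii) with $\omega$) and the second by tightness of $(Z_t)$ (condition (ii) with $\varpi$). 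Together with condition (i) this gives tightness of $(Y_t+Z_t)$ via Proposition \ref{prop:Dtight}.

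The main obstacle, and the only nonroutine ingredient, is the partition-refinement estimate \eqref{eq:keymod}: one must exploit that the $C$-tight summand is controlled through the continuity modulus $\omega$ (so its oscillation is small on \emph{short} intervals) while the tight summand is controlled only through $\varpi$ (small oscillation on a \emph{coarse} partition), and reconcile the two incompatible scales by subdividing the coarse blocks finely enough without violating the mesh constraint. Once \eqref{eq:keymod} is in hand, the probabilistic passage to the limit is entirely analogous to the bookkeeping already used for condition (i), and the inequalities \eqref{eq:modcad}--\eqref{eq:modcont} confirm that the two moduli are interchangeable in the continuous regime, making the $C$-tight conclusion consistent with the tight one.
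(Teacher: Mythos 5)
Your proof is correct and follows essentially the same route the paper takes: the paper establishes this lemma by deferring to Corollary VI.3.33 of Jacod--Shiryaev and remarking that the sequence argument carries over unchanged to continuous-time families via Proposition \ref{prop:Dtight} and Definition \ref{def:Ctight}, which is precisely the modulus-based verification (union bounds for condition (i), subadditivity of $\omega$ for the $C$-tight case, and a mixed moduli inequality for the tight case) that you carry out. The only difference is that you additionally supply a detailed partition-refinement proof of the key deterministic inequality $\varpi(\varphi+\psi,\delta)\le\omega(\varphi,2\delta)+\varpi(\psi,2\delta)$, which the cited reference asserts without proof; your refinement argument (cutting each block of an admissible coarse partition into pieces of length in $(\delta,2\delta]$) is sound.
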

  % Although the  reference in Lemma \ref{lem:sum} is  for sequences, combining Proposition \ref{prop:Dtight}, the definition of $C$-tightness, and another result that extends to $
  % \Rplus$), that  the proof is no different. 

We say $Z=(Z(s))_{s\in[-\tau,0]}$ is a \texttt{non-decreasing\,\,process}   when $Z$ is  \cadlag in the pathwise sense, non-negative, non-decreasing, and $Z(-\tau)=0.$ A non-decreasing process  $Y$ is said to \texttt{strongly}\linebreak \texttt{majorise} another non-decreasing process $Z$ whenever $Y-Z$ is also a non-decreasing process.
 \begin{lemma}[Proposition 3.35 of \cite{book:jacod}]\label{lem:increasing}
 	 Suppose, for any $t\geq 0$ fixed, that $Y_{t}$ is a non-decreasing process that strongly majorises the non-decreasing process $Z_{t}.$ If the family $ (Y_{t})_{t\geq 0}$ is tight, then so is the family $(Z_t)_{t\geq 0}$. The same result holds when exchanging tight for $C$-tight.
 \end{lemma}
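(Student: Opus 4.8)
The plan is to reduce everything to the characterisation of (C-)tightness provided by Proposition \ref{prop:Dtight} and Definition \ref{def:Ctight}, by establishing three pathwise, $t$-uniform domination inequalities that follow purely from the monotone structure. First I would record what strong majorisation buys us: for each fixed $t$ (and almost every $\omega$) the process $Y_t-Z_t$ is non-decreasing with $(Y_t-Z_t)(-\tau)=0$, so that \textbf{(a)} $Z_t(s)\le Y_t(s)$ for all $s\in[-\tau,0]$, and \textbf{(b)} $Z_t(s_2)-Z_t(s_1)\le Y_t(s_2)-Y_t(s_1)$ for all $-\tau\le s_1\le s_2\le 0$. Property (b) says the increments of $Z_t$ are dominated by those of $Y_t$, which is exactly the information the moduli of continuity see.

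For the norm condition, since $Z_t$ is non-negative, non-decreasing and \cadlag with $Z_t(-\tau)=0$, its supremum is attained at the right endpoint, so $\|Z_t\|_\infty=Z_t(0)$; likewise $\|Y_t\|_\infty=Y_t(0)$. Combining this with (a) evaluated at $s=0$ gives $\|Z_t\|_\infty\le\|Y_t\|_\infty$ pathwise, whence $\mathbb P(\|Z_t\|_\infty\ge R)\le\mathbb P(\|Y_t\|_\infty\ge R)$ for every $R$ and every $t$. Taking the supremum over $t$ and letting $R\to\infty$, the validity of Proposition \ref{prop:Dtight}(i) for $(Y_t)_{t\ge0}$ transfers condition (i) verbatim to $(Z_t)_{t\ge0}$.

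For the modulus conditions I would exploit that, for a non-decreasing function $\varphi$ and any half-open subinterval $[u,v)\subseteq[-\tau,0]$, the oscillation collapses to a single increment, namely $\omega(\varphi,[u,v))=\varphi(v-)-\varphi(u)$. Applying (b) with $s_2\uparrow v$ and using left-continuity of the limits yields $\omega(Z_t,[u,v))\le\omega(Y_t,[u,v))$. For $C$-tightness this already gives $\omega(Z_t,\delta)\le\omega(Y_t,\delta)$ directly from the definition of $\omega$, so Definition \ref{def:Ctight}(ii) for $(Z_t)$ follows from that for $(Y_t)$. For ordinary tightness I would run the same inequality partitionwise: for every admissible partition $\{t_i\}_{i=0}^k$ (with mesh larger than $\delta$) one has $\max_{1\le i\le k}\omega(Z_t,[t_{i-1},t_i))\le\max_{1\le i\le k}\omega(Y_t,[t_{i-1},t_i))$, and taking the infimum over all such partitions gives the pathwise bound $\varpi(Z_t,\delta)\le\varpi(Y_t,\delta)$. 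This transfers Proposition \ref{prop:Dtight}(ii) from $(Y_t)$ to $(Z_t)$.

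Putting the three inequalities together, both conditions of Proposition \ref{prop:Dtight} hold for $(Z_t)_{t\ge0}$ whenever they hold for $(Y_t)_{t\ge0}$, which proves tightness; the identical argument with $\omega$ in place of $\varpi$ and Definition \ref{def:Ctight} proves the $C$-tight case. The only genuinely delicate point is the identity $\omega(\varphi,[u,v))=\varphi(v-)-\varphi(u)$ for non-decreasing \cadlag $\varphi$, together with the passage to the limit $s_2\uparrow v$ in (b); everything else is monotonicity bookkeeping. I would take particular care to confirm that the supremum defining $\omega$ over the \emph{half-open} interval yields the left limit $\varphi(v-)$ rather than $\varphi(v)$, since this is precisely what makes the increment domination compatible with the \cadlag modulus $\varpi$.
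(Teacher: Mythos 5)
Your proposal is correct and follows essentially the same route as the paper: the paper's proof consists precisely of noting the three pathwise dominations $|Z_t(s)|\le|Y_t(s)|$, $\varpi(Z_t,\delta)\le\varpi(Y_t,\delta)$, and $\omega(Z_t,\delta)\le\omega(Y_t,\delta)$, and then invoking Proposition \ref{prop:Dtight} and Definition \ref{def:Ctight}. You have simply filled in the monotonicity bookkeeping (the increment domination from strong majorisation and the oscillation identity $\omega(\varphi,[u,v))=\varphi(v-)-\varphi(u)$ for non-decreasing \cadlag $\varphi$) that the paper leaves implicit, and these verifications are sound.
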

Indeed, this statement   easily extends to continuous-time families as well, since the result  immediately follows from Proposition \ref{prop:Dtight} and Definition \ref{def:Ctight}, once one notices that $\left|Z_{t}(s)\right| \leq \left|Y_{t}(s)\right|$, $s\in[-\tau,0]$, $\varpi \left(Z_t, \delta\right) \leq \varpi\left(Y_t, \delta\right)$, and $\omega\left(Z_t, \delta\right) \leq \omega\left(Y_t, \delta\right)$ holds.

% Again, the previous story applies to this result. 

\subsection{Direct applications}\label{sec:applications}
In this section, we provide two intermediate results that are useful for showing tightness of solution segments; see {\S}\ref{sec:towards}. The following tightness result is applied in {\S}\ref{sec:tight_cont}, where we consider the stochastic integrator of the SDDE to be continuous.
\begin{theorem}[Kolmogorov's tightness criterion]\label{thm:kolm}
	The  family  $(Z_t)_{t\geq 0}$   of $C[-\tau,0]$-valued random variables is tight in the uniform topology if
	\begin{enumerate}[\normalfont(i)]
  \item the  family $(Z_t(-\tau))_{t\geq 0}$ of starting values is tight in $\mathbb R$, i.e., bounded in probability\textnormal ;\item there exists constants $\gamma>0, \delta>1, $ and $K>0$ such that
\begin{equation}
	\mathbb{E}\big|Z_{t}(t_{2})-Z_{t}(t_{1})\big|^{\gamma} \leq K\left|t_{2}-t_{1}\right|^{\delta},\quad  \text {for all } t \geq t_0 \text{ and every } t_{1}, t_{2} \in[-\tau,0],\label{eq:kolm}
\end{equation}
holds, for some $t_0\geq 0.$
	\end{enumerate}
\end{theorem}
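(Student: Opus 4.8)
The plan is to prove this via the modulus-of-continuity tightness criterion in $C[-\tau,0]$, which is itself a consequence of the Arzel\`a--Ascoli characterisation (Theorem \ref{thm:ArzAsc}) combined with Prokhorov's theorem (Theorem \ref{thm:prokhorov}). The two hypotheses are tailored exactly to the two conditions \eqref{eq:cond1} and \eqref{eq:cond2} appearing in Theorem \ref{thm:ArzAsc}: condition (i) controls the value at a single point, while condition (ii) controls oscillations. Recall from Corollary \ref{cor:stoch} and Proposition \ref{prop:Dtight} that tightness in $C[-\tau,0]$ (equivalently, in the Skorokhod sense restricted to continuous paths) is equivalent to tightness of the starting values together with $\lim_{\delta\to 0}\sup_{t}\mathbb P(\omega(Z_t,\delta)\geq\varepsilon)=0$. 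So the core task is to translate the fourth-moment bound \eqref{eq:kolm} into the required smallness of the modulus of continuity, uniformly in $t$.

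First I would fix $t\geq t_0$ and use \eqref{eq:kolm} to derive a uniform control on $\mathbb E\,\omega(Z_t,\delta)^{\gamma}$ or, more precisely, a bound on $\mathbb P(\omega(Z_t,\delta)\geq\varepsilon)$. The standard route is the chaining argument underlying the classical Kolmogorov--Chentsov continuity theorem: subdivide $[-\tau,0]$ dyadically, apply \eqref{eq:kolm} at each scale to the increments $Z_t(t_2)-Z_t(t_1)$, and sum the resulting tail estimates via Markov's inequality together with the exponent condition $\delta>1$, which guarantees the geometric series over dyadic scales converges. The key point is that the constants $\gamma,\delta,K$ in \eqref{eq:kolm} are independent of $t$, so the resulting bound on $\sup_{t\geq t_0}\mathbb P(\omega(Z_t,\delta)\geq\varepsilon)$ depends only on $\delta$ (and $\varepsilon,\gamma,\delta,K$) and tends to $0$ as $\delta\to 0$. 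This yields condition (ii) of Proposition \ref{prop:Dtight} for the tail $t\geq t_0$.

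Next I would handle the starting-point boundedness. Hypothesis (i) gives tightness of $(Z_t(-\tau))_{t\geq 0}$ in $\mathbb R$, which is exactly condition \eqref{eq:cond1} of the Arzel\`a--Ascoli theorem in probabilistic form. Combined with the modulus estimate just obtained, Proposition \ref{prop:Dtight} (or directly Theorem \ref{thm:ArzAsc-sko} specialised to continuous paths) then delivers tightness of the partial family $(Z_t)_{t\geq t_0}$: for any $\varepsilon>0$ one picks $R$ and $\delta$ so that the exceptional probabilities for $\|Z_t\|_\infty$ and $\omega(Z_t,\delta)$ are each below $\varepsilon/2$, and the resulting set is relatively compact by Arzel\`a--Ascoli. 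Finally, to extend from $t\geq t_0$ to the whole family $t\geq 0$, I would note that the finite-time portion $(Z_t)_{0\leq t\leq t_0}$ is tight whenever the process is stochastically continuous, via Proposition \ref{prop:wow}; alternatively, since we only assert tightness of the family, absorbing the finitely-parametrised remainder poses no difficulty.

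The main obstacle I anticipate is the bookkeeping in the chaining step: one must verify that the dyadic estimate can be made uniform in the base time $t$ and that the penalty from $\|Z_t(-\tau)\|$ (the anchoring value) is correctly folded in, since \eqref{eq:kolm} only controls \emph{increments}, not absolute values. The cleanest way to sidestep this is to prove the modulus bound and the starting-value bound separately and then invoke the equivalence in Proposition \ref{prop:Dtight}, rather than trying to bound $\|Z_t\|_\infty$ directly through the moment inequality. A secondary subtlety is ensuring the passage $C$-tightness versus tightness is handled correctly; but by Corollary \ref{cor:Ctight} a tight family of $C[-\tau,0]$-valued random variables is automatically $C$-tight, so no extra work is needed there.
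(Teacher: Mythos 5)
Your proposal is correct and follows essentially the same route as the paper: reduction to the $C$-tightness criterion (starting-value tightness plus uniform modulus-of-continuity control), with the modulus bound obtained from \eqref{eq:kolm} via a Markov/chaining argument (which the paper delegates to \cite{book:revuz}) and the initial portion $0\leq t\leq t_0$ handled by stochastic continuity and Proposition \ref{prop:wow}. The only caveat is your parenthetical ``alternative'' for $t<t_0$: the family $(Z_t)_{0\leq t\leq t_0}$ is parametrised by a continuum, not a finite set, so it cannot simply be absorbed as a harmless remainder --- the stochastic-continuity argument via Proposition \ref{prop:wow} is genuinely needed there, exactly as in the paper's proof.
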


\begin{proof} The proof is similar to the one in \cite{book:revuz}. Our goal is to show that the family is $C$-tight. Recall that \eqref{eq:cond1} implies  \eqref{eq:uni2}, in the continuous setting, hence part (i) of $C$-tightness in Definition \ref{def:Ctight} is because of tightness in $\mathbb R$ of the starting values. 
Part (ii) of the $C$-tightness follows from applying Markov's inequality together with \eqref{eq:kolm}. Observe that it suffices to have \eqref{eq:kolm} for $t\geq t_0$ for some $t_0\geq 0, $ due to the result in Proposition \ref{prop:wow}; the continuous-time family $(Z_t)_{t\geq 0}$ is  stochastically continuous in $C[-\tau,0]$, since $(Z(t))_{t\geq 0}$ is a (stochastically) continuous process \cite[Lem. 2.3]{article:reiss}.
\end{proof}
 
 The second tightness result  plays a crucial role in  {\S}\ref{sec:special}, which investigates when solution segments in  the \cadlag case are tight. Note that the below is more or less a direct consequence of the Arzelà--Ascoli theorem, yet we  will still provide a detailed proof.

 \begin{lemma}\label{lem:reduction}
 	Let $ (f(s))_{-\tau\leq s<\infty}$ be a stochastic process in $\mathbb D[-\tau,\infty)$. Define, for every   $t\geq 0$,  the continuous-time family $(F_t)_{t\geq 0}$ of    $  C[-\tau,0]$-valued random variables by
 	\begin{equation}
 		F_t(u):=\int_{t-\tau}^{t+u}f(s)\,\mathrm ds,\quad u\in[-\tau,0].
 	\end{equation}
Let us  write $f_t:=(f(t+s))_{s\in [-\tau,0]}$, for each $t\geq 0$. If $\left(\|f_t\|_\infty\right)_{t\geq 0}$  is bounded in probability,   then the   family    $(F_t)_{t\geq 0}$ is  $C$-tight. 
 \end{lemma}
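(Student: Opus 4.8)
The plan is to invoke the Arzelà--Ascoli characterisation of $C$-tightness from Definition \ref{def:Ctight}, verifying its two conditions separately. First I would observe that, for each fixed $t\geq \tau$ and each pair $u_1,u_2\in[-\tau,0]$ with $u_1\leq u_2$, we have the deterministic bound
\begin{equation}
    |F_t(u_2)-F_t(u_1)|=\left|\int_{t+u_1}^{t+u_2}f(s)\,\mathrm ds\right|\leq \int_{t+u_1}^{t+u_2}|f(s)|\,\mathrm ds\leq |u_2-u_1|\,\|f_t\|_\infty,
\end{equation}
using that the integration variable $s$ ranges over $[t+u_1,t+u_2]\subseteq[t-\tau,t]$, so the integrand is pointwise dominated by $\|f_t\|_\infty=\sup_{s\in[-\tau,0]}|f(t+s)|$. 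In particular, taking $u_1=-\tau$ gives $\|F_t\|_\infty\leq \tau\|f_t\|_\infty$, and more generally $\omega(F_t,\delta)\leq \delta\|f_t\|_\infty$ for the modulus of continuity.

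Next I would verify condition (i) of Definition \ref{def:Ctight}. Since $(\|f_t\|_\infty)_{t\geq 0}$ is bounded in probability by hypothesis and $\|F_t\|_\infty\leq \tau\|f_t\|_\infty$, the comparison property for families bounded above in probability (recorded in {\S}\ref{sec:bound-prelim}) yields
\begin{equation}
    \lim_{R\to\infty}\sup_{t\geq 0}\mathbb P\big(\|F_t\|_\infty\geq R\big)\leq \lim_{R\to\infty}\sup_{t\geq 0}\mathbb P\big(\|f_t\|_\infty\geq R/\tau\big)=0,
\end{equation}
which is exactly the uniform-boundedness requirement. For condition (ii), fix $\varepsilon>0$ and $\delta>0$; from $\omega(F_t,\delta)\leq \delta\|f_t\|_\infty$ we obtain
\begin{equation}
    \sup_{t\geq 0}\mathbb P\big(\omega(F_t,\delta)\geq \varepsilon\big)\leq \sup_{t\geq 0}\mathbb P\big(\|f_t\|_\infty\geq \varepsilon/\delta\big).
\end{equation}
Because $(\|f_t\|_\infty)_{t\geq 0}$ is bounded in probability, the right-hand side tends to $0$ as $\delta\to 0$ (equivalently as $\varepsilon/\delta\to\infty$), giving the equicontinuity condition.

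A small point to address before concluding is that each $F_t$ genuinely takes values in $C[-\tau,0]$: this is immediate since $u\mapsto \int_{t-\tau}^{t+u}f(s)\,\mathrm ds$ is an indefinite Lebesgue integral of a locally integrable (in fact \cadlag, hence bounded on compacts) integrand, and is therefore absolutely continuous, so measurability as a $C[-\tau,0]$-valued random variable follows. Having verified both conditions of Definition \ref{def:Ctight}, I conclude that $(F_t)_{t\geq 0}$ is $C$-tight. I do not anticipate a genuine obstacle here; the argument is essentially a uniform-in-$t$ Lipschitz estimate on the sample paths of $F_t$ with constant $\|f_t\|_\infty$, after which both $C$-tightness conditions reduce directly to the assumed boundedness in probability of $(\|f_t\|_\infty)_{t\geq 0}$. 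The only mild care required is the bookkeeping that the supremum over $t\geq 0$ (rather than a $\limsup$) is controlled, which is automatic since the bound $\|F_t\|_\infty\leq\tau\|f_t\|_\infty$ and $\omega(F_t,\delta)\leq\delta\|f_t\|_\infty$ hold uniformly in $t$.
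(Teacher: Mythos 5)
Your proof is correct, and it rests on exactly the same pathwise estimate as the paper's proof---namely $|F_t(u_2)-F_t(u_1)|\leq |u_2-u_1|\,\|f_t\|_\infty$, hence $\|F_t\|_\infty\leq\tau\|f_t\|_\infty$ and $\omega(F_t,\delta)\leq\delta\|f_t\|_\infty$---but the packaging is genuinely different. The paper fixes $\varepsilon>0$, picks $M_\varepsilon$ from boundedness in probability, shows that on a high-probability event $\Omega_t$ each path of $F_t$ lies in the uniformly bounded, uniformly Lipschitz set $C_{M_\varepsilon}=\{F:\|F\|_\infty\leq\tau M_\varepsilon,\ \mathrm{Lip}(F)\leq M_\varepsilon\}$, invokes the general Arzel\`a--Ascoli theorem (Theorem \ref{thm:ArzAsc-gen}) to get relative compactness of $C_{M_\varepsilon}$, concludes tightness in $C[-\tau,0]$ with the explicit compact sets $K_\varepsilon=\overline{C_{M_\varepsilon}}$, and only then upgrades to $C$-tightness via Corollary \ref{cor:Ctight}. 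You instead verify the two conditions of Definition \ref{def:Ctight} directly, trading the compactness machinery for the elementary comparison properties of boundedness in probability recorded in {\S}\ref{sec:bound-prelim}. Your route is shorter and avoids Arzel\`a--Ascoli and the closure argument entirely; the paper's route has the mild advantage of producing explicit compact sets (closed Lipschitz balls), which matches the quantitative form in which tightness is consumed downstream (e.g., in the Krylov--Bogoliubov theorem, where one wants $\mathbb P(X_t\in K_\varepsilon)\geq 1-\varepsilon$ for a concrete $K_\varepsilon$). Two trivial slips in yours: the opening restriction to ``$t\geq\tau$'' should read $t\geq 0$ (the bound holds there, and your displayed suprema already run over $t\geq 0$), and in condition (ii) the limit is taken as $\delta\to 0$ with $\varepsilon$ fixed, which you state correctly but phrase slightly loosely as ``equivalently as $\varepsilon/\delta\to\infty$''. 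Neither affects validity.
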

\begin{proof}
% From Lebesgue theory we know the $F_t$ are  continuous.
Define for all $M\geq 0$ the sets
	\begin{equation}
		C_M:=\{F\in C[-\tau,0]: \|F\|_{\infty}\leq \tau M,\quad \text{Lip}(F)\leq M\},
	\end{equation}
where
$\text{Lip}(g)=\sup _{x,y\in[-\tau,0],\,x\neq y} {|g(x)-g(y)|}/{|x-y|}$ denotes the Lipschitz constant of a function $g:[-\tau,0] \to\mathbb R$. Any set $C_M\subset C[-\tau,0]$ is pointwise bounded and equicontinuous, which directly follows from the uniform bounds  $\|F\|_{\infty}\leq \tau M$ and $ \text{Lip}(F)\leq M$, respectively. Theorem \ref{thm:ArzAsc-gen}  allows us to deduce that $C_M$ is relatively compact in $ C[-\tau,0]. $

    Now, let $\varepsilon>0$ be arbitrarily given. By the boundedness in probability assumption, there exists a constant $M_\varepsilon\geq 0$ such that, for all $t\geq 0$ fixed, there is some  $\Omega_t\subset\Omega$ satisfying $\mathbb P(\Omega_t)\geq 1-\varepsilon$ and $\sup_{s\in[t-\tau,t]}|f(s,\omega)|\leq M_\varepsilon$ for every $\omega \in \Omega_t$.  Hence, for any $ -\tau\leq v\leq u\leq 0$, this results into the pointwise estimates
\begin{equation}|F_t(u)-F_t(v)|=\left|\int_{t+v}^{t+u} f(s)\,\mathrm d s\right| \leq \int_{t+v}^{t+u}|f(s)|\, \mathrm d s \leq  \int_{t+v}^{t+u}M_\varepsilon\, \mathrm d s = M_\varepsilon(u-v),\quad  \text{on }\,  \Omega_t,\end{equation}and
\begin{equation}
\|F_t\|_\infty\leq \sup_{u\in[-\tau,0]}\int_{t-\tau}^{t+u}|f(s)|\,\mathrm ds\leq \tau M_\varepsilon,\quad\text{on }\,\Omega_t.
\end{equation}
Consequently, for any time $t\geq 0$, we have $[u\mapsto F_t(u,\omega)]\in C_{M_\varepsilon}$  for all $\omega\in\Omega_t.$  

Recall that $C_{M_\varepsilon}$ is relatively compact and let us  denote its closure by $K_\varepsilon$. Then, for any $t\geq 0$, we have $
	\mathbb P(F_t\in K_\varepsilon)=\mathbb P(\Omega_t)\geq 1-\varepsilon. $
We obtain that the family $(F_t)_{t\geq 0}$ is  tight in $ C[-\tau,0]$ and the assertion  follows from Corollary \ref{cor:Ctight}.
\end{proof}

%%%%%%%%%%%%%%%%%%%%%%%%%%%%%%%

%\documentclass[../klad-theory.tex]{subfiles}

%\begin{document}

\subsection{Semimartingale characteristics}\label{app:C}

        \noindent In this part of the appendix we introduce the notion of {semimartingale characteristics}, which is to be understood as the generalisation of  Lévy characteristic triplets. These characteristics turn out to be a  useful tool for proving tightness of segments when the integrators are in (HInt) $\subset$ (HSpec); see {\S}\ref{sec:special}.
        % \subsection{Main definitions and tools}\label{sec:mainsem}
         We   restrict  ourselves to the one-dimensional setting and   refer to \cite{book:jacod} for its $d$-dimensional analogue. 
         A \texttt{truncation\,\,function} is a bounded function  $h:\R\to\R$ which   satisfies $h(x)=x$ for all $x$ in a neighbourhood of 0. Typically, we set $h(x)=x\mathbf 1_{\{|x|\leq 1\}}$.

\begin{definition}\label{def:semi}
	Let $X\in \mathbb D\Rplus$ be a semimartingale. Its \texttt{semimartingale} \texttt{characteristic}, with respect to the truncation function $h$, is denoted by the triplet $(B_X,C_X,\nu_X) $, where $B_X=(B_X(t))_{t\geq 0}$ and $C_X=(C_X(t))_{t\geq 0}$ are the unique predictable processes,    and $\nu_X$ is the unique non-negative random measure
%	\mark{Definitie van een random measure is vaag in Jacod...} 
on $\Rplus\times \R$ with $\nu_X( \{0\}\times \R;\omega)=0$ for all $\omega\in\Omega$, such that 
	\begin{equation}
		\left(\frac{\exp(iuX(t))}{\exp \psi_t(u)}\right)_{t\geq 0},
	\end{equation} 
with
	\begin{equation}\label{eq:psi}\psi_{t}(u):=i u B_X(t)-\frac{u^{2}}{2} C_{X}(t)+\int_{[0,t]\times \R}\left(e^{i u x}-1-i u h(x)\right) \nu_X(\mathrm ds,\mathrm d x),\end{equation}
  becomes a complex-valued martingale.
\end{definition} 
The precise statement for existence and uniqueness (up to indistinguishability) can be found in \cite[Sec. II.2]{book:jacod}.
As a matter of fact, one can explicitly construct the characteristics by means of the observations in the proof  of \cite[Prop. II.2.9]{book:jacod}. For our intents and purposes, there is no need to go deeper into the concept except for the following observations.

Observe that   the integrand   in \eqref{eq:psi} is independent of the time $t$, hence we can rewrite
\begin{equation}\label{eq:newnot}
	\int_{[0,t]\times \R}\left(e^{i u x}-1-i u h(x)\right) \nu_X(\mathrm ds,\mathrm d x)=\int_{\R}\left(e^{i u x}-1-i u h(x)\right) \nu_X([0,t]\times\mathrm d x).
\end{equation}
The right hand side of \eqref{eq:newnot} is conform \cite{book:jacod}, yet we rather stick with our initial notation. Furthermore, only the $B_X$ depends on the choice of  the truncation function $h$ \cite{book:jacod}, just like that only the $b$ of a Lévy triplet $(b,\sigma,\nu)$  depends on the choice of truncation. Therefore, it may convene the reader to explicitly write $b_h$ and $B_{X,h}$ for $b$ and $B_X,$ respectively. On the other hand, the choice of $h$ will not be of any significance in our applications. We will drop the $h$ if the choice of the truncation is clear from the context.

 % Finally,  let us  compare Definition \ref{def:semi} with Example \ref{ex:complex}. This motivates the following example.
\begin{examplex}\label{ex:char}
Suppose $L=(L(t))_{t\geq0}$ is a (one-dimensional) Lévy process with   triplet $(b_h,\sigma,\nu)$.  As an  immediate corollary of the Lévy--Khintchine formula \cite{book:applebaum, book:sato}, we obtain that  the semimartingale characteristic $(B_{L,h},C_L,\nu_L$) is   given by\footnote{The $\mathrm dt$ in the term $\nu(\mathrm dx)\,\mathrm dt$ denotes the Lebesgue  measure on $\Rplus$.}
%\footnote{In other words, for all Borel measurable sets $S\subset \Rplus$ and $A\subset \R$, we have $\nu_{I_t}(A,S)=\int_S \nu(A)\,\mathrm ds$.}
\begin{equation}
	B_{L,h}(t,\omega)=b_ht,\quad C_L(t,\omega)=\sigma^2 t,\quad\text{and}\quad \nu_L(\mathrm dt,\mathrm dx;\omega)=\nu(\mathrm dx)\, \mathrm dt.
\end{equation}
Both the processes $B_{L,h}$ and $C_L$ are deterministic, and so is the random measure $\nu_L$.
\end{examplex}
Once the semimartingale characteristic of a semimartingale $X$ is known, we    obtain by means of Theorem \ref{thm:char} the semimartingale characteristic of a  process $H\in\mathbb L\Rplus$ integrated with respect to $X$. For this we require $X$ to be a special semimartingale. 
% Recall Proposition \ref{cor:finitefirst}, so  regulated Lévy martingales, for instance, are special semimartingales. 
% By definition we have that a special semimartingale has a canonical decomposition, see Theorem \ref{thm:predictable}. 
Let $X=X(0)+M+A$ be a special semimartingale. Then   \cite[Cor. II.2.38]{book:jacod} yields
\begin{equation}
	A(t)=B_{X,h}(t)+ \int_{[0,t]\times \R}\big(x-h(x)\big)\nu_{X}(\mathrm ds,\mathrm dx),\quad t\geq 0,\label{gen1}
\end{equation}
and
\begin{equation} M(t)=X^{c}(t)+ \int_{[0,t]\times \R}x\big(\mu_{X}(\mathrm ds,\mathrm dx)-\nu_{X}(\mathrm ds,\mathrm dx)\big),\quad t\geq 0,\label{gen2}
\end{equation}
where   $X^c$ is the continuous local martingale part of $X$, and $\mu_X$ is the random measure associated with the jumps of $X$. Observe that $\nu_X$ is referred to as the \texttt{compensator} of $\mu_X$  \cite{book:jacod}. Equations \eqref{gen1}--\eqref{gen2} generalise the Lévy--Itô decomposition \cite{book:applebaum, book:kyp, book:protter,book:sato} to special semimartingales and  a  further   generalisation   can be found in   \cite[Thm. II.2.34]{book:jacod}.
\begin{examplex}\label{ex:sum}
	Suppose $X$ is a special semimartingale and let $V$ be  a continuous process which is of  finite variation. Then, clearly,  $Y=X+V$ is a special martingale too. From  \eqref{gen1}--\eqref{gen2} we   easily deduce that $B_{Y}=B_X+V$, $C_Y=C_X$ and $\nu_Y=\nu_X$ hold. This is due to the fact   $\mu_Y$ and $\mu_X$ coincide, since $V$ is continuous, and because the local martingale part of $Y$ is also given by $M$.
\end{examplex}
% Now, as promised, we will state a result that concerns   the semimartingale characteristics of stochastic integrals.

\begin{theorem}[Proposition 7.6 of \cite{phdthesis:Raible}]\label{thm:char}
 Let $H\in \mathbb L\Rplus$   and  $X$ be of class \textnormal{(HSpec)} with characteristics $(B_{X,h}, C_{X},\nu_X)$. Consider the stochastic integral  
	\begin{equation}
	I(t):=\int_0^{t} H(s) \,\mathrm d X(s),\quad t\geq 0.
	\end{equation}
Then $I=(I(t))_{t\geq 0}$ is a special semimartingale with the following characteristics:
	\begin{align}
		B_{I,h}(t)&=\int_0^tH(s)\,\mathrm dB_{X,h}(s)+ \int_{[0,t]\times \R}\big(h(xH(s))-H(s)h(x)\big)\,\nu_X(\mathrm ds,\mathrm dx),\\
		C_{I}(t)&=\int_0^t|H(s)|^2\,\mathrm dC_X(s),
	\end{align}
and
\begin{equation}
\label{eq:not-orig}	\nu_I(S,A;\omega)=\int_{\Rplus\times \R}\mathbf 1_S(s)\mathbf 1_A(H(s,\omega)x)\, \nu_X(\mathrm ds,\mathrm dx;\omega),
% \hspace{3.845cm}
\end{equation}
	for all Borel measurable sets $S\in\mathcal B( \Rplus)$ and $A\in\mathcal B(\R)$, and for all $\omega\in\Omega$.
\end{theorem}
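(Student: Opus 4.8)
The plan is to obtain the three characteristics of $I$ one at a time, by integrating the canonical decomposition of $X$ and then reading off the characteristic of $I$ through the representation formulas \eqref{gen1}--\eqref{gen2}. Since $H\in\mathbb L\Rplus$ is càglàd and adapted, it is predictable and locally bounded, so $I=\int_0^\cdot H\,\mathrm dX$ is a well-defined semimartingale. Writing $X=X(0)+M+A$ for the canonical decomposition of the special semimartingale $X$, the integral $\int_0^\cdot H\,\mathrm dM$ is a local martingale and $\int_0^\cdot H\,\mathrm dA$ is predictable of finite variation, so $I$ is again special with canonical decomposition $I=M_I+A_I$, where $M_I=\int_0^\cdot H\,\mathrm dM$ and $A_I=\int_0^\cdot H\,\mathrm dA$.

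First I would treat the continuous martingale part. Because $H$ is predictable and $X^{c}$ is the continuous local martingale part of $X$, the continuous local martingale part of $I$ is $I^{c}=\int_0^\cdot H\,\mathrm dX^{c}$. Hence $C_I=\langle I^{c}\rangle$ satisfies $C_I(t)=\int_0^t|H(s)|^2\,\mathrm d\langle X^{c}\rangle(s)=\int_0^t|H(s)|^2\,\mathrm dC_X(s)$, since $C_X=\langle X^{c}\rangle$, which is the claimed formula for $C_I$. Next, the jump part: as $\Delta I(s)=H(s)\Delta X(s)$ pathwise, the jump random measure $\mu_I$ of $I$ is the image of $\mu_X$ under the map $(s,x)\mapsto(s,H(s)x)$. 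The key step is that its compensator $\nu_I$ is then the image of $\nu_X$ under the same map, i.e. $\int g\,\mathrm d\nu_I=\int g(s,H(s)x)\,\mathrm d\nu_X$ for nonnegative predictable $g$ vanishing on $\{x=0\}$; this is exactly the asserted formula \eqref{eq:not-orig}. This identity follows from the defining property of the compensator together with the predictability of $H$, which makes $(s,\omega,x)\mapsto H(s,\omega)x$ a predictable transformation, so that the uniqueness of the compensator applies.

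It remains to identify $B_{I,h}$, which I would extract by comparing two expressions for $A_I$. On one hand, integrating \eqref{gen1} for $X$ gives $A_I(t)=\int_0^t H\,\mathrm dB_{X,h}+\int_{[0,t]\times\R}H(s)\bigl(x-h(x)\bigr)\,\nu_X(\mathrm ds,\mathrm dx)$. On the other hand, \eqref{gen1} applied to $I$ reads $A_I(t)=B_{I,h}(t)+\int_{[0,t]\times\R}\bigl(y-h(y)\bigr)\,\nu_I(\mathrm ds,\mathrm dy)$, and substituting the image formula for $\nu_I$ turns the last integral into $\int_{[0,t]\times\R}\bigl(H(s)x-h(H(s)x)\bigr)\,\nu_X(\mathrm ds,\mathrm dx)$. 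Subtracting and cancelling the common $H(s)x$ term yields $B_{I,h}(t)=\int_0^t H\,\mathrm dB_{X,h}+\int_{[0,t]\times\R}\bigl(h(H(s)x)-H(s)h(x)\bigr)\,\nu_X(\mathrm ds,\mathrm dx)$, as claimed. I would close with the integrability check that legitimises these manipulations: the integrand $h(H(s)x)-H(s)h(x)$ vanishes for $x$ near $0$ (both terms equal $H(s)x$ there) and is bounded by $(1+|H(s)|)\|h\|_\infty$ for $|x|>1$, so it is $\nu_X$-integrable because the special-semimartingale hypothesis forces $\int_{|x|>1}|x|\,\nu_X<\infty$ locally, hence $\nu_X([0,t]\times\{|x|>1\})<\infty$.

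The main obstacle I anticipate is the compensator identity $\nu_I=$ image of $\nu_X$: one must argue carefully that taking the predictable image under $(s,x)\mapsto(s,H(s)x)$ commutes with compensation, which relies on the predictability of $H$ and on the uniqueness (up to a $\mathbb P$-null set) of the compensator of an integer-valued random measure. Everything else is bookkeeping with \eqref{gen1}--\eqref{gen2} and the elementary behaviour of the truncation function $h$ near $0$ and $\infty$.
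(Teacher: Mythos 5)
Your proof is correct, but it takes a genuinely different route from the paper, because the paper offers no proof at all: the result is imported wholesale from Raible's thesis \cite[Prop. 7.6]{phdthesis:Raible}, and the only argument supplied in the text is the remark, immediately after the statement, that the displayed form \eqref{eq:not-orig} of $\nu_I$ follows from Raible's functional formulation
\begin{equation}
\int W(s,\omega,x)\,\nu_I(\mathrm ds,\mathrm dx;\omega)=\int W\bigl(s,\omega,H(s,\omega)x\bigr)\,\nu_X(\mathrm ds,\mathrm dx;\omega),
\end{equation}
valid for all $\mathcal P\times\mathcal B(\R)$-measurable $W$, by choosing $W=\mathbf 1_S(s)\mathbf 1_A(x)$ and using that deterministic processes are predictable. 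Your argument instead reconstructs the proof standing behind that citation: the canonical decomposition $I=\int H\,\mathrm dM+\int H\,\mathrm dA$ to get that $I$ is special, the identity $(H\cdot X)^c=H\cdot X^c$ for $C_I$, the compensator-commutes-with-predictable-image argument for $\nu_I$, and back-solving \eqref{gen1} for $B_{I,h}$. All of these steps are sound: $H\in\mathbb L\Rplus$ is indeed predictable and locally bounded; the image-compensator identity is precisely where predictability of $H$ and uniqueness of the compensator enter; and equating the two expressions for the predictable finite-variation part $A_I$ is the cleanest way to see why the correction term $h(H(s)x)-H(s)h(x)$ appears, including your integrability check (the integrand vanishes pathwise for $|x|$ small and is bounded on $\{|x|\geq\delta\}$, a set of a.s.\ finite $\nu_X$-mass on $[0,t]\times\R$). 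What the paper's route buys is brevity---the hard step, which you correctly identify as the compensator identity, is delegated to the reference; what your route buys is a self-contained statement, and it makes visible that the formula for $B_{I,h}$ is forced by the other two characteristics together with the decomposition \eqref{gen1}.

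One small caveat you inherit from the statement itself: the image of $\nu_X$ under $(s,x)\mapsto(s,H(s)x)$ can charge $\{x=0\}$ on the set where $H$ vanishes, whereas the compensator of a jump measure never does. The correct assertion restricts the image to $\{H(s)x\neq 0\}$, which is exactly why the kernel formula following the theorem in the paper carries the indicator $\mathbf 1_{A\backslash\{0\}}$. This is an imprecision of \eqref{eq:not-orig} rather than of your reasoning, but where you write that the image formula ``is exactly'' \eqref{eq:not-orig}, you should insert that restriction.
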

 Theorem \ref{thm:char} is a simplification of  \cite[Prop. 7.6]{phdthesis:Raible}, as  we assume $H\in\mathbb L\Rplus$ instead of $H$ being a predictable process, i.e., $\mathcal P$-measurable. Moreover, observe that    \eqref{eq:not-orig} is a direct consequence of the original formulation in \cite{phdthesis:Raible}, since $\nu_I$ is the unique random measure  satisfying
 \begin{equation}
 	\int W( s, \omega,  x)   \nu_I(\mathrm ds,\mathrm dx;\omega)=\int W(s,\omega,  H(s,\omega) x)    \nu_X(\mathrm ds,\mathrm dx;\omega),
 \end{equation}
for all $\mathcal P\times \mathcal B(\R)$-measurable $W=W(t,\omega,x)$. Indeed, consider the specific  choice $W=\mathbf 1_{S}(s)\mathbf 1_{A}(x)$ with $S\in\mathcal B(\Rplus)$ and $A\in\mathcal B(\R)$, where we note that  $\mathcal P$-measurability of the function $\mathbf 1_S$ follows from the fact that all deterministic processes are predictable.

When $L=(L(t))_{t\geq 0}$ is a Lévy process which is also a special semimartingale, we find that $L$ is a process of class (HIntL). Example \ref{ex:char} and Theorem \ref{thm:char}  imply that $\nu_I$, where $I$ is a stochastic integral of $H\in\mathbb L\Rplus$ with respect to $L$, can be written as\footnote{From  \cite[Prop. II.2.9]{book:jacod} and  \cite[Thm. II.2.42]{book:jacod}, we know that any random measure $\nu_X$ can be written in terms of a transition kernel $K_X$ and a predictable finite variation process $V_X$, i.e., $\nu_X( \mathrm d t, \mathrm d x)= K_X(t,d x)\,\mathrm d V_X({t})$.
In fact, this statement holds in  more generality  \cite[Thm. II.1.8]{book:jacod}. 
% This actually allows for more general stochastic integrators than (HIntL) in {\S}\ref{sec:special}, since the proof will be analogous under appropriate assumptions for $V_X$. 
% We will not go into the details  and leave it up to the interested reader to fill in the gaps.
} 
\begin{equation}
	\nu_I(\mathrm dt,\mathrm dx)=K_I(t,\mathrm dx)\,\mathrm dt,
\end{equation}
where $K_I(t,\mathrm dx)=K_I(t,\omega;\mathrm dx)$ is a transition kernel from $(\Rplus\times \Omega,\mathcal P)$ into $(\R,\mathcal B(\R))$, with
\begin{equation}
	K_I(t,\omega, A)=\int_{\R} \mathbf{1}_{A \backslash\{0\}}(H(t,\omega) x) \,\nu(\mathrm{d} x),
\end{equation}
defined for all $t\in\Rplus$ and every $A\in\mathcal B(\R)$. In other words, for all   measurable sets $S\in\mathcal B( \Rplus)$ and  $A\in\mathcal B(\R)$, we have  $\nu_{I }(S,A)=\int_S K_{I}(s,A)\,\mathrm ds.$

 % For our applications, all the results above suffice.\todo{For our intents and purposes we like $A(t)=t,$ so in the case of L\'evy processes. Hence there is even room for improvement, if someone wants to, and we leave it up to the interested reader in what ways to generalise even further.}

 % All notions and results regarding semimartingales, that is, all the theory in Chapter \ref{Ch1} and on the semimartingale characteristics above, obviously hold for semimartingales defined on a finite time horizon $[0,T]$ too.
 %    Particularly, with the natural  time transformation $t\mapsto t-\tau$, all the semimartingale theory works on $[-\tau,0]$. Notice that there is nothing special about the interval $[-\tau,0]$, except that it is less conventional; differently put, the intervals  $[0,T]$ or $\Rplus$ are much more usual (in the literature).
     % We end this appendix with two significant tools  needed to acquire tightness for solution segments. 
     Completely analogous to Lemma \ref{lem:sum} and Lemma \ref{lem:increasing}, we now claim---in line with  \cite{article:reiss}---that the following theorems, initially formulated for sequences only, extend to time-continuous families. 
      Note that if   $Y\in\mathbb D[-\tau,\infty),$ then each $Y_t$ can be seen as a semimartingale on $[-\tau,0]$ with respect to the filtration $\mathbb F_t=(\mathcal F_s)_{s\in[t-\tau,t]} $. 

      \begin{theorem}[{Theorem VI.4.18 and Remark VI.4.20 of \cite{book:jacod}}] 
	A\label{thm:semi}  continuous-time family $(Z_t)_{t\geq 0}$ of $D[-\tau,0]$-valued random variables, where each $Z_t$ is a semimartingale on the filtered probability space $(\Omega,\mathcal F,\mathbb F_t,\mathbb P)$, is tight if the following conditions hold:
	\begin{enumerate}[\normalfont(i)]
		\item the family $(Z_t(-\tau))_{t\geq 0}$ of starting values is tight in $\mathbb R$, i.e., bounded in probability\textnormal ;
		\item for all $\varepsilon>0$, we have
%		\mark{aangepast}
		\begin{equation}\label{eq:change}
		\textstyle \lim _{N \to \infty} \sup _{t\geq 0} \mathbb P\left(\nu_{Z_t}\big([-\tau,0] \times\{x\in\R:|x|>N\}\big)>\varepsilon\right)=0;
		\end{equation}
%	\begin{equation}
%		\lim _{N \to \infty} \limsup _{t\to\infty} \,\mathbb P\left(\nu_{Z_t}\big([-\tau,0] \times\{x\in\R:|x|>N\}\big)>\varepsilon\right)=0;
%	\end{equation}
	\item the continuous-time family $(a_{Z_t})_{t\geq 0}$, defined by
		 \begin{equation}\label{eq:abs}
		a_{Z_{t}}(s):= {\rm TV}(B_{Z_{t}})(s)+C_{Z_{t}}(s)+\int_{[-\tau, s] \times \mathbb{R}}(|x|^{2} \wedge 1) \,\nu _{Z_{t}}(\mathrm{d} u, \mathrm{d} x), \quad s \in[-\tau,0],
	\end{equation}
for each $t\geq 0$,
	is $C$-tight.
	\end{enumerate}
In fact, conditions {\normalfont(i)} and {\normalfont(ii)} are necessary  for tightness.
\end{theorem}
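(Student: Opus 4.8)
The strategy is to reduce the continuous-time statement to the sequential version in \cite[Thm. VI.4.18, Rem. VI.4.20]{book:jacod} by exploiting the metrizability of the space of laws. Recall from Appendix \ref{Sec2.2.3} that $D[-\tau,0]$, endowed with the Skorokhod topology, is a complete separable metric space, so that $\mathscr P(D[-\tau,0])$ equipped with the weak topology is itself separable and metrizable \cite[p. 213]{book:bogachev}. In a metrizable space relative compactness coincides with sequential relative compactness, so Prokhorov's theorem (Theorem \ref{thm:prokhorov}) yields the key equivalence that I would establish first: a family $(Z_t)_{t\geq 0}$ is tight if and only if every sequence $(Z_{t_n})_{n\in\N}$, with $t_n\geq 0$, is tight. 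The crucial observation is that each of the hypotheses (i)--(iii) is phrased with a supremum over all $t\geq 0$; restricting this supremum to the countable set $\{t_n:n\in\N\}$ only decreases it, so each hypothesis descends to the sequence $(Z_{t_n})_n$.

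For sufficiency I would fix an arbitrary sequence $(t_n)_n$ in $[0,\infty)$ and verify the three hypotheses of \cite[Thm. VI.4.18]{book:jacod} for the sequence $(Z_{t_n})_n$. Condition (i) for the sequence is immediate from $\sup_n\mathbb P(|Z_{t_n}(-\tau)|>R)\leq\sup_{t\geq 0}\mathbb P(|Z_t(-\tau)|>R)$; likewise \eqref{eq:change} for the sequence follows from $\sup_n\leq\sup_{t\geq0}$. For condition (iii), the $C$-tightness of $(a_{Z_t})_{t\geq 0}$ in the sense of Definition \ref{def:Ctight} restricts, again by $\sup_n\leq\sup_{t\geq 0}$, to the sup-conditions characterising $C$-tightness of the sequence $(a_{Z_{t_n}})_n$; by \cite[Prop. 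VI.3.26]{book:jacod} this is precisely Jacod--Shiryaev's notion of $C$-tightness. The sequential theorem then gives tightness of $(Z_{t_n})_n$, and since the sequence was arbitrary the equivalence above yields tightness of the whole family $(Z_t)_{t\geq 0}$.

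For necessity of (i) and (ii) I would argue as follows. The evaluation map $\pi^{-\tau}:D[-\tau,0]\to\R$ at the left endpoint is continuous for the Skorokhod topology, so if $(Z_t)_{t\geq 0}$ is tight then its continuous image $(Z_t(-\tau))_{t\geq 0}$ is tight in $\R$, giving (i) directly. For (ii) I would argue by contradiction: if \eqref{eq:change} failed there would be an $\varepsilon>0$, a $\delta>0$, numbers $N_n\nearrow\infty$ and times $t_n\geq 0$ with $\mathbb P\bigl(\nu_{Z_{t_n}}([-\tau,0]\times\{|x|>N_n\})>\varepsilon\bigr)\geq\delta$ for all $n$. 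The subfamily $(Z_{t_n})_n$ inherits tightness from $(Z_t)_{t\geq 0}$, so the necessity part of \cite[Thm. VI.4.18]{book:jacod} would force $\lim_{N\to\infty}\sup_n\mathbb P\bigl(\nu_{Z_{t_n}}([-\tau,0]\times\{|x|>N\})>\varepsilon\bigr)=0$, contradicting the displayed lower bound.

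The main obstacle is the careful bookkeeping in the reduction step, specifically ensuring that the sup-over-$t$ formulation of the hypotheses---in particular the $C$-tightness of $(a_{Z_t})_{t\geq 0}$ through Definition \ref{def:Ctight}---is genuinely equivalent to the sequential notion used in \cite{book:jacod} via \cite[Prop. VI.3.26]{book:jacod}. This is the same passage from sequences to continuous-time families already exploited for Lemma \ref{lem:sum} and Lemma \ref{lem:increasing}, and it hinges on the fact that relative compactness and sequential relative compactness agree in the metrizable space $\mathscr P(D[-\tau,0])$.
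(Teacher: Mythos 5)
Your proof is correct, and it takes a genuinely different route from the paper's. The paper offers no actual argument for Theorem \ref{thm:semi}: it asserts, ``completely analogous to Lemma \ref{lem:sum} and Lemma \ref{lem:increasing}'', that the sequential result of \cite{book:jacod} extends to continuous-time families, the implicit justification being that the proofs in \cite{book:jacod} can be re-run once the $\limsup_{n\to\infty}$-hypotheses are strengthened to $\sup_{t\geq 0}$-hypotheses (cf.\ the discussion in the proof of Proposition \ref{prop:Dtight}). You instead treat the sequential theorem as a black box and reduce to it: since $(D[-\tau,0],d_S^\circ)$ is complete and separable (Theorem \ref{thm:Polish}), the space of laws is metrizable and both implications of Prokhorov's theorem (Theorem \ref{thm:prokhorov}) are available, so tightness of the family is equivalent to tightness of every sequence extracted from it; the sup-over-$t$ form of hypotheses (i)--(iii) trivially restricts to any sequence $(Z_{t_n})_{n}$ --- this is exactly what the paper's ``somewhat stronger'' remark following the theorem amounts to --- and the sequential sufficiency of \cite[Thm.\ VI.4.18]{book:jacod} (with \cite[Prop.\ VI.3.26]{book:jacod} bridging the two notions of $C$-tightness) finishes the job, while necessity of (i) follows from continuity of $\pi^{-\tau}$ (Skorokhod time changes fix the endpoints) and necessity of (ii) from the sequential necessity by contradiction. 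What your approach buys is a complete, self-contained proof that never re-opens Jacod--Shiryaev's argument; what the paper's stance buys is only brevity, and your reduction is a legitimate way to make its claim rigorous with precisely the tools (Theorems \ref{thm:Polish} and \ref{thm:prokhorov}) the paper already provides.

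One small imprecision: the sequential necessity in \cite[Thm.\ VI.4.18]{book:jacod} yields $\lim_{N\to\infty}\limsup_{n\to\infty}$ of the probabilities in \eqref{eq:change}, not $\lim_{N\to\infty}\sup_{n}$ as you wrote. Your contradiction survives nonetheless: since $\nu_{Z_{t_n}}\bigl([-\tau,0]\times\{x\in\R:|x|>N\}\bigr)$ is non-increasing in $N$, once $N^*$ is chosen so that the $\limsup_n$ at level $N^*$ lies below $\delta/2$, every sufficiently large $n$ with $N_n\geq N^*$ gives $\mathbb P\bigl(\nu_{Z_{t_n}}([-\tau,0]\times\{x\in\R:|x|>N_n\})>\varepsilon\bigr)<\delta/2$, contradicting your uniform lower bound $\delta$; alternatively, monotonicity in $N$ together with the fact that each individual term vanishes as $N\to\infty$ upgrades the $\limsup_n$ statement to the $\sup_n$ statement you invoked.
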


Observe that equation \eqref{eq:change} is  somewhat stronger in the continuous-time setting compared to the original result in \cite{book:jacod}, because we have replaced   $\limsup_{n\to\infty}$  with $\sup_{t\geq 0}$; recall the discussion in the proof of Proposition \ref{prop:Dtight}. 
% In addition, just for clarification, the absolute value in \eqref{eq:abs}  of the finite variation process $B_{X_t}$ denotes the total variation process of $B_{X_t}$. The identity in equation \eqref{eq:totalvar}  of Chapter \ref{Ch1}  will especially be useful for our application.
 Finally,      in many of our applications we take the class of integrators to be square integrable. Then
 % in Chapter \ref{Ch3} we consider regulated Lévy martingales $L=(L(t))_{t\geq 0}$. It turns out, when proving tightness of segments, that 
 the family of continuous-time processes of our interest will be square integrable too, for which it thus suffices to exploit the following result and hence fully circumvent the application of semimartingale characteristics. 

\begin{theorem}[{Theorem VI.4.13 of \cite{book:jacod}}]\label{thm:squaretight}
	  Suppose $(Z_t)_{t\geq 0}$ is a family of $D[-\tau,0]$-valued random variables. For each $t\geq 0$, consider  the process  $Z_t $ to be a square integrable martingale on the filtered probability space $(\Omega,\mathcal F,\mathbb F_t,\mathbb P)$. Then $(Z_t)_{t\geq 0}$ is tight if
	  \begin{enumerate}[\normalfont(i)]
	  \item the  family $(Z_t(-\tau))_{t\geq 0}$ of starting values is tight in $\mathbb R$, i.e., bounded in probability\textnormal;
	  \item  the continuous-time family of predictable quadratic variations $\left(\langle Z_t\rangle \right)_{t\geq 0}$  is $C$-tight.
	  \end{enumerate}
\end{theorem}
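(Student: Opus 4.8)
The plan is to deduce tightness directly from the characterisation of tightness for continuous-time families obtained in Proposition \ref{prop:Dtight}, thereby reducing everything to two uniform-in-$t$ estimates on the martingales $Z_t$, and then to control these by means of the predictable quadratic variations $\langle Z_t\rangle$. Throughout, each $Z_t$ is regarded as a square integrable martingale on $(\Omega,\mathcal F,\mathbb F_t,\mathbb P)$ with $\mathbb F_t=(\mathcal F_s)_{s\in[t-\tau,t]}$, so that $\langle Z_t\rangle=(\langle Z_t\rangle(s))_{s\in[-\tau,0]}$ is a non-decreasing process with $\langle Z_t\rangle(-\tau)=0$ and $\|\langle Z_t\rangle\|_\infty=\langle Z_t\rangle(0)$. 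The crucial point|exactly as flagged before Theorem \ref{thm:semi}|is that every estimate below depends only on the laws of the starting value $Z_t(-\tau)$ and of $\langle Z_t\rangle$, both of which are controlled \emph{uniformly} in $t\geq 0$ by hypotheses (i) and (ii); this is precisely what upgrades the sequential statement in \cite{book:jacod} to the $\sup_{t\geq 0}$ formulation demanded by Proposition \ref{prop:Dtight}.

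First I would verify condition (i) of Proposition \ref{prop:Dtight}. Writing $\tilde Z_t(s)=Z_t(s)-Z_t(-\tau)$, which is again a square integrable martingale with $\langle\tilde Z_t\rangle=\langle Z_t\rangle$, a localised Doob (Lenglart) domination inequality applied to the submartingale $\tilde Z_t^2$ yields, for all $a,b>0$,
\begin{equation}
\mathbb P\Big(\sup_{s\in[-\tau,0]}|\tilde Z_t(s)|\geq a\Big)\leq \frac{b}{a^2}+\mathbb P\big(\langle Z_t\rangle(0)\geq b\big).
\end{equation}
Since $(\langle Z_t\rangle)_{t\geq 0}$ is $C$-tight, part (i) of Definition \ref{def:Ctight} gives that $(\langle Z_t\rangle(0))_{t\geq 0}$ is bounded in probability; choosing $b$ large makes the second term uniformly small over $t\geq 0$, after which $a$ large makes the first term small. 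Combined with tightness of the starting values this shows $\lim_{R\to\infty}\sup_{t\geq 0}\mathbb P(\|Z_t\|_\infty\geq R)=0$.

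For condition (ii) of Proposition \ref{prop:Dtight} I would control the modulus $\varpi(Z_t,\delta)$ through increments over small intervals, following the Aldous criterion underlying \cite[Thm. VI.4.13]{book:jacod}. For stopping times $S\leq S'\leq S+\delta$ with values in $[-\tau,0]$, the martingale property gives the conditional identity $\mathbb E[(Z_t(S')-Z_t(S))^2\mid\mathcal F_{t+S}]=\mathbb E[\langle Z_t\rangle(S')-\langle Z_t\rangle(S)\mid\mathcal F_{t+S}]$. Because $\langle Z_t\rangle$ is non-decreasing, its increment over \emph{any} interval of length at most $\delta$ is pathwise dominated by the continuous modulus $\omega(\langle Z_t\rangle,\delta)$, which is uniformly small in probability over $t\geq 0$ by part (ii) of $C$-tightness. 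A further application of the Lenglart inequality transfers this bound from $\langle Z_t\rangle$ to $Z_t$, and the Aldous criterion then delivers $\lim_{\delta\to 0}\sup_{t\geq 0}\mathbb P(\varpi(Z_t,\delta)\geq\varepsilon)=0$ for every $\varepsilon>0$.

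I expect the only genuine difficulty to be precisely this passage from the sequence-indexed result of \cite{book:jacod} to continuous-time families: one must check that the Aldous/Lenglart estimates are genuinely uniform in $t$ and do not secretly rely on a subsequence extraction. This is guaranteed here because the hypotheses are themselves uniform-in-$t$ statements ($\sup_{t\geq 0}$-type bounds on the starting values and on the modulus of $\langle Z_t\rangle$), so no compactness-along-subsequences argument is required and the two conditions of Proposition \ref{prop:Dtight} follow verbatim. An alternative, slightly heavier route would note that a square integrable martingale is a special semimartingale with $\langle Z_t\rangle(s)=C_{Z_t}(s)+\int_{[-\tau,s]\times\mathbb R}x^2\,\nu_{Z_t}(\mathrm du,\mathrm dx)$, so that $C$-tightness of $(\langle Z_t\rangle)_{t\geq 0}$ forces $C$-tightness of $(a_{Z_t})_{t\geq 0}$ in \eqref{eq:abs} and Theorem \ref{thm:semi} applies directly; I would keep the martingale route as primary, since it circumvents semimartingale characteristics altogether, as intended.
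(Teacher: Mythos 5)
You should be aware that the paper contains no proof of this statement: it is imported verbatim as Theorem VI.4.13 of \cite{book:jacod}, accompanied only by the remark (made just before Theorem \ref{thm:semi}) that the sequential results of \cite{book:jacod} extend to continuous-time families because the hypotheses, like the conditions in Proposition \ref{prop:Dtight}, are uniform-in-$t$ statements. Your proposal therefore cannot be matched against an argument in the paper; what it does is reconstruct the standard proof behind the cited theorem --- a Lenglart/Doob domination bound for condition (i) of Proposition \ref{prop:Dtight}, and Lenglart combined with Aldous's criterion for condition (ii) --- and then make explicit that no subsequence extraction enters, so that uniform hypotheses yield uniform conclusions. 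That is exactly the content of the paper's claim, and your emphasis on this point is where the honest work lies; the outline is sound.

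Two caveats. First, your displayed inequality is not Lenglart's inequality when the dominating predictable process has jumps: the correct form carries an additional term involving $\mathbb E\bigl[\sup_{s}\Delta\langle Z_t\rangle(s)\bigr]$, which is \emph{not} controlled uniformly in $t$ by your hypotheses (C-tightness bounds the jumps in probability, not in expectation). You can repair this by stopping along an announcing sequence of the predictable time $S_b=\inf\{s:\langle Z_t\rangle(s)\geq b\}$, so that the stopped bracket stays below $b$ and Doob's inequality gives
\begin{equation*}
\mathbb P\Bigl(\sup_{s\in[-\tau,0]}|\tilde Z_t(s)|\geq a\Bigr)\leq \frac{4b}{a^2}+\mathbb P\bigl(\langle Z_t\rangle(0)\geq b\bigr),
\end{equation*}
which suffices; the same care is needed in the Aldous step. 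Second, your alternative route through Theorem \ref{thm:semi} is viable but not as immediate as ``forces'': one needs the identity $\langle Z_t\rangle=C_{Z_t}+\int x^2\,\nu_{Z_t}(\mathrm du,\mathrm dx)$, the estimate $\mathrm{TV}(B_{Z_t})\leq \int |x|\mathbf 1_{\{|x|>1\}}\,\nu_{Z_t}(\mathrm du,\mathrm dx)$ so that $2\langle Z_t\rangle$ strongly majorises $a_{Z_t}$ (whereupon Lemma \ref{lem:increasing} applies), and also condition (ii) of Theorem \ref{thm:semi}, which follows from Chebyshev, $\nu_{Z_t}\bigl([-\tau,0]\times\{|x|>N\}\bigr)\leq N^{-2}\langle Z_t\rangle(0)$, and boundedness in probability of $\langle Z_t\rangle(0)$. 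With these repairs either route is a complete proof.
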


%%%%%%%%%%%%%%%%%%%%%%%%%%%%%%%

%\documentclass[../klad-theory.tex]{subfiles}

%\begin{document}

\section{Krylov--Bogoliubov existence theorem}\label{app:D}
In this appendix, we state and prove the Krylov--Bogoliubov theorem in the setting of SDDEs with \cadlag noise. Although the proof is standard (for the non-delay case), we present a detailed outline with references to the literature to guide the interested reader. 

To this end, let $M=(M(t))_{t\geq 0}$  be a    semimartingale, $M(0)=0$, and consider the autonomous initial value problem
\begin{equation}\label{eq:SDDE2.2.2.2}\left\{
	\begin{array}{rlll}
		\mathrm d X(t)&=&a(X_t)\,\mathrm d t+b(X_{t-}) \,\mathrm d M(t), &\quad \text { for } t\geq 0, \\[.05cm]
		X(u)&=&\Phi(u),& \quad \text { for } u \in[-\tau, 0],
	\end{array}\right.
\end{equation}
with $\Phi\in\mathbb D[-\tau,0]$. 
Recall that well-posedness of \eqref{eq:SDDE2.2.2.2}  is ensured when we assume that the functionals $a,b$ are  proper locally Lipschitz; see {\S}\ref{sec:existunique}. As explained below, we require $M$ to have independent and stationary increments, i.e., we need $M$ to be a L\'evy process. 
% \todo{Nog nagaan of Krylov meer nodig heeft dan alleen (HSpec)} 
% Recall that the condition proper is superfluous in the continuous setting. 
Then, since we restrict ourselves to time-independent coefficients, we are  not able to tell the time while perceiving the system. These conditions result into  translation invariance properties, which are relevant for proving the theorem below.

% Independent increments are needed to ensure 
% With regard  to stationarity, it is important that we are not able to tell the time while perceiving the system. In other words, we restrict ourselves to  autonomous systems. It turns out that we then obtain some translation invariance properties, which is relevant for proving the theorem below. 
% We say that $M$ has \texttt{stationary increments} if for every $t\ge 0$ and $0\le t_0\le t_1\le\cdots\le t_n$ the random vectors $(M(t+t_1)-M(t+t_0),\ldots,M(t+t_n)-M(t+t_{n-1}))$ and $(M(t_1)-M(t_0),\ldots,M(t_n)-M(t_{n-1}))$ have the same distribution.\todo{Moeten we niet juist independent increments hebben? Of dan toch allebei?}
% General one simply looses stationarity when coefficients  become time dependent.

\begin{theorem} [{Krylov--Bogoliubov: \cadlag case}]\label{thm:Krylov--Bogoliubov} Consider $M=(M(t))_{t\geq 0}$ to be a Lévy process. Suppose    initial value problem \eqref{eq:SDDE2.2.2.2} has a solution with maximal existence time $T_{\infty}=\infty$ $\mathbb P$-a.s., for any    $\Phi\in\mathbb D[-\tau,0]$ $($independent of $M)$. If the  partial  segment process $ (X_{t} )_{t \geq \tau}$ of some global solution $X=(X(t))_{-\tau\leq t<\infty}$ is tight in the Skorokhod space $D[-\tau,0]$, then the SDDE in \eqref{eq:SDDE2.2.2.2} admits an invariant measure $\nu.$ 
	In particular, if $\varepsilon>0$ and $K_{\varepsilon} \subset D[-\tau,0]$   is compact  such that $\mathbb{P}\left(X_{t} \in K_{\varepsilon}\right) \geq 1-\varepsilon $ {holds for all} $ t \geq \tau$, then there is an invariant measure $\nu$ with $\nu\left(K_{\varepsilon}\right) \geq 1-\varepsilon$.
\end{theorem}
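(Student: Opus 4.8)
The plan is to carry out the classical Krylov--Bogoliubov averaging procedure \cite{kryloff1937theorie} at the level of the segment process, using crucially that \eqref{eq:SDDE2.2.2.2} is autonomous. First I would record that, since $a,b$ are proper locally Lipschitz and every solution is global, the solution map is well-defined and pathwise unique by Theorem \ref{thm:main-exist}; combined with autonomy (the flow property) this makes the segment process $(X_t)_{t\ge 0}$ a time-homogeneous Markov process on $D[-\tau,0]$, endowed with the Skorokhod topology so that each $X_t$ is a genuine $D[-\tau,0]$-valued random variable by Corollary \ref{remark:segment}. I would write $(P_t)_{t\ge 0}$ for its transition semigroup, $(P_tf)(\varphi)=\mathbb E[f(X_t^\varphi)]$ for bounded measurable $f$, where $X^\varphi$ is the solution started from the deterministic segment $\varphi$.

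Next I would form the Cesàro averages of the marginals of the \emph{given} solution, but only from time $\tau$ onwards, i.e.\ the Borel probability measures $\nu_T$ on $D[-\tau,0]$ defined by duality through $\langle \nu_T,f\rangle=\tfrac1T\int_\tau^{\tau+T}\mathbb E[f(X_s)]\,\mathrm ds$; stochastic continuity of $s\mapsto \mathcal L(X_s)$ (Proposition \ref{prop:wow} together with \cite[Lem. 2.3]{article:reiss}) makes the integrand measurable, so each $\nu_T$ is well-defined. Tightness of $\{\nu_T\}_{T>0}$ is then immediate from the partial tightness hypothesis: given $\varepsilon>0$ and a compact $K_\varepsilon$ with $\mathbb P(X_s\in K_\varepsilon)\ge 1-\varepsilon$ for all $s\ge\tau$, one has $\nu_T\bigl(D[-\tau,0]\setminus K_\varepsilon\bigr)=\tfrac1T\int_\tau^{\tau+T}\mathbb P(X_s\notin K_\varepsilon)\,\mathrm ds\le\varepsilon$ uniformly in $T$. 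Since $D[-\tau,0]$ with the Skorokhod topology is a complete separable metric space (Theorem \ref{thm:Polish}), Prokhorov's theorem (Theorem \ref{thm:prokhorov}) supplies a sequence $T_n\to\infty$ and a probability measure $\nu$ with $\nu_{T_n}\to\nu$ weakly. Starting the average at $\tau$ rather than at $0$ is what lets me invoke \emph{partial} tightness: the discarded window $[0,\tau)$ has finite length and is washed out as $T\to\infty$.

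Then I would verify invariance. Fix $f\in C_b(D[-\tau,0])$ and $t\ge 0$. The Markov property gives $\mathbb E[f(X_{s+t})\mid\mathcal F_s]=(P_tf)(X_s)$, hence $\langle\nu_T,P_tf\rangle=\tfrac1T\int_\tau^{\tau+T}\mathbb E[f(X_{s+t})]\,\mathrm ds$; after the substitution $u=s+t$ this differs from $\langle\nu_T,f\rangle$ only by the two boundary integrals over $[\tau,\tau+t]$ and $[\tau+T,\tau+T+t]$, each bounded by $\|f\|_\infty\, t/T\to 0$. Passing to the limit along $T_n$, using weak convergence on the right and $P_tf\in C_b$ (the Feller property) on the left, yields $\langle\nu,P_tf\rangle=\langle\nu,f\rangle$; as $f$ and $t$ are arbitrary this is exactly $\nu P_t=\nu$, i.e.\ invariance in the sense of Definition \ref{def:invariantm}. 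Finally, since $K_\varepsilon$ is compact hence closed, the portmanteau theorem gives $\nu(K_\varepsilon)\ge\limsup_n\nu_{T_n}(K_\varepsilon)\ge 1-\varepsilon$, which is the quantitative claim; pushing $\nu$ forward under $\varphi\mapsto\varphi(0)$ then produces the associated stationary distribution.

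The hard part will be the continuity input in the invariance step. The averaging and tightness arguments are robust, but passing the limit through $P_t$ requires the weak Feller property $P_t\colon C_b\to C_b$, and in the Skorokhod setting this is exactly where the right-continuous case departs from the continuous one: the dependence of $X_t^\varphi$ on the initial datum $\varphi$ is only well-controlled once the whole observation window $[t-\tau,t]$ lies in $[0,\infty)$, whereas for $t<\tau$ the segment still carries the (possibly badly matching) initial jump at $\theta=-t$, and only the \emph{weak}, not strong, Feller property is available. I would therefore isolate the required continuity as a separate lemma, establishing $\varphi\mapsto\mathbb E[f(X_t^\varphi)]$ continuity from continuous dependence of solutions on initial data along the lines of \cite{article:reiss}; where only the $t\ge\tau$ regime is cleanly Feller, I would recover the small-$t$ case by combining the $t\ge\tau$ invariance with stochastic continuity of $t\mapsto\mathcal L(X_t)$. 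The remaining manipulations — measurability of the time averages and the boundary-term estimate — I would treat as routine.
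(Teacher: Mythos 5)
Your proposal follows the paper's own route almost exactly: the Markovian semigroup on $D[-\tau,0]$ (Skorokhod topology), Ces\`aro averages of the laws $\mathcal L(X_s)$ taken from time $\tau$ onwards, tightness of these averages plus Prokhorov's theorem to extract a weak limit $\nu$, and the eventually-Feller property ($P_tf\in C_b(D[-\tau,0])$ only for $t\ge\tau$) to pass to the limit in the invariance identity for $t\ge\tau$. Your direct estimate $\nu_T\bigl(D[-\tau,0]\setminus K_\varepsilon\bigr)\le\varepsilon$ together with the portmanteau step even makes the quantitative claim $\nu(K_\varepsilon)\ge 1-\varepsilon$ explicit, which the paper leaves implicit (the paper argues relative compactness via tightness of the closed convex hull of $\{P_t^*\zeta:t\ge\tau\}$ instead; the two devices are interchangeable).

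The one genuine flaw is your proposed repair of the regime $0\le t<\tau$. You suggest recovering invariance at small $t$ by combining the $t\ge\tau$ invariance with stochastic continuity of $t\mapsto\mathcal L(X_t)$. This cannot work: starting the process from $\nu$, stochastic continuity makes $s\mapsto\mathcal L(X_s)$ weakly continuous, and the $t\ge\tau$ result says this path is constantly equal to $\nu$ on $[\tau,\infty)$; but a weakly continuous path that is constant on $[\tau,\infty)$ need not be constant on $[0,\tau)$, so nothing propagates backwards in time. The correct fix---and the one the paper uses---is the semigroup identity you have already established: for $0\le t<\tau$,
\begin{equation}
P_t^*\nu=P_t^*\bigl(P_\tau^*\nu\bigr)=P_{t+\tau}^*\nu=\nu,
\end{equation}
since $t+\tau\ge\tau$. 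With that one line substituted for the stochastic-continuity argument, your proof is complete and coincides with the paper's.
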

Recall that by Proposition \ref{prop:wow} every (stochastically) continuous finite time horizon process is tight, which yields the following immediate corollary.
\begin{corollary} [{Krylov--Bogoliubov: continuous case}] Consider $M=(M(t))_{t\geq 0}$ to be a Brownian motion. Suppose    initial value problem \eqref{eq:SDDE2.2.2.2} has a solution with maximal existence time $T_{\infty}=\infty$ $\mathbb P$-a.s., for any continuous initial process $\Phi$ independent of $M$. If the    segment process $ (X_{t} )_{t \geq 0}$ of some global solution $X=(X(t))_{-\tau\leq t<\infty}$ is tight in $C[-\tau,0]$, then the SDDE in \eqref{eq:SDDE2.2.2.2}  admits an invariant measure $\nu.$ 
In particular, if $\varepsilon>0$ and $K_{\varepsilon} \subset C[-\tau,0]$  is compact  such that $\mathbb{P}\left(X_{t} \in K_{\varepsilon}\right) \geq 1-\varepsilon $ {holds for all} $ t \geq \tau$, then there is an invariant measure $\nu$ with $\nu\left(K_{\varepsilon}\right) \geq 1-\varepsilon$.
\end{corollary}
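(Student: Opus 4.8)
The plan is to deduce this continuous-case statement directly from Theorem \ref{thm:Krylov--Bogoliubov} by transferring the tightness hypothesis from the uniform topology on $C[-\tau,0]$ to the Skorokhod topology on $D[-\tau,0]$, and then reading off the refined conclusion from the location of the compact witnesses.

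First I would record the topological fact, stated in Appendix \ref{Sec2.2.2} (see \cite[Prop. 1.17]{book:jacod}), that the subspace topology induced by the Skorokhod topology on $C[-\tau,0]$ coincides with the uniform topology; equivalently, the inclusion $\iota:(C[-\tau,0],\|\cdot\|_\infty)\hookrightarrow(D[-\tau,0],d_S^\circ)$ is continuous, since $d_S^\circ\leq\|\cdot\|_\infty$. As continuous images of compact sets are compact, every $K\subset C[-\tau,0]$ that is compact in the uniform topology is also a compact subset of $D[-\tau,0]$ in its Skorokhod topology. Moreover, $X$ has continuous sample paths, because the integrator $M=W$ is a Brownian motion, so by Corollary \ref{remark:segment} each segment $X_t$ is a $D[-\tau,0]$-valued random variable taking its values in the (Borel) subset $C[-\tau,0]$.

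The key step is then immediate. Tightness of the segment process $(X_t)_{t\geq 0}$ in $C[-\tau,0]$ transfers verbatim to tightness in $D[-\tau,0]$: for every $\varepsilon>0$ the compact witness $K_\varepsilon\subset C[-\tau,0]$ with $\mathbb P(X_t\in K_\varepsilon)\geq 1-\varepsilon$ is, by the preceding paragraph, a compact subset of the Skorokhod space, whence in particular the partial segment process $(X_t)_{t\geq\tau}$ is tight in $D[-\tau,0]$. Theorem \ref{thm:Krylov--Bogoliubov} now applies and produces an invariant measure $\nu$ with $\nu(K_\varepsilon)\geq 1-\varepsilon$ for precisely these compacts $K_\varepsilon\subset C[-\tau,0]$. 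Since $C[-\tau,0]$ is a measurable subset of $D[-\tau,0]$ and $\nu(C[-\tau,0])\geq\nu(K_\varepsilon)\geq 1-\varepsilon$ holds for all $\varepsilon>0$, letting $\varepsilon\to 0$ gives $\nu(C[-\tau,0])=1$, so $\nu$ is genuinely a Borel measure on $C[-\tau,0]$ in the sense of Definition \ref{def:invariantm}.

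Finally, I would remark why the index range may be taken as $t\geq 0$ rather than only $t\geq\tau$: viewed as a $C[-\tau,0]$-valued process, $X$ is stochastically continuous, so Proposition \ref{prop:wow} guarantees that the finite-horizon family $(X_t)_{0\leq t\leq\tau}$ is automatically tight; hence tightness of $(X_t)_{t\geq\tau}$ and of $(X_t)_{t\geq 0}$ are equivalent in this setting, and none of the $t<\tau$ pathologies of the càdlàg case (Appendix \ref{app:D}) arise. The only genuinely non-routine point is the topological transfer of compactness between the two function-space topologies; everything else is a direct invocation of the càdlàg existence theorem.
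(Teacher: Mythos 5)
Your topological transfer is fine: since $d_S\le\|\cdot\|_\infty$ (take $\lambda=\mathrm{Id}$), the inclusion $(C[-\tau,0],\|\cdot\|_\infty)\hookrightarrow(D[-\tau,0],d_S)$ is continuous, so uniform-topology compacts are Skorokhod compacts, the tightness hypothesis transfers, and your argument that $\nu(C[-\tau,0])=1$ via $\nu(K_{1/n})\geq 1-1/n$ is correct; likewise your use of Proposition \ref{prop:wow} to reconcile the index ranges $t\geq 0$ and $t\geq\tau$ is exactly the paper's stated reason for calling this an ``immediate corollary''. However, there is one step that does not go through as literally written: you invoke Theorem \ref{thm:Krylov--Bogoliubov}, whose hypotheses require a global solution for \emph{every} initial process $\Phi\in\mathbb D[-\tau,0]$, whereas the corollary only grants global existence for \emph{continuous} $\Phi$. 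This is not cosmetic. The proof of the theorem builds the Markovian semigroup $(P_tf)(\varphi)=\mathbb E[f(X_t^\varphi)]$ on all of $B_b(D[-\tau,0])$, which presupposes well-defined global solutions $X^\varphi$ for every deterministic \cadlag $\varphi$; global existence from continuous data does not imply global existence from discontinuous initial segments (the coefficients $a,b$ are only locally Lipschitz on $D[-\tau,0]$, and nothing prevents explosion along discontinuous histories). So under the corollary's stated hypotheses, the operator $P_t^*$ on $\mathscr P(D[-\tau,0])$ — and hence the very notion of invariance delivered by the theorem — is not available.

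The repair is the route the paper intends: do not pass through $D[-\tau,0]$ at all, but rerun the proof of Theorem \ref{thm:Krylov--Bogoliubov} verbatim with the state space replaced by the Polish space $(C[-\tau,0],\|\cdot\|_\infty)$. Since the integrator is Brownian and the initial data are continuous, all solutions and their segments stay in $C[-\tau,0]$ (a set invariant under the dynamics), so the semigroup $P_t$ is well defined on $B_b(C[-\tau,0])$ using only the granted hypotheses; moreover, both properties in \eqref{eq:two} then hold with $t_0=0$, i.e., the semigroup is genuinely Feller rather than only eventually Feller, so the Krylov--Bogoliubov averaging applies from time $0$ and no partial-segment caveat arises. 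This yields an invariant measure directly on $C[-\tau,0]$ in the sense of Definition \ref{def:invariantm}, making your final paragraph about $\nu(C[-\tau,0])=1$ unnecessary. What your reduction buys, had the stronger existence hypothesis been available, is that one need not re-examine the theorem's proof at all; what the restriction-to-$C$ argument buys is that it matches the corollary's actual hypotheses and simplifies the Feller bookkeeping.
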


We follow the  method as first introduced by 
Krylov and 
% \todo{Misschien is $\eta$ beter dan $\nu$.}
Bogoliubov \cite{kryloff1937theorie}. Loosely speaking, this approach constructs an invariant measure $\nu$ by averaging   over all the distributions of a given
tight  segment process.
% $(X_t)_{t\geq \tau}$.
% for some initial data $X_0=\Phi$.
% is tight as well, e.g., take $\Phi=0$.
Recall that the push-forward measure of $\nu$ under  evaluation map $D[-\tau,0]\to \mathbb R,$ $\varphi\mapsto\varphi(0)$ is a stationary distribution of \eqref{eq:SDDE2.2.2.2}; let us denote it by $\mu.$ Formally,  the invariant measure $\nu$ that we find via the Krylov--Bogoliubov method, and hence the stationary distribution $\mu$, satisfy the (subsequential) limits
% \todo{Nieuw artikel mentionen}
\begin{equation}%%%+tau kan weg
	\nu=\lim_{T\to\infty}\frac{1}{T}\int_0^T\mathcal L(X_{s})\,\mathrm ds,\qquad \mu=\lim_{T\to\infty}\frac{1}{T}\int_0^T\mathcal L(X({s}))\,\mathrm ds,\label{eq:eta}
\end{equation}
 where $\mathcal L(X_t)$ and $\mathcal L(X(t))$ denotes the laws of $X_t$ and $X(t)$, respectively. In the proof, we actually have to replace $s$ by $s+\tau$, as  certain properties only hold for $t\geq \tau$ within the \cadlag case.   Clearly,  $\nu$  has much  richer structure than $\mu$, and it is really fascinating that such a highly information dense object  exists in certain cases.
 Furthermore,  because the measure construction is achieved by means of single tight segment process,   there is   no (immediate) guarantee that for some other tight segment process we get the same invariant measure. In fact, the existence of two distinct invariant measures allows for infinitely many by taking convex combinations.
 % Investigating uniqueness is beyond the scope of this paper.
 % , and therefore remains uniqueness  an open problem.
 % \todo{In intro/discussie meer op in gaan?} 
 % For more about the uniqueness of our invariant measures, we  refer to the discussion in Chapter \ref{Ch5}.

% \begin{theorem} [{Krylov--Bogoliubov}]Suppose, for any   process $\Phi\in\mathbb D[-\tau,0]$, that   initial value problem \eqref{eq:nonautoauto} has a solution with maximal existence time $T_{\infty}=\infty$ $\mathbb P$-a.s.. If the  partial  segment process $ (x_{t} )_{t \geq \tau}$ of some global solution $X=(X(t))_{-1\leq t<\infty}$ is tight in the Skorokhod space $D[-\tau,0]$, then the differential equation in \eqref{eq:nonautoauto} admits an invariant measure $\nu.$ 
	
% 		Moreover, if $\varepsilon>0$ and $K_{\varepsilon} \subset D[-\tau,0]$   compact are such that $\mathbb{P}\left(x_{t} \in K_{\varepsilon}\right) \geq 1-\varepsilon $ {holds for all} $ t \geq \tau$, then there is an invariant measure $\nu$ with $\nu\left(K_{\varepsilon}\right) \geq 1-\varepsilon$.
% \end{theorem}

 We are now ready to prove  Theorem \ref{thm:Krylov--Bogoliubov}.  
 In the proof,  it is important that we consider the evolution of the segments $X_t$ rather than the scalar solution $X(t)$ itself; $D[-\tau,0]$ is the natural state space to work with. Indeed, the semigroup property in equation \eqref{eq:KryBoSemigroup} is generally not true for a scalar   solution \cite{article:reiss}. Differently put, the segment   process is a time-homogeneous Markov process, whereas the scalar process is not, due to the time delay in the equation. 
In particular, recall that for autonomous SDEs driven by a semimartingale,    solutions satisfy the Markov property when the noise has independent increments (even for non-autonomous systems), and that a solution is a time-homogeneous Markov process when the noise additionally has stationary increments  \cite{protter1977markov}. These facts translate to the SDDE setting, but then only for the segment process. Lastly, to fill in any possible remaining gaps in the proof below, we refer to  \cite[Sec. 3 and 4]{article:reiss},  \cite[Ch. 3]{thesis:geerten}, and  \cite[Ch. 2 and 3]{book:daprato} for more details.

 \begin{proof}[Proof of Theorem \ref{thm:Krylov--Bogoliubov}]  We split up this proof into two parts: the necessary setup (step 1) and the actual construction of an invariant measure (step 2). \\
	
	\textit{Step 1.}
	 \noindent Let    $D[-\tau,0]$ be endowed with the Skorokhod topology. Denote by   $B_{b}(D[-\tau, 0])$  the space of  Borel measurable functions $f:D[-\tau,0]\to \R$ such that $\|f\|_\infty:=\sup \,\{|f(\varphi)|: \varphi \in D[-\tau, 0]\}<\infty$. The normed vector space $(B_{b}(D[-\tau, 0]),\|\cdot \|_\infty)$ is   Banach   \cite{book:hitch}, for which we write again $B_{b}(D[-\tau, 0])$. 
	 Introduce the collection of operators $(P_t)_{t\geq 0}$, defined by
	 \begin{equation}
	 	P_t:B_{b}(D[-\tau, 0])\to B_{b}(D[-\tau, 0]), \quad (P_tf)(\varphi):=\mathbb E[f(X_t^\varphi)],\, \varphi\in D[-\tau,0],
	 \end{equation}
 where $(X_t^\varphi)_{t\geq 0}$ is the segment process of the solution $X^\varphi=(X^\varphi(t))_{t\geq 0}$ to  \eqref{eq:SDDE2.2.2.2} with initial process  the deterministic process $\Phi=\varphi$. 
	 
        Note that $(P_t)_{t\geq 0}$ is a {Markovian semigroup} \cite{article:reiss}. That is, every segment process $(X_t^\varphi)_{t\geq 0}$ is  Markov |this is true because the noise $M$ has independent increments|and we have $P_0=\text{Id}$ together with  $P_{s+t}=P_{s} P_{t}=P_{t} P_{s}$   for any $t, s \geq 0$|this follows immediately from the fact that system  \eqref{eq:SDDE2.2.2.2} is autonomous and that   $M$ has stationary increments\footnote{Without stationary increments,   segment processes $(X_t^\varphi)_{t\geq 0}$ are time-inhomogeneous  Markov processes, resulting merely in the property $E_{s,u}E_{u,t}=E_{s,t}$, $0\leq s\leq u\leq t,$ for the evolution family  $ (E_{s,t}f)(\varphi):=\mathbb E[f(X_t)|X_s=\varphi]$. Time-homogeneity allows us to define  $P_t:=E_{s,t+s}$, independent of  $s\geq 0$, resulting in the semigroup property.}. Indeed, we have
  % \footnote{This property gives us that   solutions  satisfy some kind of  translation invariance property; we are not able to tell time by  perceiving   solutions only.}
	 \begin{equation}
	 	\left(P_{s+t} f\right)(\varphi)  =\mathbb{E}\left[f(X_{s+t}^{\varphi})\right]
	 	 =\mathbb{E}\left[\mathbb{E}\big[f(X_{s+t}^{\varphi}) \mid X_{t}^{\varphi}\big]\right]
	 	 =\mathbb{E}\left[(P_{t} f)\left(X_{s}^{\varphi}\right)\right]  
	 	 =\left(P_{s}(P_{t} f)\right)(\varphi), \label{eq:KryBoSemigroup}
	 \end{equation}
  which tells us that the system is translation invariant.
  % , as we are not able to tell time by  perceiving   solution segments only.
  
	  Let   $C_{b}(D[-\tau, 0])\subset B_b(D[-\tau,0])$ be the subspace of functions that are continuous	 with respect to the Skorokhod topology on $D[-\tau, 0] .$ Then, for any $f\in C_b(D[-\tau,0])$, we have the properties  
	 \begin{equation}
	 	P_tf\in C_{b}(D[-\tau, 0])\quad\text{and}\quad \lim_{s\searrow t} P_sf(\varphi)=P_t(
	 	\varphi),\label{eq:two}
	 \end{equation}
	 for all $t\geq t_0$ and $\varphi\in D[-\tau,0]$, where $t_0=\tau$.  This implies  the Markovian semigroup is \textit{eventually Feller}---it would have been a Feller semigroup, as in \cite{book:daprato}, if $t_0=0.$ The immediate Feller property fails to hold in the \cadlag setting \cite[p. 1416]{article:reiss}.
  The second property in \eqref{eq:two} follows easily from the fact $\mathbb P(\Delta X^\varphi(t)\neq 0)\leq \mathbb P(\Delta M(t)\neq 0)=0$, and hence  $X^\varphi$ is stochastically continuous. Thanks to \cite[Lem. 3.2]{article:reiss}, we   obtain  that the partial segment process  $(X_t^\varphi)_{t\geq \tau}$ is stochastically continuous as well.  The first property in \eqref{eq:two} tells us  that  $P_t$ maps $C_{b}(D[-\tau, 0])$ into $C_{b}(D[-\tau, 0])$ for every time $  t\geq \tau$, resulting into the well-defined mappings
  % \footnote{This is due to the fact that  the segment process  may fail to be immediately Feller in the right-continuous setting \cite[p. 1416]{article:reiss}; it  satisfies the eventually Feller property from $t=\tau$ onwards 
 % because the Feller property is  satisfied for sure for $t\geq \tau$. NOT SEGMENT PROCESS; SEMIGROUP!}
	 \begin{equation}
	 	P_t:C_{b}(D[-\tau, 0])\to C_{b}(D[-\tau, 0]),\,f\mapsto  \big[\varphi\mapsto \mathbb E[f(x_t^\varphi)]\big],\quad \text{for all } t\geq \tau. 
	 \end{equation} 
  % are well-defined mappings, conform \cite{article:reiss}.
 
Recall  $ \mathscr P(D[-\tau, 0])$ denotes the collection of   Borel probability measures on $D[-\tau, 0],$  endowed with the topology of weak convergence of measures. Introduce the   duality pairing   $\langle\cdot, \cdot\rangle$,  given by \begin{equation}\langle\xi, f\rangle=\int_{D[-\tau, 0]} f(\varphi)  \,\xi(\mathrm{d}\varphi),\quad  \xi \in \mathscr P(D[-\tau, 0]), \,f \in B_{b}(D[-\tau, 0]) .\end{equation}
The terminology duality pairing is justified (to a certain extend). Indeed, note that the dual space $B_{b}(D[-\tau, 0])^*$ is isomorphic to  $ba(D[-\tau,0])$, the space   of all  finitely additive signed Borel measures    of bounded variation  \cite[Thm. 13.4]{book:hitch}. One typically considers  $B_{b}(D[-\tau, 0])^*\to   ba(D[-\tau, 0]),$  $\zeta\to \mu_\zeta$, where $\mu_\zeta(A)=\zeta(\mathbf 1_A)$, for all $A\in\mathcal B(D[-\tau, 0]).$
%finitely additive signed measures of bounded variation, see \citeb{p. 457}{book:hitch}. 
In particular, we have that $ \mathscr P(D[-\tau, 0])$ is   a subspace of $  ba(D[-\tau, 0])$. 
 Subsequently, let us  define for any $t \geq 0$ and   $\xi\in \mathscr P(D[-\tau, 0])$
 the continuous linear functional $P_{t}^{*} \xi \in B_{b}(D[-\tau, 0])^*$ that acts on bounded Borel measurable functions as follows:
 \begin{equation}
 \left(P_{t}^{*} \xi\right) f:=\left\langle\xi, P_{t} f\right\rangle, \quad f \in B_{b}(D[-\tau, 0]).
 \end{equation}
Continuity of   $P_t^*\xi$ is evident, as $|(P_t^*\zeta)f|\leq \|f\|_\infty$.  By the isomorphism  above, we can identify  $P_{t}^{*} \xi$ as an element in $  ba(D[-\tau, 0])$. Whenever $P_{t}^{*} \xi$ is a true signed measure---thus, when   the $\sigma$-additivity property is satisfied---the natural identification allows to write $P_{t}^{*} \xi=[f\mapsto 	\left\langle P_{t}^{*} \xi, f\right\rangle]$ and we find 
\begin{equation}
	\left\langle P_{t}^{*} \xi, f\right\rangle=\left\langle\xi, P_{t} f\right\rangle.\end{equation}
As a matter of fact, the functional $P_{t}^{*} \xi$ turns out to be an element in $\mathscr{P}(D[-\tau,0])$. Indeed, if $\xi$ is the distribution of an initial value process $\Phi,$ then $P_{t}^{*} \xi$ is the distribution of $X_{t}^\Phi,$ because
\begin{equation}
\left\langle P_{t}^{*} \xi, f\right\rangle=\int_{D[-\tau, 0]} \mathbb{E}\left[f (X_{t}^\varphi )\right] \xi(\mathrm{d} \varphi)=\mathbb{E}\left[\mathbb{E}\left[f (X_{t}^\Phi  ) \mid \mathcal{F}_{0}\right]\right]=\mathbb{E}\left[f (X_{t}^\Phi  )\right],
\end{equation}
 for all $ f \in B_{b}(D[-\tau, 0]).$
 This makes   \begin{equation}
	P_t^*:\mathscr P(D[-\tau, 0])\to \mathscr P(D[-\tau, 0]),\,\xi\mapsto\big[f\mapsto \langle \xi,P_tf\rangle\big],\quad \text{for all }t\geq 0, 
\end{equation}
  a well-defined mapping (under the natural identification).
 Lastly, this results into $P_{s+t}^{*}=$  $P_{s}^{*} P_{t}^{*}=P_{t}^{*} P_{s}^{*}$, for any $s,t\geq 0$, since for   $\xi \in \mathscr{P}(D[-\tau,0])$ and $f \in B_{b}(D[-\tau,0])$ we have
\begin{equation}
	 (P_{s+t}^{*} \xi ) f  =\left\langle\xi, P_{s+t} f\right\rangle \\
	=\left\langle\xi, P_{t} (P_{s} f)\right\rangle \\
	=\left\langle P_{s}^{*}( P_{t}^{*} \xi), f\right\rangle \\
	=\left(P_{s}^{*} (P_{t}^{*} \xi)\right) f .
\end{equation}
We have introduced all the necessary objects and discussed their properties accordingly.\\

% \subsubsection*{The construction of an invariant measure:}
 \textit{Step 2.} A  measure $\xi \in \mathscr P(D[-\tau,0])$ is  an invariant measure   if $P_{t}^{*} \xi=\xi$ holds for all $t \geq 0,$ i.e., if $\left\langle\xi, P_{t} f\right\rangle=\langle\xi, f\rangle$ holds for all $f \in B_{b}(D[-\tau,0])$ and all $t \geq 0$. It suffices to show $\left\langle\xi, P_{t} f\right\rangle=\langle\xi, f\rangle$ for all $f \in C_{b}(D[-\tau,0])$, because this implies $d_0(P_{t}^{*} \xi,\xi)=0$ and   hence $P_{t}^{*} \xi=\xi$.
 
  Let $\zeta$ be the distribution of an initial value process $\Phi$ such that   the set $\left\{P_{t}^{*} \zeta: t \geq \tau \right\}$ is tight. In other words, we choose $\Phi$ such that the partial segment process $(X_t)_{t\geq \tau}$ is tight.
  % \footnote{With regards to the existence of an invariant measure, it suffices to take $\zeta=0$ to be the trivial distribution.}
As a result of the Riesz--Bourbaki representation theorem  \cite[Prop. XI.5]{book:bourbaki}, there exists a unique family  of Borel signed measures $(\theta_{t})_{t \geq 0}$  of bounded variation such that,  
 for each $t\geq 0$ fixed, we have
 \begin{equation}
 	\left\langle\theta_{t}, f\right\rangle=\frac{1}{t} \int_{0}^{t}\langle P_{s+\tau}^{*} \zeta, f\rangle \,\mathrm d s,
 \end{equation}
 for every $f \in C_{b}(D[-\tau,0])$. 
 % For more details, see the analogous proof in \cite{thesis:geerten}.  
  In fact, it directly follows that we have $(\theta_{t})_{t \geq 0}\subset \mathscr P(D[-\tau,0])$, because  it is contained in the closed convex hull of $\left\{P_{t}^{*} \zeta: t \geq \tau \right\}$. For properties of a convex hull, we refer to  \cite{book:hitch}. 
  % \pagebreak 
In particular, the convex hull of any tight set is again tight, hence by Prokhorov's theorem we obtain that the convex hull of $\left\{P_{t}^{*} \zeta: t \geq \tau\right\}$ is relatively compact. 
% Since $\left(\theta_{t}\right)_{t \geq 0}$ is contained in the closed convex hull of the previous set, 
Hence, we can find a subsequence $\left(\theta_{t_{n}}\right)_{n \in \mathbb{N}} \subset\left(\theta_{t}\right)_{t \geq 0}$ such that $ \theta_{t_{n}}\to\nu$ holds, with $\nu \in \mathscr{P}(D[-\tau,0])$ and where the convergence is in the weak topology. In particular, we have   $\langle \nu,g\rangle =\lim_{n\to\infty}\langle \theta_{t_n},g\rangle$, for all  $g\in C_b(D[-\tau,0])$.
% \todo{Oorspronkelijk stond er $\eta$ overal; ik heb hier $\nu$ van gemaakt. Zelfde opject toch?!}

 Finally, for   $t \geq \tau$ and every $f \in C_{b}(D[-\tau,0])$, we have $P_tf\in C_b(D[-\tau,0])$, and therefore
% $$
% \lim _{n \rightarrow \infty} \frac{1}{t_{n}} \int_{0}^{t_{n}}\langle\zeta(t+s), f\rangle \mathrm{d} s=\lim _{n \rightarrow \infty} \frac{1}{t_{n}} \int_{0}^{t_{n}}\left\langle T_{s}^{*}, P_{t} f\right\rangle \mathrm{d} s=\left\langle\nu, P_{t} f\right\rangle
% $$
% and on the other hand
% $$
% \frac{1}{t_{n}} \int_{0}^{t_{n}}\langle\zeta(t+s), f\rangle \mathrm{d} s=\frac{1}{t_{n}} \int_{0}^{t_{n}}\langle\zeta(s), f\rangle \mathrm{d} s-\frac{1}{t_{n}} \int_{0}^{t}\langle\zeta(s), f\rangle \mathrm{d} s+\frac{1}{t_{n}} \int_{t_{n}}^{t_{n}+t}\langle\zeta(s), f\rangle \mathrm{d} s
% $$
% which converges to $\langle\nu, f\rangle$ as $n \rightarrow \infty$. Hence, $P_{t}^{*} \nu=\nu$ and it follows that $P_{t}^{*} \nu=P_{t}^{*}\left(P_{\tau}^{*} \nu\right)$
% $$
% =P_{t+\tau}^{*} \nu=\nu, \text { for every } t \geq 0
% $$
%	 
%$ \begin{aligned}
% 	\lim _{n \rightarrow \infty} \frac{1}{t_{n}} \int_{0}^{t_{n}} \langle P_{(t+s)}^{*} \zeta, f \rangle d s &=\lim _{n \rightarrow \infty} \frac{1}{t_{n}} \int_{0}^{t_{n}}\left\langle P_{s}^{*} \zeta, f\right\rangle d s \\
% 	&=\lim _{n \rightarrow \infty}\left\langle\theta_{t_{n}}, f\right\rangle \\
% 	&=\langle\nu, f\rangle . \\
% 	\lim _{n \rightarrow \infty} \frac{1}{t_{n}} \int_{0}^{t_{n}} \langle P_{(t+s)}^{*} \zeta, f \rangle d s &=\lim _{n \rightarrow \infty} \frac{1}{t_{n}} \int_{0}^{t_{n}}\left\langle P_{s}^{*} \zeta, P_{t} f\right\rangle d s \\
% 	&=\lim _{n \rightarrow \infty}\left\langle\theta_{t_{n}}, P_{t} f\right\rangle \\
% 	&=\left\langle\nu, P_{t} f\right\rangle .
% \end{aligned}$
%
\begin{equation}
\begin{aligned}
\left\langle\nu, P_{t} f\right\rangle &=\lim _{n \rightarrow \infty}\left\langle\theta_{t_{n}}, P_{t} f\right\rangle \\
&=\lim _{n \rightarrow \infty} \frac{1}{t_{n}} \int_{0}^{t_{n}} \langle P_{s+\tau}^{*} \zeta, P_{t} f \rangle \,\mathrm d s \\
&=\lim _{n \rightarrow \infty} \frac{1}{t_{n}} \int_{0}^{t_{n}} \langle P_{t+s+\tau}^{*} \zeta, f \rangle \,\mathrm d s\\
&=\lim _{n \rightarrow \infty} \frac{1}{t_{n}} \int_{t}^{t+t_{n}} \langle P_{s+\tau}^{*} \zeta, f \rangle \,\mathrm d s\\
&=\lim _{n \rightarrow \infty}\left(\frac{1}{t_{n}} \int_{0}^{t_{n}}\langle P_{s+\tau}^{*} , f\rangle \,\mathrm{d} s+\frac{1}{t_{n}} \int_{t_{n}}^{t_{n}+t}\langle P_{s+\tau}^{*}\zeta , f\rangle \,\mathrm{d} s-\frac{1}{t_{n}} \int_{0}^{t}\langle P_{s+\tau}^{*}\zeta , f\rangle \,\mathrm{d} s\right)\\
&=\lim _{n \rightarrow \infty} \frac{1}{t_{n}} \int_{0}^{t_{n}}\langle P_{s+\tau}^{*}\zeta , f\rangle\, \mathrm{d} s+0+0\\
&= \lim _{n \rightarrow \infty}\left\langle\theta_{t_{n}}, f\right\rangle \\
&=\langle\nu, f\rangle. 
\end{aligned}\end{equation}
The other two limits equal zero because we have\begin{equation}\left|\int_{\alpha}^{\alpha+t}\langle P_{s+\tau}^{*}\zeta , f\rangle\, \mathrm{d} s\right|\leq \|f\|_\infty t, \end{equation}
%\quad \left|\int_{0}^{t}\langle P_{s+\tau}^{*}\zeta , f\rangle \,\mathrm{d} s\right|\leq \|f\|_\infty t.
for any $\alpha\geq 0.$
We conclude $P_t^*\nu=\nu$, for all $t\geq \tau. $ Consequently, for $0\leq t<\tau$, we also obtain
\begin{equation}
	P_t^*\nu=P_t^*(P_\tau^* \nu)=P_{t+\tau}^*\nu=\nu.
\end{equation}
This completes the proof.
\end{proof}

%%%%%%%%%%%%%%%%%%%%%%%%%%%%%%%%%%%%%%%%%%%%

% %Bibliography
\printbibliography

\end{document}